\renewcommand{\thetable}{\arabic{section}.\arabic{table}}
\numberwithin{equation}{section}
\newtheorem{theorem}{Theorem}[section]
\newtheorem{lemma}[theorem]{Lemma}
\newtheorem{remark}[theorem]{Remark}
\newtheorem{example}[theorem]{Example}
\newtheorem{proposition}[theorem]{Proposition}
\newtheorem{definition}[theorem]{Definition}
\newtheorem{corollary}[theorem]{Corollary}
\newtheorem{fig}[theorem]{Figure}
\newcommand{\bthe}{\begin{theorem}}
	\newcommand{\ethe}{\end{theorem}}
\newcommand{\ben}{\begin{enumerate}}
	\newcommand{\een}{\end{enumerate}}
\newcommand{\bit}{\begin{itemize}}
	\newcommand{\eit}{\end{itemize}}
\newcommand{\beq}{\begin{equation}}
\newcommand{\eeq}{\end{equation}}
\newcommand{\ble}{\begin{lemma}}
	\newcommand{\ele}{\end{lemma}}
\newcommand{\bde}{\begin{definition}\rm}
	\newcommand{\ede}{\halmos\end{definition}}
\newcommand{\bco}{\begin{corollary}}
	\newcommand{\eco}{\end{corollary}}
\newcommand{\bpr}{\begin{proposition}}
	\newcommand{\epr}{\end{proposition}}
\newcommand{\brem}{\begin{remark}\rm}
	\newcommand{\erem}{\halmos\end{remark}}
\newcommand{\bproof}{\begin{proof}}
	\newcommand{\eproof}{\end{proof}}
\newcommand{\bexam}{\begin{example}\rm}
	\newcommand{\eexam}{\halmos\end{example}}
\newcommand{\bfi}{\begin{fig}}
	\newcommand{\efi}{\end{fig}}
\newcommand{\btab}{\begin{tab}}
	\newcommand{\etab}{\end{tab}}
\newcommand{\beao}{\begin{eqnarray*}}
	\newcommand{\eeao}{\end{eqnarray*}\noindent}
\newcommand{\beam}{\begin{eqnarray}}
\newcommand{\eeam}{\end{eqnarray}\noindent}
\newcommand{\barr}{\begin{array}}
	\newcommand{\earr}{\end{array}}
\newcommand{\bdis}{\begin{displaymath}}
\newcommand{\edis}{\end{displaymath}\noindent}
\def\N{{\mathbb N}}
\def\P{{\mathbb P}}
\def\E{{\mathbb E}}
\def\R{{\mathbb R}}
\def\P{\mathbb{P}}
\newcommand{\bs}{\boldsymbol}
\newcommand{\bsz}{\boldsymbol{Z}}
\newcommand{\bsx}{\boldsymbol{X}}
\newcommand{\bsy}{\boldsymbol{Y}}
\newcommand{\bst}{\boldsymbol{T}}
\newcommand{\stp}{\stackrel{P}{\rightarrow}}
\newcommand{\std}{\,\,\stackrel{\mathcal{D}}{\rightarrow}\,\,}
\newcommand{\stv}{\stackrel{v}{\rightarrow}}
\newcommand{\stw}{\stackrel{w}{\rightarrow}}
\newcommand{\eps}{\varepsilon}
\newcommand{\var}{{\rm Var}}
\newcommand{\DAG}{{\rm DAG}}
\newcommand{\an}{{\rm an}}
\newcommand{\pa}{{\rm pa}}
\newcommand{\des}{{\rm de}}
\newcommand{\An}{{\rm An}}
\newcommand{\Pa}{{\rm Pa}}
\newcommand{\Des}{{\rm De}}
\newcommand{\ch}{{\rm ch}}
\let\norm\undefined 
\DeclarePairedDelimiter\norm{\lVert}{\rVert}
\newcommand{\halmos}{\quad\hfill\mbox{$\Box$}}  
\def\P{{\bf {\mathbb{P}}}}
\newcommand{\D}{\mathcal{D}}
\definecolor{red-brown}{rgb}{0.65, 0.16, 0.16}
\definecolor{red(ncs)}{rgb}{0.77, 0.01, 0.2}
\newcommand{\CK}[1]{{\color{blue} #1}}
\newcommand{\Mario}{\textcolor{red(ncs)}}
\begin{document}
\begin{frontmatter}
\title{Heavy-tailed max-linear structural equation models in networks with hidden nodes} 

\runtitle{Heavy-tailed max-linear structural equation models in networks with hidden nodes}

\begin{aug}
\author[A]{\inits{M.}\fnms{Mario}~\snm{Krali}\ead[label=e1]{mario.krali@epfl.ch}}
\author[B]{\inits{A.}\fnms{Anthony}~\snm{C. Davison}\ead[label=e2]{anthony.davison@epfl.ch}}
\author[C]{\inits{C.}\fnms{Claudia}~\snm{Kl\"uppelberg}\ead[label=e3]{cklu@ma.tum.de}}
\address[A]{Institute of Mathematics, \'Ecole Polytechnique F\'ed\'erale de Lausanne (EPFL)\printead[presep={,\ }]{e1}}

\address[B]{Institute of Mathematics, \'Ecole Polytechnique F\'ed\'erale de Lausanne (EPFL)\printead[presep={,\ }]{e2}}

\address[C]{Department of Mathematics, Technical University of Munich \printead[presep={,\ }]{e3}}

%
%

\end{aug}

\maketitle

\begin{abstract}
Recursive max-linear vectors provide models for causal dependence between large values of random variables that are supported on directed acyclic graphs, but the standard assumption that all nodes of such a graph are observed can be unrealistic. We give necessary and sufficient conditions for a partially observed recursive max-linear vector to be representable as a recursive max-linear (sub-)model and provide a graphical algorithm to construct the latter. Our conditions concern the max-weighted paths of a directed acyclic graph and its minimal representation, which play a key role for such models. In the framework of regular variation we translate these conditions into checkable criteria and establish a connection between max-weighted paths and the extremal dependence measure of transformed variables for pairs of nodes. We propose a statistical algorithm to detect bivariate regularly varying recursive max-linear models among the node variables of a directed acyclic graph and show consistency and asymptotic normality of the estimators of the extremal dependence measure under a thresholding procedure. Simulations show that our algorithm performs satisfactorily.  We apply it to nutrition intake data.

\end{abstract}
\begin{keyword}
\kwd{Bayesian network}
\kwd{directed acyclic graph}
\kwd{extreme value theory}
\kwd{recursive max-linear model}
\kwd{regular variation}
\kwd{structure learning}
\end{keyword}
\end{frontmatter}

\section{Introduction and motivation}\label{sec:intro}

Extreme value theory has become indispensable for studying rare events, with applications to high temperatures, rainfall and flooding \citep{DHT}, storms and hurricanes {\citep{davisetal,fondev}, and financial crises \citep{McNeil,poonetal}. 
Statistical modelling of such events can improve our understanding of the underlying mechanisms and thus can suggest how to mitigate their effects.  
The modelling of multivariate extremes is an area of high activity in which the nonparametric character of joint distributions~\citep[Ch.~8, 9]{beirlant} has generally restricted applications to fairly low dimensions.
A variety of dimension reduction methods for extremes have been proposed, including clustering approaches \citep{Chautru,JanWan}, a principal components-like decomposition \citep{cooley}, factor analysis \citep{HKK}, and support detection \citep{goix}.
Recent directions of research include spatial extremes \citep{DHT}, Bayesian approaches \citep[e.g.,][]{opitz2018inla}, and graphical modelling \citep[]{engelke:hitz:18, gk}.   


{Graphical models have proven useful in studying high-dimensional data, and we contribute to this area by} considering a class of graphical models for extremes using max-linear structural equation models \citep{pearl} called recursive max-linear models (RMLMs) \citep{gk} or max-linear Bayesian networks \citep{amendola,gkl}.
An RMLM $\boldsymbol{X}$ supported on a directed acyclic graph (DAG) $\mathcal{D}=(V,E)$ with nodes $V=\{1,\dots,D\}$ and edges $E$ is defined through the formula 
\begin{align}\label{semequat1}
X_i\coloneqq {\underset{k\in \pa(i)}{\bigvee}}c_{ik}X_k\vee c_{ii}Z_i,\hspace{5mm} {i\in V},
\end{align}
where the innovations $Z_1,\dots,Z_{D}$ are independent atom-free random variables with support $\mathbb{R}_+$ and the edge weights $c_{ik}$ are positive for all $i \in V$ and $k\in\pa(i)$, which denotes the parents of node $i$. For later use we assemble the innovations and weights in the innovations vector $\boldsymbol{Z}=(Z_1,\dots,Z_{D})$ and the edge-weight matrix $C=(c_{ik})_{D\times D}$.  

Max-linearity offers an analogue to linear operations when analysing how the largest shocks affect a system.  Models such as~\eqref{semequat1} have the appealing property of downplaying weaker shocks, since it is mainly the extreme shocks that disseminate through the network, and these models allow certain nodes to have key influences on other nodes.  Max-linear models can also approximate any max-stable dependence structure between extremes arbitrarily well as the number of factors grows, making them valuable objects of study \citep{foug,Wang2011}.

Identifiability and estimation for RMLMs are studied by \citet{gkl} and \citet{KL2017}, and, under the assumption of one-sided multiplicative noise, in \citet{BK}. Conditional independence under the RMLM is studied by \citet{amendola}, who introduce a new separation concept.  
\citet{TBK} propose a machine-learning algorithm for identifying the edges of an RMLM {with two-sided noise} supported on a root-directed tree. Other work studying a max-linear model in the context of trees of transitive tournaments can be found in \citet{AS}.

Engelke and coauthors have taken a different approach to graphical modelling for extremes. \citet{engelke:hitz:18} use conditional independence relations in undirected graphical models when the exponent measure has a density, based upon which \citet{eglearn} propose a graph learning procedure for the H\"usler--Reiss model. \citet{ES} develop a structure learning method for estimating the edges of an undirected graphical tree structure. \citet{gnecco:19} perform causal discovery using conditional means of the integral transforms of pairs of node variables in linear structural equation models with heavy tails, and \citet{mhalla} propose a method for causal discovery using Kolmogorov complexity and extreme conditional quantiles. \citet{EI} survey methods from some of these references. 

Analogous to the Gaussian setting, but focusing on extremes, \citet{leecooley} and \citet{Huseretal} use partial tail correlations to infer undirected graphical structures. 

\subsection{Problem statement}\label{sec:1.1}

Structural equation models require assumptions about the observed variables. One key assumption is 
that the innovations, often referred to as noise or error variables in the literature, are independent, which implies that the model is Markovian \citep[p.30]{pearl} and ensures that the model satisfies the parental Markov condition, which allows conditional independence relations to be discerned from the graphical structure \citep[Theorem~1.4.1]{pearl}.  This assumption is widely criticised because one may not observe all relevant variables, resulting in unmeasured causes \citep[p. 252]{pearl}. In graphical terminology, hidden variables may correspond to hidden confounders, i.e., variables which are unknown, undiscovered or unmeasured, and which may falsify conclusions if ignored.

Although it is hoped that structure learning procedures will find a causal order, i.e., a graph structure of the variables, few such procedures can handle hidden nodes.  To the best of our knowledge, there are just two publications on extremal graphical models dealing with similar problems. 
The approach in \citet[after equation~(9)]{gnecco:19} may be used to identify a causal order even when there are hidden nodes. Focusing on confounders only, 
\citet{PCD} propose testing for a causal link between two variables by using regression to assess whether the scale parameters depend on confounders, which must therefore be observed.

Hidden confounders, often associated with latent variables, are a lively topic of research in graphical modelling. The objective is to construct a DAG which both marginalizes out any hidden variables and preserves conditional independence relationships from the larger DAG with the hidden nodes.
Ancestral graphs, particularly maximal ancestral graphs (MAGs) \citep{richardson,colombo}, are prominent concepts targeting this problem; they allow hidden confounding effects to be embedded via so-called $m$ or  $m*$ separation, thus accounting for hidden confounders while retaining global Markov properties. \citet{richardson} apply this theory to Gaussian graphical models, whose relative simplicity enables a convenient parametrization of the MAG. In extreme value theory, and particularly in RMLMs, however, it is unclear how this theory could be applied, as RMLMs do not satisfy  the faithfulness assumption \citep{amendola}. In the extremal framework the dependence structure of the regularly varying vector $\bs X\in \mathbb{R}_+^{D}$ is characterised by its angular measure $H_{\bs X}$, which {has $D^2$ parameters for an RMLM}. In the Gaussian case the dependence is completely captured by pairwise covariances whatever the number of hidden confounders, but this is untrue for RMLMs unless the conditions given in Section~\ref{sec:RMLM} are satisfied.
	
We take a different approach, using properties of RMLMs to find conditions under which one can ignore the effects of hidden nodes.  Certain paths of an RMLM may be irrelevant because extreme shocks will never pass through them owing to the max-linearity, and this property may extend to nodes along irrelevant paths, whose lack of visibility is inconsequential.  Below we show that the irrelevance of hidden confounders is equivalent to modelling the observed nodes via an RMLM, and provide a graphical algorithm to select a (sub)set of the observed nodes that can be modelled in this way.

Under a regularly varying framework we use extremal dependencies between pairs of observed node variables to derive necessary and sufficient conditions that can be used to decide whether the observed variables can be modelled as an RMLM in the presence of hidden variables, and provide a statistical algorithm to detect bivariate regularly varying RMLMs among pairs of node variables~in~a~DAG.

Our work uses the scaling methodology of \citet{KK}, who, starting with the assumption that a regularly varying vector $\boldsymbol{X}$ can be modelled as an RMLM, first estimate a causal ordering of the nodes based on estimated scaling parameters, and then perform inference on the dependence parameters of the angular measure of $\boldsymbol{X}$}. 
 
\subsection{Terminology}\label{sec:1.2}

We use standard terminology for directed graphs \citep{Lauritzen1990}.
Let $\mathcal{D}=(V, E)$ be a DAG with node set $V=\{1,\dots,{D}\}$  and edge set $E\subset V\times V$. The parents, ancestors and descendents of a node $i\in V$ are respectively $\pa(i)=\{j\in V: e_{ji}\in E\}$, $\an(i)$ and $\des(i)$; we write $\Pa{(i)}=\pa(i)\cup \{i\} $,  $\An(i)=\an(i)\cup \{i\}$ and $\Des(i)=\des(i)\cup\{i\}$.
If $U\subseteq V$, then  $\an(U)$ denotes the ancestral set of all nodes in $U$, and $\An(U)=\an(U)\cup U$, $\des(U)$ and $\Des(U)$ are defined analogously to those for a single node.
A node $i\in V$ is a {source node} if $\pa(i)=\emptyset$, and $V_0$ denotes the set of all source nodes.
 {We write $j\to i$ to denote the edge $e_{ji}$ from node $j$ to $i$}. Then
a path $p_{ji}\coloneqq[\ell_0=j\to \ell_1 \to\cdots\to \ell_{m}=i]$ from $j$ to $i$ has length $|p_{ji}|=m$, and the set of all such paths is denoted by $P_{ji}$. 
Instead of $p_{ji}$ we also write $j\rightsquigarrow i$, and say that $X_j$ causes $X_i$ (or $j$ causes $i$) whenever there is a path $j \rightsquigarrow i$ between the corresponding nodes.

Given nodes $i,j,k\in V$, we say that $X_i$ is a confounder of $X_j$ and $X_k$ (or $i$ is a confounder of $j$ and $k$) if there exist paths $i \rightsquigarrow j$ and $i\rightsquigarrow k$ which do not pass through $k$ and $j$, respectively. 

A DAG $\mathcal{D}=(V,E)$ is called {well-ordered} if for all $i\in V$ it is true that $i<j$ for all $j\in \pa(i)$. We refer to such an order as a {causal order}. 
A graph {$\D_1=(V_1,E_1)$} is a subgraph of $\D$ if $V_1\subseteq V$ and $E_1\subset (V_1\times V_1)\cap E$. If $\D$ is a DAG, then  $\D_1$ is also a DAG.

We finish by describing endogenous and exogenous variables in structural equation models \citep[pp.~23--24]{Peters2014b}.    Endogenous variables are those that the modeler tries to understand, and exogeneous variables are independent and influence the endogenous variables, but not conversely.  In an RMLM such as~\eqref{semequat1} these are the $X_i$ and $Z_i$ respectively.  

\subsection{A motivating example}\label{ex:intro}

Figure~\ref{introex} shows three DAGs with node set $V=\{1,2,3\}$ but with node 3 hidden; we do not observe this node, and may be unaware that it exists.  The presence of an edge between two nodes $j\to i$ ($j\in\pa(i)$) indicates that $c_{ij}>0$. We use dotted edges to capture the effects of hidden nodes. Writing out equation~\eqref{semequat1} for the three DAGs gives 
 \begin{align}\label{ex:3graphs}
 X_1=c_{11}Z_1\vee c_{12}X_2\vee c_{13}X_3, \quad X_2=c_{22}Z_2\vee c_{23}X_3,\quad {X_3=c_{33}Z_3},
 \end{align}
 with $c_{13}=0$ for $\D_1$ and $c_{12}=0$ for $\D_3$. 
In this paper we seek conditions whereby $(X_1,X_2)$ can be expressed as an RMLM of the form
	$$X_1=c_{11}Z_1\vee c_{12}X_2,\,\quad \quad X_2= \tilde c_{22}\tilde{Z}_2,$$
	for independent innovations $Z_1, \tilde{Z}_2$. 
In Section~\ref{sec:RV} we prove that, under a regular variation condition on the innovations, the random variable $\tilde{Z}_2$ shares certain tail properties with $Z_2$.
		
			\begin{figure}[t]
				\begin{center}
				\resizebox{3.4cm}{2.5cm}{\begin{tikzpicture}[
					> = stealth,
					shorten > = 1pt, 
					auto,
					node distance = 2cm, 
					semithick 
					]
					\tikzstyle{every state}=[
					draw = black,
					thick,
					fill = white,
					minimum size = 4mm,scale=1
					]
					\node[state, dotted] (3) {$3$};
					\node[state] (2) [below right=1cm and 1cm of 3] {$2$};
					\node[state] (1) [below left=1cm and 1cm of 3] {$1$};
					
					\path[->][dotted, blue] (3) edge node {} (2);
					\path[->][blue] (2) edge node {} (1);
					\node (4) [below = 1.2cm of 3] {$\D_1$};
					\end{tikzpicture}}	
					\hspace{1cm}
				\resizebox{3.4cm}{2.5cm}{\begin{tikzpicture}[
					> = stealth,
					shorten > = 1pt, 
					auto,
					node distance = 2cm, 
					semithick 
					]
					\tikzstyle{every state}=[
					draw = black,
					thick,
					fill = white,
					minimum size = 4mm,scale=1
					]
					\node[state, dotted] (3) {$3$};
					\node[state] (2) [below right=1cm and 1cm of 3] {$2$};
					\node[state] (1) [below left=1cm and 1cm of 3] {$1$};
					
					\path[->][dotted, blue] (3) edge node {} (2);
					\path[->][dotted, blue] (3) edge node {} (1);
					\path[->][blue] (2) edge node {} (1);
					\node (4) [below = 1.2cm of 3]  {$\D_2$};
					\end{tikzpicture}}		
					\hspace{1cm}
					\resizebox{3.4cm}{2.5cm}{\begin{tikzpicture}[
					> = stealth,
					shorten > = 1pt, 
					auto,
					node distance = 2cm, 
					semithick 
					]
					\tikzstyle{every state}=[
					draw = black,
					thick,
					fill = white,
					minimum size = 4mm,scale=1
					]
					\node[state, dotted] (3) {$3$};
					\node[state] (2) [below right=1cm and 1cm of 3] {$2$};
					\node[state] (1) [below left=1cm and 1cm of 3] {$1$};
					
					\path[->][dotted ,blue] (3) edge node {} (2);
					\path[->][ dotted, blue] (3) edge node {} (1);
                 \node (4) [below = 1.2cm of 3] {$\D_3$};
					\end{tikzpicture}}
				
				\end{center}
				\caption{Three-dimensional DAGs in which the hidden node 3 is a confounder in $\D_2$ and $\D_3$.}\label{introex}\vspace{-.5cm}
			\end{figure}

To illustrate exogeneity and endogeneity, we reformulate~\eqref{ex:3graphs} as
\begin{align}\label{simp_ex}
X_1&=\big(c_{11}Z_1\vee c_{13}X_3\big) \vee c_{12}X_2\eqqcolon f_{13}(Z_1,X_3) \vee c_{12}X_2,\\ 
 	X_2&=c_{22}Z_2\vee c_{23}X_3\eqqcolon f_{23}(Z_2, X_3)\nonumber,
  \end{align}
and briefly discuss the three DAGs of Figure~\ref{introex}.

If $c_{13}=0$, corresponding to $\D_1$, then $f_{13}=f_{13}(Z_1)$ and $f_{23}=f_{23}(Z_2, c_{33}Z_3)$: both functions are exogenous, only depending on different innovations, which makes them independent.
Extending the notion of an innovation slightly, we call $f_{13}(Z_1)$ and $f_{23}(Z_2,  c_{33}Z_3)$ innovations of $(X_1, X_2)$. 
Indeed, $(X_1, X_2)$ is an RMLM on the smaller DAG $(\{1,2\}, 2\to 1)$. 

If $c_{13}\neq 0$ and $c_{12}\neq 0$, corresponding to $\D_2$, then
both $f_{13}$ and $f_{23}$ are functions of $X_3$ and thus are not exogenous. 
Indeed, they can be written in terms of the innovations $f_{13}(Z_1,c_{33}Z_3)$ and $f_{23}(Z_2, c_{33}Z_3)$. 

If $c_{12}=0$, corresponding to $\D_3$, then 
we have a situation similar to that for $\D_2$.

For $\D_2$ and $\D_3$, node 3 is a confounder of nodes 1 and 2 in the classical sense. In $\D_2$ and under certain conditions on the path from 3 to 1 passing through 2, we can express $(X_1,X_2)$ as an RMLM with independent innovations. It is the goal of this paper to investigate when this is possible.

Distinguishing between $\D_2$ and $\D_3$ is essential in practice because the pair $(X_1,X_2)$ is causally dependent only in $\D_2$, which implies that extreme values of $X_2$ cause extreme values of $X_1$. This arises in the application in Section~\ref{sec:data}, in which such observations correspond to unusually high nutrient levels, which can lead to toxicity or have other harmful effects. In this setting it may be important to distinguish whether simultaneous high quantities across several nutrients are causally related to each other, or whether the underlying cause is a hidden (nutrient) confounder, not present in the data. 

Focusing on $\D_3$, we briefly discuss the implications of MAGs applied to Gaussian graphical models \citep[Section 8]{richardson}, in which the dependence structure of $(X_1, X_2)$ is fully characterised by their covariance, a single parameter, which could be captured by a single edge in a bivariate graph. Since such models are marginalizable, this is true whether or not there is a single hidden confounder (as in $\D_3$), or several such: in all cases the MAG yields a bidirected edge $1\leftrightarrow2$.
	
The extremal dependence captured by the angular measure of a $D$-dimensional RMLM is more complex than in Gaussian graphical models, because the angular measure is supported on the $D$-dimensional unit simplex.  This has $2^D-1$ faces, each of which might capture important features of the dependence structure. For $\D_3$, these faces are the nodes $\{1\}, \{2\}$, and the interior of the simplex. In a more general setting with $D-2$ {independent} confounders, the support in the interior is determined by the dependence {of $(X_1, X_2)$} on each of the confounders, and the number of the parameters of the angular measure of $(X_1, X_2)$ is $2D$ (Proposition~\ref{discspect}). Many  configurations of the angular measure may yield the same extremal dependence measure, so, as the angular measure changes with the number of hidden confounders, we know of no viable way to embed this in a general way in the context of MAGs. 

\subsection{Recursive max-linear models}\label{secRMLM}

{An RMLM $\bsx$ as defined in~\eqref{semequat1}} has a unique solution, which can be derived via tropical algebra \citep[see, e.g.,][]{But2010}; i.e., linear algebra with arithmetic in the max-times semiring $(\R_+,\vee,\times)$ defined by $a\vee b:=\max(a,b)$ and $a\times b := ab$ for $a,b\in\R_+:=[0,\infty)$. 
These operations extend to $\R_+^D$ coordinatewise and to corresponding matrix multiplication $\times_{\max}$. 
In this paper vectors are column vectors; we write $\bsz=(Z_1,\dots,Z_D)$ for the column vector of innovations. 
Tropical multiplication of the max-linear (ML) coefficient matrix $A$ with $\bsz$ yields the unique solution  \citep[Theorem~2.2]{gk} 
\begin{align}\label{Rmlmequat}
X_i=(A\times_{\max} \bsz)_i={{\underset{j\in \An(i)}{\bigvee}}}a_{ij}Z_j,\hspace{5mm} i\in\{1,\dots,D\}.
\end{align}
The coefficient matrix $A=(a_{ij})_{D\times D}$ is defined by the {path weights} $d(p_{ji})= c_{jj} c_{k_1j}\cdots c_{ik_{\ell-1}}$ for each path $p_{ji}=[j\to k_1\to\cdots \to k_{\ell}=i]$.
The entries of $A$ are defined for $i\in\{1,\dots,{D}\}$ by
\begin{align*}
a_{ij}=\underset{p_{ji}\in P_{ji}}{\bigvee}d(p_{ji}) \mbox{ for } j\in {\An}(i),\quad a_{ij}=0 \mbox{ for }  j\in V\setminus {\An}(i),\quad a_{ii}=c_{ii},
\end{align*}
and a path $p_{ji}$ from $j$ to $i$ such that $a_{ij}$ equals $d(p_{ji})$ is called max-weighted path (mwp).

We use the following notion throughout the paper.

\begin{definition}\label{mwpair}
A pair of nodes $(i,j)$ {is a max-weighted pair}, 
if for all $k\in \An(i)\cap\An(j)$ there are max-weighted paths $k\rightsquigarrow j \rightsquigarrow i$; if so, we write $(i,j)\in$ {\em MWP}. 
Note that $k=j$ is possible, so this includes max-weighted paths $j\rightsquigarrow i$.
\end{definition}

\subsection{Organisation}\label{sec:orga}

In Section~\ref{sec:RMLM} we give conditions that ensure that a partially observed vector of node variables can be modelled as an RMLM, and present a graphical algorithm to construct this model. 
In Section~\ref{sec:RV} we provide criteria based on max-weighted paths in a regularly varying RMLM to ensure the representation of the observed pairs of node variables as bivariate RMLMs, and derive conditions for the identification of these paths.
In Section~\ref{sec:est} we translate previous theoretical results into a statistical algorithm to detect max-weighted paths {between} observed node variables, and apply it to nutrient intake data. 
Section~\ref{funcCLT} provides a functional central limit theorem for random sample sizes, which {results from a} two-step thresholding procedure.

The Supplement has seven appendices. Appendix~\ref{sec:3.4} extends the methodology for {bivariate} regularly varying RMLMs by leveraging the 
identified ancestors among the observed nodes.
Appendix~\ref{Ap:ProofsRMLM} contains proofs of the main theorems of Sections~\ref{sec:RMLM},~\ref{sec:RV} and Appendix~\ref{sec:3.4}.
Appendix~\ref{sec:ARV} summarises standard definitions and results on regular variation, and specifies those relevant for RMLMs.
Consistency and asymptotic normality of the estimated scalings and extremal dependence measures are proved in Appendix~\ref{sec:stat}, where we propose an intermediate thresholding procedure that requires an apparently novel functional CLT~for a random sample.
In Appendix~\ref{scalest} these results are used to estimate the inputs of our Algorithm~\ref{datdalg2}.
Appendix~\ref{sec:performance} investigates its performance in a simulation study based on true and false positive rates for the estimated max-weighted paths enriched by various categories of causal dependence. 
Appendix~\ref{Ap:sim} studies the sensitivity of our algorithm numerically.

\section{Constructing RMLMs from DAGs with hidden nodes }\label{sec:RMLM}

We now investigate the problem posed in Section~\ref{sec:1.1} and motivated by the example of Section~\ref{ex:intro}: given an RMLM $\bsx$ on a DAG $\D$ with $D$ nodes and certain nodes hidden, can we construct a lower-dimensional RMLM on the observed {$d<D$} nodes? We shall see that this may be possible under certain assumptions {on the max-weighted paths.}
{All proofs are deferred to Appendix~\ref{Ap:ProofsRMLM}.}
 
 \subsection{Max-weighted paths and RMLMs on DAGs with hidden nodes}\label{sec:2.1}
 
 Suppose that the true DAG has $|V|={D}$ nodes, of which {only} $d<D$ are observed. In this section we denote the set of observed nodes  and its complement by $O\subset V$ and $O^c$, respectively, and write $\bsx_O$ for the vector of observed variables. 
 
 A key tool is the minimal representation of the components of $\boldsymbol{X}$ \citep[Theorem~6.7]{gk}, in which we replace the arbitrary subset $U$ by the subset $O\subset V$ of observed nodes and make {some} modifications. 
 For $i\in V$ we reformulate the sets as 
 \begin{align}\label{rmlmequat}
{\an^O(i)} &= \Big\{j\in\an(i)\cap O : a_{ij}> \bigvee_{k\in O\cap \textrm{an}(i)\cap \des(j)} \frac{a_{ik}a_{kj}}{a_{kk}}\Big\},\nonumber\\
 \An^{O^c}(i) &= \{i\} \cup \Big\{j\in\an(i)\cap O^c : a_{ij}> \bigvee_{k\in O\cap \an(i)\cap \des(j)} \frac{a_{ik}a_{kj}}{a_{kk}}\Big\}.
 \end{align} 
 The set $\an^O(i)$, originally defined as $\An_{\rm low}^O(i)$ for $i\notin O$ in equation~(6.3) of \citet{gk}, contains the lowest {max-weighted} ancestors of $i$ in $O$, i.e., those nodes $j$ such that no max-weighted path from $j$ to $i$ passes through any other node in $O$. 
 However, $\An_{\rm low}^O(i)=\{i\}$  for $i\in O$, and as we are interested in the innovations, we extend the use of $\an^O(i)$ in~\eqref{rmlmequat} for all $i\in V$ to avoid such trivial representations, so that $i\notin \an^O(i)$.
 Analogously, $\An^{O^c}(i)$, denoted by $\An_{\rm nmw}^{O}(i)$ in \citet[Theorem~6.7]{gk}, consists of the lowest max-weighted ancestors of $i$ in $O^c$, {including now $i$ itself.}
 This allows us to express the variable for  node $i$ in terms of the minimum number of observed ancestors and of innovations, giving the minimal representation 
 \begin{align}\label{minrep}
 X_{i} &= \bigvee_{k\in \an^{O}(i)} \frac{a_{ik}}{a_{kk}} X_k \vee \bigvee_{k\in \An^{O^c}(i)} a_{ik}Z_k,\quad {i\in V.}
 \end{align} 
 As we are interested in this representation for $\boldsymbol{X}_O$ only, we use it for $i\in O$; as a result, the set $O$ is the node set of a unique DAG $\D^O$ \citep[Theorem~5.4]{gk}, and the edges of $\D^O$ correspond to the relations in~\eqref{minrep}. 


The following example illustrates the minimal representation~\eqref{minrep}.

 \begin{example}\label{ex:2.1}
Consider $\mathcal{D}_2$ of Figure~\ref{introex} with confounder 3 of nodes 1 and 2.
  
If the path from $3\to 2\to1$ is max-weighted, {the path $3\to 1$ is irrelevant} and~\eqref{minrep} allows us to write $(X_1,X_2)$ as an RMLM with 
 	$$X_1=a_{11}Z_1\vee \frac{a_{12}}{a_{22}}X_2, \quad {X_2= a_{22}Z_2\vee a_{23}Z_3}=f_{23}(Z_2, Z_3).$$
Here $Z_1$ and $f_{23}(Z_2, Z_3)$ depend on the independent original innovations in the two equations and are both exogenous for $(X_1, X_2)$, which represents an RMLM on the smaller DAG $(\{1,2\}, 2\to 1)$. 
  
  If the path $3\to 2\to1$ is not max-weighted, then equation~\eqref{simp_ex} implies that the new innovations $f_{13}(Z_1, Z_3)\coloneqq a_{11}Z_1\vee a_{13}Z_3$ and $f_{23}$ both depend on $Z_3$.  This contradicts the independence of the innovations, so {$(X_1, X_2)$} cannot be represented as an RMLM.
 \end{example}

The next theorem characterises when a vector $\bsx_O$ of observed node variables can be represented as an RMLM. Part (i) characterises the source nodes and part (ii) their descendants.   
The proof, given in Appendix~\ref{Ap:ProofsRMLM}, uses representation~\eqref{minrep}.

{We recall Definition~\ref{mwpair} and define two further sets that restrict max-weighted paths to certain subsets of nodes. For $B, C\subseteq V$ we write}
{\begin{align}\label{mwps}	
	{\rm MWP}&= \{(i,j):  \An(i)\cap\An(j)\neq\emptyset \text{\,and\,} \forall k\in \An(i)\cap\An(j), \exists 
    \text{\, an mwp\,}  k\rightsquigarrow j\rightsquigarrow i\},\nonumber\\ 
	{\rm MWP}(B)&= \{(i,j):  \An(i)\cap\An(j)\cap B\neq\emptyset \text{\,and\,}\forall k\in \An(i)\cap\An(j)\cap B, \exists  \text{\, an mwp\,} k\rightsquigarrow j\rightsquigarrow i\},\nonumber\\
	{\rm MWP}^C(B)&= \{i : \An(i)\cap B\neq \emptyset \text{\,and\,} \forall k\in \An(i)\cap B, \exists\,  j\in C \text{\,and an mwp\,} k\rightsquigarrow j\rightsquigarrow i\}.
	\end{align}}
 \vspace*{-.1mm}
 Note that ${\rm MWP}={\rm MWP}(V)$, implying that if $(i, j)\in {\rm MWP}$, then $(i, j)\in {\rm MWP}(B)$ for all $B\subseteq V$ such that $\An(i)\cap \An(j)\cap B\neq \emptyset$. This proves useful for the tree graphical structures in Section~\ref{trees}. 
{For the sets $C=\{j\}$ and $B=\{k\}$, we simply write  ${\rm MWP}(B)={\rm MWP}(k)$ and ${\rm MWP}^C(B)={\rm MWP}^j(k)$.}
 
\bthe\label{prop.dat} 
Let $\boldsymbol{X}\in\R_+^D$ be an RMLM {on a DAG $\D$} with coefficient matrix $A\in \mathbb{R}_+^{D\times D}$. 
{For observed nodes $O\subset V$,} the vector $\bsx_O$ 
can be represented as an RMLM if and only if:
\hspace{-10mm} 
\begin{enumerate}
 	\item[(i)]  
for every $\ell\in O$ such that $O\cap \an(\ell)=\emptyset$, both\\
(a) $(i, \ell)\in {\rm MWP}$ for all $i\in O\cap\emph{de}(\ell)$, and \\
(b) $\an(\ell)\cap\an(j)=\emptyset$ for all $j\in O\cap\Des(\ell)^c$ hold;  \\
 let $V_0^O$ denote the set of nodes $\ell$ satisfying these properties, then 
 	\item[(ii)] 
for every $u\in O^c\setminus\emph{an}(V_0^O)$, and any nodes $i,j\in O$ such that  $u\in \an(i)\cap\an(j)$, then\\
(a) if $j\in \an(i)$, either $i\in{\rm MWP}^{j}(u)$ or there exists $k\in\an(j)\cap O$ such that $i, j\in{\rm MWP}^{k}(u)$; and\\ 
(b) if $j\notin\an(i)$ and $i\notin\an(j)$, there exists $k\in\an(i)\cap\an(j)\cap O$ such that $i, j\in{\rm MWP}^{k}(u)$.
 \end{enumerate}
 \ethe

Theorem~\ref{prop.dat} covers all three DAGs in Figure~\ref{introex}.
For $\D_1$ there is a unique (hence max-weighted) path {$3\rightsquigarrow 2\rightsquigarrow 1$} and Theorem~\ref{prop.dat}(i, ii) is trivially satisfied.
When node 3 is a confounder, then for $\D_2$, provided {$3\rightsquigarrow 2\rightsquigarrow 1$} is max-weighted, the vector $(X_1,X_2)$ can be represented as an RMLM, but for $\D_3$ no such representation is possible. 

{We now implement the conditions of Theorem~\ref{prop.dat} into a graphical algorithm,
and illustrate how it can be employed to model $\bsx_O$ as an RMLM. 
If this is impossible, Algorithm~\ref{datdalgrmlm} identifies a subset $K$ of $O$ such that $\bsx_K$ can be modelled as an RMLM. The general result is stated in Proposition~\ref{graphalgprop}.
Algorithm~\ref{datdalgrmlm} cannot be applied in a data analysis as it needs the~matrix~$A_O\in\mathbb{R}^{d\times D}$ as input.
More precisely, $A_O$ contains $D$-dimensional rows indexed in $O$ from the original~coefficient matrix~$A$.
Note that $A_O$ determines the marginal distribution of the observed variables and also~contains information about the maximum path weights from the hidden nodes to the observed ones.
We emphasize that Algorithm~\ref{datdalgrmlm} serves as a transparent reformulation of Theorem~\ref{prop.dat}, which outputs a node set $K\subseteq O$ and the corresponding DAG $D^K$, which we call the minimal representation DAG of the RMLM $\bsx_K$.
For better understanding, we also highlight in blue the connections of the conditions of Algorithm~\ref{datdalgrmlm} to the respective~ones~in~Theorem~\ref{prop.dat}.}

\begin{algorithm}[t]
	\caption{Identification of a set of nodes in $O$ that can be modelled as an RMLM, and of their DAG} 
	\label{datdalgrmlm}
	\textbf{Input}: $\bs{z}=\boldsymbol{0}\in\mathbb{R}^d$, { $A_O\in\mathbb{R}^{d\times D}$}, $\D=(O, E=\emptyset)$, $K=\emptyset$  \\
	\textbf{Output}: Well-ordered set $K$ {of nodes forming an RMLM}, and the minimal representation DAG $\D^K$ \\
	\textbf{Procedure:} 
	\begin{algorithmic}[1]
		\State \textbf{for} $j \in O$
		 	\State \hspace{4mm}  \textbf{for} $i \in O\setminus \{j\}$
		 	 \State \hspace{11mm} \textbf{if} $(i, j)\in {\rm MWP}$, \textbf{set} $z_j= z_j+1$, \textbf{add } $j\to i$ \textbf{in} $E$,  \quad[\textcolor{blue}{Theorem~\ref{prop.dat} (i)(a)}]
		\State \hspace{14mm} \textbf{else if} $\An(i)\cap \An(j)=\emptyset$, \textbf{set} $z_j = z_j+1$, \quad[\textcolor{blue}{Theorem~\ref{prop.dat} (i)(b)}]
		
	\State \hspace{4mm} \textbf{end for}
		\State \textbf{end for}
		\State $K\leftarrow\text{select one index from } \underset{j\in O}{\rm argmax}~ z_j$;\quad {$\bs{z}\leftarrow\boldsymbol{0}$}
		\State $V^K\leftarrow O\cap ({\rm MWP}^K({\An(K)})\cup \{r: \An(r)\cap \An(K)=\emptyset\})\setminus
		K$
			\State \textbf{while} $V^K\neq \emptyset$
			\State\hspace{8mm} \textbf{for} $j \in V^K$
			\State\hspace{8mm} \hspace{3mm}  \textbf{for} $i \in V^K\setminus \{j\}$

			\State\hspace{8mm} \hspace{8mm} \textbf{if} $(i, j)\in {\rm MWP}(\An(K)^c)$, \textbf{set} $z_j=z_j+1$,  \textbf{add} $j\to i$ \textbf{in} $E$,\quad  [\textcolor{blue}{Theorem~\ref{prop.dat} (i, ii)(a)}]
						\State\hspace{8mm} \hspace{11mm} \textbf{else if} $\An(i)\cap \An(j)\cap \An(K)^c=\emptyset$, \textbf{set} $z_j=z_j+1$, \quad [\textcolor{blue}{Theorem~\ref{prop.dat} (i, ii)(b)}]
			\State\hspace{8mm} \hspace{3mm} \textbf{end for}
			\State\hspace{8mm} \textbf{end for}
			\State\hspace{8mm} $k\leftarrow \text{select an index from  }\underset{j\in V^K}{\rm argmax}~ z_j$; 
\text{update $K$ by adding $k$ as its first element, }$K\leftarrow(k, K)$  \State\hspace{8mm} $V^K\leftarrow O\cap ({\rm MWP}^K({\An(K)})\cup \{r: \An(r)\cap \An(K)=\emptyset\})\setminus K$;
 \State\hspace{8mm} {$\bs{z}\leftarrow\boldsymbol{0}$}
	\State \textbf{end while}
				\State  \textbf{for} $i\in K$ ;
				\State  \hspace{5mm}\textbf{for} $j\in \an(i)\cap K$ ;
				\State  \hspace{10mm}\textbf{for} $k\in \an(j)\cap K$ ;
					\State  \hspace{14mm} \textbf{if}  $(i,j)\in {\rm MWP}(\An(k)\setminus (\an(k) \cap K))$, \textbf{remove} $k\to i$ \textbf{from} E		
				\State  \hspace{10mm}\textbf{end for} 
				\State  \hspace{5mm}\textbf{end for} 
				\State  \textbf{end for} 
                    \State  \textbf{Return} $K$, $\D^K=(K, E)$
	\end{algorithmic}
\end{algorithm}\vspace*{-.25cm}
\begin{proposition}\label{graphalgprop}
The observed node variables $\bsx_O$ can be modelled as an RMLM if and only if Algorithm~\ref{datdalgrmlm} outputs a causal order of $O$ and {the minimal representation} DAG $\D^O$. 
If only subsets of node variables~in~$O$ can be modelled as RMLMs, Algorithm~\ref{datdalgrmlm} outputs a well-ordered set $K\subset O$ of such nodes with {the minimal representation} DAG~$\D^K$.
\end{proposition}

The set $K\subset O$ produced by Algorithm~\ref{datdalgrmlm} is augmented by one element at every iteration {step} and also provides a causal ordering. {Although it seems natural to write the ordered node set $K$ as a vector, we write it as a set, since we often apply set manipulations to $K$.}
The {successive} augmentation of $K$ avoids cases when $z_i=z_j$ for pairs $(i,j)$ that share hidden confounders, {as for node 9 }in Figure~\ref{partialdag}. 


\begin{example}	\label{exdag} 
{We illustrate Algorithm~\ref{datdalgrmlm} when $\bsx_O$ can be modelled as an RMLM.}\\
Suppose that an RMLM is supported on the DAG in Figure~\ref{fig:M1} with $D=13$ nodes, of which $d=8$ are observed and five are unobserved, that $X_{13}=Z_{13},\, X_{12}=Z_{12}$ and $X_{11}=Z_{11}$, and that the paths $13 \to 10 \to 6$, {$13 \to 10 \to 5\to 3$} and  $7 \to 4 \to i$ ($i\in\{1, 2\}$) are max-weighted. We apply Algorithm~\ref{datdalgrmlm}. 
 	\begin{figure}[ht]
 		\centering
 		\resizebox{7.cm}{4.2cm}{\begin{tikzpicture}[
 			> = stealth,
 			shorten > = 1pt, 
 			auto,
 			node distance = 2cm, 
 			semithick 
 			]
 			\tikzstyle{every state}=[
 			draw = black,
 			thick,
 			fill = white,
 			minimum size = 4mm,scale=1
 			]
 			
 			\node[state] (12)[dotted] {\footnotesize $12$};
 			\node[state] (13)[dotted] [left=1.5cm of 12] {\footnotesize $13$};
 			\node[state] (11)[dotted] [right=2.2cm of 12] {\footnotesize $11$};
 			\node[state] (10) [below =.7cm  of 13] {\footnotesize $10$};
 			\node[state] (9) [below=.7cm  of 12] {$9$};
 			\node[state] (8) [below=.7cm  of 11] {$8$};
 			\node[state] (6) [below left=.7cm and .7cm  of 10] {$6$};
 			\node[state, dotted] (5) [below right=.7cm and .7cm  of 10] {$5$};
 			\node[state, dotted] (7) [right=.8cm   of 9] {$7$};
 			\node[state] (4) [below =.7cm  of 7] {$4$};
 			\node[state] (2) [below left=.7cm and 1cm  of 4] {$2$};
 			\node[state] (1) [below right=.7cm and 1cm  of 4] {$1$};
 			\node[state] (3) [below right =.7cm  and .73 cm of 6] {$3$};
 			
 			\path[->][blue, dotted] (7) edge node {} (4);
 			\path[->][blue, dotted] (13) edge node {} (10);
 			\path[->][blue, dotted] (5) edge node {} (3);
 			\path[->][blue] (6) edge node {} (3);
 			\path[->][blue] (4) edge node {} (1);
 			\path[->][blue] (4) edge node {} (2);
 			\path[->][blue, dotted, bend right=45] (13) edge node {} (6);
 			\path[->][blue, dotted, bend right=0] (7) edge node {} (2);
 			\path[->][blue, dotted, bend left=0] (7) edge node {} (1);
 			\path[->][blue, dotted] (9) edge node {} (7);
 			\path[->][blue, dotted] (9) edge node {} (5);
 			\path[->][blue] (10) edge node {} (6);
 			\path[->][blue, dotted] (10) edge node {} (5);
 			\path[->][blue, dotted] (11) edge node {} (8);
 			\path[->][blue, dotted] (12) edge node {} (9);
 			\path[->][blue, dotted] (8) edge node {} (7);
 			\end{tikzpicture}}
 		\caption{Partially observed DAG with $13$ nodes, of which the five dotted nodes are hidden.} \label{fig:M1}\vspace{-.5cm}
 	\end{figure}

-- Start with $j=10$ {(line 1)}. 
Now check $i\in O\setminus\{10\}$.
{Running through these $i$, we obtain
$(i,10)\in {\rm MWP}$ for $i\in\{6, 3\}$, so line 3 gives $z_{10}=1$ and then $z_{10}=2$, and we add the edges $10\to 6$ and $10\to 3$.
As $\An(10)\cap\An(i)=\emptyset$ for $i\in\{9, 8, 4, 2, 1\}$, such pairs $(i,10)$ satisfy the condition of line 4, giving consecutively $z_{10}=3$, $z_{10}=4$, $z_{10}=5$, $z_{10}=6$ and $z_{10}=7$ (line 5);}

-- for $j\in\{8, 9\}$ {(line 1)}, similar reasoning gives $z_9=z_8=7$ (line 5);

{note that by Theorem~\ref{prop.dat} (i), $\{10,9,8\}\subseteq V_0^O$;}

-- for $j\notin\{10, 9, 8\}$ we find $z_j<7$; for instance, $z_6=5$ (in line 5) because $\An(6)\cap\An(i)=\emptyset$ for $i\in\{9, 8, 4, 2, 1\}$, {but $(3,6), (10,6)\notin {\rm MWP}$; hence, $V_0^O=\{10,9,8\}$;}

-- {as $\max_{j\in O} z_j= 7$, select one node out of $\{10,9,8\}$ and we take $K=\{10\}$ (line 7)}, then $V^K=O\setminus\{10\}$ {(line 8)};
a similar analysis as above gives in lines 12 and 13  that $z_9=z_8=z_6=6$, while $z_j<6$ for all remaining nodes in $V^K$; for instance, $z_4=2$ because the conditions in lines 12 and 13 are satisfied with $\An(6)\cap\An(4)\setminus\{10\}=\emptyset$ and $\An(3)\cap\An(4)\setminus\{10\}=\emptyset$;
{line 16 then  updates $K$ by first selecting $k$ from $\{9, 8, 6\}$ and then adding it as the first element of $K$;} 

-- suppose we have identified $K=\{8, 9, 10\}$ (line 16), so $V^K=\{1, 2, 3, 4, 6\}$ {(line 17)};
we continue the iteration in lines 12 and 13 giving $z_1=2$, since $\An(i) \cap \An(1) \cap An(K)^c = \emptyset$ for  $i\in \{6, 3\}$, and  $z_2=2$, $z_3=3$;
{this continues with} $z_4=4$, and $z_6=4$, since $(3,6)\in {\rm MWP}(\An(K)^c)$, because $\An(3)\cap \An(6)\cap \An(K)^c=\{6\}$ (line~12) and, similarly, $\An(i)\cap \An(6)\cap \An(K)^c=\emptyset$ for $i\in\{1, 2, 4\}$ (line~13);

-- suppose now that we have identified $K=\{4, 6, 8, 9, 10\}$ and $V^K=\{1 , 2, 3\}$; similar steps to those above give $z_3=z_2=z_1=2$;

-- for the final iteration, suppose that $K=\{2,3, 4, 6, 8, 9, 10\}$ and $V^K=\{1\}$; then $z_1=0$ (lines 12 and 13), which indicates that $1$ is the last node that can be added to the DAG;  the RMLM can be extended no further, since adding $1$ gives $K\{1, 2, 3, 4, 6, 8, 9, 10\}$ and $V^K=\emptyset$.

The DAG of the RMLM, found by applying lines 18--24 of Algorithm~\ref{datdalgrmlm}, is shown in Figure~\ref{fig:M2}.
 	\begin{figure}[H]
 		\centering
 		\resizebox{5.2cm}{2.7cm}{\begin{tikzpicture}[
 			> = stealth,
 			shorten > = 1pt, 
 			auto,
 			node distance = 2cm, 
 			semithick 
 			]
 			\tikzstyle{every state}=[
 			draw = black,
 			thick,
 			fill = white,
 			minimum size = 4mm,scale=1
 			]
 			
 			\node[state] (10) {\footnotesize{$10$}};
 			\node[state] (9) [right=1cm  of 10] {$9$};
 			\node[state] (6) [below left=.7cm   of 10] {$6$};
 			\node[state] (4) [below right =.7cm  of 9] {$4$};
 			\node[state] (8) [above right=.7cm  of 4] {$8$};
 			\node[state] (2) [below left=.7cm  of 4] {$2$};
 			\node[state] (1) [below right=.7cm  of 4] {$1$};
 			\node[state] (3) [below right =.7cm   of 6] {$3$};
 			
 			\path[->][blue] (6) edge node {} (3);
 			\path[->][blue] (4) edge node {} (1);
 			\path[->][blue] (4) edge node {} (2);
 			\path[->][blue] (9) edge node {} (3);
 			\path[->][blue] (9) edge node {} (4);
 			\path[->][blue] (10) edge node {} (6);
 			\path[->][blue] (10) edge node {} (3);
 			\path[->][blue] (8) edge node {} (4);
 			\end{tikzpicture}}
 		\caption{DAG $\D^O$ with the eight observed nodes.} \label{fig:M2}\vspace{-.5cm}
 	\end{figure}
 \end{example}

The dimension of the DAG output by Algorithm~\ref{datdalgrmlm} need not be the highest attainable. 

\begin{example}\label{subsets}
{We illustrate Algorithm~\ref{datdalgrmlm} when only a subvector of $\bsx_O$ can be modelled as an RMLM.}\\
	Consider the nine-dimensional RMLM supported on the DAG in Figure~\ref{partialdag}:
	
	-- in the initial step of the algorithm we have $z_j=0$ for $j\in\{1, 2, 3, 4, 5\}$, $z_7=2$, because $(4, 7), (5,7)\in {\rm MWP}$, and $z_8=3$, by similar arguments;
	
	-- letting $K=\{8\}$ gives $V^K=\{1, 2, 3\}$, excluding $7$ and its descendants, because $9$ is a hidden confounder for both $7$ and $8$. At this step, however, we have $z_j=0$ for $j\in\{1, 2, 3\}$. If we were to pick, say $1$, then $K=\{1, 8\}$, but then $V^K=\emptyset$, because  $6$ is hidden and non-exogenous among $1, 2$ and $3$. Thus, we can only obtain the two-dimensional RMLMs with nodes $\{1,8\}$, $\{2, 8\}$ and $\{3, 8\}$;
	
	-- by contrast, had we chosen $K=\{7\}$, then $V^K=\{4,5\}$ and $z_j=1$ for both nodes in $V^K$, so we would have obtained a DAG with nodes $\{4 ,5, 7\}$.
	
	Having identified one possible subset $K$ for the RMLM, we could repeat the procedure with the nodes in $O\setminus K=\{4, 5, 7\}$. This would yield a second RMLM consisting of nodes $\{4 ,5, 7\}$.	
\begin{figure}[ht]
	\centering
	\resizebox{5.3cm}{3cm}{\begin{tikzpicture}[
		> = stealth,
		shorten > = 1pt, 
		auto,
		node distance = 2cm, 
		semithick 
		]
		\tikzstyle{every state}=[
		draw = black,
		thick,
		fill = white,
		minimum size = 4mm,scale=1
		]

		\node[state, dotted] (9) {$9$};
		\node[state] (8) [ right=.7cm  of 9] {$8$};
		\node[state] (7) [left=.7cm  of 9] {$7$};
		\node[state, dotted] (6) [below=.6cm  of 8] {$6$};
		
		\node[state] (5) [ below right=.7cm and .4cm  of 7] {$5$};
		\node[state] (4) [  below left=.7cm and .4cm  of 7] {$4$};
		\node[state] (3) [ below =.54cm  of 6] {$3$};
		\node[state] (2) [ below right=.7cm and .7cm of 6 ] {$2$};
		\node[state] (1) [ below left=.7cm  and .7cm of 6 ] {$1$};
		
		\path[->][blue, dotted, bend right=0] (9) edge node {} (8);
		\path[->][blue, dotted, bend left=0] (9) edge node {} (7);
		\path[->][blue, dotted] (8) edge node {} (6);
		\path[->][blue, bend left=0] (7) edge node {} (5);
		\path[->][blue, bend left=0] (7) edge node {} (4);
		\path[->][blue, dotted, bend left=0] (6) edge node {} (3);
		\path[->][blue, dotted, bend left=0] (6) edge node {} (2);
		\path[->][blue, dotted, bend left=0] (6) edge node {} (1);
		
		\end{tikzpicture}}
	\caption{Partially observed DAG, of which the two dotted nodes $6$ and $9$ are hidden.} \label{partialdag}\vspace{-.7cm}
\end{figure}
\end{example}
The following remark elucidates the relationship between Theorem~\ref{prop.dat} and representation~\eqref{minrep}, which is also the main tool used to prove the former.

\begin{remark}\label{ex:rel} We now use the representation~\eqref{minrep} to show the implications of conditions (i) and (ii) in Theorem~\ref{prop.dat} for the DAGs in Figures~\ref{fig:M1} and~\ref{partialdag}.   Suppose that none of the paths $13 \to 10 \to 6$, $13 \to 10 \to 5\to 3$, and  $7 \to 4 \to i$, $i\in\{1,2\}$, in Figure~\ref{fig:M1} is max-weighted. 

For each of the conditions of Theorem~\ref{prop.dat} we select a pair of nodes, and show that the innovations are non-exogeneous:
\\[2mm]
\,
(i)(a) and Figure~\ref{fig:M1}, with the pair $(6,10)$ and 10 a  candidate for a source node. Representation~\eqref{minrep} yields
\begin{align*}
X_{10} = a_{10,10}Z_{10}\vee a_{10,13}Z_{13},\quad
X_6 = \frac{a_{6,10}}{a_{10,10}} X_{10} \vee a_{66}Z_6\vee  a_{6,13}Z_{13},
\end{align*}
which both have innovations depending on $Z_{13}$, so they cannot be represented by an RMLM;\\[2mm]
 \,
(i)(b) and Figure~\ref{partialdag}, with $\{7,8\}$  candidates  for source nodes. Here $7\notin\Des(8)$ , $8\notin\Des(7)$, and $\An(7)\cap\An(8)\cap O^c =\{9\}$, so~\eqref{minrep} yields
\begin{align*}
X_7 =a_{77}Z_7\vee a_{79}Z_{9},\quad
X_8 =a_{88}Z_8\vee a_{89}Z_{9},
\end{align*}
which both have innovations depending on $Z_{9}$, so they cannot be represented by an RMLM;\\[2mm]
\,
(ii)(a) and Figure~\ref{fig:M1}: let $u=7$ and consider the pair $(4,2)$. By representation~\eqref{minrep},
\begin{align}\label{x2}
X_4 = \frac{a_{48}}{a_{88}}X_8\vee\frac{a_{49}}{a_{99}}X_9\vee  a_{44}Z_4\vee  a_{47}Z_7, \quad 
X_2 =  \frac{a_{24}}{a_{44}}X_4\vee  \frac{a_{28}}{a_{88}}X_8 \vee \frac{a_{29}}{a_{99}}X_9\vee  a_{22}Z_2\vee a_{27}Z_7,
\end{align}
and as both have innovations depending on $Z_7$, they cannot be represented as an RMLM. This contradicts (ii)(a) with $u=7$ and $4\in\an(2)$;\\[2mm]
\,
(ii)(b) and Figure~\ref{fig:M1}. Let $u=7$ and consider the pair $(1,2)$, which has no ancestral relation.  By~\eqref{minrep}, 
\begin{align*}
X_1 =  \frac{a_{18}}{a_{88}}X_8\vee\frac{a_{19}}{a_{99}}X_9\vee \frac{a_{14}}{a_{14}}X_4\vee{a_{11}}Z_1\vee  a_{17}Z_7,
\end{align*}
with $X_2$ given in~\eqref{x2}, which both depend on $Z_7$, again contradicting (ii)(b) of Theorem~\ref{prop.dat}.
\end{remark}

\subsection{MWP for directed trees}\label{trees}

We now focus on two simple but important graphical structures, namely directed trees and directed spanning trees, in which a unique path connects any causally dependent pair of nodes. The identification of the set MWP turns out to be crucial in such instances, because the edges of the DAG induced by the minimal representation~\eqref{minrep} of the RMLM can be identified directly from the pairs in MWP: if $(i, j)\in {\rm MWP}$, then $(i, j)\in {\rm MWP}(B)$ for all $B\subseteq V$ such that the set $\An(i)\cap \An(j)\cap B\neq\emptyset$.

\begin{remark}\label{extree}
Suppose that for $O_t\subseteq O$ the RMLM $\bs X_{O_t}$ is supported on {the} directed tree $\D^{O_t}$. 
\begin{enumerate}
\item[(i)]
As there is a unique path between pairs $(i,j)$ such that $j\in \An(i)$, all such pairs lie in ${\rm MWP}$. 
For any other pair there exists a well-ordered set $K$ at some iteration of Algorithm~\ref{datdalgrmlm}, with $i,j\notin K$, and  $\an(i)\cap\an(j)\subset K$, such that $\An(i)\cap\An(j)\cap \An(K)^c=\emptyset$. 
\item[(ii)]
The edges of the tree can be identified as follows. Let $i$, $j$, $k$ be such that $j\in \an(i)$ and $k\in \an(j)$. Then, by path uniqueness in a tree, $(i, j), (i, k), (j,k) \in 
{\rm MWP}$, and these pairs of nodes correspond to adding the edges $j\to i$, $k\to i$ and $k\to j$ in the first iteration of Algorithm~\ref{datdalgrmlm}. However, the membership  of $(i, j)$ in ${\rm MWP}$ implies that there are max-weighted paths $k\to j\to i$ for all $k\in\an(j)\cap\an(i)$.  Thus, following the minimal representation~\eqref{minrep}, we remove the edge $k\to i$, accomplished in line 23 of Algorithm \ref{datdalgrmlm}.
\end{enumerate}
\end{remark}
This remark shows that the properties of a tree and of the set {\rm MWP} imply that we need only that $(i, j)\in {\rm MWP}$ to verify the condition in line 12 of Algorithm~\ref{datdalgrmlm}, and likewise for the set in line 23. 

A directed spanning tree has a unique path between any pair of nodes and all edges directed towards a single, sink, node. We call such a graphical structure a (sink-)directed spanning tree.

\begin{proposition}\label{dst}
    A vector $\bsx_{O_t}$ is an RMLM supported on a sink-directed spanning tree of $O_t\subseteq O$ if and only if 
    \begin{enumerate}
        \item[(i)]\, for all pairs of nodes $i,j\in O_t$, either $(i,j)$ or $(j,i)$ lie in {\rm MWP}, or  $\An(i)\cap\An(j)=\emptyset$;
        \item[(ii)] there exists $i\in O_t$ such that $(i, j)\in {\rm MWP}$ for all $j\in O_t\setminus\{i\}$.
    \end{enumerate}
    The edges of such a tree can be identified from the pairs in {\rm MWP}.
\end{proposition}

\begin{proof}
    The equivalence is a direct consequence of the definition of a directed spanning tree. Its edges can be identified in a similar fashion to the procedure outlined in Remark~\ref{extree}.
\end{proof}

Finally, we state a simple consequence of Theorem~\ref{prop.dat} for a sink-directed spanning tree.
 \begin{corollary}
If the RMLM  $\boldsymbol{X}=(X_1,\dots,X_D)\in\R_+^D$  is supported on a well-ordered sink-directed spanning tree and we observe $(X_{o_1},\ldots,X_{o_d})$ for $d<D$, ordered so that $o_i<o_j$ for $i<j$, then $(X_{o_1},\ldots,X_{o_d})$ is also an RMLM.
 \end{corollary}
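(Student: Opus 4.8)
The plan is to verify that both conditions of Theorem~\ref{prop.dat} hold automatically for any observed subvector of a recursive ML vector supported on a root-directed tree, so that the corollary follows immediately. The key structural fact about a root-directed tree is that every node has at most one child and hence there is a unique directed path between any two comparable nodes. This uniqueness is what will make all the max-weighted path requirements trivial, since a unique path is, by definition, max-weighted.

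First I would fix notation: let $O=\{o_1,\dots,o_d\}$ be the observed nodes and recall that in a root-directed tree each node $k$ has a single outgoing edge, so $\des(k)$ is a single descending chain terminating at the root. Consequently, for any two nodes $i,j$ with a common ancestor $u$, the paths $u\rightsquigarrow i$ and $u\rightsquigarrow j$ must coincide until they reach a node where the two descending chains to $i$ and $j$ diverge; but in a tree each node has a unique child, so the chains cannot diverge and then reconverge. This means any common ancestor $u$ of $i$ and $j$ satisfies either $u\in\an(i)\cap\An(j)$ with $j\in\An(i)$, or $u\in\an(j)\cap\An(i)$ with $i\in\An(j)$; the two observed nodes with a common ancestor are themselves always ancestrally comparable.

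Next I would check part~(i). For a source node $\ell\in V_0^O$ (i.e.\ $O\cap\an(\ell)=\emptyset$), condition~(a) requires $\an(\ell)\cap\an(j)=\emptyset$ for all observed $j$ that are not descendants of $\ell$. If some $u$ were a common ancestor of $\ell$ and such a $j$, the preceding divergence argument would force $\ell$ and $j$ to be ancestrally comparable, and since $\ell$ is a lowest observed node its descendants contain $j$, a contradiction; hence~(a) holds. Condition~(b) asks for max-weighted paths $u\rightsquigarrow\ell\rightsquigarrow i$ for $i\in O\cap\des(\ell)$ and $u\in O^c\cap\an(\ell)$; but in a tree the path $u\rightsquigarrow i$ is unique and, because $\ell$ lies on the unique chain from $u$ down to the root through $i$, it necessarily passes through $\ell$. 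A unique path is max-weighted, so~(b) holds. For part~(ii), given $u\in O^c$ and observed $i,j$ with $u\in\an(i)\cap\an(j)$, the comparability argument puts us in case~(a) with, say, $j\in\an(i)$; the unique path $u\rightsquigarrow i$ passes through $j$ and is max-weighted, satisfying the first alternative of~(ii)(a). Case~(ii)(b) never arises because two observed nodes sharing an ancestor are always comparable.

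The main obstacle, and the step requiring the most care, is the divergence/reconvergence argument establishing that any two nodes with a common ancestor are ancestrally comparable in a root-directed tree; everything else is a direct application of ``unique path $\Rightarrow$ max-weighted.'' I would make this rigorous by using the defining property that each non-root node has exactly one child, so following children from any node yields a single deterministic chain, and two such chains that ever meet must coincide thereafter. With comparability secured, all the disjointness and max-weighted-path hypotheses of Theorem~\ref{prop.dat} are met, and the theorem yields that $(X_{o_1},\dots,X_{o_d})$ is a RMLM.
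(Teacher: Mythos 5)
Your proposal is correct and follows essentially the same route as the paper: its one-line proof---every node of a root-directed tree has a single child, so a hidden node is never a confounder, hence (i) and (ii) of Theorem~\ref{prop.dat} hold---is precisely the comparability-of-co-descendants and ``unique path $\Rightarrow$ max-weighted'' argument that you spell out in detail. Your longer write-up simply makes explicit the structural fact (descendant sets are chains, so nodes with a common ancestor are ancestrally comparable) that the paper leaves implicit.
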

 
\begin{proof}
	Every node of such a tree has a single child, so a hidden node is never a confounder, and therefore (i) and (ii) in Theorem~\ref{prop.dat} are both satisfied. If the sink node is also observed, then the observed RMLM is also supported on a sink-directed spanning tree.
\end{proof}

\section{Regular variation of a recursive max-linear vector}\label{sec:RV}

In this section we connect the theory in Section~\ref{sec:RMLM} with that of regular variation. 
{We focus on regularly varying RMLMs with hidden nodes and investigate whether the observed nodes have again a representation as an RMLM. Section \ref{MLreparamet} confirms this, if Theorem~\ref{prop.dat} applies.
As the latter involves MWP, in Section~\ref{sec:MWP} we characterise max-weighted pairs $(i,j)$ for regularly varying RMLMs, resulting in a bivariate RMLM for $(X_i,X_j)$. We generalise such results in Appendix~\ref{sec:3.4} to extend an RMLM $\bsx_K$ for $K \subset V$ to other observed nodes.}
  
\subsection{Extremal dependence}\label{sec_exdep}

In the rest of the paper we suppose that the vector of innovations $\boldsymbol{Z}\in \R_+^D$ is regularly varying with index $\alpha>0$, written $\boldsymbol{Z}\in{\rm RV}_+^{D}(\alpha)$, and that it has independent and standardised components, with $n\mathds{P}(n^{-1/\alpha} Z_i>z)\to z^{-\alpha}$ ($z>0$) as $n\to\infty$ for all $i\in\{1,\dots,{D}\}$.
Then RMLMs in~\eqref{Rmlmequat} belong~to~the more general class of max-linear models 
with independent regularly varying innovations, which has a long history; see, e.g., \citet{Wang2011}. 
Such models are multivariate regularly varying and have a finite discrete {angular measure} $H_{\bsx}$ on the non-negative unit sphere $\Theta_+^{{ {D}}-1}=\{\boldsymbol{\omega}\in \mathbb{R}^{ {D}}_+:\norm{\boldsymbol{\omega}}=1\}$, for some norm $\norm{\cdot}$. 
For completeness we define multivariate regular variation, its angular representation and its angular measure in Appendix~\ref{sec:ARV}, referring to \citet{sres,ResnickHeavy} for more details.

Our results depend on the following measure of extremal dependence, introduced in Propositions~3 and~4 of \citet{lars}; see also \citet[Section 4]{cooley} and \citet[Section 2.2]{KK}. 


We now derive certain properties of the finite angular measure $H_{\boldsymbol{X}}$ and of its (non-normalised) second order moments.
Let $f\colon\Theta_+^{{d}-1}\to\mathbb{R}_+$ be a function, continuous outside a null set, bounded and compactly supported. 
Then the following moment exists \cite[eq. (3)]{lars} and, to keep notation simple, we define 
\begin{align}\label{p2empdist}
\mathbb{E}_{{H}_{\bsx}}[f(\boldsymbol{\omega})] := \int_{\Theta_+^{{d}-1}} f(\boldsymbol{\omega}) {\rm d}{ H}_{\bsx}(\boldsymbol{\omega}).
 \end{align}
In Appendix~\ref{sec:ARV} we will standardise the angular measure $H_{\bsx}$ to a probability measure such that the expectation notation for the finite integral in \CK{\eqref{p2empdist}} makes sense.
Defining $f(\boldsymbol{\omega})=\omega_i \omega_j$ we find the second order moments of the angular measure $H_{\bsx}$. 
 
 \begin{definition}\label{scaledef}
 Let $\bsx\in {\rm RV}^D_+(2)$ and consider its angular representation $(R,\boldsymbol{\omega})=(\norm{\boldsymbol{X}}, \bs X/ \norm{\boldsymbol{X}})$ as in Definition~\ref{mrv}(ii), setting $\omega_i={X_i}/{R}$ for $i\in\{1,\dots, D\}$, and $\boldsymbol{\omega}=(\omega_1,\dots,\omega_{ {D}})\in \Theta_+^{{ {D}}-1}$. 
 \begin{itemize}
     \item[(i)] We define the second order moments 
 \begin{align*}
 \sigma_{ij}^2 = \sigma_{X_i, X_j}^2&\coloneqq\int_{\Theta_+^{{ {D}}-1}}\omega_i \omega_j \,{\rm d}H_{\boldsymbol{X}}(\boldsymbol{\omega}), \quad  i,j\in \{1,\ldots, {D}\}. 
 \end{align*}
 \item[(ii)]
 $\sigma_{ij}^2$ is the extremal dependence measure of $(X_i,X_j)$ \cite[eq. (8)]{lars}.
\item[(iii)]
$\sigma_i =\sigma_{{X}_{i}}=\sigma_{{X}_{i},X_i}$ {is the scaling (parameter) of $X_i$ \cite[Section~4]{cooley}.}
 \item[(iv)]
The matrix $\Sigma\coloneqq(\sigma_{ij}^2)_{D\times D}$ summarises the second-order properties of $H_{\bsx}$. 
  \end{itemize}
 \end{definition}

 We also define the standardised RMLM obtained from~\eqref{Rmlmequat} by standardising the rows of $A$. 

 \begin{definition}
Let $A$ be a coefficient matrix with row vectors $A_i$ and column vectors $a_k$, then the standardised coefficient matrix $\bar{A}$ is defined as 
 \begin{align}
 \bar{A}=(\bar{a}_{ij})_{{D}\times{D}}
{ =\bigg(\frac{a_{ij}}{\|A_{i}\|}\bigg)_{{D}\times{D}}}
 \coloneqq \bigg(\frac{a_{ij}^2}{\sum_{k\in {\An(i)}}a_{ik}^2}\bigg)_{{D}\times{D}}^{1/2}
 =\bigg(\frac{a_{ij}^2}{\sum_{k=1}^{D}a_{ik}^2}\bigg)_{{D}\times{D}}^{1/2}.\label{abar}
 \end{align}    
 \end{definition}

 In the rest of the paper we make the following assumptions.\smallskip 
 
 \noindent
 {\bf {Assumptions A:}}
 \begin{enumerate}
 	\item[(A1)]
 	The innovations vector $\boldsymbol{Z}\in {\rm RV}^{D}_+(2)$ has independent and standardised components.
 	\item[(A2)]
 	The norm $\|\cdot\|$ denotes the Euclidean norm.
 	\item[(A3)]
 	The coefficient matrix $A$ is standardised as in~\eqref{abar}; i.e., the components of $\bsx$ are standardised. 
 \end{enumerate}
 

The following proposition collects mainly results (formulated for the non-standardised coefficient matrix) from \citet[Lemma~3]{foug} and \citet[Proposition~5]{cooley}; see also \citet[Section 2.2]{KK}.

\begin{proposition}\label{completedep}
Let ${\boldsymbol{X}}$ be an RMLM satisfying Assumptions $A$.
Then for $i, j\in\{1,\ldots, D\}$,
  \begin{enumerate}
   \item[(i)]
 the angular measure $H_{\bsx}$ of $\bsx$ is discrete with atoms $(a_{i}/\norm{a_i})$, which are the normalised columns of {$A$},
 \item[(ii)]
  $\sigma_{ij}^2=(AA^\top)_{ij}=\sum_{k=1}^D a_{ik} a_{jk}$ and
 $\sigma^2_{i}=({A}{A}^\top)_{ii}=1$, 
 \item[(iii)] 
 the (sub)vector $(X_i,X_j)$ from~$\bs X$ can be represented via the matrix $A_{ij}\in\R^{2\times D}$ having only rows $i$ and $j$ of $A$, and $\sigma_{ij}^2=(A_{ij}A_{ij}^\top)_{12}$,
\item[(iv)]  
(a) $\sigma_{ij}^2=1$ for $i\neq j$, (b)  $ {\bar a_{ik} = \bar a_{jk}}$ for all $k\in\{1,\dots,D\}$, and (c) $ X_i, X_j$ are asymptotically fully dependent,  are equivalent,
\item[(v)]  
$\sigma^2_{i}=\lim_{n\to \infty} n\mathbb{P}({{X_{i}^2}}>{n})=1$.
 \end{enumerate}
\end{proposition}

\begin{proof}
(iv) As $\sigma_{ij}^2$ is defined via the limiting angular measure,  asymptotic full dependence implies that the variables $X_i,X_j$ become completely dependent as $\bs X$ becomes more extreme. Appendix~\ref{sec:3.2} gives a precise definition and proof of the last equivalence of part~(iv) of Proposition~\ref{completedep}. \\
(v) is a consequence of the standardisation and Lemma~\ref{tailMk} for $|K|=1$. 
\end{proof}

{The choice of $\alpha=2$ and the Euclidean norm allows for the representation of the
scalings and extremal dependence measures by the entries of $A$. In addition, these measures are invariant with respect to the dimensionality of the angular measure (see Remark~\ref{invariant}). 
Results can be extended for $\alpha \neq 2$, but then expression for the $\sigma_{ij}$’s contain $\alpha$ \citep{kirilzhou}, complicating the notation. }

%
 
{The main result of this section (Theorem~\ref{incdagmod} below) relies on transformations and scalings of the maximum, $M_{K}\coloneqq\max(X_k:k\in K)$, over components of an RMLM $\boldsymbol{X}$ indexed in $K$. 
For the next definition, take $f(\boldsymbol{\omega})=\bigvee_{k\in K}\omega_k^{2}$ and use \eqref{p2empdist} (note that $f$ satisfies the required conditions).}
 
 \begin{definition}\label{scalMK}
 Let $\bsx$ be an RMLM satisfying Assumptions A with angular representation $(R,\boldsymbol{\omega})=(\norm{\boldsymbol{X}}, \bs X/ \norm{\boldsymbol{X}})$ as in Definition~\ref{mrv}(ii); set $\omega_i={X_i}/{R}$ for $i\in\{1,\dots, D\}$, and $\boldsymbol{\omega}=(\omega_1,\dots,\omega_{ {D}})\in \Theta_+^{{ {D}}-1}$. 
 Let $M_{K}\coloneqq\max(X_k:k\in K)$ for $K\subseteq \{1,\dots, D\}$.
 We define the scaling of $M_K$ as
	\begin{align}\label{def:scalMK}
	\sigma_{M_{K}}^2&\coloneqq\int_{\Theta_+^{ {d}-1}} \underset{k\in K}{\bigvee}\omega_k^{2} {\rm d}H_{{\boldsymbol{X}}}(\boldsymbol{\omega}). 
	\end{align}
 \end{definition}


Lemma 6 in \citet{KK} characterises the scalings of such objects in terms of the coefficient matrix $A$ and are restated in parts (i,ii) of the next lemma; part (iii) is proved in Lemma~\ref{tailMk}.

\ble\label{scalcoll}
Let $\boldsymbol{X}$ be an RMLM satisfying Assumptions A. 
Then $M_{K}$ is a max-linear combination of the innovations that lies in ${\rm RV}_+(2)$ and has squared scalings as follows:
\begin{itemize}
    \item[(i)] if $K\subset\{1,\dots,{D}\}$, {then} $\sigma_{M_{K}}^2 =\sum_{k\in \An(K)}\bigvee_{i\in K} a_{ik}^2$,
\item[(ii)] if $K =\{1,\dots,{D}\}$, then $\sigma_{M_{K}}^2 = \sum_{k \in \An(K)}  a_{kk}^2$,
\item[(iii)] {$\sigma_{M_{K}}^2=\lim\limits_{n\to \infty} n\mathbb{P}({{M_{K}}}>\sqrt n)$.}
\end{itemize}
\ele

\subsection{The max-linear matrix of an RMLM with hidden nodes} \label{MLreparamet}

{Now we turn to the main questions of this paper.}
Assume that only $d<D$ of the nodes of the DAG supporting $\bsx$ are observed, corresponding to a max-linear vector $\boldsymbol{X}_O=A_O\times_{\max}\boldsymbol{Z}\in\R_+^d$ with coefficient matrix $A_O\in\R^{d\times D}$, whose rows correspond to the observed node variables in $O$.  
{We shall use the fact that   $\boldsymbol{X}_O$ as a subvector of $\bsx$ is again max-linear and $\boldsymbol{X}_O \in{\rm RV}_+^d(2)$.}

 It is  natural to ask whether one can rewrite $A_O$ as a square $d\times d$ matrix and the innovations vector as a $d$-vector of exogenous random variables with independent components in ${\rm RV}_+(2)$, {which would entail that $\boldsymbol{X}_O$
 can be represented as an RMLM.} To address this we start with a useful lemma.

\begin{lemma}\label{mlcomb}
If the innovations $Z_1,\ldots,Z_p$ for $p\in\{1,\dots,D\}$ satisfy Assumption (A1) and $(a_1,\ldots, a_p)$ $\in \mathbb{R}_+^p$, then the maximum $M\coloneqq\bigvee_{i\in\{1,\ldots,p\}}a_{i}Z_i$ belongs to ${\rm RV}_+(2)$ with squared scaling $\sigma^2=\sum_{i=1}^p a_{i}^2$. 
In particular, $M=\sigma Z$ where $Z\in {\rm RV}_+(2)$ and has unit scaling.
\end{lemma}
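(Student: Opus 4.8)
The goal is to show that a finite max-combination $M=\bigvee_{i=1}^p a_i Z_i$ of independent, standardized, regularly varying innovations is itself regularly varying with index $2$ and to compute its squared scaling. The plan is to work directly with the tail of $M$ and exploit the independence and the single-big-jump heuristic typical of heavy-tailed maxima. First I would write $\P(M>x)=1-\prod_{i=1}^p \P(a_i Z_i\le x)$, using independence of the $Z_i$ from Assumption (A1). Since each $Z_i\in{\rm RV}_+(2)$ with unit scaling, $\P(Z_i>x)\sim x^{-2}$ as $x\to\infty$, so $\P(a_i Z_i>x)=\P(Z_i>x/a_i)\sim a_i^2 x^{-2}$ whenever $a_i>0$ (terms with $a_i=0$ contribute nothing). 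Expanding the product $1-\prod_i(1-\P(a_iZ_i>x))$ and keeping only first-order terms (the products of two or more tail terms are $O(x^{-4})=o(x^{-2})$) yields
\begin{align*}
\P(M>x)\sim \sum_{i=1}^p \P(a_i Z_i>x)\sim \Big(\sum_{i=1}^p a_i^2\Big)x^{-2},\qquad x\to\infty,
\end{align*}
which already identifies $\sigma^2=\sum_{i=1}^p a_i^2$ and shows $M\in{\rm RV}_+(2)$.

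An alternative, perhaps cleaner, route is to recognise $M$ as a one-dimensional max-linear model and invoke the regular variation results for such models stated earlier: the innovations vector is regularly varying with index $2$, and a max-linear image of a regularly varying vector is again regularly varying of the same index (this is exactly the content underlying Proposition~\ref{scalee} and Lemma~\ref{scalcoll}, which compute scalings via sums of squared coefficients). Indeed, taking $\bs{h}=\{1,\dots,p\}$ with the single-row coefficient vector $(a_1,\dots,a_p)$, Lemma~\ref{scalcoll}(a) gives $\sigma^2_M=\sum_{k} a_k^2$, matching the direct computation. I would present the direct tail computation as the main argument, since it is self-contained, and remark that it is consistent with the general theory.

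Finally, to obtain the representation $M=\sigma Z$ with $Z\in{\rm RV}_+(2)$ of unit scaling, set $Z\coloneqq M/\sigma$ (assuming $\sigma>0$; if all $a_i=0$ the statement is vacuous). Then $\P(Z>x)=\P(M>\sigma x)\sim \sigma^2(\sigma x)^{-2}=x^{-2}$, so $Z$ has the claimed tail and unit scaling, and $M=\sigma Z$ by construction.

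The main obstacle I anticipate is purely technical rather than conceptual: justifying that the cross terms in the inclusion–exclusion expansion are genuinely negligible uniformly, i.e.\ that $\P(a_iZ_i>x,\,a_jZ_j>x)=o(x^{-2})$. This follows from independence, since that joint probability factorizes into $\P(a_iZ_i>x)\P(a_jZ_j>x)=O(x^{-4})$, but one must be slightly careful to phrase the asymptotics so that the finitely many lower-order terms are collected correctly. Because $p\le D$ is finite and fixed, there is no uniformity-in-dimension issue, so the argument reduces to a standard finite sum of asymptotic equivalences and the proof is short.
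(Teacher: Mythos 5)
Your proof is correct, and your primary argument takes a genuinely different (more elementary) route than the paper, while your ``alternative'' route is exactly the paper's own proof. The paper disposes of the lemma in two lines: it invokes closure of ${\rm RV}_+(2)$ under max-linear combinations, reads the scaling $\sigma^2=\sum_{i=1}^p a_i^2$ off Lemma~\ref{scalcoll}(a), and sets $Z\coloneqq M/\sigma$. Your main argument instead computes the tail from scratch: by independence (Assumption (A1)), $\P(M>x)=1-\prod_{i=1}^p\bigl(1-\P(a_iZ_i>x)\bigr)$, each factor has tail $\P(a_iZ_i>x)\sim a_i^2x^{-2}$, and the finitely many cross terms are $O(x^{-4})$, giving $\P(M>x)\sim\bigl(\sum_{i=1}^p a_i^2\bigr)x^{-2}$. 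What your route buys is self-containedness: it makes explicit exactly where independence and the finiteness of $p$ enter, and it does not presuppose the closure property or Lemma~\ref{scalcoll}; what the paper's route buys is brevity and consistency, since Lemma~\ref{scalcoll} is needed elsewhere anyway and already encapsulates this computation. One point your write-up treats implicitly but correctly: identifying the tail constant $\lim_{x\to\infty}x^2\,\P(M>x)$ with the squared scaling of Definition~\ref{scaledef} is legitimate because in one dimension the spectral measure is a point mass at $1$ whose total mass equals that tail constant under the paper's standardization; this same identification is what makes your final step ($\P(Z>x)\sim x^{-2}$, hence unit scaling for $Z=M/\sigma$) close the argument. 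Your explicit handling of the degenerate case $\sigma=0$ is a detail the paper's proof silently skips.
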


\begin{proof}
Recall that ${\rm RV}_+(2)$ is closed with respect to max-linear combinations.
The scaling follows as in Lemma~\ref{scalcoll} (i), and defining $Z:=M/\sigma$ implies that $Z\in {\rm RV}_+(2)$ with unit scaling.
\end{proof}

We now illustrate how closure under max-linear combinations can reduce the dimension of $A_O$.

\begin{example}\label{AOex}
Consider an RMLM supported on the DAG $\mathcal{D}_2$ in Figure~\ref{introex} and satisfying Assumptions~A, with a max-weighted path $3\to 2\to 1$. Here $O=\{1,2\}$, $D=3$ and $d=2$ and node 3 is hidden, and we have the reduced max-linear representation 
\begin{align}\label{same.rep}
	\begin{bmatrix}
	X_{1}\\
	X_{2}
	\end{bmatrix}
	& {=} 
	\begin{bmatrix}
	a_{11}Z_1\vee a_{12} Z_{2}\vee a_{13}Z_{3}\\
	a_{22}Z_{2}\vee a_{23}Z_{3}
	\end{bmatrix}
	\overset{}{=}  
	\begin{bmatrix}
	a_{11}Z_1\vee (a_{12}/a_{22})X_{2}\\
	a_{22}^{*}Z^{*}_{2}
	\end{bmatrix}\overset{}{=}\begin{bmatrix}
	a_{11}Z_1\vee (a_{12}a_{22}^{*}/a_{22})Z^{*}_{2}\\
	a_{22}^{*}Z^{*}_{2}
	\end{bmatrix} ,
	\end{align}
	where $a_{22}^{*2}=a_{22}^2+a_{23}^2$, and $Z_2^{*}=(a_{22}Z_{2}\vee a_{23}Z_{3})/a_{22}^{*}$ is a standardised innovation (Lemma~\ref{mlcomb}). Hence the new innovation vector is $(Z_1, Z_2^{*})$ and the reduced coefficient matrix lies in $\mathbb{R}^{2\times 2}$.
\end{example}

The following result shows how to re-parametrise the max-linear vector $\boldsymbol{X}_O=A_O\times_{\max}\boldsymbol{Z}$ with $|O|=d$, coefficient matrix $A_O\in\R^{d\times D}$ and $\bsz\in\R_+^D$ as an RMLM with reduced and upper-triangular coefficient matrix, under the conditions of Theorem~\ref{prop.dat}. 

	\begin{proposition}\label{prop.incmat} \,
		Suppose that $\boldsymbol{X}\in{\rm RV}_+^D(2)$ is an RMLM {satisfying Assumptions~A with coefficient matrix $A$}, and that the observed node variables $\bsx_O$ have $|O|=d < D$, where the {nodes in $O$ are well-ordered}, and satisfy conditions (i) and (ii) of Theorem~\ref{prop.dat}.
  Then 
  \begin{align}\label{eq:triang}
  \boldsymbol{X}_O=A_O^{*}\times_{\max}\boldsymbol{Z^{*}}
  \end{align} 
  with standardised innovations vector $\boldsymbol{Z}^{*}=(Z_1^{*},\dots,Z_d^{*})$ and reduced coefficient matrix 
		\begin{align}\label{Am}
		A_{O}^{*}=\begin{bmatrix}
		a^{*}_{11} & a^{*}_{12}& \hdots  & a^{*}_{1d} \\
		0 &   a^{*}_{22} &\hdots   & a^{*}_{2d}\\
		\vdots& \vdots & \ddots & \vdots\\
		0&   0 &\hdots    &a^{*}_{dd}
		\end{bmatrix},
		\end{align}
		where, with $\an^{O}(i)$ and $\An^{O^c}(i)$ defined in~\eqref{rmlmequat}, 
  $$
  a_{ii}^{*}=\Big(\sum_{k\in  \An^{O^c}(i)} a_{ik}^2\Big)^{1/2},\qquad a_{ij}^{*}=\bigvee_{k\in {\Des(j)}\cap\an^{O}(i)} \frac{a_{ik}}{a_{kk}}a_{kj}^{*},\quad {j > i}.
  $$
  	\end{proposition}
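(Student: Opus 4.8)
The plan is to start from the minimal representation~\eqref{minrep} restricted to the observed nodes and, after collapsing the hidden-innovation terms, to recognise it as a well-ordered recursive ML system on the observed DAG $\D^O$ whose tropical solution is the claimed triangular matrix. Writing~\eqref{minrep} for $i\in O$ gives
\begin{align*}
X_i=\bigvee_{k\in\An^O(i)}\frac{a_{ik}}{a_{kk}}X_k\;\vee\;\bigvee_{k\in\An^{O^c}(i)}a_{ik}Z_k ,
\end{align*}
and I would first apply Lemma~\ref{mlcomb} to the second term, whose innovations are indexed by $\An^{O^c}(i)$. That term then equals $a_{ii}^*Z_i^*$ with $a_{ii}^{*2}=\sum_{k\in\An^{O^c}(i)}a_{ik}^2$ and standardized $Z_i^*:=a_{ii}^{*-1}\bigvee_{k\in\An^{O^c}(i)}a_{ik}Z_k\in{\rm RV}_+(2)$; since $i\in\An^{O^c}(i)$ we get $a_{ii}^*\ge a_{ii}>0$. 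This produces the diagonal of~\eqref{Am}, the innovations vector $\boldsymbol Z^*$, and recasts the display as the structural equation $X_i=\bigvee_{k\in\An^O(i)}c_{ik}^*X_k\vee a_{ii}^*Z_i^*$ with edge weights $c_{ik}^*=a_{ik}/a_{kk}$ on $\D^O$.

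The hard part will be to prove that the $Z_i^*$, $i\in O$, are mutually independent, which I would reduce to showing that the sets $\{\An^{O^c}(i):i\in O\}$ are pairwise disjoint; each $Z_i^*$ is then a function of a distinct subfamily of the independent innovations, so (A1) transfers to $\boldsymbol Z^*$. Arguing by contradiction, suppose a hidden node $u\in O^c$ lies in $\An^{O^c}(i)\cap\An^{O^c}(j)$ for distinct $i,j\in O$. The definition~\eqref{rmlmequat} then gives $a_{iu}>\bigvee_{k\in O\cap\an(i)\cap\des(u)}a_{ik}a_{ku}/a_{kk}$, that is, no max-weighted path $u\rightsquigarrow i$ passes through an observed node, and likewise for $j$. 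But $u$ is a hidden common ancestor of the observed pair $(i,j)$, so, according as $u\in\an(V_0^O)$ or not, condition (i)(b) or condition (ii) of Theorem~\ref{prop.dat} supplies an observed node $k$ (or $j$ itself, when $j\in\an(i)$) with a max-weighted path $u\rightsquigarrow k\rightsquigarrow i$. This forces the equality $a_{iu}=a_{ik}a_{ku}/a_{kk}$, contradicting the strict inequality above. Hence the sets are disjoint; this is exactly the point at which the combinatorial content of Theorem~\ref{prop.dat} is converted into disjointness, and I expect it to be the main obstacle.

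It then remains to solve the recursion. Because $\D^O$ inherits acyclicity and the causal order from $\D$ (Definition~\ref{def:DO}), the system $X_i=\bigvee_{k\in\An^O(i)}c_{ik}^*X_k\vee a_{ii}^*Z_i^*$ is a well-ordered RMLM driven by the independent, standardized, regularly varying innovations $\boldsymbol Z^*$. By the tropical solution~\eqref{Rmlmequat} its unique solution is $\boldsymbol X_O=A_O^*\times_{\max}\boldsymbol Z^*$, where $A_O^*=(a_{ij}^*)$ collects the path weights in $\D^O$ and is upper-triangular by the causal order, establishing~\eqref{eq:triang} and the shape~\eqref{Am}. Finally, decomposing any path $j\rightsquigarrow i$ in $\D^O$ at its terminal edge into a parent $k\in\An^O(i)$ of $i$ yields the path-weight recursion $a_{ij}^*=\bigvee_{k\in\An^O(i)}c_{ik}^*a_{kj}^*$; since $a_{kj}^*=0$ unless $j$ is an ancestor of $k$ in $\D^O$, hence unless $k\in\Des(j)$, the maximum may be restricted to $k\in\Des(j)\cap\An^O(i)$, giving exactly $a_{ij}^*=\bigvee_{k\in\Des(j)\cap\An^O(i)}(a_{ik}/a_{kk})a_{kj}^*$ for $j>i$.
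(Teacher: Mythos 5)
Your proof is correct, and it reaches the conclusion by a genuinely different organisation than the paper. The paper argues by induction over the generations of the observed DAG: it collapses the hidden innovations generation by generation via Lemma~\ref{mlcomb}, verifies independence pairwise at each inductive stage (invoking (i)(a), (i)(b) for $V_0^O$ and $G_1^O$, then (ii)(a), (ii)(b) for later generations), and computes the entries of $A_O^*$ by explicitly substituting the representation of each ancestor and exchanging maxima (via Lemma~A.1 of \citealp{gk}). You instead separate the two concerns. First, you prove in one stroke that the sets $\{\An^{O^c}(i):i\in O\}$ are pairwise disjoint, by playing the strict inequality in the definition~\eqref{rmlmequat} of $\An^{O^c}(i)$ off against the equality $a_{iu}=a_{ik}a_{ku}/a_{kk}$ forced by the max-weighted path that Theorem~\ref{prop.dat} guarantees through an observed node; disjointness then yields mutual (not merely pairwise) independence of the $Z_i^*$ at once, with no induction. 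Second, having recognised the collapsed system as a bona fide RMLM on the well-ordered DAG $\D^O$, you let the general existence/uniqueness theory (the tropical solution~\eqref{Rmlmequat}) deliver the triangular matrix, and you recover the stated entry formula from the standard path-weight recursion at the terminal edge, noting that terms with $a^*_{kj}=0$ may be freely added to or removed from the maximum. The paper's induction buys a fully self-contained computation; your route buys modularity and makes transparent exactly where the combinatorial content of Theorem~\ref{prop.dat} enters (solely in the disjointness claim), at the cost of leaning on the RMLM solution theory already established in the paper, which is legitimate.

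One small expository point: in the case $u\in\an(V_0^O)$ of your contradiction argument, condition (i)(b) only provides max-weighted paths $u\rightsquigarrow\ell\rightsquigarrow i'$ for $i'\in O\cap\des(\ell)$, so before invoking it you must first use (i)(a) to conclude that your nodes $i$ and $j$ do lie in $\Des(\ell)$ for the source node $\ell$ with $u\in\an(\ell)$ (and, when one of them equals $\ell$, derive the contradiction for the other node). This is implicit in your sketch and does not affect its validity, but it should be spelled out in a full write-up.
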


\subsection{Identifying max-weighted paths: the set  MWP} \label{sec:MWP}

In Section~\ref{MLreparamet}  we have confirmed that an observed vector $\bsx_O$ that satisfies conditions (i) and (ii) of Theorem~\ref{prop.dat} {with a well-ordered set of nodes $O$} can be represented as $A_O^{*}\times_{\max} \bsz^{*}\in\R_+^d$ for a triangular matrix $A_O^{*}\in\R^{d\times d}$ as in~\eqref{Am} and a vector of innovations $\bsz^{*}\in\R_+^d$.

We now turn our attention to condition (i) of Theorem~\ref{prop.dat}, or equivalently the identification of MWP, as addressed in Algorithm~\ref{datdalgrmlm}, which requires that max-weighted paths from hidden confounders in the $D$-dimensional DAG pass through observed ancestral nodes.
To this end, we investigate the extremal dependence measure of certain transformations of $(X_i,X_j)$ for two observed nodes $i$ and $j$ that have a common ancestor, or are such that $j\in\an(i)$. It turns out that such a pair of variables can be represented as an RMLM if and only if the extremal dependence measure between the transformed random variables equals unity. This provides a way to reduce the dimension of the max-linear representation, and, in particular, to verify whether hidden confounders can be ignored.

Assume for now that {$i\notin\an(j)$, so that the pair $(i, j)$ can be ordered as $i<j$ on a well-ordered DAG}. This is ensured by Lemma~\ref{lemmaid1}(iii), or condition~\eqref{cond1} of Theorem~\ref{incdagmod} below. 
We start with the submatrix of the $i$-th and $j$-th rows of $A$, 
\begin{align}\label{eq:Aim}
A_{ij}= \begin{bmatrix}
0& \cdots & a_{ii} & \cdots & a_{i,j-1} & a_{ij} & \cdots & a_{iD}\\
0& \cdots & 0 & \cdots & 0 & a_{jj} & \cdots & a_{jD}
\end{bmatrix}\in\R^{2\times D},
\end{align}
where $a_k=(a_{ik},a_{jk})$ is the $k$-th column of $A_{ij}$ for $k\in\{1,\dots,D\}$.

The random variable 
\begin{align}\label{def:twomax}
M_{c_1 i, c_2 j}\coloneqq\max(c_1 X_i,c_2 X_j), \quad c_1,c_2>0, 
\end{align}
is a max-linear combination of $Z_1,\dots,Z_D$ 
with coefficient matrix in $\mathbb{R}^{1\times D}$ with entries $c_1 a_{ik}\vee c_2 a_{jk}$ for $k\in\{1,\ldots, D\}$. 

The next lemma establishes a connection between the max-weighted path property for $(i,j)\in~{\rm MWP}$ and certain linear transformations between the entries of the coefficient matrix $A_{ij}$. These new transformed entries then appear in the angular measure of $(X_i, X_j)$ by applying similar transformations to the bivariate random vector, given in~\eqref{def:tim}. 

\begin{lemma}\label{lemmaid1}
   Consider a subvector $(X_i, X_j)\in{\rm RV}_+^2(2)$ from an RMLM and let $(a_{ij},a_{jj})$ and $(a_{ik},a_{jk})$ denote the $j$-th and $k$-th columns of the coefficient matrix $A_{ij}$ in \eqref{eq:Aim}. Then
   \begin{enumerate}
  \item[(i)] for $0<c_1\leq 1, c_2>0$, define 
$\tilde{a}_k=(\tilde{a}_{ik},\tilde{a}_{jk}) \coloneqq
(a_{jk}-c_1a_{ik},a_{jk}+c_2a_{ik})$ for $ k\in\An(j)$.
Then 
\begin{align}\label{eq:equiv}
a_{jj}a_{ik}=a_{jk}a_{ij}\quad\iff\quad\tilde{a}_{jj}\tilde{a}_{ik}=\tilde{a}_{jk}\tilde{a}_{ij},
\end{align}
and both equalities are equivalent to the {existence of a} max-weighted path $k\rightsquigarrow j \rightsquigarrow i$; 
\item[(ii)] $(i,j)\in{\rm MWP}$ if and only if the row vectors $(\tilde{a}_{i1},\ldots, \tilde{a}_{iD})$ and $(\tilde{a}_{j1},\ldots, \tilde{a}_{jD})$, defined in $(i)$ for $k\in\An(j)$ and $\tilde a_k=(0,0)$ for $k\notin\An(j)$, are linearly dependent. In contrast, the row vectors $A_i$ and $A_j$ are linearly independent; and 
\item[(iii)] if there exists $a>1$ such that 
$$\sigma_{M_{i,aj}}^2=\sigma_{M_{ij}}^2 +a^2-1\quad\mbox{and}\quad
\sigma_{M_{ai,j}}^2<\sigma_{M_{ij}}^2 +a^2-1,$$
then {$i\notin \an(j)$}, $\An(i)\cap\An(j)\neq\emptyset$, and $a_{jk}\geq a_{ik}$ for all $k\in\An(j)$;
otherwise, either $\An(i)\cap \An(j)=\emptyset$  or $(i,j)\notin {\rm MWP}$.
\end{enumerate}
\end{lemma}

\begin{remark} 
As is immediate from Definition~\ref{mwpair}, membership of $(i,j)$ in MWP requires that~there~are max-weighted paths $u\rightsquigarrow j \rightsquigarrow i$ for all $u\in \An(i)\cap\An(j)$,  and therefore ignores the effect of nodes outside $\An(j)$. This enables us to deduce {membership in MWP} for each pair of nodes $(i,j)$ from the linear dependence in Lemma~\ref{lemmaid1} $(ii)$, with the latter motivating the transformation \eqref{def:tim}~below.
\end{remark}

For $0<c_1\le 1$ and $c_2>0$ define the vector 
\begin{align}\label{def:tim}
\boldsymbol{T}^{ij}= (T_1^{ij}, T_2^{ij}) \coloneqq(M_{c_1 i,j}-c_1X_i, (1+c_2)X_j+c_2X_i-c_2M_{ij});
\end{align}
this is a linear function of $\boldsymbol{T}^{ij}_2\coloneqq(M_{c_1 i,j}, M_{ij}, X_i, X_j)$.
Table~\ref{table:atoms} is a consequence of Proposition~\ref{prop:easy}, Lemma~\ref{breimanlemma}, Corollary~\ref{cor:easy}, and Lemma~\ref{genmaxspectmainCK} provides the atoms of the angular measures of transformations of $(X_i,X_j)$ used in~\eqref{def:tim} and, in particular, of the angular measure of $\boldsymbol{T}^{ij}\in{\rm RV}^2_+(2)$. 
The atoms of this angular measure contain only indices $k\in \An(j)$, since, by a version of Breiman's Lemma~\ref{breimanlemma}, $\tilde{a}_k=0$ if $k\notin \An(j)$, corresponding to those $\tilde{a}_k$ defined in Lemma~\ref{lemmaid1}~(i)--(ii). 

\begin{table}[H]
	{\centering\caption{Vectors $\tilde a_k$ used to obtain the atoms of transformations of $(X_i, X_j)$ in \eqref{def:tim}. 
    \label{table:atoms}}}
	\centering
	\fbox{%
		\begin{tabular}{l|l|l}
			Notation & Vector & $\tilde a_k $  \\
			\hline
			$\boldsymbol{T}^{ij}_1 $ & $(M_{ij},X_i,X_j)$ & $(a_{ik}\vee a_{jk}, a_{ik}, a_{jk})$ \\ \rule{0pt}{3ex}
			$\boldsymbol{T}^{ij}_2$ & $(M_{ci,j}, M_{ij}, X_i, X_j)$ & $(ca_{ik}\vee a_{jk} , a_{ik}\vee a_{jk}, a_{ik}, a_{jk})$ \\\rule{0pt}{3ex}
			$\boldsymbol{T}^{ij}$ & $(M_{c_1i,j}-c_1X_{i},$ & $(c_1a_{ik}\vee a_{jk} -c_1a_{ik}$, \\
			& $(1+c_2)X_j+c_2X_i-c_2M_{ij})$ & $(1+c_2)a_{jk}+c_2a_{ik}-c_2(a_{ik}\vee a_{jk}))$	\\
		\end{tabular}}
	\end{table}
\vspace*{-.75cm}

\begin{lemma}\label{lemmaid2}
Let $\boldsymbol{T}^{ij}$ be as in~\eqref{def:tim} for $0<c_1\leq 1$ and $c_2>0$.
If the condition of Lemma~\ref{lemmaid1} $(iii)$ holds, then $\boldsymbol{T}^{ij}\in {\rm RV}_+^
2(2)$ has discrete angular measure {with atoms $\tilde{a}_k/\norm{\tilde{a}_k}$ derived from the non-zero vectors $\tilde{a}_k=(a_{jk}-c_1a_{ik}, a_{jk}+c_2a_{ik})$ for $k\in\An(j)$.}
 Moreover, $\tilde a_k=(0,0)$ for $k\notin\An(j)$. 
\end{lemma}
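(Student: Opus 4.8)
The plan is to obtain the regular variation and the atoms directly from the transformation results already encoded in Table~\ref{table:atoms}, and then to use the hypothesis of Lemma~\ref{lemmaid1}(iii) only to collapse the maxima appearing in those atoms. First I would record that $\boldsymbol{T}^{im}$ is a fixed linear image $\boldsymbol{T}^{im}=L\,\boldsymbol{T}^{im}_2$ of the max-linear vector $\boldsymbol{T}^{im}_2=(M_{c_1i,m},M_{im},X_i,X_m)$, with
\[
L=\begin{bmatrix} 1 & 0 & -c_1 & 0 \\ 0 & -c_2 & c_2 & 1+c_2\end{bmatrix},
\]
and that $L$ is positively homogeneous of degree one and maps $\R_+^4$ into $\R_+^2$ (the two components of $\boldsymbol{T}^{im}$ are non-negative by the elementary bounds $\max(c_1x_i,x_m)\ge c_1x_i$ and $(1+c_2)x_m+c_2x_i\ge c_2\max(x_i,x_m)$). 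Since $\boldsymbol{T}^{im}_2$ is max-linear and hence regularly varying with a discrete spectral measure, applying Proposition~\ref{prop:easy}, Breiman's Lemma~\ref{breimanlemma}, Corollary~\ref{cor:easy} and Lemma~\ref{genmaxspectmainCK} --- precisely the results underlying Table~\ref{table:atoms} --- gives $\boldsymbol{T}^{im}\in{\rm RV}_+^2(2)$ with discrete spectral measure whose non-normalized atoms are the images under $L$ of the atoms of $\boldsymbol{T}^{im}_2$, namely the last row of Table~\ref{table:atoms},
\begin{align*}
\big(c_1a_{ik}\vee a_{mk}-c_1a_{ik},\ (1+c_2)a_{mk}+c_2a_{ik}-c_2(a_{ik}\vee a_{mk})\big),\qquad k\in\{1,\dots,D\}.
\end{align*}

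Next I would invoke the hypothesis: by Lemma~\ref{lemmaid1}(iii) the stated condition forces $a_{mk}\ge a_{ik}$ for all $k\in\An(m)$. Because $0<c_1\le 1$ gives $c_1a_{ik}\le a_{ik}\le a_{mk}$, both maxima collapse, $c_1a_{ik}\vee a_{mk}=a_{mk}$ and $a_{ik}\vee a_{mk}=a_{mk}$, so the displayed atom reduces to $(a_{mk}-c_1a_{ik},\,a_{mk}+c_2a_{ik})=\tilde a_k$, exactly the vector of Lemma~\ref{lemmaid1}(i); moreover $a_{mk}>0$ and $c_2>0$ make the second coordinate strictly positive, so each $\tilde a_k$ with $k\in\An(m)$ is a genuine non-zero atom. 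For $k\notin\An(m)$ one has $a_{mk}=0$, whence the first coordinate is $c_1a_{ik}\vee 0-c_1a_{ik}=0$ and the second is $c_2a_{ik}-c_2(a_{ik}\vee 0)=0$; thus $\tilde a_k=(0,0)$, which is the final assertion of the lemma and shows these indices contribute nothing. Normalizing the surviving atoms $\{\tilde a_k:k\in\An(m)\}$ then gives the claimed spectral measure.

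The one genuinely delicate step, and the main obstacle, is the first one. Since $\boldsymbol{T}^{im}$ is built from a difference of a maximum and a linear term rather than from pure max-linear operations, neither its regular variation nor the discreteness of its spectral measure follows from closure of max-linear combinations; they must be extracted from the Breiman-type argument behind Table~\ref{table:atoms}, together with the care that the linear map $L$ (which has negative entries) does not spuriously cancel the atoms we need. Under the hypothesis this is precisely what happens: the atoms indexed by $\An(m)$ survive with strictly positive second coordinate, while exactly those indexed outside $\An(m)$ collapse to the origin. Once this machinery is invoked, the remainder is the elementary collapsing of maxima described above.
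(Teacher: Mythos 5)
Your proposal is correct and follows essentially the same route as the paper's (much terser) proof: both read off the non-normalized atoms of $\boldsymbol{T}^{im}$ from the last row of Table~\ref{table:atoms} (i.e., from Proposition~\ref{prop:easy}, Lemma~\ref{breimanlemma}, Corollary~\ref{cor:easy} and Lemma~\ref{genmaxspectmainCK} applied to the linear image of $\boldsymbol{T}^{im}_2$), then use Lemma~\ref{lemmaid1}(iii) together with $0<c_1\leq 1$ to collapse the maxima to $\tilde a_k=(a_{mk}-c_1a_{ik},\,a_{mk}+c_2a_{ik})$ for $k\in\An(m)$ and to $(0,0)$ otherwise. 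Your additional verifications (non-negativity of the components, strict positivity of the second coordinate, the explicit computation when $a_{mk}=0$) are all sound and simply make explicit what the paper leaves implicit.
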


\begin{proof}
{This is a consequence of Lemma~\ref{genmaxspectmainCK}, the choice of $c_1,c_2$, which gives the formula in the third row of Table~\ref{table:atoms},
and the fact that $a_{jk}\geq a_{ik}$ for $k\in\An(j)$ by Lemma~\ref{lemmaid1}(iii).} 
\end{proof}

It is important below that the scalings of the components of 
$\boldsymbol{T}^{ij}$ (see Definition~\ref{scaledef}) may not equal unity. 
To adjust for this, we define the standardised random vector and the extremal dependence measure 
\begin{align}
\bs{\tilde{T}}^{ij} &=(\tilde{T}^{ij}_{1}, \tilde{T}^{ij}_{2}),\quad \mbox{where}\quad  \tilde{T}^{ij}_{k}\coloneqq {T}^{ij}_{k}/\sigma_{{T}^{ij}_{k}} \quad\mbox{for}\quad k\in\{1,2\},\label{eq:Tscaled} \\
\smash{\tau^2_{ij}}&\coloneqq\smash{\sigma^2_{\tilde{T}^{ij}_{1},\tilde{T}^{ij}_{2}}}\label{tau}
\end{align}

The following result characterises when a pair $(i,j)$ belongs to MWP.
\begin{theorem}\label{incdagmod}
	{Let $\boldsymbol{X}\in {\rm RV}_+^D(2)$  be an RMLM} 
    {satisfying Assumptions~A.}
	Suppose that we observe nodes $i,j\in V$, and that for some $a>1$,
	\begin{equation}\label{cond1}
	\sigma_{M_{i,aj}}^2=\sigma_{M_{ij}}^2 +a^2-1,
	\quad\quad 
	\sigma_{M_{ai,j}}^2<\sigma_{M_{ij}}^2 +a^2-1.
	\end{equation}
  Consider $\bs{{T}}^{ij}$ as in~\eqref{def:tim} for $0<c_1\leq 1$ and $c_2>0$, and {$\bs{\tilde{T}}^{ij}$ as in~\eqref{eq:Tscaled}.}
  Then: 
  \begin{enumerate}
  \item[(i)] {$(i,j)\in{\rm MWP}$}
 if and only if 
	$\tau_{ij}^2=1$.
 In this case, $(X_i,X_j)$ can be represented as an RMLM; 
	\item[(ii)] if $i\notin \emph{de}(j)$,
 then
	$\tau_{ij}^2<1$ and $(X_i,X_j)$ cannot be represented as an RMLM.
 \end{enumerate}
\end{theorem}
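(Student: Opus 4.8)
The plan is to translate the scalar identity $\sigma_{\tilde T^{im}_1,\tilde T^{im}_2}^2=1$ into a statement about the spectral atoms of $\bs{\tilde T}^{im}$, and from there into the max-weighted path property via Lemma~\ref{lemmaid1}. The natural starting point is that condition~\eqref{cond1} is exactly the hypothesis of Lemma~\ref{lemmaid1}(iii), so from the outset I may assume $\An(i)\cap\An(m)\neq\emptyset$ and $a_{mk}\ge a_{ik}$ for all $k\in\An(m)$. Lemma~\ref{lemmaid2} then identifies the non-normalized atoms of $\bs T^{im}$ as the vectors $\tilde a_k=(\tilde a_{ik},\tilde a_{mk})$ of Lemma~\ref{lemmaid1}(i) for $k\in\An(m)$, with $\tilde a_k=(0,0)$ otherwise. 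Applying Proposition~\ref{scalee} to this two-dimensional max-linear vector yields $\sigma_{T^{im}_1}^2=\sum_k\tilde a_{ik}^2$, $\sigma_{T^{im}_2}^2=\sum_k\tilde a_{mk}^2$ and $\sigma_{T^{im}_1,T^{im}_2}^2=\sum_k\tilde a_{ik}\tilde a_{mk}$, each sum over $\An(m)$.

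For part~(a), after standardizing through~\eqref{eq:Tscaled} the vector $\bs{\tilde T}^{im}$ has unit scalings, so Corollary~\ref{completedep}(c) applies directly: $\sigma_{\tilde T^{im}_1,\tilde T^{im}_2}^2=1$ holds if and only if the standardized atoms coincide, i.e.\ if and only if the rows $(\tilde a_{ik})_k$ and $(\tilde a_{mk})_k$ are proportional. By Lemma~\ref{lemmaid1}(ii) and the algebraic identity in its proof, this proportionality is equivalent to~\eqref{eq:equiv} holding for every $k\in\An(m)$, which by Lemma~\ref{lemmaid1}(i) is equivalent to each path $k\rightsquigarrow m\rightsquigarrow i$ being max-weighted. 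It then remains to match $\An(m)$ with $\An(i)\cap\An(m)$: taking $k=m$ in the max-weighted condition forces a path $m\rightsquigarrow i$, hence $i\in\des(m)$ and $\An(m)\subseteq\An(i)$, so the two index sets agree; conversely the statement that all max-weighted paths from $\An(i)\cap\An(m)$ to $i$ pass through $m$ likewise forces $i\in\des(m)$, closing the equivalence. Finally, once all such paths pass through $m$, relation~\eqref{eq:equiv} gives $a_{ik}=(a_{im}/a_{mm})a_{mk}$ for $k\in\An(m)$, so by the minimal representation~\eqref{minrep} (equivalently Proposition~\ref{prop.incmat} with $O=\{i,m\}$) one writes $X_i=(a_{im}/a_{mm})X_m\vee\bigvee_{k\in\An(i)\setminus\An(m)}a_{ik}Z_k$ with the second term independent of $X_m$, exhibiting $(X_i,X_m)$ as a RMLM.

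For part~(b), suppose $i\notin\des(m)$, so $m\notin\an(i)$ and $a_{im}=0$. The atom at $k=m$ is then $\tilde a_m=(a_{mm},a_{mm})$, forcing the coordinatewise ratio at $k=m$ to be $1$; proportionality of the two atom rows would therefore require $\tilde a_{ik}=\tilde a_{mk}$, i.e.\ $(c_1+c_2)a_{ik}=0$, for all $k\in\An(m)$. Since $\An(i)\cap\An(m)\neq\emptyset$ by Lemma~\ref{lemmaid1}(iii), some $k\in\An(m)$ has $a_{ik}>0$, a contradiction; hence the rows are not proportional and $\sigma_{\tilde T^{im}_1,\tilde T^{im}_2}^2<1$. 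That $(X_i,X_m)$ is not a RMLM then follows because $i$ and $m$ share a hidden common ancestor while neither is an ancestor of the other (recall that $i<m$ precludes $i\in\an(m)$ in a well-ordered DAG), so the corresponding innovation enters both components and cannot be separated, which is precisely the obstruction illustrated for $\D_3$ in Example~\ref{ex:2.1}.

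The scaling computations and the algebra of Lemma~\ref{lemmaid1} are routine and already available. The main obstacle I anticipate is the bookkeeping that identifies the purely algebraic condition (proportionality of the atom rows, which naturally ranges over $k\in\An(m)$) with the graph-theoretic statement of part~(a) (max-weighted paths indexed by $\An(i)\cap\An(m)$): one must argue in both directions that the trivial atom at $k=m$ together with a nonempty common-ancestor set forces $i\in\des(m)$, so that $\An(m)=\An(i)\cap\An(m)$ and the two formulations genuinely coincide. Some care is also needed to ensure that the standardization~\eqref{eq:Tscaled} is exactly what makes Corollary~\ref{completedep}(c) applicable, since the unnormalized cross-scaling $\sigma_{T^{im}_1,T^{im}_2}^2$ does not itself detect asymptotic full dependence.
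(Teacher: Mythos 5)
Your proposal is correct and takes essentially the same route as the paper: condition~\eqref{cond1} is fed into Lemma~\ref{lemmaid1}(iii), Lemma~\ref{lemmaid2} supplies the atoms $\tilde a_k$, the equivalence $\sigma_{\tilde{T}^{im}_{1},\tilde{T}^{im}_{2}}^2=1$ iff the two atom rows are proportional is the Cauchy--Schwarz equality case (which you package as Corollary~\ref{completedep}(c) and the paper writes out explicitly), and this proportionality is translated into max-weighted paths via Lemma~\ref{lemmaid1}(i), with part~(b) settled by the same contradiction from $a_{im}=0$ together with $\An(i)\cap\An(m)\neq\emptyset$. The only differences are cosmetic (where the Cauchy--Schwarz step is cited versus rewritten, and your slightly more explicit treatment of the index-set matching and of the RMLM representability claims), not differences of method.
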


\begin{remark}\label{rem:two_obs}
In the situation of Theorem~\ref{incdagmod}(i), {$\tilde{T}^{ij}_{1},\tilde{T}^{ij}_{2}$ are asymptotically fully dependent} by Corollary~\ref{completedep} (iii). 
In the situation of Theorem~\ref{incdagmod}(ii), there exists a confounder $u\in\An(i)\cap\An(j)$, but no max-weighted path $u \rightsquigarrow j \rightsquigarrow i$.
\end{remark}

The following corollary is particularly useful for the statistical applications in Section~\ref{sec:est}. 
Fix a pair $(i,j)$ of observed nodes and assume that there exists some $a>1$ such that~\eqref{cond1} holds. Consider ${\tilde{T}^{ij}_{1}},{\tilde{T}^{ij}_{2}}$ as in \eqref{eq:Tscaled} 
and let $0<c_1,c'_1\leq 1$ and $c_2>0$.
Furthermore, define ${\tilde{T}^{ij'}_{1}}$ analogously to ${\tilde{T}^{ij}_{1}}$, but replacing the scalar $c_1$ by $c'_1$. 
Similar to \eqref{tau}, we define  ${\tau_{ij'}^2\coloneqq\sigma^2_{\tilde{T}^{ij'}_{1},\tilde{T}^{ij'}_{2}}}$.

\begin{corollary}\label{cor37}
{If $(i,j)\in {\rm MWP}$}, then $\tilde{\Delta}_c\coloneqq|\tau_{ij'}^2-\tau_{ij}^2|=0.$
\end{corollary} 

The following remark concerns asymptotically independent node variables $(X_i,X_j)$, which by symmetry lead to an asymptotically fully dependent vector $\boldsymbol{\tilde T}^{ij}$.

\begin{remark}\label{remarkai}
Conditions~\eqref{cond1} of Theorem~\ref{incdagmod} exclude asymptotically independent pairs $(X_i,X_j)$, i.e., pairs for which $\sigma_{M_{ij}}^2=2$, since then the inequality in~\eqref{cond1} becomes an equality.  
Using Table~\ref{table:atoms}, the angular measures for $(X_i,X_j)$ and for the transformed pair $\tilde{\boldsymbol{T}}^{ij}$ 
in \eqref{eq:Tscaled}, 
respectively, consist of 
the normalised non-zero columns of the coefficient matrices
$$A_{ij}=\begin{bmatrix} 1& 0 \\ 0& 1
\end{bmatrix}, \quad A_{\tilde{\boldsymbol{T}}^{ij}}=\begin{bmatrix} 0& 1 \\ 0& 1
\end{bmatrix}.$$
Then Proposition~\ref{completedep}~(ii) implies that $\tau_{ij}^2={\tau}_{ji}^2=1$.
\end{remark}

\begin{example}\label{ex1mwp}
Consider RMLMs supported on the DAGs in Figure~\ref{introex}, with coefficient matrix $A$ and innovations $\boldsymbol{Z}$	satisfying Assumptions~A.	We consider the DAGs separately:
    
--\, for  $\D_1$ we apply Theorem~\ref{incdagmod} for $(i,j)=(1,2)$.  
Since there is a unique (max-weighted) path $3\to2\to1$, the pair $(X_1, X_2)$ can be represented as an RMLM with node $2$ behaving as a source node in the observed DAG, and so condition~\eqref{cond1} holds by Theorem~2 of \citet{KK}. Computing the standardised vector  $\tilde{\boldsymbol{T}}^{12}$ and its extremal dependence measure gives $\tau_{12}^2=1$, so $\tilde{T}^{ij}_{1},\tilde{T}^{ij}_{2}$ are asymptotically fully dependent. 
Hence, by Theorem~\ref{prop.dat} and Proposition~\ref{prop.incmat}, 
$(X_1, X_2)$ can be represented as an RMLM, similar to Example~\ref{ex:2.1}, with reduced coefficient matrix $A_O^{*}$ computed as in Example~\ref{AOex};

--\, for $\D_2$ we first note that if the path $3\to 2 \to 1$ were max-weighted, then steps similar to those above lead to the same result as for $\D_1$.  There are two possibilities if the edge $3\to 1$ is the only max-weighted path: condition~\eqref{cond1} may not hold, in which case $(X_1, X_2)$ cannot be represented as an RMLM, whereas if the condition is satisfied, Theorem~\ref{incdagmod}(i) implies that $\tau_{12}^2<1$. If so, $(X_1, X_2)$ cannot be represented as an RMLM due to the existence of the confounder node 3;

--\, for $\D_3$ we note that there are also two possibilities. Condition~\eqref{cond1} may fail. Alternatively, if it is satisfied, Theorem~\ref{incdagmod} (ii) yields $\tau_{12}^2<1$. In both cases $(X_1, X_2)$ cannot be represented  as an RMLM, due to the existence of the confounder node $3$. 
\end{example}

\section{Estimation}\label{sec:est}

We now employ the results of Section~\ref{sec:MWP} in a statistical algorithm to detect RMLMs between pairs of node variables by estimating scalings and extremal dependence measures from 
Definition~\ref{scaledef} for appropriately transformed observations. 
This uses the link  established in Theorem~\ref{incdagmod} between the extremal dependence measure and the max-weighted path property.
{By Remark~\ref{rem:two_obs}} the max-weighted path property equivalently determines whether confounders can be ignored. 
We recall from Definition~\ref{mwpair} that 
{$(i,j)\in$ MWP} if for all $u\in \An(i)\cap\An(j)$ there are max-weighted paths $u\rightsquigarrow j \rightsquigarrow i$. Appendix~\ref{scalest} outlines the estimation procedure for the scalings and extremal dependence measures used in Section~\ref{statMWP}, which are special cases of~\eqref{finalest11} below. 
Section~\ref{funcCLT} and Appendix~\ref{sec:stat} establish the asymptotic properties of the estimators. 

\subsection{A statistical algorithm to estimate MWP}\label{statMWP}

{Assume $n$ observations of a subvector $X_O\in{\mathbb{R}}_+^{d}$ from an RMLM  satisfying Assumptions~A.}
We define the following real $d\times d$ matrices, whose entries are estimated non-parametrically from the empirical angular measure (see Appendix~\ref{scalest}), based on pairs of components of  $\boldsymbol{X}_O$:
\begin{itemize}
\item[-] $\hat{C}^{(1)}$ with entries $\hat{C}^{(1)}_{ij}=\min\Big(0.1+(\hat{\sigma}_{i}^2+\hat{\sigma}_{j}^2-\hat{\sigma}_{M_{ij}}^2)^{1/2}, 0.8\Big)$; 
\item[-] $\hat{\Delta}^{(1)}$ with entries $\hat{\Delta}^{(1)}_{ij}=(\hat{\sigma}_{M_{i,a j}}^2-\hat{\sigma}_{M_{ij}}^2-a^2+1)/(a^2-1)$;
\item[-] $\hat{\Delta}^{(2)}$ with entries $\hat{\Delta}^{(2)}_{ij}=\hat{\tau}_{ij}^2$;
\item[-] $\hat{\Delta}^{(3)}$ with entries $\hat{\Delta}^{(3)}_{ij}=|\hat{\tau}_{ij'}^2-\hat{\tau}_{ij}^2$|; 
\item[-] $\hat{\Delta}^{(4)}$ with entries $\hat{\Delta}^{(4)}_{ij}=\hat{\sigma}_{M_{ij}}^2$.
\end{itemize}

The last four matrices are used by Algorithm~\ref{datdalg2} below to identify all pairs of observed nodes in MWP. 
The entries of $\hat{C}^{(1)}$ are used for the coefficients $c_1$, $c_1'$, $c_2$ of the linear transformation $\boldsymbol{T}^{ij}$ defined in~\eqref{def:tim} and for $\boldsymbol{T}^{ij'}$ in Corollary~\ref{cor37}. These coefficients are chosen based on the discussion in Section~\ref{sec:c} and Example~\ref{ex:c1}: for each pair $(i,j)$ we set
$c_1=\hat{C}^{(1)}_{ij}$, $c'_1=0.1\,c_1$,  and $c_2=1/c_1$.
\subsubsection{Motivating  the structure of the algorithm}
We now briefly discuss Algorithm~\ref{datdalg2}, particularly its lines 1--4, which are linked to the findings of Section~\ref{sec:MWP}:
\begin{enumerate}
	\item[Line 1:] the set $S_1$ identifies pairs $(i, j)$  satisfying condition~\eqref{cond1};
	\item[Line 2:] the set $S_2$ identifies those pairs satisfying Theorem~\ref{incdagmod} (i);
	\item[Line 3:]  the set $S_3$ excludes those pairs that are asymptotically independent (Remark~\ref{remarkai});  
	\item[Line 4:]  the set $S_4$ excludes those pairs for which the necessary condition of Corollary~\ref{cor37} fails.
\end{enumerate}

Theorem~\ref{incdagmod} implies that the sets $S_1$ and $S_2$ suffice to identify MWP. 
However, the pre-asymptotic regime in the simulation study of Appendix~\ref{sec:performance} indicates many false positives, whereby non-max-weighted pairs are estimated as max-weighted pairs. 
This motivates the use of two additional necessary conditions, contained in Remark~\ref{remarkai} and Corollary~\ref{cor37}, to reduce the number of false positives. 
If $(X_i,X_j)$ are asymptotically independent, Remark~\ref{remarkai}  gives ${\tau}_{ij}^2={\tau}_{ji}^2=1$, so the conditions in~\eqref{cond1} are violated, though the algorithm outputs $(i,j)\in S_1$;  $S_3$ eliminates such pairs. 
Corollary \ref{cor37} states that transformations with different $c_1,c'_1$ result in the same extremal dependence measure for pairs in MWP, $S_4$ removes such pairs.



The matrix $\hat{\Delta}^{(4)}$, used specifically for the data application,  deals with those asymptotically strongly dependent pairs that do not satisfy~\eqref{cond1}. We consider such pairs to be indistinguishable. 

Algorithm~\ref{datdalg2} relies on small constants to allow for estimation errors that arise because the data come from a pre-asymptotic regime resulting from the finite sample size. The simulation results described in Appendices~\ref{sec:performance} {and~\ref{Ap:sim2}} indicate that small changes in the error terms, particularly $\eps_3$, $\eps_4$ and $\eps_5$, can lead to significant changes {in levels of both true positive and false discovery rates}, making it difficult to use the asymptotic normality of the estimated extremal dependence measure, particularly because their confidence intervals exclude asymptotic full dependence in nearly all cases. Although the estimators remain consistent,  their asymptotic normal distribution is degenerate when the extremal dependence measure approaches unity, as noted also in the remark to Theorem~1 of~\citet{lars}. 
Hence, we select the $\eps$ terms based on a simulation study, aiming for a reasonable trade-off between true positive and false discovery rates. We discuss the choice of $a>0$ and the $\eps$ terms in Appendices~\ref{simul} {and~\ref{Ap:sim2}.} Appendix~\ref{sec:performance}  studies the performance of the algorithm by simulation. 

%
\begin{algorithm}[t]
		\caption{Estimation of pairs of nodes in MWP, and of indistinguishable nodes} 
		\label{datdalg2}
\flushleft 
\textbf{Parameters}:\quad $a>1, \eps_1, \eps_2, \eps_3, \eps_4, \eps_5, \eps_6\geq0$  \\
\textbf{Input}:\quad ${\hat C^{(1)}}, \hat{\Delta}^{(1)}, \hat{\Delta}^{(2)}, \hat{\Delta}^{(3)},  \hat{\Delta}^{(4)}$  \\
\textbf{Output}: Matrix $\hat{P}\in \{0,1\}^{d\times d}$ {indicating pairs in MWP}\\
\hspace{1.2cm} Matrix $\hat{P}^{*}\in \{0,1\}^{d\times d}$ {indicating indistinguishable pairs} \\
\textbf{Procedure:} 
		\begin{algorithmic}[1]
                \State \textbf{set} $S_1 = \{(i,j)\in O\times O:\hat{\Delta}^{(1)}_{ij}\geq-\eps_1 \quad {\rm and} \quad \hat{\Delta}^{(1)}_{ij}-\hat{\Delta}^{(1)}_{ji}\geq-\eps_2\}$ \quad[{\rm \textcolor{blue}{Conditions~\eqref{cond1}}}]
                \State \hspace{4.5mm} $S_2=\{(i,j)\in O\times O: \hat{\Delta}^{(2)}_{ij}>1-\eps_3\}$ \quad [{\rm \textcolor{blue}{Theorem~\ref{incdagmod} (i)}}]
                \State \hspace{4.5mm} $S_3=\{(i,j)\in O\times O: \hat{\Delta}^{(2)}_{ij}>\hat{\Delta}^{(2)}_{ji}+\eps_4\}$ \quad [{\rm \textcolor{blue}{Remark~\ref{remarkai}}}]
                \State \hspace{4.5mm} $S_4=\{(i,j)\in O\times O: \hat{\Delta}^{(3)}_{ij}<\eps_5 \, {\hat C^{(1)}_{ij}}\}$ \quad [{\rm \textcolor{blue}{Corollary~\ref{cor37}}}]
			\State\textbf{for} $(i,j)\in O\times O$  
			
			\State\hspace{5mm}\textbf{if} 
			$(i,j)\in S_1\cap S_2 \cap S_3 \cap S_4$, \textbf{set} $\hat{P}_{ij}=1$
			\State\hspace{10mm}\textbf{else if} $\Delta^{(4)}_{ij}<1+\eps_6$, \textbf{set} $\hat{P}_{ij}=0 , \, \hat{P}^{*}_{ij}=1$
                \State\hspace{20mm}\textbf{else set} $\hat{P}_{ij}=0 , \, \hat{P}^{*}_{ij}=0$
			\State \textbf{end for}
			
		\end{algorithmic}
 \textbf{Return} $\hat{P},\hat{P}^{*} $ 
	\end{algorithm}\vspace{-.4cm}



\subsection{A data example}\label{sec:data}

We now apply Algorithm~\ref{datdalg2} to nutrient intake data for $n=9544$ individuals from the NHANES survey available at \url{https://wwwn.cdc.gov/Nchs/Nhanes/2015-2016/DR1TOT_I.XPT}, which also gives more details of the 168 variables. We treat individuals as independent and identically distributed and work with the $d=39$ observed components shown in Figure~\ref{fig:foodpairs}; see also \citet{JanWan}.  

We focus on the causal dependence between high nutrient intakes, which might damage the health~of an individual, resulting in toxicity or other complications. Due to similarities in their chemical compositions, large amounts of a specific nutrient are likely to be associated with those of related nutrients, so a better understanding of the causal mechanisms driving such extreme dependencies will be valuable.

 As in the simulation study, our {first} goal is to identify pairs of variables for which a two-dimensional RMLM is feasible and the effect of {possible} confounders of the two nodes can be ignored.

\citet{KK} and \citet{BK} `manually' select four such components to model extremal causal dependence under the questionable assumption that the dependence structure can be approximated by an RMLM with no hidden confounders. In contrast, our procedure serves as a building block for selecting the nodes that compose an RMLM. Instead of assuming the irrelevance of hidden confounders, Algorithm~\ref{datdalg2} ensures that this approximately holds. 

We standardise the data to Fr\'echet$(2)$ margins using the empirical integral transform~\eqref{empstand} and then proceed as for the standardised setting described in Appendix~\ref{sec:estscal}. {Appendix~\ref{discuss} discusses the influence of innovations with different tail indices to the extremal causal dependence.}
The parameters are chosen as in the simulation study of Appendices~\ref{sec:performance} and~\ref{Ap:sim}: we fix $\eps_1=0.25, \,\eps_2=0.01,\, \eps_3=0.07,\, \eps_4=0.01,\, \eps_5=0.07,\, \eps_6=0.2, a=1.0001$, and we use $k_1=500$ for the intermediate thresholding and $k_2=200$ for the final threshold.
If the conditions in line 6 of Algorithm~\ref{datdalg2} are satisfied, we set those entries $(i,j)$ of the matrix $\hat{P}$ to unity, indicating that $(i,j)\in$~MWP. 
Appendix~\ref{simustud} considers the sensitivity of the estimated max-weighted pairs to changes in $a$, $\eps_3$, $\eps_4$~and~$\eps_5$.
 
The  matrices $\hat{\Delta}^{(2)}$ in Figure~\ref{fig:foodpairs} show non-zero values of ${\hat{\tau}_{ij}^2}$, for which Algorithm~\ref{datdalg2} outputs $\hat{P}_{ij}=1$, indicating that $(i,j)\in$~MWP.
By Theorem~\ref{incdagmod} (i), for each non-zero entry $(i,j)$~we~obtain the two-dimensional RMLM $(X_i,X_j)$ with edge $j\to i$.~As we learnt from the simulation results for DAGs of dimension $D=40$ and $p=0.1$ in Appendix~\ref{Ap:sim}, some of the estimated nodes in MWP correspond~to false positives related only by confounders.~The left-hand matrix $\hat{\Delta}^{(2)}$ reveals a large number of estimated MWPs, but in most cases the dependence is rather weak. 
By Remark~\ref{remarkai} and by symmetry, asymptotic independence implies that $\Delta^{(2)}_{ij}=\Delta^{(2)}_{ji}$, so we set $\eps_4=0.05$ to further filter out weakly dependent pairs. 
This lowers the estimated number of pairs of nodes in MWP, as shown by the right-hand panel of Figure~\ref{fig:foodpairs}. 

\begin{figure}[t]\centering 
	\vspace{-0cm}\hspace*{-0cm}\includegraphics[ height=7cm, width=14cm, clip]{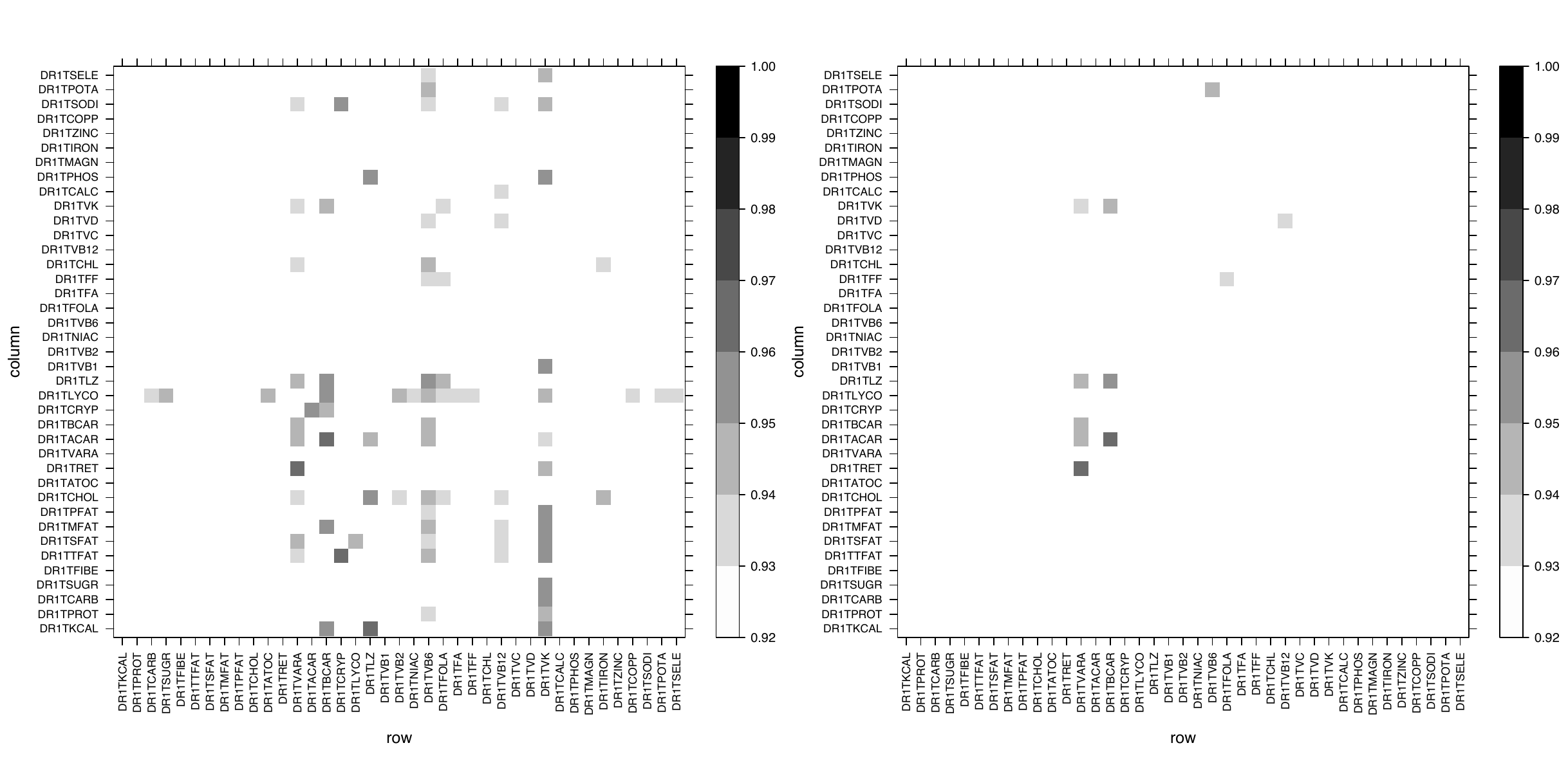}\vspace{-.5cm}
	\caption{Matrices $\hat{\Delta}^{(2)}$ {with entries $\hat{\tau}_{ij}^2$} for all pairs $(i,j)$, where Algorithm~\ref{datdalg2} outputs $\hat{P}_{ij}=1$  for $\eps_4=0.01$ (left) and $\eps_4=0.05$ (right).  
\label{fig:foodpairs}}\vspace{-.5cm}
\end{figure}

The matrix $\hat{\Delta}^{(2)}$ shown on the right-hand side of Figure~\ref{fig:foodpairs} has non-zero entries only for the following twelve nutrients:
AC (DR1TACAR, $\alpha$-Carotene),
BC (DR1TBCAR, $\beta$-Carotene),
VA (DR1TVARA, Vitamin A),
LZ (DR1TLZ, Lutein+Zeaxanthin),
VK (DR1TVK, Vitamin K),
VB12 (DR1TVB12, Vitamin B12), FF (DR1TFF, Food Folate), FA (DR1TFA, Folic Acid), P (DR1TP, Potasium), VB6 (DR1TVB6, Vitamin B6), VD (DR1TVD, Vitamin D), and VB12 (DR1TVB12, Vitamin B12).


{The pairs identified in MWP allow us to construct a larger DAG than in \citet{KK}, with six of the 39 observed nutrients, thus revealing further potential causal relations between the extremes of nutrient intake. Some of these relations align with prior  knowledge, for instance, that the carotenoids BC and LZ are precursors of VA  \citep[Section~1]{giordano2018lutein}. }

We also identify some separate bivariate RMLMs, for instance FF $\to$ FA, P $\to$ VB6, and VD $\to$ VB12, but we cannot establish connections between the corresponding DAGs due to the asymptotic dependence between the source nodes of the respective bivariate DAGs. This dependence indicates the possibility of hidden confounders, analogous to the DAG $\mathcal{D}_2$ in Figure~\ref{introex} or to that in Figure~\ref{partialdag}.

A closer look shows that the pair (LZ,VK) does not appear in any of the sets $S_1, \ldots,S_4$, but exhibits very strong extremal dependence and symmetry, with $\sigma_{M_{ij}}^2= 1.19$. For this pair the chosen $\eps$'s seem too small, but larger values led to many false positives. In this particular case, we draw the undirected red edge to represent their indistinguishability.

We use the matrix $\hat{P}$ and the pairs in MWP to construct the DAG depicted in the left-hand panel~of Figure~\ref{founddag3},~with directed edges between the nutrients~$(i,j)$~for non-zero entries~$\hat{P}_{ij}$~of the matrix $\hat{P}$.~Proposition~\ref{dst} implies that we can identify a directed spanning tree solely from the set MWP and the asymptotic independence relations. The DAG on the left of Figure~\ref{founddag3} shows that all causally dependent pairs lie in MWP and that VA is a candidate for the sink node.
No pairs among RET, AC and LZ lie~in~MWP. By Proposition~\ref{dst} (i), pairs of nodes that are not causally related on a directed spanning tree must be asymptotically independent, and we estimate $\hat{\sigma}_{M_{ij}}^2\approx 1.9$ for these three nodes; recall that $\sigma_{M_{ij}}^2=2$ corresponds to asymptotically independent extremes.  
The pairs  (AC, VK), (RET, BC) and (RET, VK) also show weak extremal dependence; combined with the set MWP this helps us verify conditions (i) and (ii) of Proposition~\ref{dst}, and thus shows that the directed spanning tree with VA as sink~could~support~the~RMLM.

The right-hand image in Figure~\ref{founddag3} corresponds to the minimal representation DAG $\mathcal{D}^O$ 
and to the directed spanning tree generated from the minimal representation~\eqref{minrep}, and is obtained by drawing only the estimated max-weighted paths.

	\begin{figure}[t]
		\centering
\begin{tikzpicture}[
					> = stealth, 
					shorten > = 1pt, 
					auto,
					node distance = 1.5cm, 
					semithick 
					]
					\tikzstyle{every state}=[
					draw = black,
					thick,
					fill = white,
					minimum size = 4mm
					]
					\node[state,  ] (4) {AC};
					
					\node[state] (5) [left of=4]{\footnotesize RET};
					\node[state] (3) [right of=4]{LZ};
					\node[state] (2) [below of=4] {BC};
					\node[state] (6) [below of=3]{VK};
					\node[state] (1) [left of=2] {VA};
					
%
%
%
					\path[->][blue] (4) edge node {} (2);
                    \path[->][blue] (4) edge node {} (1);
                    \path[->][blue] (3) edge node {} (1);
                    \path[->, bend right][blue] (6) edge node {} (1);
                    \path[->][blue] (3) edge node {} (2);
					\path[->][blue] (5) edge node {} (1);
					\path[-][red] (3) edge node {} (6);
					\path[->][blue] (6) edge node {} (2);
%
%
					
					\path[->][blue] (2) edge node {} (1);
					
					\end{tikzpicture}
  \qquad 
	\begin{tikzpicture}[
					> = stealth, 
					shorten > = 1pt, 
					auto,
					node distance = 1.5cm, 
					semithick 
					]
					\tikzstyle{every state}=[
					draw = black,
					thick,
					fill = white,
					minimum size = 4mm
					]
					\node[state,  ] (4) {AC};
					
					\node[state] (5) [left of=4]{\footnotesize RET};
					\node[state] (3) [right of=4]{LZ};
					\node[state] (2) [below of=4] {BC};
					\node[state] (6) [below of=3]{VK};
					\node[state] (1) [left of=2] {VA};
					
%
%
%
					\path[->][blue] (4) edge node {} (2);
					\path[->][blue] (5) edge node {} (1);
					\path[-][red] (3) edge node {} (6);
					\path[->][blue] (6) edge node {} (2);
     \path[->][blue] (3) edge node {} (2);
%
%
					
					\path[->][blue] (2) edge node {} (1);
					
					\end{tikzpicture}
		\caption{Left: DAG corresponding to $\hat{\Delta}^{(2)}$ as estimated in the right-hand panel of Figure~\ref{fig:foodpairs}, with directed edges (blue) having $\hat P_{ij}=1$ and undirected edge (red) for the indistinguishable pair (VK, LZ) having $\hat P_{ij}=0$ and $\hat P^{*}_{ij}=1$.
  Right: \DAG \, with only max-weighted paths, corresponding to the minimal representation DAG $\D^O$.}
  \label{founddag3}\vspace{-.51cm}
	\end{figure}

\section{A functional central limit theorem for random sample sizes} \label{funcCLT}

{We close the paper} by stating a functional central limit theorem which aids in establishing consistency and asymptotic normality of the estimators derived from the empirical angular measure for a random sample size. 
Below $\bs Y\in {\rm RV}^2_+(\alpha)$  
with angular decomposition $(R, \bs \omega)$ and {normalised angular measure $\tilde{H}_{\boldsymbol{Y}}(\cdot)={H}_{\boldsymbol{Y}}(\cdot)/{H}_{\boldsymbol{Y}}({\Theta}^{1}_+)$, a probability measure  on ${\Theta}^{1}_+$.}
Appendix~\ref{sec:stat} contains the motivation, proofs and details about the random sample size $N_n$ resulting from the two-step (or intermediate)~thresholding~procedure.  

The estimators of the extremal dependence measure developed in Appendices~\ref{sec:stat},~\ref{scalest}, and used in Algorithm~\ref{datdalg2}, are {empirical versions of \eqref{p2empdist} and by vague convergence \citep[e.g.,][Proposition 4]{lars}, we find that
$${\mathbb{E}}_{\tilde{{H}}_{\bsy}}[f(\boldsymbol{\omega})]=\lim_{y\to\infty}
 {\mathbb{E}}_{\tilde{{H}}_{\bsy}}[f(\bs \omega)\mid R>y]= \int_{\Theta_+^{1}} f(\bs \omega) {\rm d}{ \tilde H}_{\bs Y}(\bs \omega).$$}
{Thus the estimators take the form}
\begin{align}\label{finalest11}
\hat{\mathbb{E}}_{\tilde{{H}}_{\bsy}}[f(\boldsymbol{\omega})]=\frac{1}{k_2}\sum_{\ell=1}^{N_n}f(\boldsymbol{\omega}_\ell)\mathds{1}\{R_\ell \geq R^{(k_2)}\},
\end{align}
where $N_n$ is a random process in $\mathbb{N}$ and  $R^{(k_2)}$ is the $k_2$-th largest order statistic amongst $R_1,\ldots, R_{N_n}$. 

We apply \eqref{finalest11} to $\bs Y=\bs{\tilde{T}}^{ij}\in {\rm RV}^2_+(2)$ and set $f(\boldsymbol{\omega})=2\omega_1\omega_2$. 
This yields the estimate $\hat{\tau}_{ij}^2={\hat{\mathbb{E}}_{\tilde{{H}}_{{\tilde{\bs T}}^{ij}}}}[2\omega_1\omega_2]$ of the extremal dependence measure in \eqref{tau}, which is used in Algorithm~\ref{datdalg2} to estimate MWPs.


The next result shows that, {for $\sigma^2={\rm Var}_{\tilde{{H}}_{\bsy}}(f({{\boldsymbol{\omega}}_1}))$}, under appropriate conditions and as $n\to\infty$, 
\begin{align}\label{w11}
W_{k_1}(t,s)=\frac{1}{\sigma\sqrt{k_2}}\sum_{i=1}^{\lfloor k_1t\rfloor}\Big(f(\boldsymbol{\omega}_i)-\mathds{E}_{\tilde{H}_{\boldsymbol{Y}}}[f(\boldsymbol{\omega}_1)]\Big)\mathds{1}\{R_i/{b_{{k_1}/{k_2}}}\geq s^{-1/\alpha}\},
\end{align}
converges weakly in the Skorokhod space $D([0,\infty)^2)$ to a Brownian sheet $W$, i.e., 
	a Wiener process on $[0,\infty)^2$ with covariance function $(t_1\wedge t_2)(s_1\wedge s_2)$ for $(t_1,s_1), (t_2,s_2)\in\R_+^2$. 
 We write $\overset{P}{\to}$ for convergence in probability, $\overset{w}{\to}$ for weak convergence in $D([0,\infty)^2)$, and $\overset{\D}{\to}$ for convergence in distribution. 


\begin{theorem}\label{asnorm1}
	Let $\{\bsy_i: i\geq 1\}$ be independent replicates of the standardised vector $\bsy\in {\rm RV}^2_+(\alpha)$ with angular decomposition $(R,\boldsymbol{\omega})$, and let $R$ have distribution function $F$ and survival function $\bar{F}=1-F$. 
	As $n\to\infty$, choose $k_1(n), k_2(k_1)\to\infty$ such that $k_1=o(n), k_2=o(k_1)$, and choose normalising constants $b_{k_1/k_2}$ such that $\bar F(b_{ k_1/k_2} ) \sim k_2/k_1$.
	Then if 
	\begin{align}\label{assump1}
	\lim\limits_{n\to \infty}\sqrt{k_2}\bigg(\frac{k_1}{k_2}
	\mathbb{E}[f( \boldsymbol{\omega}_1 )\mathds{1} {\{R_1\geq b_{ k_1/k_2} s^{-1/ \alpha} \}}] 
	- \mathbb{E}_{\tilde{H}_{\boldsymbol{Y}}} [f(\boldsymbol{\omega}_1)] \frac{k_1}{k_2} \bar{F}(b_{ k_1/k_2}
	s^{-1/\alpha}) \bigg)=0
	\end{align}
	locally uniformly for $s\in[0, \infty)$, {and for $W_{k_1}(t,s)$ as in \eqref{w11} with $\sigma^2={\rm Var}_{\tilde{{H}}_{\bsy}}(f({{\boldsymbol{\omega}}_1}))>0$,} 
	\begin{align}\label{eq:limitest1}
	W_{k_1}(t,s)\overset{w}{\to}W(t,s), \hspace{5mm} n\to\infty.
	\end{align}
\end{theorem}

Consistency and asymptotic normality of~\eqref{finalest11}, established in the next theorem, follow on noting that for sufficiently large $n$,  $(\sqrt{k_2}/\sigma) (\hat{\mathbb{E}}_{\tilde{{H}}_{\bsy}}[f(\boldsymbol{\omega})]-\mathbb{E}_{\tilde{H}_{\boldsymbol{Y}}} [f(\boldsymbol{\omega}_1)])=W_{k_1}(N_n/k_1, (R^{(k_2)}/b_{k_1/k_2})^{-\alpha})$, and that, as shown in Lemma~\ref{randomRb}, $R^{(k_2)}/b_{k_1/k_2}\overset{P}{\to}1$.

\begin{theorem}
	In the setting of Theorem~\ref{asnorm1}, let $N_n\in\mathbb{N}$ be a random process for which ${N_n/k_1 \stp 1}$, let $\bsy_1,\dots,\bsy_{N_n}$ be a random number of independent replicates of $\bsy$, and take $\hat{\mathbb{E}}_{\tilde{{H}}_{\bsy}}[f(\boldsymbol{\omega})]$ as in~\eqref{finalest11}.~Then \begin{align}\label{eq:limitest2}
	\sqrt{k_2}(\hat{\mathbb{E}}_{\tilde{H}_{\boldsymbol{Y}}}[f({\boldsymbol{\omega}_1})]-\mathds{E}_{\tilde{H}_{\boldsymbol{Y}}}[f({\boldsymbol{\omega}_1})])\,{\overset{\D}{\to}}\,\mathcal{N}(0, \sigma^2), \hspace{5mm} n\to\infty.
	\end{align}
\end{theorem}


\section{Conclusions}\label{sec:summary}
Under the realistic assumption that certain nodes of an RMLM supported on a DAG may be unobserved, 
we have given necessary and sufficient conditions for modelling a reduced RMLM 
and a graphical algorithm to construct RMLMs for of a (sub)set of the observed nodes. 
The relation of max-weighted paths between pairs of nodes and the extremal dependence measure of transformed observations is crucial in constructing such reduced models for regularly varying RMLMs. We also provide a statistical algorithm to find pairs of nodes (and larger sets of nodes) that can be modelled by regularly varying RMLMs, study it by simulation and apply it to nutrition intake data. A new functional CLT shows that the estimators of the extremal dependence measure are consistent and asymptotically normally distributed. 

\begin{acks}[Acknowledgments]
We thank the reviewers for unusually constructive comments that have greatly improved the work. 
\end{acks}
\begin{funding}
The work was funded by the Swiss National Science Foundation (project 200021\_178824).
%
\end{funding}

\bibliography{texrefs}
\bibliographystyle{imsart-nameyear.bst}


\newpage

\appendix
\noindent{\Large{\textbf{Appendix}}}
\\
\vspace{0mm}
\setcounter{table}{0}
\renewcommand{\thetable}{A.\arabic{table}}

\noindent The Appendix has seven sections. 

Appendix~\ref{sec:3.4} continues from Section~\ref{sec:MWP} and extends the methodology for regularly varying RMLMs by using knowledge of the identified ancestors among the observed nodes. This serves the purpose of lines 8--17 of Algorithm~\ref{datdalgrmlm}. Some of the findings in Appendix~A rely on results from multivariate regular variation (Appendix~\ref{sec:ARV}) and properties of the coefficient matrix (Lemma~\ref{ineq} in Appendix \ref{Ap:ProofsRMLM}). In order not to interrupt the argument when extending the  identification results of Section~\ref{sec:MWP}, we make forward references to these appendices.

Appendix~\ref{Ap:ProofsRMLM} gives the proofs of the main theorems of Sections~\ref{sec:RMLM},~\ref{sec:RV} and Appendix~\ref{sec:3.4}.

Appendix~\ref{sec:ARV} provides standard definitions and results on regular variation, emphasising those relevant for RMLMs.

Appendix~\ref{sec:stat} proposes an intermediate thresholding procedure that requires what seems to be a novel functional CLT for a random sample, and establishes consistency and asymptotic normality of the estimated scalings and extremal dependence measures given in Section~\ref{funcCLT}.

Appendix~\ref{scalest} uses the previous results to estimate the inputs of Algorithm \ref{datdalg2}.

Appendix~\ref{sec:performance} evaluates the performance of Algorithm~\ref{datdalg2} by simulation, in terms of true and false positive rates for the estimated max-weighted paths enriched by various categories of causal dependence. 

Appendix~\ref{Ap:sim} contains boxplots and results on the sensitivity of Algorithm~\ref{datdalg2} from both the simulation study and the data example, and a discussion concerning the effect of innovations with different tail indices on causal extremal dependence.

\section{Identifying max-weighted paths: the set ${\rm MWP}(\An(K)^c)$}\label{sec:3.4}

We consider the setting of Section~\ref{sec:MWP}, but investigate the influence of a subset $K$ of observed nodes on $X_i$ and $X_j$ for $i,j\in O\setminus K$.  We assume that $i\notin\an(j)$, so we can order them such {that $i<j$.} 
Similar to the discussion preceding equation~\eqref{eq:Aim}, this is ensured by the condition of Lemma~\ref{lemmackappa}, or by~\eqref{cond2} of Theorem~\ref{inc2dagmod}. This parallels the situation in lines 8--17 of Algorithm~\ref{datdalgrmlm}, once $K$ contains at least one node. The objective is to establish a criterion based on the extremal dependence measure to ensure that $(i,j)\in {\rm MWP}(\An(K)^c)$.
Throughout this section we need the following assumptions:

\medskip \noindent
{\bf {Assumptions B:}} \label{assump2}
\begin{enumerate}
	\item[(B1)]
	the random variables indexed by the nodes in $K$ can be represented as an RMLM;
	\item[(B2)]
	all observed ancestors of the nodes in $K$ lie inside $K$, i.e., $\An(K)\cap O\subset K$; and 
	\item[(B3)]
	if $i, j$ have a common hidden confounder $u$ with {$k_1\in K$, i.e., $u\in\an(i)\cap\an(j)\cap\an(k_1)\cap O^c$,} then there exist max-weighted paths $u\rightsquigarrow k_2 \rightsquigarrow i $, $u\rightsquigarrow k_2 \rightsquigarrow j $ for $k_2\in \An(k_1)\cap K$.
\end{enumerate}
The goal is to infer whether we can obtain a larger RMLM by adding the node variables $X_i$ and $X_j$ to those representing the observations on nodes in $K$.

Assumptions (B1)--(B3) follow naturally from the causal ordering of the nodes and statements (i)--(ii) of Theorem~\ref{prop.dat}, and the iterative nature of Algorithm~\ref{datdalgrmlm}. 
In particular, Assumption (B3) implies that when considering the extension of an RMLM by adding the node variables $(X_i,X_j)$,
we may disregard all innovations indexed by nodes in $\An(K)$. By definition of the set $V^K$ in Algorithm~\ref{datdalgrmlm}, this is equivalent to $i,j\in V^K$.
Therefore, the only relevant innovations are those indexed in $\An(i)\cup \An (j) \cap \An(K)^c$.

Recall that Lemma~\ref{lemmaid1} (i) states that there is a max-weighted path $k\rightsquigarrow j \rightsquigarrow i$  for a pair of nodes $(i, j)$ if and only if the relation $a_{jj}a_{ik}=a_{jk}a_{ij}$ holds for some $k\in\An(i)\cap\An(j)=\An(j)$.  
The previous paragraph implies that  it suffices that this relation now holds for all $k\in \An(i)\cap \An (j) \cap \An(K)^c$.

To make this mathematically precise, we define {the random variables} 
$$M_{K}\coloneqq \vee_{k\in K} X_k\quad\mbox{and}\quad M_{ K, r}\coloneqq \vee_{k\in K \cup \{r\}} X_k,$$ 
which, by Proposition~\ref{prop:easy}, can be represented as 
\begin{align}\label{eq:MK}
M_{ K, r} &= \bigvee_{k_1\in \An(r)} a_{r k_1}Z_{k_1}  \vee \underset{o\in\An(K) }{\bigvee} \bigvee_{k_2\in\An(o)} a_{ok_2}Z_{k_2}\nonumber \\
&=\underset{k_1\in\An(r)\setminus\An(K)}{\bigvee}a_{rk_1}Z_{k_1} \vee \underset{o\in\An(K) }{\bigvee} \bigvee_{k_2\in\An(o)} a_{ok_2}Z_{k_2}\nonumber\\
&=:\underset{k_1\in\An(r)\setminus\An(K)}{\bigvee}a_{rk_1}Z_{k_1} \vee M_{K},\quad r\in\{i,j\},
\end{align}
where for the second line we have used Assumption~(B3). 
By Lemma~\ref{lemmaid1} (i), the latter assumption implies that for $k_1\in \An(j)\cap\An(K)\cap O^c$, $a_{jk_1}a_{oo}=a_{ok_1}a_{jo}$ for some $o\in K\cap\An(j)$. Using Lemma~\ref{ineq}, this gives $a_{jk_1}<a_{ok_1}$.

With $M_{K}\coloneqq\vee_{o\in\An(K) }\vee_{k_2\in\An(o)} a_{ok_2}Z_{k_2}$ as in \eqref{eq:MK} define  
\begin{align}\label{eq:T3kappa}
\boldsymbol{T}^{K ij}_3 =({T}^{K ij}_{31},{T}^{K ij}_{32}) \coloneqq (M_{K,i}-M_{K},  M_{K,j}-M_{K}).
\end{align}

As in the discussion leading to Lemma~\ref{lemmaid1}, {we now connect} the extremal dependence measure of transformations of $\boldsymbol{T}^{K ij}_3$ with membership of $(i, j)$ to ${\rm MWP}(\An(K)^c)$. 
By Lemma~\ref{genmaxspect5CK} the atoms of the angular measure of $\boldsymbol{T}^{Kij}_3$ are obtained by normalising the non-zero columns of the matrix 
\begin{align}\label{ATim3}
A_{\boldsymbol{T}^{Kij}_3}= \begin{bmatrix}
0& \cdots & a_{ii} & \cdots & a_{i,j-1} & a_{ij} &  0  & a_{i,j+2} & \cdots & 0  & a_{ik} & 0  & \cdots & 0\\
0& \cdots & 0 & \cdots & 0 & a_{jj} &  0  & a_{j,j+2} & \cdots & 0 & a_{jk}& 0 & \cdots & 0
\end{bmatrix}\in\R^{2\times D},
\end{align}
where the zero columns indices correspond to innovations indices in $\An(K)$, and only the columns indexed in $(\An(i)\cup \An (j) )\cap \An(K)^c$ are non-zero.

This leaves us in a setting parallel to that of Section~\ref{sec:MWP}
and we note that Lemma~\ref{lemmaid1}~(i)-(ii) remain valid for the vector $\boldsymbol{T}^{Kij}_3$ and the matrix in \eqref{ATim3} with indices $k\in\An(j)\cap\An(K)^c$.
We state a modified version of Lemma~\ref{lemmaid1}~(iii) for the pair $(X_i, X_j)$ which uses $K$.

\begin{lemma}\label{lemmackappa}
Consider the subvector {$(X_i, X_j, X_K)\in{\rm RV}_+^{2+|K|}(2)$ from an RMLM satisfying Assumptions~A and~B}, and let $(a_{ij},a_{jj})$ and $(a_{ik},a_{jk})$ be the $j$-th and $k$-th columns of $A_{ij}$ in ~\eqref{eq:Aim}.
	If there exists some $a>1$ such that
	\begin{align*}
	 \sigma_{M_{i,aj,{a}K}}^2=\sigma_{M_{i,j,K}}^2 +(a^2-1)\sigma_{M_{{j,K}}}^2\quad{\rm and}\quad 
	\sigma_{M_{ai,j,{a}K}}^2<\sigma_{M_{i,j,K}}^2 +(a^2-1)\sigma_{M_{i,{K}}}^2,
		\end{align*}
		then $i\notin \an(j)$, $\An(i)\cap\An(j)\cap\An(K)^c\neq \emptyset$, and $a_{jk}\geq a_{ik}$ for all $k\in\An(j)\cap\An(K)^c$;
	otherwise, either $\An(i)\cap \An(j)\cap\An(K)^c=\emptyset$, or  there exists some $k\in\An(j)\cap\An(K)^c$ such that there are no max-weighted paths $k\rightsquigarrow j\rightsquigarrow i$.
\end{lemma}

{This suggests that we apply to $\boldsymbol{T}^{Kij}_3$ from \eqref{eq:T3kappa} the procedure as applied to $\boldsymbol{T}^{ij}$ in \eqref{def:tim}.
Table~\ref{table:atomsO} provides the atoms of the angular measure for the vectors leading to  $\boldsymbol{T}^{Kij}_3$. }

\begin{table}[htbp] 
	{\centering\caption{Vectors $\tilde a_k$ used to obtain the atoms of transformations $(X_i,X_j, M_{K})$ in equation~\eqref{eq:T3kappa}. \label{table:atomsO}}}
	\centering
	\fbox{%
		\begin{tabular}{l|l|l}
			Notation & Vector & $\tilde a_k $  \\
			\hline
			$\boldsymbol{T}^{Kij}_1$ & $(M_{K},X_i,X_j)$ & $(\underset{r\in K}{\vee}a_{rk}, a_{ik}, a_{jk})$ \\
			$\boldsymbol{T}^{Kij}_2$ & $(M_{K},M_{K,i},M_{K,j})$ & $(\underset{r\in K}{\vee}a_{rk}, \underset{r\in K\cup\{i\}}{\vee}a_{rk}, \underset{r\in K\cup\{j\}}{\vee}a_{rk})$\\
			$\boldsymbol{T}^{Kij}_3$ & $(M_{K,i}-M_{K},M_{K,j}-M_{K})$ & $(\underset{r\in K\cup\{i\}}{\vee}a_{rk}-\underset{r\in K}{\vee}a_{rk} , \underset{r\in K\cup\{j\}}{\vee}a_{rk}-\underset{r\in K}{\vee}a_{rk}, )$ \\
		\end{tabular}}
	\end{table}
	
	Now, define for $0<c_1<1$ and $c_2>0$
	\begin{align}\label{TKij}
    M^K_{c_1i,c_2j} &\coloneqq\max(c_1{T}^{Kij}_{31}, c_2{ T}^{Kij}_{32}), and\nonumber\\
	\boldsymbol{T}^{Kij} &\coloneqq(M^K_{c_1i,j}-c_1{T}^{Kij}_{31}, (1+c_2) T^{Kij}_{32}+c_2{T}^{Kij}_{31}-c_2M^K_{ij});
	\end{align}
    this is a linear function of $(M^K_{c_1i,j},M^K_{ij},{T}^{Kij}_{31},{T}^{Kij}_{32})$.
    Table~\ref{table:atomsO} is a consequence of Lemmas~\ref{breimanlemma}--\ref{genmaxspect5CK}; \ref{genmaxspectlast} gives the atoms of the angular measure of $\boldsymbol{T}^{Kij}$.

\begin{lemma}\label{lemmaid3}
		Let $\boldsymbol{T}^{K ij}$ be as in~\eqref{TKij} for $0<c_1\leq 1$ and $c_2>0$.
		If the condition of Lemma~\ref{lemmackappa} holds, 
		then $\boldsymbol{T}^{K ij}\in\mathrm{RV}_+^2(2)$ has discrete angular measure
        with atoms $\tilde{a}_k/\|\tilde{a}_k\|$ derived from Table~\ref{table:atomsO}  for
         non-zero vectors 
         $\tilde{a}_k=(c_1a_{ik}\vee a_{jk} -c_1a_{ik}, (1+c_2)a_{jk}+c_2a_{ik}-c_2(a_{ik}\vee a_{jk}))$
         for $k\in\An(j)\cap\An(K)^c$.
		Moreover, $\tilde a_k=(0,0)$ for $k\notin\An(j)\cap\An(K)^c$. 
	\end{lemma}
    
	Let $\boldsymbol{\tilde T}^{Kij}=(\tilde T_1^{Kij},\tilde T_2^{Kij})$ denote the standardised version of $\boldsymbol{T}^{Kij}$, analogous to $\boldsymbol{T}^{i j}$ and $\boldsymbol{\tilde T}^{i j}$ defined in \eqref{eq:Tscaled}. The corresponding Lemmas~\ref{breimanlemma}--\ref{genmaxspectlast} can be found in Appendix~\ref{sec:ARV}. 
	The next theorem uses the extremal dependence measure $\tau_{Kij}^2\coloneqq\sigma_{{\tilde T}^{Kij}_{1},{\tilde T}^{Kij}_{2}}^2$ between the components of $\bs{\tilde{T}}^{K ij}$ to provide necessary and sufficient conditions for a pair $(i, j)$ to lie in ${\rm MWP}(\An(K)^c)$. The proof is given in Appendix~\ref{Ap:ProofsRMLM}. 
	
	\begin{theorem}\label{inc2dagmod}
		Let {$\boldsymbol{X}\in {\rm RV}_+^D(2)$} be an RMLM satisfying Assumptions~A and~B. 
		Suppose that we observe nodes $i,j\in V$ such that $\An(K)\cap\{i,j\}=\emptyset$, and that for some $a>1$, 
		\begin{align}\label{cond2}
		\sigma_{M_{i,aj,{a}K}}^2=\sigma_{M_{i,j,K}}^2 +(a^2-1)\sigma_{M_{{j,K}}}^2,
		\quad\quad
		\sigma_{M_{ai,j,{a}K}}^2<\sigma_{M_{i,j,K}}^2 +(a^2-1)\sigma_{M_{i,{K}}}^2.
		\end{align}  
		Consider $\bs{{T}}^{K ij}$ as in \eqref{TKij} with $0<c_1\leq 1$ and $c_2>0$ and its standardised version $\bs{\tilde{T}}^{K ij}$. Then:
		\begin{enumerate}
			\item[(i)] $(i,j)\in {\rm MWP}(\An(K)^c)$ if and only if 
			$\tau_{Kij}^2=1$  
			In this case, $(X_i,X_j, \boldsymbol{X}_{K})$ can be represented as an RMLM;
			\item[(ii)] if $i\notin \emph{de}(j)$, then
			$\tau_{Kij}^2<1$, and $(X_i,X_j, \boldsymbol{X}_{K})$ cannot be represented as an RMLM.
		\end{enumerate}
	\end{theorem}

	We refer to Example~\ref{exdag} for an application of Theorem~\ref{inc2dagmod}. 
	\begin{example}
		Consider an RMLM satisfying the conditions of Theorem~\ref{inc2dagmod} and supported on the DAG in Figure~\ref{fig:M1}. 
        Let $K=\{8,9,10\}$ contain the known (source) nodes. 
		Indeed, this represents a trivial RMLM consisting of three source nodes, thus satisfying Assumptions~B. 
        Suppose we wish to know whether $(X_2, X_4, X_8, X_9, X_{10})$ can be modelled as an RMLM. 
        For the pair $(i,j)=(2,4)$, we have
		\begin{align*}
		X_4&=a_{44}Z_4\vee a_{47}Z_7\vee\frac{a_{48}}{a_{88}}X_8\vee\frac{a_{49}}{a_{99}}X_9,\\
		X_2&=a_{22}Z_2 \vee a_{27}Z_7 \vee  \frac{a_{24}}{a_{44}}X_4\vee \frac{a_{28}}{a_{88}}X_8\vee\frac{a_{29}}{a_{99}}X_9,
		\end{align*}
		with hidden node $7$.  
        First, we need to consider the known nodes in $K$. 
        By Lemma~\ref{genmaxspect5CK} the column vectors of the angular measure of  $\boldsymbol{T}^{K 24}_3=(M_{2,8,9,10}-M_{8,9,10}, M_{4,8,9,10}-M_{8,9,10} )$ arranged into the matrix  $A_{\boldsymbol{T}^{K 2 4}_3}$ as in \eqref{ATim3}, will be non-zero for those indices corresponding to $Z_2$, $Z_4$ and $Z_7$. 
        More specifically,
		$$
		A_{\boldsymbol{T}^{K 2 4}_3}=
		\begin{bmatrix}
		0 & a_{22} & 0 & a_{24} & 0 & 0 & a_{27} & 0 & 0 & 0 \\
		0 & 0 & 0 & a_{44} & 0 & 0  & a_{47} & 0 & 0 & 0
		\end{bmatrix},
		$$
		which removes the effect of $Z_8$, $Z_9$ and $Z_{10}$.
		We check whether the path $7\to4\to2$ is max-weighted, so that we can disregard the term $ a_{27}Z_7$, and hence ensure exogeneity of the innovation $a_{44}Z_4\vee a_{47}Z_7$.  
        If condition \eqref{cond2} holds, then this path is max-weighted, that is $(2,4)\in {\rm MWP}(7)$ if and only if Theorem~\ref{inc2dagmod} $(i)$ is satisfied for $\boldsymbol{\tilde{T}}^{K 24}$, in which case we can ignore the effect of node $7$.  Using Proposition~\ref{prop.incmat}, we then obtain
		\begin{align*}
		X_2=a_{22}Z_2\vee \frac{a_{24}}{a_{44}}X_4,\quad X_4=a_{44}^{*}Z^{*}_4\vee\frac{a_{48}}{a_{88}}X_8\vee\frac{a_{49}}{a_{99}}X_9,
		\end{align*}
		where $a_{44}^{*}=(a_{44}^2+a_{47}^2)^{1/2}$, and $Z_4^{*}=(a_{44}Z_4\vee a_{47}Z_7)/a_{44}^{*}$. A similar analysis applies for the remaining nodes.
	\end{example}

\section{Proofs of Sections~\ref{sec:RMLM},~\ref{sec:RV} and Appendix~\ref{sec:3.4}} \label{Ap:ProofsRMLM}

The notion of generations of a DAG proves useful throughout the proofs. Here we provide the definition, referring to \citet[Section~2.1]{KK} for further properties.
 \begin{definition}\label{def:gen}
 In a DAG $\mathcal{D}=(V, E)$, a {generation of nodes} is the set of all nodes that have a longest path of same length from any source node. {Let $G_0$ denote the source nodes}, and define the $k$-th generation of nodes by
$$G_k=\{i\in V\setminus\underset{\ell<k}{\cup} G_{\ell}: \underset{p_{ji}\in P_{ji}: \hspace{1mm}j\in G_0}{\max}  {|p_{ji}|=k}\}.$$
\end{definition}

\bproof[\textbf{Proof of Theorem~\ref{prop.dat}}]

We use induction over the number of generations of the observed DAG.  

Skipping the case of a trivial DAG with a single generation $G_0^O$, we initiate the induction by showing that for two observed generations, $G_0^O, G_1^O$, the conditions in (i) and (ii) are necessary and sufficient. 

Regarding necessity, we notice that {an RMLM on a 2-generation DAG consists} of a set of source node variables $G_0^O$ and their observed children, $G_1^O$, connected by at most a path of length one. {Because $\bs X^O$ is an RMLM, the source nodes in $G_0^O$ satisfy (i), and therefore $V_0^O=G_0^O$.}
Then the recursion \eqref{semequat1} of this RMLM reads
\begin{align*}
X_\ell &= c_{\ell \ell} Z_\ell,\quad \ell\in V_o^O,\\
X_i &= \bigvee_{k\in\pa(i)\cap O} c_ {ik} X_k \vee  c_{ii} Z_i,\quad i\in G_1^O.
\end{align*}
This means that the innovations entering the representation of any source node, say $\ell$, can have a max-weighted path to $\ch(\ell)\cap O$ only via $\ell$ 
and hence (i)(a) holds. 
For assertion (i)(b), either $j\in V_0^O$ and then $X_j=c_{jj} Z_j$, or $j\in G_1^O$, but 
$j\notin{\ch(\ell)}$, hence again $X_j=\vee_{k\in\pa(j)\cap O\setminus \{\ell\} } c_ {jk} X_k\vee c_{jj} Z_j$.
By the exogeneity of the innovations of the RMLM, there can be no common hidden ancestor $u\in\an(i)\cap\an(j)\cap O^c\setminus \an(V_0^O)$ for $i,j\in G_1^O$, so the conditions (ii) are void.

Next, we show sufficiency of (i) and (ii). 
Since the observed DAG has two generations, we use representation~\eqref{minrep} for {$i\in G_1^O$ and $\ell\in G_0^O$}, yielding
\begin{align}\label{ells}
X_{i} &= \bigvee_{k\in \an^O(i)}\frac{a_{ik}}{a_{kk}} X_k \vee \bigvee_{k\in \An^{O^c}(i)} a_{ik}Z_k,\quad i\in G_1^O,\nonumber\\
X_{\ell} &= \bigvee_{k\in\An^{O^c}(\ell)} a_{\ell k}Z_k\eqqcolon \tilde{Z}_\ell, \quad \ell\in G_0^O.
\end{align}
We show that an innovation $Z_{u}$ can appear only in one of the equations in~\eqref{ells}, either $u\in\An^{O^c}(i)$ or $u\in\An^{O^c}(\ell)$.
By definition, the former occurs only when there are no max-weighted paths $u\rightsquigarrow \ell\rightsquigarrow  i$ for $\ell\in G_0^O$, and thus, by $(i)(a)$ when $u\in\an(i)\setminus\an(\ell)$. 
Otherwise, if there is a max-weighted path $u\rightsquigarrow \ell\rightsquigarrow  i$, then $u\notin\An^{O^c}(i)$, but $u\in\An^{O^c}(\ell)$.

Now, let  {$j\in O\cap{\rm De}(\ell)^c$}, then $\an(\ell)\cap\an(j)\cap O^c=\emptyset$ and $\an(\ell)\cap O=\emptyset$. Thus, if we consider the representation of $X_j$, we have that $\An^{O^c}(j)\cap \An^{O^c}(\ell)=\emptyset$ by (i)(b). As the nodes $\ell\in G_0^O$ are the only ones that satisfy the properties in (i), we have $G_0^O=V_0^O$.

 {So far we looked at pairs $(i,\ell)$ where $\ell\in G_0^O$ and $i\in G_1^O$ or $i\in O\cap{\rm De}(\ell)^c$}. 
 It remains to investigate $(i,j)$ when both $i,j\in G_1^O$.
Consider the representation of $X_j$, similar to that of $X_i$ in~\eqref{ells}, for $j\in G_1^O\setminus\{i\}$. We want to show that $\An^{O^c}(i)\cap\An^{O^c}(j)=\emptyset$. 
Suppose that there exists $u\in\an(i)\cap\an(j)\cap O^c$ and $u\in \An^{O^c}(j)$. Then, $u\notin \an(\ell)$ for $\ell \in G_0^O$. 
Furthermore, as the observed DAG has two generations, there are no paths between $i$ and $j$ for $i,j\in G_1^O$. So, by (ii)(b) there must exist a node $k\notin G_0^O$ such that $i,j\in{\rm MWP}^k(u)$. However, as the DAG only has two generations, no such node $k$ exists, and therefore there can be no $u$ such that $u\in\an(i)\cap\an(j)\cap O^c$ and $u\in \An^{O^c}(j)$. In particular, this implies that $\An^{O^c}(i)\cap\An^{O^c}(j)=\emptyset$.


Thus, conditions (i) and (ii) suffice to show that we have obtained an RMLM on a DAG with two generations.

{Next, using the inductive hypothesis, suppose that there are $o-1<d$ such nodes belonging to $p-1$ observed generations (${\cup}_{i\leq p-1}{G^O_i}$), and that we can construct an RMLM on a DAG composed of the nodes in ${\cup}_{i\leq p-1}{G^O_i}$ if and only if (i) and (ii) are satisfied. Now, suppose we observe an additional generation, say $G^O_p$, and {assume for the moment that} $G^O_p$ consists of one node, say ${o_{p_1}}$. 
We first show the necessity of (ii). By 
\eqref{minrep} we may write 
	\begin{align}\label{proofrep}
	X_{o_{p_1}} &= \bigvee_{j\in \an^{O}({o_{p_1}})}\frac{a_{{o_{p_1}}j}}{a_{jj}}X_j\vee \bigvee_{j\in  \An^{O^c}({o_{p_1}})} a_{{o_{p_1}}j}Z_j.
	\end{align}
	As before, we need to ensure that the innovations of the hidden ancestors of node ${o_{p_1}}$ appearing in $\An^{O^c}({o_{p_1}})$ are exogenous. We do so by contradiction: suppose that the observed $o$ nodes are generated via an RMLM on a DAG, and that there exists some $u\in O^c\cap \an({o_{p_1}})\cap\an(j)$ for some $j\in{\cup}_{i\leq p-1}{G^O_i}$ such that neither (a) nor (b) of (ii) hold for the pair $({o_{p_1}}, j)$.
	Then,
	\begin{itemize}
		\item[(C1)] for $j\in\an({o_{p_1}})$, no max-weighted paths from $u$ to ${o_{p_1}}$ pass through $j$;
		\item[(C2)]  for every $k\in\an({o_{p_1}})\cap\an(j)\cap O$ there are no max-weighted paths from $u$ to both ${o_{p_1}}$ and  $j$ passing through $k$.
	\end{itemize}
	(C1) ensures that $Z_u$ will enter $X_{o_{p_1}}$ either via some node in $\an^{O}({o_{p_1}})\setminus\{j\}$ or via the set $\An^{O^c}({o_{p_1}})$. 
	In the latter case, $Z_{u}$ entering $\An^{O^c}({o_{p_1}})$ implies that  the innovations involved in $\An^{O^c}({o_{p_1}})$ are not exogenous, as $u$ enters the representation of $j$ as well, either via $\an^{O}(j)$, or $\An^{O^c}(j)$. This contradicts the assumption that the observed DAG corresponds to an RMLM. If $Z_{u}$ enters the representation of $X_{o_{p_1}}$ via $\an^{O}({o_{p_1}})\setminus\{j\}$, then there exists some node $r\in\an^{O}({o_{p_1}})\setminus\{j\}$ such that $u\in O^c\cap \an(r)\cap\an(j)$. 
	Note, however, that by the induction hypothesis, since the nodes in the first $p-1$ generations form an RMLM, and since $j, r\in {\cup}_{i\leq p-1}G^O_i$, either there is a max-weighted path $u\rightsquigarrow r\rightsquigarrow j\rightsquigarrow {o_{p_1}}$ or $u\rightsquigarrow j\rightsquigarrow r\rightsquigarrow {o_{p_1}}$, or there exists some node $k\in\an(r)\cap\an(j)\cap O \cap {\rm de}(u)$ such that there are max-weighted paths from $k$ to both $r$ and $j$. The first two such paths contradict (C1). The third path would also imply that there is a max-weighted path from $k$ to both $j$ and ${o_{p_1}}$, hence contradicting (C2). Similarly, $u\in \An^{O^c}(j)\cap\An^{O^c}({o_{p_1}})$ contradicts the exogeneity of  the innovations composing the two respective sets due to the presence of $Z_u$.}

Under (C2) $Z_{u}$ would appear in $X_j$ via either  $\an^{O}(j)$ or $\An^{O^c}(j)$, and in $X_{o_{p_1}}$ via either $\an^{O}({o_{p_1}})\setminus\{j\}$ or $\An^{O^c}({o_{p_1}})$. Clearly, $u$ appearing in $\An^{O^c}(j)\cap \An^{O^c}({o_{p_1}})$, or in $\an^{O}(j)\cap \An^{O^c}({o_{p_1}})$ contradicts the exogeneity of the innovations involved in $\An^{O^c}({o_{p_1}})$. Similarly, $u$ appearing in $\An^{O^c}(j)\cap\an^{O}({o_{p_1}})\setminus\{j\}$, would imply that there is a max-weighted path from $u$ to ${o_{p_1}}$ via $r\in\an^{O}({o_{p_1}})\setminus\{j\}$. But then $u\in \an(r)\cap \an(j)$, and since $j, r\in {\cup}_{i\leq p-1}G^O_i$, by the induction hypothesis there exists some $k$ such that there are max-weighted paths from $k\in\an(r)\cap\an(j)\cap O \cap {\rm de}(u)$ to both $j$ and $r$, and hence to both $j$ and ${o_{p_1}}$, a contradiction to (C2). Finally,  $u$ appearing through ancestors in $\an^{O}(j)\cap\an^{O}({o_{p_1}})\setminus\{j\}$ implies that there are max-weighted paths from $u$ to $j$ passing through $r_1\in \an^{O}(j)$, and to ${o_{p_1}}$ passing through $r_2\in\an^{O}({o_{p_1}})\setminus\{j\} $. Since $r_1, r_2\in {\cup}_{i\leq p-1}G^O_i$, and $u\in\an(r_1)\cap\an(r_2)$, by the induction hypothesis there exists some $m\in\an^{O}(r_1)\cap\an^{O}(r_2)$ such that there are max-weighted paths from $u$ to both $r_1$ and $r_2$,  and hence also to $j$ and ${o_{p_1}}$, passing through $m$, again a contradiction. 
	
Similar arguments apply when $G^O_p$ has more than one node, {i.e., $o_{p_1}$ and $o_{p_2}$. 
Then we can also write the variable of the latter as 
		\begin{align}\label{proofrep2}
		X_{o_{p_2}} &= \bigvee_{j\in \an^{O}({o_{p_2}})}\frac{a_{{o_{p_2}}j}}{a_{jj}}X_j\vee \bigvee_{j\in  \An^{O^c}({o_{p_2}})} a_{{o_{p_2}}j}Z_j.
		\end{align}
For pairs $(o_{p_2}, j)$ points (C1) and (C2) can be ruled out similarly as for $(o_{p_1}, j)$.  It remains to consider the hidden nodes $u\in O^c\cap \an(o_{p1})\cap\an(o_{p_2})$ for the pair $(o_{p_2}, o_{p_1})$. The point (C1) is void as there is no path between nodes $o_{p_1}$ and $j=o_{p_2}$, both of which lie in $G^O_p$, so we need only consider (C2) for $(o_{p_2}, o_{p_1})$. We only argue against the case of $u$ appearing in $\An^{O^c}(o_{p_2})\cap\an^O(o_{p_1})$; arguments for the other cases are identical to those presented for $j\in\cup_
{i\leq p-1}G_i^O$ in the preceding paragraph. This would imply that there exists a max-weighted path from $u$ to $o_{p_1}$  via some $r\in \an^O(o_{p_1})$. As $u\in\An^{O^c}(o_{p_2})$, we have that $u\in\an(r)\cap\an(o_{p_2})$.  Note, however, that this contradicts the exogeneity of $Z_u$, since $r\in\cup_{i\leq p-1}G_i^O$. We can use the arguments similar to the preceding paragraph to show that (C2) contradicts the formulation of the observed node variables as an RMLM.
This shows the necessity of condition (ii). Note that (i) is a special case of (ii) for $j\in V_0$.}

To show sufficiency when there are more than two generations we use the inductive hypothesis and suppose that (i) and (ii) suffice for a DAG with $p-1$ generations to generate an RMLM. 

Consider the generation $G^O_p$ of a DAG. Let ${o_{p}}\in G^O_p$ and focus on the representation in~\eqref{minrep}.
It suffices to show that any $u\in \An^{O^c}({o_{p}})$ does not appear in any set $\An^{O^c}({j})$ for $j\in O\setminus {o_p}$. 
For any such $j$ we have the three mutually exclusive possibilities:
\begin{itemize}
	\item[(D1)]  $j\in \an({o_{p}})\cap O $;
	\item[(D2)] there exists $ q\in \an(j)\cap \an({o_{p}})\cap O$ and $j\notin\an(o_p)$; or 
	\item[(D3)] $\an(j)\cap\an({o_{p}})\cap O=\emptyset$.
\end{itemize}

Under (D3), suppose that there exists $u\in\an(j)\cap\an({o_{p}})\cap O^c$. This immediately leads to a contradiction to (i)(a) and (ii)(b), since there can exist no node $k\in \an(i)\cap\an(j)\cap O$ such that $i,j\in {\rm MWP}^k(u)$. Thus, we cannot have $u\in\an(j)\cap\an({o_{p}})\cap O^c$ if $\an(j)\cap\an({o_{p}})\cap O=\emptyset$. It follows that $\An^{O^c}({j})\cap \An^{O^c}({o_p})=\emptyset$ for pairs under (D3).

Under (D1), if $j\in \an({o_{p}})$, then by (ii)(a) each $u\in\an(j)\cap\an({o_{p}})\cap O^c$ will have a max-weighted path to ${o_{p}}$ via $j$, or to both $j$ and ${o_{p}}$ via some $k\in\an(i)\cap\an({o_{p}})\cap O$, which ensures that  $u\notin\An^{O^c}({o_{p}})$.

Under (D2), by condition (ii)(b) for every $u$ in $\an(j)\cap\an({o_{p}})\cap O^c$ there exists some  $k$ in $\an(j)\cap \an({o_{p}})\cap O$ such that there are max-weighted paths from $u$ to both $j$ and ${o_{p}}$, respectively, passing through $k$. 
This shows that ${u}$ cannot appear in $\An^{O^c}(o_p)$ or in $\An^{O^c}(j)$.

Therefore, all innovations in $\An^{O^c}({o_{p}})$ must be exogenous to ${o_{p}}$ and cannot enter the representation of the other nodes. Thus, $X_{o_{p}}$ for ${o_{p}}\in G^O_p$ can be represented as a max-linear function of some ancestral nodes and some exogenous innovations via representation \eqref{proofrep}, and so, by induction, $\boldsymbol{X}_O$ can be formulated as an RMLM.
\eproof

\begin{proof}[Proof of Proposition~\ref{graphalgprop}]
    {We note that the membership of a pair $(i,j)$ in MWP or in MWP($\An(K)^c$) for a certain set $K$, is an asymmetric relation (by acyclicity), and implies that $(j,i)$ cannot belong to either set, nor that $\An(i)\cap\An(j)\cap \An(K^c)=\emptyset$.} 
    Suppose first that the {$d$ node variables} indexed in $O$ can be modelled as an RMLM. 
    Then, by Theorem~\ref{prop.dat} (i), in line 7 we must have that $z_j=d-1$ at the end of the initial step for all $j$ that are source nodes {and $z_j<d-1$ for every non-source node $j$.} 
    We select one {source} node, say $K=\{j_0\}$.
	
	In the second step in line 8, $V^K$ will be composed of $d-1$ remaining nodes. 
    {Because the observed node variables form an RMLM, for any source node, say $j$, that was not selected in the initial step, we have $z_j=d-2$, again by Theorem~\ref{prop.dat}(i).} 
    {For nodes $j$ such that $j\in\des(j_0)$ and $\an(j)\cap O=\{j_0\}$,} either of the conditions of Theorem~\ref{prop.dat} (ii) must be met. 
    As these correspond to the conditions in lines 12 and 13 of the algorithm, we have that $z_j=d-2$ at the end of the second step. Again, as before, it holds that $z_j<d-2$ for any other type of node $j$. {We add a new index to $K=\{j_0\}$ as requested in line 16.}
	
	The algorithm proceeds in a similar fashion, augmenting the set $K$ by one index {at each iteration step;} {indeed, because of Theorem~\ref{prop.dat} (ii), 
    at each step $p$ there exists at least one node $j$ which satisfies either of the conditions in lines 12 or 13, giving $z_j=d-p$ {and $|V^K|= d-p$} at the end of the step.} 
	
	In the last step, we have $V^K=\{i_d\}$, composed of a single node, for which  {$z_{i_d}=0$,} after which the RMLM can no longer be extended and the algorithm ends.
	
	{Suppose now that the algorithm outputs all $d$ nodes in $O$.
    As the dimension of $K$ increases by at most one, this implies that the algorithm runs for $d$ iterations, and this occurs only if at each, but the last step, there is at least one $z_j>0$. To see why, note that if $\max(\bs z)=0$ before $d$ iterations, then $|K|<d$ and $V^K=\emptyset$; however, this implies that the algorithm stops and outputs a set $K$ of dimension less than $d$, which is a contradiction.
	

In particular, at each step $p$ we must have $|V^K|=d-p$. To see why, suppose that we have selected $p-1$ nodes, and that $\max(z_j)<d-p$ in the $p$-th step, attained, say, at $j_p$. 
This implies that there exists a node $i_p\in V^K$ such that neither $(i_p, j_p)\in {\rm MWP}(\An(K)^c)$, nor $\An(i_p)\cap\An(j_p)\cap\An(K)^c=\emptyset$. 
After we update $K$ with the new node $j_p$, we see that $i_p$ cannot be a member of $V^K$, hence,
{$|V^K|\le d-p-1$ and $|K|= p$. }
The set $K$ is updated with elements from $V^K$, so that it can contain at most $d-1$ elements, however, this is a contradiction to $|K|=d$ which the algorithm outputs.}
	
	Note that for $j_{p+1}$ to be in $V^K$ with $j_p \in K$, either $\an(j_{p+1})\cap\an(K)=\emptyset$ or $j_p\in {\rm MWP}^K({\An(K)})$. 
    This ensures the exogeneity of the innovations {of node variables} not in $O$ which are common ancestors of $j_{p+1}$ and $K$, verifying conditions (i) and (ii) of Theorem~\ref{prop.dat} for the pairs $(j_{p+1}, k)$, $k\in K$. 
    For this fixed $j_{p+1}$, and for the remaining $d-p-1$ nodes $i\in V^K$, either $\An(i)\cap\An(j_{p+1})\cap\An(K)^c=\emptyset$, or $(i,j_{p+1})\in{\rm MWP}^K({\An(K)^c})$, {or $(j_{p+1}, i)\in{\rm MWP}^K({\An(K)^c})$}. 
    This verifies the relations established in conditions (i) and (ii) of Theorem~\ref{prop.dat} for all pairs that contain nodes from the set {$V^K$, and thus also from $K$. In the last step of the algorithm, we have $K=O$, therefore showing that the nodes in $O$ can be modelled as an RMLM.}
	
	
	Assume now the setting when only strict subsets $K$ of $O$ can be modelled as an RMLM, and assume without loss of generality that $|K|>1$. Note that these may be several, possibly disjoint, subsets as illustrated in Example~\ref{subsets}. Then, the set $K$ in line 7 will be non-empty, since there exists $j$ which satisfies either of the conditions in lines 3 and 4 for at least one node $i\neq j$, giving $z_j>0$; thus we can extend the DAG by at least one node.  In case no non-trivial RMLMs, i.e., with more than one node, can be formed, then none of the conditions in lines 3 and 4 is met, giving $z_j=0$ for all $j\in O$; from these line 7 selects $j_0$, say, $K=\{j_0\}$, and line 8 gives $V^K=\emptyset$, returning a trivial one dimensional RMLM.  
	
 To show that $K$ is well-ordered, suppose that for $K\neq \emptyset$, there exists $i,j\in V^K$, such that $j\in\an(i)$, and $(i,j)\in {\rm MWP}(\An(K)^c)$, so that $\An(j)\subset \An(i)$.
 If $k\in V^K$ is such that $\An(k)\cap\An(i)\cap \An(K)^c=\emptyset$, then it follows that $\An(k)\cap\An(j)\cap \An(K)^c=\emptyset$. Furthermore, because $(i,j)\in {\rm MWP}(\An(K)^c)$, if $(k,i)\in {\rm MWP}(\An(K)^c)$,   it follows directly that $(k,j)\in {\rm MWP}(\An(K)^c)$. Therefore, $z_j>z_i$, implying that $j$ will be selected before node $i$.
	
	That Algorithm~\ref{datdalgrmlm} outputs the DAG constructed via the minimal representation in~\eqref{minrep}, follows directly, since, if $j\in \des(k)\cap\an(i)$ and $(i,j)\in {\rm MWP}(\An(k)\setminus (\an(k) \cap K))$, then there is a max-weighted path $k\rightsquigarrow j \rightsquigarrow i$, and thus $k\notin \An^K(i)$ in representation~\eqref{minrep} of node $i$. 
	\end{proof}

\begin{proof}[\textbf{Proof of Proposition~\ref{prop.incmat}.}]
We consider only the observed nodes $O$ and proceed via induction over the generations {(see Definition~\ref{def:gen})} of the DAG of the observed part $\bsx_O$ of the RMLM.
{Under the conditions of Theorem~\ref{prop.dat}, the sets $V_0^O$ and  $G_0^O$ are the same and consist of the source nodes.}
		We start with the source nodes $V_0^O$.  Now $\An(i)=\An^{O^c}(i)$ for $i\in V^O_0$, and we find from~\eqref{minrep} that
  \begin{align}\label{proofMLrep}
  X_i = \bigvee_{k\in \An^{O^c}(i)} a_{ik} Z_k\, = \, a^{*}_{ii} Z^{*}_{i},
  \end{align}
		where $a_{ii}^{*2}=\sum_{j\in \An (i)} a_{ij}^2$, and $Z^{*}_i= \vee_{j\in \An{(i)}} (a_{ij}/a_{ii}^{*}) Z_j$ by Lemma~\ref{mlcomb}. Moreover $a_{ij}^{*2}=0$ for $j\neq i$ by Theorem~\ref{prop.dat} (i)(b), because the summation covers all ancestors of $X_i$.
  Furthermore the source nodes and hence $Z^{*}_i$ are independent by Theorem~\ref{prop.dat}(i)(b). 
 As $\bsx_O$ lives on a well-ordered DAG, the source nodes correspond to the last components of $\bsx_O$ and the corresponding rows in $A^{*}_O$ have non-zero entries only on the diagonal.

Now consider a node  $i_1$ in generation $G_1^O$, which consists of the children of nodes in $V_0^O$ in $O$. By~\eqref{minrep}, we find
  \begin{align}\label{G1}
		X_{i_1}{=}\bigvee_{k\in \an^{O}(i_1)}\frac{a_{i_1,k}}{a_{kk}} X_k \vee  a_{i_1,i_1}^{*}Z_{i_1}^{*}, 
   \end{align}
   		where $a_{i_1,i_1}^{*2}=\sum_{k\in \An^{O^c}(i_1)}a_{i_1,k}^2$ and, by Lemma~\ref{mlcomb}, $Z^{*}_{i_1}= \vee_{j\in \An^{O^c}(i_1)} Z_j/a_{i_1,i_1}^{*}$.
  
  We now show that $Z^{*}_{i_1}$ is independent, first, of $X_k\in V_0^O$ and, second, of $Z^{*}_{j_1}$, with $i_1, j_1 \in G_1^O$.
 
To prove the first, note that, if 
  $i_1\in G_1^O$ and $i\in V_0^O$ have a common hidden ancestor $u$, then by Theorem~\ref{prop.dat}(i)(a) there must be a max-weighted path $u  \rightsquigarrow i \rightsquigarrow i_1$.
  Therefore, representation~\eqref{G1} for $X_{i_1}$ contains $X_i$ in the first maximum, such that the innovations indexed in $\An^{O^c}(i)$ and $\An^{O^c}(i_1)$ must have different indices, and therefore be independent.  
  
To prove the second, we take two different nodes $i_1, j_1 \in G_1^O$. Then by
 Theorem~\ref{prop.dat} (ii)(a), any max-weighted path to $i_1$ and $j_1$ from a common hidden ancestor would have to pass through a common source node variable $X_k$ for $k\in V_0^O\cap\pa^O(i_1)\cap \pa^O(j_1)$. Hence, in representations~\eqref{G1} for $X_{i_1}$ and $X_{j_1}$, the innovations would be the scaled innovations with different indices in $\An^{O^c}(i_1)$ and $\An^{O^c}(j_1)$, respectively; thus these innovations are independent.

To compute the entries of $A_O^{*}$ from those of $A_O$, we use the right-hand side of \eqref{proofMLrep} for $X_k$ in \eqref{G1}, which gives 
  \begin{align}\label{eq:i1}
  X_{i_1} = \bigvee_{k\in \an^{O}(i_1)}\frac{a_{i_1,k}}{a_{kk}} a_{kk}^{*} Z_k^{*} \vee  a_{i_1,i_1}^{*}Z_{i_1}^{*} 
  \, = \, \bigvee_{k\in \an(i_1) \cap O }a_{i_1,k}^{*} Z_k^{*} \vee  a_{i_1,i_1}^{*}Z_{i_1}^{*},
  \end{align}
  where $a_{i_1,k}^{*}=({a_{i_1,k}}/{a_{kk}})a_{kk}^{*}$, and $\Des(k)\cap\an^O(i_1)=\{k\}$. 
  This gives the matrix elements of the row for generation $G_{i_1}$ in $A_O^{*}$ for all $k\ge i_1$; we set $a^{*}_{i_1k}=0$ for all $k<i_1$.

  We have now established that for the first two generations, $V_0^O$ and $G_1^O$, the coefficient matrix can be represented by an upper-triangular matrix $A^{*}_O$, and that the innovations $Z_i^{*}$ are independent.  We now suppose that this is true for generations up to $G^O_{p-1}$, and argue by induction, assuming that we have obtained the coefficients $a_{i,k}^{*}$, where $i\in \cup_{j<p}G_j^O$. 
     For the inductive step,
      we select  $i_p$ from generation $G_p^O$, and note that~\eqref{minrep} implies that
		$$X_{i_p}=\bigvee_{k\in \an^{O}(i_p)}\frac{a_{i_p,k}}{a_{kk}} X_k \vee  \bigvee_{j\in \An^{O^c}(i_p)} a_{i_p,j}Z_{j},
            $$
            where, by Lemma~\ref{mlcomb}, the innovations can be encapsulated into a single standardised innovation $Z^{*}_{i_p}= \vee_{j\in \An^{O^c}(i_p)} Z_j/a_{i_p,i_p}^{*}$, with $a_{i_p,i_p}^{*2}=\sum_{k\in \An^{O^c}(i_p)}a_{i_p,k}^2$.  To complete the proof we must establish independence of the innovations and compute the entries of the matrix $A_O^{*}$.  
            
We first prove that the innovations in $\An^{O^c}(i_p)$ are independent of those in $\An^{O^c}(i_q)$, or, equivalently, that $Z^{*}_{i_p}$ is independent of $Z^{*}_{i_q}$ for an arbitrary node $i_q\in \cup_{j\leq p}G_j^O$ that belongs to some generation up to or including that of $i_p$. Without loss of generality let $i_q>i_p$. If $i_q$ and $i_p$ have a common hidden ancestor $u$, then by Theorem~\ref{prop.dat}(ii)(a) and (b), either there must be a max-weighted path $u\rightsquigarrow i_p \rightsquigarrow i_q$, or there must exist $k\in \an(i_p)\cap \an(i_q) \cap O$ such that there are paths $u\rightsquigarrow k\rightsquigarrow i_q$, and $u\rightsquigarrow k \rightsquigarrow i_p$. 
Therefore, the innovations indexed in $\An^{O^c}(i_p)$ and $\An^{O^c}(i_q)$ must have different indices; thus they are independent.

Finally we use the induction hypothesis for the ML representation of $X_k$ for $k\in \an^O(i_p)$, to obtain, similarly as in~\eqref{eq:i1},
\begin{align*}
X_{i_p}&=\bigvee_{k\in \an^{O}(i_p)}\frac{a_{i_p,k}}{a_{kk}}\bigvee_{j\in \An(k)\cap O}a_{kj}^{*} Z_j^{*} \vee  a_{i_p,i_p}^{*}Z_{i_p}^{*}\\
&=\bigvee_{k\in \an^{O}(i_p)}\bigvee_{j\in \An(k)\cap O}\frac{a_{i_p,k}}{a_{kk}} a_{kj}^{*} Z_j^{*} \vee  a_{i_p,i_p}^{*}Z_{i_p}^{*}\\
&=\bigvee_{j\in \an(i_p)\cap O}\bigvee_{k\in \an^{O}(i_p)\cap {\Des(j)}}\frac{a_{i_p,k}}{a_{kk}} a_{kj}^{*} Z_j^{*} \vee  a_{i_p,i_p}^{*}Z_{i_p}^{*},
\end{align*}
and this yields $a_{i_p,j}^{*}=\bigvee_{k\in \an^{O}(i_p)\cap {\Des(j)}}a_{kj}^{*}{a_{i_p,k}}/{a_{kk}} $. The exchange of the maximum operators in the last equality follows in a similar fashion to Lemma A.1 in \citet{gk}. 
The last expression gives the matrix elements of the rows corresponding to nodes $i_p \in G_{p}$ in $A^{*}_O$ for all $j\ge i_p$, and we set  {$a^{*}_{i_p,j}=0$} for all $j<i_p$.  This ends the second step of the proof, and establishes the result.  
	\end{proof}

Standardization of the innovation coefficient matrix $A$ allows us to use Lemma~2.1 of \citet{GKO}, re-stated below, and important in the subsequent proofs.

\ble\label{ineq}
If the RMLM $\boldsymbol{X}$ is supported on a well-ordered DAG, then $\bar{a}_{jj}>\bar{a}_{ij}$ for $i\in V$ and $j\in \an(i)$.
\ele

\begin{proof}[\textbf{Proof of Lemma~\ref{lemmaid1}}.] \, (i) Equivalence between the first equality and {the existence of a max-weighted path $k\rightsquigarrow j \rightsquigarrow i$ is a direct consequence} of Theorem~3.10 of \citet{gk}. Next, we expand
\begin{align*}
   \tilde{a}_{jj}\tilde{a}_{ik} 
   &= (a_{jj}+c_2a_{ij})(a_{jk}-c_1a_{ik})=a_{jj}a_{jk}-c_1a_{jj}a_{ik}+c_2a_{ij}a_{jk}-c_1c_2a_{ij}a_{ik}\\
   &=a_{jj}a_{jk}-c_1a_{jk}a_{ij}+c_2a_{jj}a_{ik}-c_1c_2a_{ij}a_{ik}
   = (a_{jk}+c_2a_{ik})(a_{jj}-c_1a_{ij})\\
   &= \tilde{a}_{jk}\tilde{a}_{ij},
\end{align*}
where the step from the first to the second line holds if and only if $a_{jj}a_{ik}=a_{jk}a_{ij}$.

\quad (ii) The proof follows immediately from (i) and the fact that for $k\notin\An(j)$ both vectors have {different} zero entries.  
In particular, the proof of (i) implies that, {for a pair $(i,j)$ in MWP}, $\tilde{a}_{ik}=\tilde{a}_{ij}\tilde{a}_{jk}/\tilde{a}_{jj}$ for all $k\in \An(j)$. 
Therefore, since $\tilde{a}_{ik}=0$, for $k\notin\An(j)$, the vector $(\tilde{a}_{i1},\ldots, \tilde{a}_{iD})$ is a scalar multiple of $(\tilde{a}_{j1},\ldots, \tilde{a}_{jD})$. 
By equivalence, the same holds for $k\in \An(j)$, when considering the vectors $A_i$, $A_j$. However, for such vectors we also have $a_{ii}>a_{ji}=0$ because $i\notin\An(j)$, implying they cannot be linearly dependent.

\quad (iii) {That $i\notin \an(j)$, follows from Theorem~2 of \citet{KK}.} For the second statement, suppose $\An(i)\cap\An(j)\neq \emptyset$, and that there exists $k\in\An(j)$ such that $a_{jk}<a_{ik}$. Then $\sigma_{M_{i,aj}}^2-\sigma_{M_{ij}}^2<a^2-1$, since $(a^2a_{jk}^2)\vee a_{ik}^2-a_{ik}^2< \max\big((a^2-1)a_{jk}^2,0\big)$, by arguments similar to equations~(23)--(24) of \citet{KK}, giving a contradiction.

If $\An(i)\cap\An(j)=\emptyset$, then $X_i$ and $X_j$ are independent, and by symmetry of the extremal dependence measure of $(X_i,X_j)$, $\sigma_{M_{i,aj}}^2=\sigma_{M_{ai,j}}^2=1+a^2$. Furthermore, $\sigma_{M_{ij}}^2=2$, and, hence, $\sigma_{M_{i,aj}}^2-\sigma_{M_{ij}}^2=\sigma_{M_{ai,j}}^2-\sigma_{M_{ij}}^2=(1-a^2)$, contradicting (iii).

To show the final statement by contradiction, suppose that {there exist max-weighted paths} $k\rightsquigarrow j \rightsquigarrow i$ for all $k\in\An(j)$. By equation~\eqref{eq:equiv}, $a_{jk}>a_{ik}$ for all $k\in\An(j)$, which yields $\sigma_{M_{i,aj}}^2=\sigma_{M_{ij}}^2 +a^2-1$. For the other difference, similar to equation~(23)--(24) of \citet{KK}, we obtain
\begin{align*}
\sigma_{M_{ai,j}}^2-\sigma_{M_{ij}}^2=\sum_{k_1\notin\An(j)}(a^2-1)\, a_{ik_1}^2+\sum_{k_2\in\An(j)} \big((a^2\, a_{ik_2}^2)\vee a_{jk_2}^2-a_{jk_2}^2\big) \, < \, a^2-1.
\end{align*}
These correspond to the two identities in the first statement in (iii), giving a contradiction.
\end{proof}

\begin{proof}[\textbf{Proof of Theorem~\ref{incdagmod}.}]
	By Lemma~\ref{lemmaid1} (iii), equation~\eqref{cond1} implies that $a_{jk}\geq a_{ik}$ for $k\in\An(j)$, with strict inequality for $k=j$ by Lemma~\ref{ineq}. As $0<c_1\leq 1$, also $a_{jk}\geq c_1 a_{ik}$ for such $k$. 
 Furthermore, equation~\eqref{cond1} and Theorem~2 of \citet{KK} imply that $i\notin \an(j)$, and therefore $i<j$. 
 By Lemma~\ref{lemmaid2}, the angular measure of $\boldsymbol{{T}}^{ij}$ is given by 
 \begin{align*}
	H_{\boldsymbol{{T}}^{ij}}(\cdot) = \sum_{k\in\An(j)}\norm{\tilde{a}_k}^2 \delta_{\big\{ \frac{{\tilde{a}_k}}{\norm{\tilde{a}_k}} \big\}}(\cdot),
	\end{align*}
for (non-zero) vectors $\tilde a_k$ given in Table~\ref{table:atoms}.
 Thus,  $\tilde{a}_k=(a_{jk} -c_1a_{ik}, a_{jk}+c_2a_{ik})$ for $k\in\An(j)$, and  $\tilde{a}_k=(0, 0)$ for $k\notin\An(j)$. 

We first prove (i). {Let $(i,j)\in$ MWP.}
Lemma~\ref{lemmaid1} (i) implies that $a_{ik}a_{jj}=a_{ij}a_{jk}$ for all $k\in\An(i)\cap\An(j)$ if and only if {there exists a max-weighted path from every}
$k\in\An(i)\cap\An(j)=\An(j)$ to $i$ {that} passes through $j$. 
  In this case, it also holds that $a_{ik}=ba_{jk}$ with $b={a_{ij}}/{a_{jj}}$ such that $\tilde{a}_k=((1-c_1b)a_{jk}, (1+c_2b)a_{jk})$ for $k\in\An(j)$.
	
	The components of $\boldsymbol{X}$ are standardised, that is, $\sum_{k\in\An(r)}a_{rk}^2=1$ for $r\in\{i,j \}$, and the squared scalings of ${T}^{ij}_{1}$ and ${T}^{ij}_{2}$ become $\sigma_{{T}^{ij}_{1}}^2=(1-c_1b)^2$ and $\sigma_{{T}^{ij}_{2}}^2=(1+c_2b)^2$. 
 Standardization of ${{T}^{ij}_{1}}$ and ${{T}^{ij}_{2}}$ to unit scalings amounts to normalising them via~\eqref{eq:Tscaled} by the respective factors $1/(1-c_1b)$ and $1/(1+c_2b)$,  which map the vectors $\tilde{a}_k$ to $\bar{a}_k=(a_{jk}, a_{jk})$ for $k\in\An(j)$.
 Hence, by Corollary~\ref{completedep} (iii),
 $\tau_{ij}^2={\sigma}_{ \tilde{T}^{ij}_{1},\tilde{T}^{ij}_{2}}^2=1$.
	
For the reverse implication, recall first that $\sigma_{\tilde{T}^{ij}_{1}}^2=\sigma_{\tilde{T}^{ij}_{2}}^2=1$ for the standardised vector $\bs{\tilde{T}}^{ij}$.
 Assume that there exist scalars $\sigma_{{T}^{ij}_{1}},\sigma_{{T}^{ij}_{2}}$ such that, after standardization,  $\tau_{ij}^2=1$. 
Recall that 
 $\tilde{a}_k=(a_{jk} -c_1a_{ik}, a_{jk}+c_2a_{ik})$ for $k\in\An(j)$ and $\tilde a_k=(0,0)$ for $k\notin\An(j)$ and 
Proposition~\ref{completedep} implies
	\begin{align}\label{CSineq}
	1= \tau_{ij}^2={\sigma}_{ \tilde{T}^{ij}_{1},\tilde{T}^{ij}_{2}}^2
 & =\frac{1}{\sigma_{{{T}^{ij}_{1}}} \sigma_{{{T}^{ij}_{2}}}}\sum_{k\in\An(j)}(a_{jk}-c_1a_{ik})(a_{jk}+c_2a_{ik})\nonumber\\
 & =\Big[\frac{1}{\sigma_{{T}^{ij}_{1}}^2}\sum_{k\in\An(j)}(a_{jk}-c_1a_{ik})^2\frac{1}{ \sigma_{{T}^{ij}_{2}}^2}\sum_{k'\in\An(j)}(a_{mk'}+c_2a_{ik'})^2\Big]^{1/2}\\
 &= \sigma_{\tilde{T}^{ij}_{1}}\,\sigma_{\tilde{T}^{ij}_{2}}.\nonumber
	\end{align}
	However, by the Cauchy--Schwarz inequality, equation~\eqref{CSineq} holds if and only if for some $b>0$ we have $(a_{jk}-c_1a_{ik})=b(a_{jk}+c_2a_{ik})$ for $k\in\An(j)$. Furthermore, since 
 $a_{jj}>0$, it must be the case that $a_{ij}>0$ also, and therefore 
 $(1-b)a_{jk}=(c_1+bc_2)a_{ik}$ for $k\in\An(j)$, and so ${a_{ik}}{a_{jj}}={a_{ij}}{a_{jk}}$ for such $k$. This implies that for all $k\in\An(i)\cap\An(j)$ there are max-weighted paths from $k$ to $i$ that pass through $j$. This proves (i). 

To establish (ii), suppose for a contradiction that~\eqref{CSineq} holds for the standardised variables $\tilde{T}^{ij}_{1},\tilde{T}^{ij}_{2}$, and that $a_{ij}=0$. Similar to the argument in the previous paragraph, by the Cauchy--Schwarz inequality,~\eqref{CSineq} implies that ${a_{ik}}{a_{jj}}={a_{ij}}{a_{jk}}$ for {all} $k\in\An(j)$. By~\eqref{cond1} and Lemma~\ref{lemmaid1} (iii), there exists some $k\in\An(i)\cap\An(j)\neq \emptyset$ such that $a_{ik},a_{jk}>0$, and moreover $a_{jj}>0$, which contradicts the equality ${a_{ik}}{a_{jj}}={a_{ij}}{a_{jk}}$, and implies that the path $k\rightsquigarrow j \rightsquigarrow i$ is not max-weighted. Therefore, we must have $\tau_{ij}^2<1$.
\end{proof}

\begin{proof}[\textbf{Proof of Theorem~\ref{inc2dagmod}.}]
		{The proof of (i)} closely follows that of Theorem~\ref{incdagmod}. From~\eqref{cond2} it holds that $a_{jk}\geq a_{ik}$ for $k\in \An(j)\setminus\An(K)$ with strict inequality $a_{jj}>a_{ij}$, and likewise for $\boldsymbol{T}^{K ij}$ when $c\leq1$.  Hence  $\boldsymbol{T}^{K ij}$ has angular measure 
	\begin{align*}
	H_{{\boldsymbol{T}^{K ij}}}(\cdot) = \sum_{k\in\An(j)\setminus\An(K)}\norm{\tilde{a}_k}^2 \delta_{\big\{ \frac{{\tilde{a}_k}}{\norm{\tilde{a}_k}} \big\}}(\cdot),
	\end{align*}
	where $\tilde{a}_k=(a_{jk} -c_1a_{ik}, a_{jk}+c_2a_{ik}),$ for $k\in \An(j)\setminus\An(K)$, and $\tilde{a}_k=(0, 0)$ otherwise, in particular for $k\in \An(i)\setminus\An(j)$, and, due to Lemmas~\ref{genmaxspect5CK} and~\ref{genmaxspectlast}, also for $k\in K$. Since for $k\in  (\An(i)\cap\An(j))\setminus\An(K)\neq \emptyset$ there is a max-weighted path from $k$ to $i$ via $j$ if and only if ${a_{ik}}{a_{jj}}={a_{ij}}{a_{jk}}$, then $a_{ik}=ba_{jk}$ for all such $k$, giving $\tilde{a}_k=((1 -c_1b)a_{jk},(1+c_2b)a_{jk})$.
	
	The squared scaling of ${T}^{Kij}_{3,2}$ equals $\sigma_{{T}^{Kij}_{3,2}}^2=\sum_{k\in\An(j)\setminus \An(K)}a_{jk}^2$, which implies that the squared scalings of ${T}^{Kij}_{1}$ and ${T}^{Kij}_{2}$ become $(1-c_1b)^2\sigma_{{T}^{Kij}_{3,2}}^2$ and $(1+c_2b)^2\sigma_{{T}^{Kij}_{3,2}}^2$. Upon standardising the components $\boldsymbol{T}^{K ij}$ to unit scalings, say, into the vector $\boldsymbol{\tilde{T}}^{K ij}$, by arguments similar to those of the proof of Theorem~\ref{incdagmod}, it is clear that $\tau_{Kij}^2=\sigma_{{\tilde T}^{Kij}_{1},{\tilde T}^{Kij}_{2}}^2=1$.
	
	The reasoning in the other direction mimics the Cauchy--Schwarz inequality argument in the proof of Theorem~\ref{incdagmod}, but with  the index of the summation ranging in $\An(j)\setminus\An(K)$. 
 Because $a_{jj}>0$ and $\An(i)\cap\An(j)\setminus\An(K)\neq \emptyset$, we must have $a_{ij}>0$. The Cauchy--Schwarz equality {then implies} that ${a_{ik}}{a_{jj}}={a_{ij}}{a_{jk}}$ for $k\in(\An(i)\cap\An(j))\setminus\An(K)$.
	
	The proof of (ii) is identical to that of Theorem~\ref{incdagmod}; the only change is again in the index $k\in(\An(i)\cap\An(j))\setminus\An(K)\neq \emptyset$. 
\end{proof}

\section{Multivariate regular variation}\label{sec:ARV}
 
\subsection{Definitions and results for regularly varying vectors}\label{sec:3.1}

 We state two equivalent definitions of multivariate regular variation taken from \citet[Theorem~6.1]{ResnickHeavy}. 
 
 \bde \label{mrv} 
 (i) \, A random vector $\boldsymbol{Y}\in\mathbb{R}^{d}_+$ is {multivariate regularly varying} if there exists a sequence of real numbers $b_n\to \infty$  as $n\to\infty$ such that
 \beam\label{eq:mrva}
 n\mathbb{P}(\boldsymbol{Y}/{b_n}\in \cdot)\overset{v}{\to}{\nu_{\bs Y}}(\cdot), \hspace{5mm}n\to\infty,
 \eeam
 where $\overset{v}{\to}$ denotes vague convergence in $M_+([0, \infty]^{ {d}}\setminus\{\boldsymbol{0}\})$, the set of non-negative Radon measures on $[0, \infty]^{ {d}}\setminus\{\boldsymbol{0}\}$, and $\nu_{\bsy}$ is called the exponent measure of $\bsy$.\\
 (ii) \,
 A random vector  $\boldsymbol{Y}\in\mathbb{R}^{ {d}}_+$ is {multivariate regularly varying} if for any norm $\|\cdot\|$ there exists a finite measure $H_{\boldsymbol{Y}}$ on the positive unit sphere $\Theta_+^{{ {d}}-1}=\{\boldsymbol{\omega}\in \mathbb{R}^{ {d}}_+: \norm{\boldsymbol{\omega}}=1\}$ and a sequence $b_n\to \infty$ as $n\to\infty$ such the {angular representation} $(R,\boldsymbol{\omega})\coloneqq(\norm{\boldsymbol{Y}}, \boldsymbol{Y}/\norm{\boldsymbol{Y}})$ of $\bsy$ satisfies 
 \begin{align}\label{eq:mrvb}
 n\mathbb{P}\big(({R}/{b_n},\boldsymbol{\omega})\in \cdot\big)\overset{v}{\to} \nu_\alpha\times H_{\boldsymbol{Y}}(\cdot), \hspace{5mm}n\to\infty,
 \end{align}
 in $M_+((0,\infty]\times\Theta_+^{{ {d}}-1})$, ${\rm d}\nu_\alpha(x)=\alpha x^{-\alpha-1}{\rm d}x$ for some $\alpha>0$, and for Borel subsets $C\subseteq \Theta_+^{{ {d}}-1}$,
 \begin{align*}
 H_{\boldsymbol{Y}}(C)\coloneqq\nu_{\boldsymbol{Y}}\big(\{\boldsymbol{y}\in\mathbb{R}^{ {d}}_+\setminus\{\boldsymbol{0}\}: \norm{\boldsymbol{y}}\geq 1, \boldsymbol{y}/\norm{\boldsymbol{y}} \in {C}\}\big).
 \end{align*}
 In this case the measure $H_{\boldsymbol{Y}}$ is called the {angular measure of $\boldsymbol{Y}$}, we write $\boldsymbol{Y}\in {\rm RV}^{ {d}}_+(\alpha)$ and call $\alpha$  the {index of regular variation}.
 \ede

For a vector $\bsy$ with standardised margins \citet[Corollary~3.2.18 (1.)]{MikWin} give a  useful characterisation of multivariate regular variation, stated in the lemma below, by studying the limiting conditional distribution of the angular components.
Similar characterisation holds under  even more general settings in the context of star-shaped metric spaces via the so-called modulus, a norm-like function \citep[equivalence between (i) and (ii) in Prop. 3.1]{segersmodulus}.


\begin{lemma}\label{def:mrvc}
Let $\boldsymbol{Y}\in \mathbb{R}^d_+$ have standardised margins such that $\mathbb{P}(Y_i>y)\sim y^{-\alpha}$  as $y\to\infty$ for $i\in\{1,\ldots, d\}$ and $\alpha>0$. Assume further that $R=\norm{\boldsymbol{Y}}\in {\rm RV}_+(\alpha)$, 
and that for $\boldsymbol{\omega}=\boldsymbol{Y}/\norm{\boldsymbol{Y}}\in {\Theta}^{d-1}_+$ there exist a sequence $b_n'\to\infty$ such that 
\beam\label{eq:mrvc}
\mathbb{P}\left(\boldsymbol{\omega} \in \cdot\, |\, R/b_{n}'>1\right)\overset{w}{\to} \tilde{H}_{\boldsymbol{Y}}(\cdot), \quad n\to\infty,
\eeam
where $\overset{w}{\to}$ denotes weak convergence 
and $\tilde{H}_{\boldsymbol{Y}}$ is a probability measure on ${\Theta}^{d-1}_+$. Then $\boldsymbol{Y}\in {\rm RV}^{ {d}}_+(\alpha)$. 
\end{lemma}

The choice of the normalising sequence $b_n'$ can vary with the choice of the norm or the dependence structure of the vector $\bs Y$ as in Lemma~\ref{def:mrvc}. 
Without loss of generality we focus on  $\alpha\geq1$ and  $\norm{\cdot}_\alpha$ as choice of norm. 
{Multivariate regularly varying vectors with index $\alpha<1$ can always be transformed to regular variation with index greater than one, for instance, by taking all components to a certain power.}

From Definition~\ref{mrv}(ii) we obtain that
\begin{align}\label{interstep_mass}
{n}\mathbb{P}\left(R/{b_{n}}> r\right)
={n}\mathbb{P}\big(R/{b_{n}}> r, \boldsymbol{\omega} \in \Theta^{d-1}_+\big) \to r ^{-\alpha} \times H_{\boldsymbol{Y}}(\Theta^{d-1}_+),\quad n\to\infty,
\end{align}
indicating that we may encode information about $H_{\boldsymbol{Y}}(\Theta^{d-1}_+)$ in $b_{n}$ to arrive at~\eqref{eq:mrvc}.
In a similar fashion to \citet[p. 592]{cooley}, we use~\eqref{interstep_mass} to compute
\begin{align}\label{eq:ncd}
H_{\boldsymbol{Y}}(\Theta_+^{{ {d}}-1})&=\int_{\Theta_+^{{ {d}}-1}} \norm{\boldsymbol{\omega}}^{\alpha} {\rm d}H_{{\boldsymbol{Y}}}(\boldsymbol{\omega})\nonumber\\
&=\int_{\Theta_+^{{ {d}}-1}}\sum_{j=1}^d \omega_j^{\alpha}{\rm d}H_{{\boldsymbol{Y}}}(\boldsymbol{\omega})=\sum_{j=1}^d\int_{\Theta_+^{{ {d}}-1}} \omega_j^{\alpha}{\rm d}H_{{\boldsymbol{Y}}}(\boldsymbol{\omega})\nonumber\\
&=\sum_{j=1}^d\int_{\Theta_+^{{ {d}}-1}} \int_{r>\omega_j^{-1}}\alpha r^{-(\alpha+1)}{\rm d}r{\rm d}H_{{\boldsymbol{Y}}}(\boldsymbol{\omega})\nonumber\\
&= \sum_{j=1}^d \lim_{n\to \infty}n\mathbb{P}\big(R/{b_n}>\omega_j^{-1}, \boldsymbol{\omega}\in \Theta_+^{{ {d}}-1}\big)\nonumber\\
&=\sum_{j=1}^d \lim_{n\to \infty}n\mathbb{P}\big(Y_j/{b_n}>1\big)=d,
\end{align}
where we have used the fact that $R\omega_j=Y_j$ and
$b_n\sim n^{1/\alpha}$ are correct normalising constants for the standardised $Y_j$.
Therefore, to normalise the mass of $H_{{\boldsymbol{Y}}}$, without loss of generality, we can fix constants $b'_n$ for $R$ such that $b'_n\sim(dn)^{1/\alpha}$ as $n\to\infty$.

{The following lemma links the scalings of $M_K=\max(X_k: k\in K)$ over components of $\bs X$ 
for $\alpha=2$ with the tail asymptotics of $M_K$.}

\begin{lemma}\label{tailMk}
	Consider a vector $\bs Y\in {\rm RV}^d_+(2)$ with standardised margins, let $K\subseteq \{1,
    \dots,d\}$ and $M_K:=\max(Y_k: k\in K)$. Then the squared scaling $\sigma_{M_K}^2$ of  $M_K$ satisfies $n\mathbb{P}({{M_K}}/\sqrt{n}>y)=\sigma_{M_K}^2y^{-2}$, $y>0$.
\end{lemma}
\begin{proof} Since $\bs Y\in {\rm RV}^d_+(2)$, we have {by Definition~\ref{mrv}(i)} that for $y>0$ 
		\begin{align*}
		\lim\limits_{n\to \infty} n\mathbb{P}({{M_{K}}}/\sqrt{n}>y)&=\nu_{\bs{Y}}\Big({\Big\{\bs{y}\in\R_+^{ {d}} : \frac{\bs{y}}{\norm {\boldsymbol{y}}}\in \Theta_+^{{d}-1}, \underset{k\in K}{\vee}y_k > y\Big\}}\Big)\\
		&=\int_{\Theta_+^{ {d}-1}} \int_{\{r> y/{\underset{k\in K}{\vee}\omega_i}\}} 2 r^{-3}{\rm d}r  {\rm d}H_{{\bs{Y}}}(\bs{\omega})\\
		&= y^{-2} \int_{ \Theta_+^{ {d}-1}} \underset{k\in K}{\vee}\omega_k^{2}  {\rm d}H_{{\bs{Y}}}(\bs{\omega})\\
		&=\sigma_{M_{K}}^2y^{-2},
		\end{align*}
		which proves the claim of the lemma.
\end{proof}

{We end this section with a remark showing that for a max-linear vector $\bs X\in {\rm RV}^d_+(2)$ with coefficient matrix $A$ and the Euclidean norm, the scaling parameters and the extremal dependence measures are invariant with respect to the dimensionality of the angular measure. 
This follows upon representing a (sub)vector $\bs X_K$ of $\bs X$ by the matrix $A_K$, which contains only the rows of $A$ indexed in $K$, and applying Definitions~\ref{scaledef} and~\ref{scalMK} using the angular measure $H_{\bs X_K}$.

\begin{remark}\label{invariant}
Consider the max-linear vector $\bs X\in {\rm RV}^d_+(2)$ with coefficient matrix $A$ and its subvector $\bs X_K$ for a subset of indices $K$. Then the extremal dependence measures $\sigma_{ij}^2$ and the scalings  $\sigma_{M_{J}}^2$, where $i,j\in K$ and $J\subseteq K$, computed from the angular measures of $\bs X$ and $\bs X_K$ under the Euclidean norm $\norm{\cdot}$ are the same.
\end{remark}
}

 \subsection{Regularly varying RMLMs}\label{sec:3.2}
 
\begin{proposition} (\citet[Lemma 3]{foug}, \citet[Section 6]{einmahl2012m}], 
)\label{discspect}\\
Let $\bsz\in {\rm RV}^D_+(\alpha)$ with independent components $Z_k\in{\rm RV}_+(\alpha)$, $A\in\R_+^{d\times D}$ and 
\begin{align}\label{eq:AZ}
\bsx= A\times_{\max} \bsz.
\end{align}
Then $\bsx\in{\rm RV}^d_+(\alpha)$ with discrete angular measure 
 \begin{equation}
 H_{\boldsymbol{X}}(\cdot) = \sum_{k=1}^D\norm{a_k}^\alpha \delta_{\big\{ \frac{{a_k}}{\norm{a_k}} \big\}}(\cdot),
 \label{discretspecteq}
 \end{equation}	
 on the positive unit sphere $\Theta_+^{d-1}$ with atoms $(a_{k}/\norm{a_k})_{k=1,\dots,D}$ for $a_k=(a_{1k},\dots,a_{dk})$; i.e., the atoms are the {normalised} columns of $A$.  
 The finite measure $H_{\bsx}$ can be normalised to a probability measure $\tilde{H}_{\bsx}$  defined as
 \begin{align}\label{specmass2}
 \tilde{H}_{\bsx}(\cdot)\coloneqq \frac{H_{\boldsymbol{X}}(\cdot)}{H_{\boldsymbol{X}}(\Theta_+^{{d}-1})}.
 \end{align}
\end{proposition}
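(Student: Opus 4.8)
The plan is to realize $\bsx$ as the image of the innovations vector $\bsz$ under the continuous, positively homogeneous map $f\colon\R_+^D\to\R_+^d$ given by $f(\bsz)_i=\bigvee_{k=1}^D a_{ik}z_k$, and then to transport the regular variation of $\bsz$ through $f$ by a mapping argument. First I would establish the regular variation of $\bsz$ itself. Because $Z_1,\dots,Z_D$ are independent and each lies in ${\rm RV}_+(\alpha)$ with standardized tail $\P(Z_k>x)\sim x^{-\alpha}$, only one coordinate can be extreme at a time: for $j\neq k$, independence and regular variation give $\P(Z_j>\varepsilon b_n,\,Z_k>\varepsilon b_n)\sim \varepsilon^{-2\alpha}/n^2=o(1/n)$ with $b_n\sim n^{1/\alpha}$. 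Consequently $\bsz\in{\rm RV}_+^D(\alpha)$ with exponent measure concentrated on the coordinate axes,
\[
\nu_{\bsz}=\sum_{k=1}^D\nu^{(k)},\qquad \nu^{(k)}\bigl(\{z:z_k>r,\ z_j=0\ (j\neq k)\}\bigr)=r^{-\alpha},
\]
so that its spectral measure is $\sum_{k=1}^D\delta_{e_k}$ on $\Theta_+^{D-1}$; this is the standard single-big-jump statement for independent heavy tails.

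Next I would invoke the mapping theorem for regularly varying measures: vague convergence as in Definition~\ref{mrv}(a) is preserved under continuous maps that are homogeneous of degree one and whose preimages of sets bounded away from the origin are again bounded away from the origin. The map $f=A\times_{\max}$ is a finite maximum of linear forms, hence continuous, and satisfies $f(t\bsz)=t\,f(\bsz)$ for $t>0$. It follows that $\bsx=f(\bsz)\in{\rm RV}_+^d(\alpha)$ with exponent measure $\nu_{\bsx}=\nu_{\bsz}\circ f^{-1}$.

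Then I would compute this pushforward explicitly. Since $f(t\,e_k)=t\,a_k$, the measure $\nu^{(k)}$ living on the $k$-th axis is carried onto the ray $\{t\,a_k:t>0\}$, which points in the direction $a_k/\norm{a_k}$, and for $\rho>0$,
\[
\nu_{\bsx}\bigl(\{x=t\,a_k:\norm{x}>\rho\}\bigr)=\nu^{(k)}\bigl(\{s:s\norm{a_k}>\rho\}\bigr)=\norm{a_k}^{\alpha}\rho^{-\alpha}.
\]
Summing over $k$ and reading off the spectral measure through $H_{\bsx}(C)=\nu_{\bsx}(\{x:\norm{x}\ge1,\ x/\norm{x}\in C\})$ with $\rho=1$ yields $H_{\bsx}=\sum_{k=1}^D\norm{a_k}^{\alpha}\delta_{\{a_k/\norm{a_k}\}}$, where columns sharing a direction simply merge their masses and any zero column contributes mass zero. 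The normalization $\tilde H_{\bsx}=H_{\bsx}/H_{\bsx}(\Theta_+^{d-1})$ with total mass $\sum_{k=1}^D\norm{a_k}^{\alpha}$ is then immediate.

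The main obstacle is the careful application of the mapping theorem, namely verifying that $f^{-1}$ of a set bounded away from $\boldsymbol{0}$ in $\R_+^d\setminus\{\boldsymbol{0}\}$ is again bounded away from $\boldsymbol{0}$ in $\R_+^D\setminus\{\boldsymbol{0}\}$. This can fail precisely along an axis $e_k$ with $a_k=\boldsymbol{0}$, since such an axis is collapsed into $\{\boldsymbol{0}\}$ by $f$. I would handle this by restricting $f$ to the union of the axes with $a_k\neq\boldsymbol{0}$; the remaining axes map into $\{\boldsymbol{0}\}$ and are therefore invisible to the limiting measure, so the hypotheses of the mapping theorem hold on the relevant part of the state space and the computation above is unaffected.
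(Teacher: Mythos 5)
Your proof is correct. Note that the paper itself does not prove this proposition: it is imported by citation from Proposition~4.1 of Krali, so there is no in-paper argument to compare against. Judged on its own, your route is the standard one and it delivers exactly the stated result: independence plus the standardized marginal tails give the single-big-jump form of $\nu_{\bsz}$, concentrated on the coordinate axes with $\nu^{(k)}(\{se_k:s>r\})=r^{-\alpha}$; the map $f=A\times_{\max}\cdot$ is continuous and positively $1$-homogeneous, so the exponent measure pushes forward; and since $f(se_k)=sa_k$, the axis mass is carried onto the ray through $a_k$ with $\nu_{\bsx}(\{ta_k:\norm{ta_k}>\rho\})=\norm{a_k}^{\alpha}\rho^{-\alpha}$, which upon reading off $H_{\bsx}(C)=\nu_{\bsx}(\{x:\norm{x}\ge 1,\,x/\norm{x}\in C\})$ gives $H_{\bsx}=\sum_{k=1}^D\norm{a_k}^{\alpha}\delta_{\{a_k/\norm{a_k}\}}$, with coinciding directions merging their masses, zero columns contributing nothing, and the normalization~\eqref{specmass2} immediate.

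One small inaccuracy, in the diagnosis rather than the remedy: the mapping-theorem hypothesis that preimages of sets bounded away from $\boldsymbol{0}$ be bounded away from $\boldsymbol{0}$ does \emph{not} fail when some column $a_k=\boldsymbol{0}$. Indeed $\norm{f(z)}_{\infty}\le(\max_{i,k}a_{ik})\,\norm{z}_{\infty}$, so any $z$ with $f(z)$ bounded away from the origin is itself bounded away from the origin, regardless of the columns of $A$. The genuine technical point is different: $f$ need not map $\R_+^D\setminus\{\boldsymbol{0}\}$ into $\R_+^d\setminus\{\boldsymbol{0}\}$, because axes with $a_k=\boldsymbol{0}$ are sent to the origin, so $f$ is not a map between the two punctured spaces. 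Your fix---restricting attention to the axes with $a_k\neq\boldsymbol{0}$, equivalently discarding the mass that $f$ pushes into the origin---is exactly what resolves this, and the resulting formula is unaffected. So the proof stands as written once that sentence is corrected.
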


\begin{corollary}\label{fulldep}
(i) {\rm (\citet[ Proposition~A.2]{GKO})} For a set $[\boldsymbol{0},\boldsymbol{x}]^c$ the exponent measure is given by 
\begin{align*}
\nu_{\bsx}([\boldsymbol{0},\boldsymbol{x}]^c)=\sum_{k=1}^{D}\underset{i=1}{\overset{d}{\bigvee}}\frac{a_{ik}^\alpha}{x_i^\alpha}.
\end{align*}
(ii) Let $a_{ik}=a_{jk}$ for all $k\in\{1,\dots,D\}$ as in Corollary~\ref{completedep} (iii). Then 
\begin{align*}
\nu_{\bsx}([\boldsymbol{0},\boldsymbol{x}]^c)=\sum_{k=1}^{D} a_{1k}^\alpha \underset{i=1}{\overset{d}{\bigvee}} x_i^{-\alpha} = \sum_{k=1}^{D} a_{1k}^\alpha \,\Big(\underset{i=1}{\overset{d}{\bigwedge}} x_i\Big)^{-\alpha},
\end{align*}
giving asymptotically full dependence by \citet[equation~(6.32)]{ResnickHeavy}.
\end{corollary}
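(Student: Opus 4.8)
The plan is to take part (a) as given, since it is quoted verbatim from Proposition~A.2 of \citet{GKO}, and to derive part (b) from it by an elementary computation that exploits the equal-rows hypothesis. First I would invoke part (a) to write $\nu_{\bsx}([\boldsymbol{0},\boldsymbol{x}]^c)=\sum_{k=1}^{D}\bigvee_{i=1}^{d} a_{ik}^\alpha/x_i^\alpha$. Under the assumption that all rows of $A$ coincide, so that $a_{ik}=a_{1k}$ for every row index $i$ and every $k$ (the complete-dependence situation of Corollary~\ref{completedep}(c)), the coefficient $a_{1k}^\alpha$ no longer depends on $i$ and can be pulled out of the inner maximum, giving $\bigvee_{i=1}^{d} a_{ik}^\alpha/x_i^\alpha = a_{1k}^\alpha \bigvee_{i=1}^{d} x_i^{-\alpha}$.

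The second step is to simplify the remaining maximum. Since $\alpha>0$, the map $x\mapsto x^{-\alpha}$ is strictly decreasing on $(0,\infty)$, so the largest value of $x_i^{-\alpha}$ is attained at the smallest coordinate; hence $\bigvee_{i=1}^{d} x_i^{-\alpha} = (\bigwedge_{i=1}^{d} x_i)^{-\alpha}$. Substituting back and summing over $k$ yields $\nu_{\bsx}([\boldsymbol{0},\boldsymbol{x}]^c)=\sum_{k=1}^{D} a_{1k}^\alpha (\bigwedge_{i=1}^{d} x_i)^{-\alpha}$, which is precisely the pair of displayed identities in (b).

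Finally, I would match this closed form against the canonical exponent measure of an asymptotically fully dependent vector. The expression $\nu_{\bsx}([\boldsymbol{0},\boldsymbol{x}]^c)=C\,(\bigwedge_i x_i)^{-\alpha}$ with $C=\sum_{k} a_{1k}^\alpha$ depends on $\boldsymbol{x}$ only through $\min_i x_i$, which is the hallmark of complete dependence; invoking \citet[eq.~(6.32)]{ResnickHeavy} then identifies the limit as the fully dependent extreme-value structure and completes the argument.

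Since every step is a one-line manipulation, there is essentially no analytic obstacle here; the only points requiring care are the bookkeeping of the equal-rows hypothesis (making sure $a_{1k}$ legitimately represents every $a_{ik}$, so that the factor can be extracted from the max) and the direction of monotonicity of $x\mapsto x^{-\alpha}$, which must be decreasing for the max-to-min conversion to be valid. The conceptual content lies entirely in recognizing that identical rows of $A$ force the exponent measure into the single-parameter complete-dependence form.
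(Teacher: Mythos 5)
Your proposal is correct and follows exactly the route the paper intends: part (a) is imported from \citet[Proposition~A.2]{GKO}, and part (b) is the immediate substitution $a_{ik}=a_{1k}$ followed by factoring $a_{1k}^\alpha$ out of the maximum and using the monotonicity of $x\mapsto x^{-\alpha}$ to rewrite $\bigvee_i x_i^{-\alpha}$ as $\big(\bigwedge_i x_i\big)^{-\alpha}$, with full dependence then read off from \citet[eq.~(6.32)]{ResnickHeavy}. The paper treats this as self-evident and supplies no separate proof, so your write-up simply makes the intended one-line computation explicit.
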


In what follows we denote the set of random vectors $\bsx$ as in~\eqref{eq:AZ} by ${\rm RV}^d_+(\alpha,A)$ and as before the rows of $A$ by $A_i$ for $i\in\{1,\dots,d\}$.
Let now $\bsx\in{\rm RV}^d_+(\alpha,A)$, then each component of $\bsx$ has representation 
$$X_i= A_i \times_{\max} \bsz =\bigvee_{k\in\{1,\dots,D\}} a_{ik} Z_k,\quad i\in\{1,\dots,d\}.$$

This motivates the following. 

\begin{proposition}\label{prop:easy}
Consider the set of random variables
$$\{X=a \times_{\max}\bsz\in {\rm RV}_+(\alpha,a): a\in\R^D_+\}.$$
This set has the following properties:
\begin{enumerate}
\item[(i)]
For $c>0$ and $X\in{\rm RV}_+(\alpha, a)$ we have $cX\in {\rm RV}_+(\alpha, ca)$.
\item[(ii)]
Let $X_1\in{\rm RV}_+(\alpha, a_1)$ and $X_2 \in {\rm RV}_+(\alpha,a_2)$, then 
$(X_{1},X_{2}) \in {\rm RV}^2_+(\alpha,A_{12})$ with $A_{12}=(a_{1},a_{2})^\top$; i.e., $(X_{1},X_{2})$ has discrete angular measure with  atoms obtained by normalising the non-zero columns of $A_{12}$. 
\item[(iii)]
Let $X_1\in {\rm RV}_+(\alpha,a_1)$ and $X_2 \in {\rm RV}_+(\alpha,a_2)$, then
$\max\{X_1,X_2\} \in{\rm RV}_+(\alpha,a_1\vee a_2)$, where $a_1\vee a_2$ is taken componentwise.
\end{enumerate}
\end{proposition}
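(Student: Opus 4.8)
The plan is to obtain all three assertions as immediate consequences of Proposition~\ref{discspect}, using only the elementary arithmetic of the max-times semiring $(\R_+,\vee,\times)$ together with the fact, built into the definition of the set, that every member $X=a\times_{\max}\bsz$ is driven by one and the same innovation vector $\bsz\in{\rm RV}_+^D(\alpha)$ with independent components. Since that common vector is fixed throughout, each of the three operations below produces another object of the form $(\text{coefficient array})\times_{\max}\bsz$, so that membership in the corresponding ${\rm RV}_+(\alpha,\cdot)$ class, and the description of the spectral measure, follow directly from Proposition~\ref{discspect}.

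For part~(a) I would simply distribute the scalar $c>0$ through the max-times representation: writing $X=\bigvee_{k=1}^D a_k Z_k$, positivity of $c$ gives $cX=\bigvee_{k=1}^D (ca_k)Z_k=(ca)\times_{\max}\bsz$. This exhibits $cX$ as a member of the set with coefficient vector $ca$, and Proposition~\ref{discspect} (applied with $d=1$) then yields $cX\in{\rm RV}_+(\alpha,ca)$. Part~(c) is handled the same way: since the binary maximum commutes with the coordinatewise maxima, $\max\{X_1,X_2\}=\bigvee_{k=1}^D(a_{1k}\vee a_{2k})Z_k=(a_1\vee a_2)\times_{\max}\bsz$, so again Proposition~\ref{discspect} gives $\max\{X_1,X_2\}\in{\rm RV}_+(\alpha,a_1\vee a_2)$.

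For part~(b) I would stack the two coefficient (row) vectors into the $2\times D$ matrix $A_{12}=(a_1,a_2)^\top$, whose $k$-th column is $(a_{1k},a_{2k})^\top$. Because both components are built from the same $\bsz$, the pair satisfies $(X_1,X_2)^\top=A_{12}\times_{\max}\bsz$, and the hypotheses of Proposition~\ref{discspect}, namely independent regularly varying innovations, hold. The proposition then gives $(X_1,X_2)\in{\rm RV}_+^2(\alpha)$ with discrete spectral measure whose atoms are the normalized columns $a_k/\norm{a_k}$ carrying mass $\norm{a_k}^\alpha$; the columns $a_k=(0,0)^\top$ contribute zero mass and hence drop out, leaving exactly the normalized non-zero columns of $A_{12}$ as claimed.

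The argument involves no real obstacle: the only point requiring a moment's care is recognizing that the shared innovation vector is precisely what makes the joint regular variation in part~(b) available, and, as a bookkeeping matter, that vanishing columns must be excluded from the list of atoms since they carry no spectral mass.
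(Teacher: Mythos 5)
Your proposal is correct and follows essentially the same route as the paper's own proof: part (a) from distributing the positive scalar through the max-linear representation, part (b) by stacking the coefficient vectors into the $2\times D$ matrix $A_{12}$ and invoking Proposition~\ref{discspect}, and part (c) by exchanging the two maxima so that $X_1\vee X_2=\bigvee_{k}(a_{1k}\vee a_{2k})Z_k$. Your write-up merely spells out details (the role of the common innovation vector $\bsz$, and the exclusion of zero columns as atoms) that the paper leaves implicit.
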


\begin{proof}
(i) follows directly from the representation of $X$.\\ 
(ii) is a simple consequence of considering the vector $(X_1, X_2)$, therefore arranging the transposed vectors $a_1,a_2$ into rows of a new matrix $A_{12}\in\mathbb{R}_+^{2\times D}$. The atoms are then derived from Proposition~\ref{discspect}.\\
(iii) is a consequence of max-linearity:
$$\hspace{3cm}X_1\vee X_2= \underset{j\in\{1,2\}}\vee\underset{k\in\{1,\ldots,D\}}\vee a_{jk}Z_k=\underset{k\in\{1,\ldots,D\}}\vee (\underset{j\in\{1,2\}}\vee a_{jk})Z_k.\hspace{1,7cm}\qedhere
$$
\end{proof}

From this we can immediately read off the first two lines of Table~\ref{table:atomsO} and of Table~\ref{table:atoms}.
For the third lines of these Tables, we apply the following multivariate version of Breiman's lemma in combination with Proposition~\ref{prop:easy}.

\begin{lemma} {\rm [\citet[Proposition~A.1]{basrak2002}]}\label{breimanlemma}
	Let $\boldsymbol{Y}\in {\rm RV}^d(\alpha)$, and $S$ be a random $q\times d$ matrix, independent of $\boldsymbol{Y}$. If $0<\mathbb{E}\norm{S}^\gamma<\infty$ for some $\gamma>\alpha$ then
	$$n\mathbb{P}(b_n^{-1}S\boldsymbol{Y}\in \cdot)\overset{v}{\to}\tilde{\nu}(\cdot)\coloneqq \mathbb{E}[\nu \circ S^{-1}(\cdot)],$$
 where $\overset{v}{\to}$ denotes vague convergence on $\R^d \setminus\{0\}$.
\end{lemma}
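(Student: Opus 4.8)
The statement is the multivariate Breiman lemma; since it is quoted from \citet{basrak2002} one may simply cite it, but here is the argument I would give. The plan is to establish vague convergence by testing against functions: it suffices to show that for every bounded continuous $f$ on the image space $\mathbb{R}^q\setminus\{\boldsymbol 0\}$ that vanishes on a neighbourhood $\{\norm{\boldsymbol y}<\delta\}$ of the origin,
\[
n\,\mathbb{E}\bigl[f(b_n^{-1}S\boldsymbol{Y})\bigr]\longrightarrow \int f\,{\rm d}\tilde\nu=\mathbb{E}\Bigl[\int f(S\boldsymbol{x})\,\nu({\rm d}\boldsymbol{x})\Bigr],\qquad n\to\infty.
\]
First I would condition on $S$, which is legitimate because $S$ and $\boldsymbol{Y}$ are independent: writing $g_n(s)\coloneqq n\,\mathbb{E}[f(b_n^{-1}s\boldsymbol{Y})]$, the left-hand side equals $\mathbb{E}[g_n(S)]$.

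For a fixed deterministic matrix $s$, the map $\boldsymbol{x}\mapsto s\boldsymbol{x}$ is linear and continuous, so regular variation of $\boldsymbol{Y}$ gives $g_n(s)\to g(s)\coloneqq\int f(s\boldsymbol{x})\,\nu({\rm d}\boldsymbol{x})$, i.e.\ the deterministic pushforward $\nu\circ s^{-1}$ is the limit. The one wrinkle is that $f(s\cdot)$ vanishes near $\boldsymbol 0$ but need not be compactly supported when $s$ is not injective (mass can escape along $\ker s$); I would handle this by truncation, splitting $f(s\boldsymbol{x})$ according to $\norm{\boldsymbol x}\le R$ and $\norm{\boldsymbol x}>R$. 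The first piece is a genuine $C_K$ test function to which vague convergence of $\boldsymbol Y$ applies directly; the second is bounded by $\norm{f}_\infty\mathbf 1\{\norm{s\boldsymbol x}\ge\delta,\ \norm{\boldsymbol x}>R\}$ and is driven to $0$ as $R\to\infty$, uniformly in $n$, by the same tail bound used below.

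The heart of the proof is interchanging limit and expectation, and this is where the moment hypothesis enters. Since $\mathrm{supp}(f)\subset\{\norm{\boldsymbol y}\ge\delta\}$ and $\norm{s\boldsymbol Y}\le\norm{s}\,\norm{\boldsymbol Y}$,
\[
|g_n(s)|\le \norm{f}_\infty\, n\,\mathbb{P}\bigl(\norm{\boldsymbol Y}>\delta b_n/\norm{s}\bigr).
\]
Because multivariate regular variation of $\boldsymbol{Y}$ forces $\norm{\boldsymbol Y}\in{\rm RV}_+(\alpha)$, I would invoke Potter's bound: fixing $\eps\in(0,\gamma-\alpha)$, for all $n$ large and all $s$ one has $n\,\mathbb{P}(\norm{\boldsymbol Y}>\delta b_n/\norm{s})\le C(\norm{s}/\delta)^{\alpha+\eps}$, whence $|g_n(s)|\le C'\norm{s}^{\alpha+\eps}$. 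This bound is integrable, since $\mathbb{E}\norm{S}^{\alpha+\eps}\le 1+\mathbb{E}\norm{S}^{\gamma}<\infty$ as $\alpha+\eps<\gamma$. Dominated convergence then yields $\mathbb{E}[g_n(S)]\to\mathbb{E}[g(S)]=\int f\,{\rm d}\tilde\nu$, with measurability of $s\mapsto g(s)$ and of the transition kernel $\nu\circ S^{-1}$ following from continuity and a monotone-class/Fubini argument; the hypothesis $\mathbb{E}\norm{S}^{\gamma}>0$ ensures the limit is not the null measure. The main obstacle is precisely this uniform-in-$(n,s)$ Potter domination combined with the non-compact support of $f(s\cdot)$; once these are in hand, the result follows by conditioning and dominated convergence.
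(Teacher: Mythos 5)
The paper never proves this lemma: it is imported verbatim as Proposition~A.1 of \citet{basrak2002}, so there is no internal argument to compare yours against. What you have written is, in outline, a correct reconstruction of the standard proof of the multivariate Breiman lemma --- condition on $S$ using independence, apply regular variation of $\boldsymbol{Y}$ for each fixed matrix $s$, and dominate uniformly so that the expectation over $S$ can be interchanged with the limit --- and this is essentially the argument of the cited reference, so your proof buys a self-contained justification where the paper settles for a citation. Two details would need tightening before the write-up is fully rigorous. First, the truncation $f(s\boldsymbol{x})\mathds{1}\{\norm{\boldsymbol{x}}\le R\}$ is not a continuous test function; you need a continuous cutoff there (routine, but as written the ``genuine $C_K$ test function'' claim is false). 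Second, and more substantively, Potter's inequality only holds when both of its arguments exceed some finite threshold $x_0$, so the bound $n\,\mathbb{P}(\norm{\boldsymbol Y}>\delta b_n/\norm{s})\le C(\norm{s}/\delta)^{\alpha+\varepsilon}$ does not follow from Potter alone when $\norm{s}>\delta b_n/x_0$: in that regime you must invoke the trivial bound $n\,\mathbb{P}(\cdot)\le n$ together with $n=o\big(b_n^{\alpha+\varepsilon}\big)$, or equivalently dispose of the event $\{\norm{S}>\eta b_n\}$ by Markov's inequality using $\mathbb{E}\norm{S}^{\gamma}<\infty$ and $n/b_n^{\gamma}\to 0$; this is exactly the second place where the strict inequality $\gamma>\alpha$ is needed, and omitting it leaves the claimed ``uniform in $s$'' domination unjustified. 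Finally, a small slip: $\mathbb{E}\norm{S}^{\gamma}>0$ (i.e.\ $\mathbb{P}(S\neq 0)>0$) does not by itself prevent $\tilde\nu$ from being the null measure (the exponent measure $\nu$ could charge only $\ker S$ almost surely); that hypothesis guarantees a non-degenerate formulation rather than non-nullity of the limit.
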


\begin{corollary}\label{cor:easy}
Let $\bsx=A\times_{\max} \bsz$ be as in Proposition~\ref{prop:easy}, with atoms of the angular measure of $\bsx$ obtained by normalising the non-zero column vectors of the $d\times D$-matrix $A$ on the positive unit sphere. Let $S\in\R^{q\times d}$ be a non-random matrix. Then the linear transformation $S \bsx= S (A \times_{\max} \bsz)\in {\rm RV}^d(\alpha)$ 
has discrete angular measure on the unit sphere $\Theta^{d-1}=\{\omega\in\R^d: \|\omega\|=1\}$ with atoms obtained by normalising the non-zero columns of $SA$.  
\end{corollary}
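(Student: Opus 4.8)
The plan is to deduce the result directly from the multivariate Breiman lemma (Lemma~\ref{breimanlemma}), taking advantage of the fact that $S$ is deterministic so that the integrability hypothesis of that lemma is automatic. First I would record the precise shape of the exponent measure of $\bsx$. By Proposition~\ref{discspect} (equivalently, from the formula in Corollary~\ref{fulldep}(a)) the vector $\bsx=A\times_{\max}\bsz$ lies in ${\rm RV}^d_+(\alpha)$, and its exponent measure decomposes as $\nu_{\bsx}=\sum_{k=1}^D\mu_k$, where $\mu_k$ is the image of $\alpha z^{-\alpha-1}\mathrm dz$ under the map $z\mapsto z\,a_k$ and is therefore concentrated on the ray $\R_+a_k$ spanned by the $k$-th column $a_k$ of $A$; a short change of variables shows that $\mu_k$ carries angular point mass $\norm{a_k}^\alpha$ at $a_k/\norm{a_k}$, so that the atoms of the spectral measure of $\bsx$ are exactly the normalized non-zero columns of $A$.

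Next I would apply Lemma~\ref{breimanlemma} with the fixed $q\times d$ matrix $S$ in place of the random matrix. Since $S$ is non-random, $\E\norm{S}^\gamma=\norm{S}^\gamma<\infty$ for every $\gamma>\alpha$, so the moment hypothesis is trivially met, and the lemma gives $S\bsx\in{\rm RV}^q(\alpha)$ with limiting measure $\tilde\nu=\E[\nu_{\bsx}\circ S^{-1}]=\nu_{\bsx}\circ S^{-1}$, the ordinary pushforward of $\nu_{\bsx}$ under $S$, the expectation being superfluous.

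The final step is to evaluate this pushforward using the decomposition above. By linearity of $S$, the image of the ray $\R_+a_k$ is the ray $\R_+(Sa_k)=\R_+(SA)_k$, where $(SA)_k=Sa_k$ is the $k$-th column of $SA$; concretely $\mu_k\circ S^{-1}$ is the image of $\alpha z^{-\alpha-1}\mathrm dz$ under $z\mapsto z\,(SA)_k$. Hence $\tilde\nu=\sum_{k=1}^D\mu_k\circ S^{-1}$ is supported on the rays through the columns of $SA$, which is precisely the exponent measure of a regularly varying vector whose spectral measure on the unit sphere of the target space $\R^q$ has atoms at the normalized non-zero columns $(SA)_k/\norm{(SA)_k}$, with mass $\norm{(SA)_k}^\alpha$. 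Reading off the angular part in the sense of Definition~\ref{mrv}(b) yields the claim.

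I expect the only delicate point to be the bookkeeping of degenerate columns: whenever $Sa_k=0$ the corresponding ray collapses to the origin, which is excluded from $\R^q\setminus\{0\}$, so such $k$ contribute no atom and are simply dropped, which is why exactly the non-zero columns of $SA$ appear (and if $Sa_k=0$ for every $k$, then $S\bsx=0$ almost surely and there is nothing to prove). No genuine analytic obstacle arises, since the deterministic $S$ trivializes the moment condition in Breiman's lemma and the pushforward of a ray-supported measure is computed directly.
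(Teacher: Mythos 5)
Your proof is correct and takes essentially the approach the paper intends: the paper states this corollary immediately after Breiman's lemma (Lemma~\ref{breimanlemma}) with no separate argument, relying exactly on your observation that a deterministic $S$ trivializes the moment hypothesis, that the expectation in $\E[\nu\circ S^{-1}]$ is then superfluous, and that the pushforward of the ray-supported exponent measure $\sum_{k=1}^D \mu_k$ lands on the rays $\R_+(Sa_k)$, giving angular mass $\norm{Sa_k}^\alpha$ at $Sa_k/\norm{Sa_k}$. Your explicit treatment of the degenerate columns with $Sa_k=0$ (which contribute no atom) is a point the paper leaves implicit, and you also correctly read the target space as $\R^q$ rather than the paper's typographical $\R^d$.
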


\begin{lemma}\label{genmaxspectmainCK}
	Let $\boldsymbol{X}\in {\rm RV}_+^d(\alpha,A)$ with $A\in\mathbb{R}_+^{d\times D}$. {Recall $\boldsymbol{T}_2^{ij}=(M_{c_1ij}, M_{ij},X_i,X_j)$ from Table~\ref{table:atoms} and define the matrix}
$$S=\begin{bmatrix}
	1&0&-c_1&0\\
	0&-c_2& c_2& 1+c_2\\
	\end{bmatrix}
 \in\mathbb{R}^{2\times4}.
	$$
 Then $\boldsymbol{T}^{ij}:=S \boldsymbol{T}_2^{ij} =(M_{c_1i,j}-c_1X_i, (1+c_2)X_j+c_2X_i-c_2M_{ij})$, $\boldsymbol{T}^{ij}\in  {\rm RV}^2(\alpha)$ and has discrete angular measure with  atoms obtained by normalising the vectors 
 $\tilde{a}_k=(c_1a_{ik}\vee a_{jk} -c_1a_{ik}, (1+c_2)a_{jk}+c_2a_{ik}-c_2(a_{ik}\vee a_{jk}))$.	
\end{lemma}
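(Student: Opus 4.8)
The plan is to construct $\boldsymbol{T}^{im}$ in two stages: first realize the auxiliary vector $\boldsymbol{T}_2^{im}=(M_{c_1i,m},M_{im},X_i,X_m)$ as a max-linear image of $\boldsymbol{Z}$, and then obtain $\boldsymbol{T}^{im}=S\boldsymbol{T}_2^{im}$ by applying the \emph{ordinary} linear map $S$. The key conceptual point, which organises the whole argument, is that $S$ acts linearly and not tropically, so the regular variation of $\boldsymbol{T}^{im}$ cannot be read off from Proposition~\ref{discspect} directly; it must instead be transported through $S$ by Breiman's lemma, and the resulting atoms are computed by ordinary matrix multiplication.

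For the first stage, I would use Proposition~\ref{prop:easy}(a),(c) to write each coordinate of $\boldsymbol{T}_2^{im}$ as a max-linear combination of $Z_1,\dots,Z_D$, namely $M_{c_1i,m}=\bigvee_k(c_1a_{ik}\vee a_{mk})Z_k$, $M_{im}=\bigvee_k(a_{ik}\vee a_{mk})Z_k$, $X_i=\bigvee_k a_{ik}Z_k$ and $X_m=\bigvee_k a_{mk}Z_k$. Gathering these coefficients into the matrix $A_{\boldsymbol{T}_2^{im}}\in\mathbb{R}_+^{4\times D}$ whose $k$-th column is $(c_1a_{ik}\vee a_{mk},\,a_{ik}\vee a_{mk},\,a_{ik},\,a_{mk})^\top$, which is exactly the second row of Table~\ref{table:atoms}, Proposition~\ref{discspect} gives $\boldsymbol{T}_2^{im}\in{\rm RV}_+^4(\alpha,A_{\boldsymbol{T}_2^{im}})$, with discrete spectral measure supported on the normalized non-zero columns of $A_{\boldsymbol{T}_2^{im}}$.

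For the second stage, I would invoke Corollary~\ref{cor:easy}, itself a consequence of Breiman's Lemma~\ref{breimanlemma}, with $\boldsymbol{X}=\boldsymbol{T}_2^{im}$ and the non-random matrix $S$. Its moment hypothesis is immediate, since $\mathbb{E}\norm{S}^\gamma=\norm{S}^\gamma<\infty$ for every $\gamma>\alpha$ as $S$ is deterministic. The corollary then yields $\boldsymbol{T}^{im}=S\boldsymbol{T}_2^{im}\in{\rm RV}^2(\alpha)$ with discrete spectral measure whose atoms are the normalized non-zero columns of the ordinary product $SA_{\boldsymbol{T}_2^{im}}$. A single matrix–vector multiplication gives the $k$-th column as
\begin{align*}
S\begin{pmatrix} c_1a_{ik}\vee a_{mk}\\ a_{ik}\vee a_{mk}\\ a_{ik}\\ a_{mk}\end{pmatrix}
=\begin{pmatrix} (c_1a_{ik}\vee a_{mk})-c_1a_{ik}\\ (1+c_2)a_{mk}+c_2a_{ik}-c_2(a_{ik}\vee a_{mk})\end{pmatrix}=\tilde a_k,
\end{align*}
which is precisely the asserted atom. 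A short check shows this column vanishes exactly when $a_{mk}=0$, i.e. when $k\notin\An(m)$, so the non-zero atoms are indexed by $k\in\An(m)$, in agreement with the discussion following Table~\ref{table:atoms}.

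I do not anticipate a substantive obstacle, as the statement is essentially a composition of Proposition~\ref{discspect}, Breiman's Lemma~\ref{breimanlemma}, and Corollary~\ref{cor:easy}. The one place demanding care is to keep the two algebras apart: $\boldsymbol{T}_2^{im}$ is built by tropical operations on $\boldsymbol{Z}$, whereas $S$ acts by ordinary linear algebra, so the atoms of $\boldsymbol{T}^{im}$ come from the \emph{ordinary} product $SA_{\boldsymbol{T}_2^{im}}$ rather than a tropical product. A minor bookkeeping point is that $S$ may send several columns to a common direction or to zero; the discrete spectral measure absorbs both, so neither case needs a separate argument.
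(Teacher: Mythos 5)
Your proposal is correct and follows essentially the same route as the paper's own (much terser) proof: read off the atoms of $\boldsymbol{T}_2^{im}$ from Proposition~\ref{prop:easy}, then push them through the deterministic linear map $S$ via Corollary~\ref{cor:easy} (Breiman's Lemma~\ref{breimanlemma}), computing the resulting columns $SA_{\boldsymbol{T}_2^{im}}$ explicitly. Your added checks — the trivial moment condition for deterministic $S$, and that a column vanishes precisely when $a_{mk}=0$, i.e.\ $k\notin\An(m)$ — are accurate and merely make explicit what the paper leaves implicit.
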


\begin{proof}
From Proposition~\ref{prop:easy} we read off the non-zero atoms $(\tilde{a}_k/\norm{\tilde{a}_k})$ of the angular measure of $\boldsymbol{T}_2^{ij}$ where 
$\tilde{a}_k=(c_1a_{ik}\vee a_{jk}, a_{ik}\vee a_{jk}, a_{ik}, a_{jk})$.
 Applying 
 Corollary~\ref{cor:easy}, the result follows.
\end{proof}

\begin{lemma}\label{genmaxspect5CK}
	Let $\boldsymbol{X}\in {\rm RV}_+^d(\alpha,A)$ be a $d-$dimensional subvector of an RMLM with coefficient matrix $A\in\mathbb{R}_+^{d\times D}$. 
Recall $\boldsymbol{T}_2^{K ij}\coloneqq(M_{K},M_{K,i},M_{K,j})$ from Table~\ref{table:atomsO} and define the matrix
 $$S=\begin{bmatrix}
	-1 & 1 & 0\\
	-1 & 0 & 1\\
	\end{bmatrix}
 \in\mathbb{R}^{2\times3}.
	$$
Then $\boldsymbol{T}^{Kij}_3\coloneqq(M_{K,i}-M_{K},M_{K,j}-M_{K})= S \boldsymbol{T}_2^{K ij}$, 
 $\boldsymbol{T}^{Kij}_3\in {\rm RV}^2(\alpha)$ and has discrete angular measure with atoms obtained by normalising the vectors 
	 $\tilde{a}_k=(\underset{r\in K\cup\{i\}}{\vee}a_{rk}-\underset{r\in K}{\vee}a_{rk} , \underset{r\in K\cup\{j\}}{\vee}a_{rk}-\underset{r\in K}{\vee}a_{rk} )=(a_{ik} \mathds{1}_{\{k\notin \An(K)\}}, a_{jk} \mathds{1}_{\{k\notin \An(K)\}}).$	
\end{lemma}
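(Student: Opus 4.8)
The plan is to establish the two assertions in turn: first that $\boldsymbol{T}^{\kappa i m}_3=S\boldsymbol{T}_2^{\kappa im}$ is regularly varying with atoms given by the left-hand (difference) form, and then that these atoms collapse to the indicator form on the right.

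For the first assertion I would proceed exactly as in the proof of Lemma~\ref{genmaxspectmainCK}. Each coordinate $M_{O_\kappa}$, $M_{O_\kappa,i}$, $M_{O_\kappa,m}$ is a maximum of components $X_j$, so by Proposition~\ref{prop:easy}(c) it is itself max-linear with $k$-th coefficient $\vee_{j\in O_\kappa}a_{jk}$, $\vee_{j\in O_\kappa\cup\{i\}}a_{jk}$ and $\vee_{j\in O_\kappa\cup\{m\}}a_{jk}$ respectively; by Proposition~\ref{prop:easy}(b) the vector $\boldsymbol{T}_2^{\kappa im}$ then lies in ${\rm RV}^3_+(\alpha)$ with non-normalized atoms the columns recorded in the $\boldsymbol{T}_2^{\kappa im}$ row of Table~\ref{table:atomsO}. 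Since $S\in\mathbb{R}^{2\times3}$ is a fixed matrix and a direct multiplication confirms $S\boldsymbol{T}_2^{\kappa im}=(M_{O_\kappa,i}-M_{O_\kappa},\,M_{O_\kappa,m}-M_{O_\kappa})=\boldsymbol{T}^{\kappa i m}_3$, Corollary~\ref{cor:easy} gives $\boldsymbol{T}^{\kappa i m}_3\in{\rm RV}^2(\alpha)$ with discrete spectral measure whose atoms are the normalized non-zero columns of $S$ applied to that atom matrix; computing $S$ times the $k$-th column yields the first expression for $\tilde a_k$.

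The substance is the reduction $\vee_{j\in O_\kappa\cup\{i\}}a_{jk}-\vee_{j\in O_\kappa}a_{jk}=a_{ik}\mathds{1}_{\{k\notin\An(O_\kappa)\}}$ (and the analogue for $m$). Rewriting the left side as $(a_{ik}-\vee_{j\in O_\kappa}a_{jk})^+$, I would split on $k$. If $k\notin\An(O_\kappa)$ then $a_{jk}=0$ for all $j\in O_\kappa$, since a path weight $a_{jk}$ vanishes unless $k\in\An(j)$, so the difference is $a_{ik}$, as claimed. If $k\in\An(O_\kappa)$ I must show $a_{ik}\le\vee_{j\in O_\kappa}a_{jk}$. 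Here only $k\in\an(i)\cap\An(O_\kappa)$ matters (otherwise $a_{ik}=0$), and the case $k=i$ is ruled out because $i\in O$ together with $i\in\An(O_\kappa)$ would contradict Assumption (B2); so $k\in\an(i)$. If $k\in O_\kappa$, then $a_{kk}$ appears in the maximum and Lemma~\ref{ineq}, under the standardization~(A3), gives $a_{kk}>a_{ik}$; if instead $k\in\an(O_\kappa)\setminus O_\kappa$, then (B2) forces $k\in O^c$, so $k\in\an(i)\cap\an(O_\kappa)\cap O^c$ and Assumption (B3) together with Lemma~\ref{lemmaid1}(i) provides a max-weighted path $k\rightsquigarrow o\rightsquigarrow i$ through some $o\in O_\kappa\cap\An(i)$, i.e.\ $a_{ik}a_{oo}=a_{ok}a_{io}$, whence $a_{ik}=a_{ok}\,a_{io}/a_{oo}<a_{ok}\le\vee_{j\in O_\kappa}a_{jk}$ using $a_{io}<a_{oo}$ from Lemma~\ref{ineq}. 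Repeating the argument with $m$ in place of $i$ completes the proof.

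The main obstacle is precisely this last inequality for a hidden common ancestor $k\in\an(i)\cap\an(O_\kappa)\cap O^c$: without the graphical hypotheses (B2)--(B3) a max-weighted path from $k$ to $i$ could bypass $O_\kappa$ and make $a_{ik}$ exceed every $a_{jk}$, $j\in O_\kappa$, so the reduction would fail. The crux is to convert the qualitative statement of (B3) into the algebraic identity $a_{ik}a_{oo}=a_{ok}a_{io}$ via the max-weighted-path characterization of Lemma~\ref{lemmaid1}(i), and then to close the gap with the strict standardization inequality $a_{io}<a_{oo}$ of Lemma~\ref{ineq}.
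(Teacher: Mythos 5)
Your proof is correct and takes essentially the same route as the paper: Proposition~\ref{prop:easy}(b)--(c) plus Corollary~\ref{cor:easy} yield the difference form of the atoms, and the reduction to the indicator form rests on the causal ordering of $O_\kappa$ together with Lemma~\ref{ineq}. Your explicit handling of the hidden-ancestor case $k\in\an(O_\kappa)\setminus O_\kappa$ — converting (B3) into $a_{ik}a_{oo}=a_{ok}a_{io}$ via Lemma~\ref{lemmaid1}(i) and closing with $a_{io}<a_{oo}$ — is precisely the argument the paper carries out in the text immediately preceding the lemma (where it derives $a_{jk_1}<a_{ok_1}$), so you have merely folded into the proof a step the paper's terse proof only alludes to.
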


\begin{proof}
From Proposition~\ref{prop:easy} we read off the atoms of the angular measure of ${\boldsymbol{T}_2^{K ij}}$ by normalising the non-zero vectors
$(\underset{r\in K}{\vee}a_{rk}, \underset{r\in K\cup\{i\}}{\vee}a_{rk}, \underset{r\in K\cup\{j\}}{\vee}a_{rk})$. Applying Corollary~\ref{cor:easy}, the first representation of $\tilde{a}_k$ follows. The second representation is
	due to the causal ordering of the nodes in $K$ and Lemma~\ref{ineq}, since both $a_{ik},a_{jk}$ are strictly less than $a_{kk}$ for some $k\in K$. This results in
	$$
	\underset{r\in K\cup\{i\}}{\vee}a_{rk}=\begin{cases} \underset{r\in K}{\vee}a_{rk}, & k\in \An(K), \\
	a_{ik}, & k\notin \An(K), \end{cases} 
	$$
	and likewise for ${\vee}_{r\in K\cup\{j\}}a_{rk}$.
\end{proof}

\begin{lemma}\label{genmaxspectlast}
	Let $\boldsymbol{X}\in {\rm RV}_+^d(\alpha,A)$ satisfy the setting of Lemma~\ref{genmaxspect5CK}.
 Set $\boldsymbol{T}_4^{K ij}\coloneqq(M^K_{c_1i,j}, M^K_{i,j},$ ${T}_{3,1}^{K ij},{T}_{3,2}^{K ij})$, for components defined around~\eqref{TKij}, and define the matrix
 $$S=\begin{bmatrix}
	1 &0 & -c_1& 0\\
	0& -c_2 & c_2 & 1+c_2\\
	\end{bmatrix}
 \in\mathbb{R}^{2\times4}.
	$$
Recall $\boldsymbol{T}^{Kij}\coloneqq(M^K_{c_1i,j}-c_1{ T}^{Kij}_{31}, (1+c_2){T}^{Kij}_{32}+c_2{T}^{Kij}_{31}-c_2M^K_{ij})$ from~\eqref{TKij}.
Then $\boldsymbol{T}^{Kij} =S\boldsymbol{T}^{Kij}_4$, $\boldsymbol{T}^{Kij}\in {\rm RV}^2(\alpha)$ and has discrete angular measure with atoms obtained by normalising the vectors 
	 $\tilde{a}_k=(c_1a_{ik}\vee a_{jk} -c_1a_{ik}, (1+c_2)a_{jk}+c_2a_{ik}-c_2(a_{ik}\vee a_{jk}))\mathds{1}_{\{k\notin \An(K)\}}.$	
\end{lemma}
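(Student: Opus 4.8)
The plan is to follow the proof of Lemma~\ref{genmaxspectmainCK} step by step, building the two-dimensional vector $\boldsymbol{T}^{\kappa im}$ out of the already-analysed vector $\boldsymbol{T}^{\kappa im}_3$ through the intermediate four-dimensional vector $\boldsymbol{T}^{\kappa im}_4$, and then recovering $\boldsymbol{T}^{\kappa im}$ as the single linear image $S\boldsymbol{T}^{\kappa im}_4$, to which Corollary~\ref{cor:easy} applies. First I would record, from Lemma~\ref{genmaxspect5CK}, that $\boldsymbol{T}^{\kappa im}_3=(T^{\kappa im}_{3,1},T^{\kappa im}_{3,2})$ has discrete spectral measure with non-normalized atoms $(a_{ik},a_{mk})\mathds{1}_{\{k\notin\An(O_\kappa)\}}$; equivalently, at the level of regular variation $T^{\kappa im}_{3,1}$ and $T^{\kappa im}_{3,2}$ behave like the max-linear variables with coefficient vectors $(a_{ik}\mathds{1}_{\{k\notin\An(O_\kappa)\}})_k$ and $(a_{mk}\mathds{1}_{\{k\notin\An(O_\kappa)\}})_k$.

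Next I would compute the atoms of $\boldsymbol{T}^{\kappa im}_4=(M^\kappa_{c_1i,m},M^\kappa_{i,m},T^{\kappa im}_{3,1},T^{\kappa im}_{3,2})$. Since $M^\kappa_{c_1i,m}=\max(c_1T^{\kappa im}_{3,1},T^{\kappa im}_{3,2})$ and $M^\kappa_{i,m}=\max(T^{\kappa im}_{3,1},T^{\kappa im}_{3,2})$, parts (a) and (c) of Proposition~\ref{prop:easy} give these maxima the coefficient vectors $(c_1a_{ik}\vee a_{mk})\mathds{1}_{\{k\notin\An(O_\kappa)\}}$ and $(a_{ik}\vee a_{mk})\mathds{1}_{\{k\notin\An(O_\kappa)\}}$ respectively, while Proposition~\ref{prop:easy}(b) stacks the four components into a matrix whose $k$-th column is $(c_1a_{ik}\vee a_{mk},\,a_{ik}\vee a_{mk},\,a_{ik},\,a_{mk})\mathds{1}_{\{k\notin\An(O_\kappa)\}}$. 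The normalized non-zero columns of this matrix are the atoms of the spectral measure of $\boldsymbol{T}^{\kappa im}_4$.

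Finally I would verify the algebraic identity $\boldsymbol{T}^{\kappa im}=S\boldsymbol{T}^{\kappa im}_4$ by direct multiplication: the first row $(1,0,-c_1,0)$ of $S$ returns $M^\kappa_{c_1i,m}-c_1T^{\kappa im}_{3,1}$, and the second row $(0,-c_2,c_2,1+c_2)$ returns $(1+c_2)T^{\kappa im}_{3,2}+c_2T^{\kappa im}_{3,1}-c_2M^\kappa_{i,m}$, and since $M^\kappa_{i,m}=M^\kappa_{im}$ these are precisely the two components of $\boldsymbol{T}^{\kappa im}$ in~\eqref{Tkim}. Then Corollary~\ref{cor:easy} yields $\boldsymbol{T}^{\kappa im}\in{\rm RV}^2(\alpha)$ with atoms the normalized non-zero columns of $S$ applied to the columns found above; carrying out the $2\times4$ by $4\times1$ product on each column reproduces $\tilde a_k=(c_1a_{ik}\vee a_{mk}-c_1a_{ik},\,(1+c_2)a_{mk}+c_2a_{ik}-c_2(a_{ik}\vee a_{mk}))\mathds{1}_{\{k\notin\An(O_\kappa)\}}$, as claimed, the indicator persisting because any column with $k\in\An(O_\kappa)$ is already the zero vector and is mapped to zero by $S$.

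The one genuine subtlety — and essentially the only place requiring care — is that $T^{\kappa im}_{3,1}$ and $T^{\kappa im}_{3,2}$ are differences of maxima rather than genuine max-linear combinations, so Proposition~\ref{prop:easy}, which is phrased for vectors of the form $a\times_{\max}\boldsymbol{Z}$, does not apply verbatim. The resolution is that Lemma~\ref{genmaxspect5CK} has already shown that only indices $k\notin\An(O_\kappa)$ contribute, and on those indices the subtracted term $M_{O_\kappa}$ drops out, so each of $T^{\kappa im}_{3,1},T^{\kappa im}_{3,2}$ shares its spectral measure with the bona fide max-linear variable having the stated coefficient vector. Since all subsequent operations (scaling, maxima, and the linear map $S$) depend only on these atoms, Proposition~\ref{prop:easy} and Corollary~\ref{cor:easy} may be invoked through this identification, and the computation goes through exactly as in the proof of Lemma~\ref{genmaxspectmainCK}.
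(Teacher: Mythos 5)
Your proposal is correct and takes essentially the same route as the paper, whose entire proof is the one-line remark that the result follows by applying Lemmas~\ref{genmaxspectmainCK} and~\ref{genmaxspect5CK} to the vector $\boldsymbol{T}^{\kappa im}_3$ in place of $(X_i,X_m)$. Your unpacking of that remark---in particular the observation that $\boldsymbol{T}^{\kappa im}_3$ is not literally max-linear but, by Lemma~\ref{genmaxspect5CK}, shares its joint spectral measure with a max-linear vector, so that Proposition~\ref{prop:easy} and Corollary~\ref{cor:easy} may be invoked through this identification---is exactly the detail the paper leaves implicit.
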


\begin{proof} The proof follows by applying Lemmas~\ref{genmaxspectmainCK} and~\ref{genmaxspect5CK} to the vector $\boldsymbol{T}^{K ij}_3$, instead of $(X_i, X_j)$.
\end{proof}

\section{Statistical theory for regularly varying innovations}\label{sec:stat}

We start with some notation and results from \citet{ResnickHeavy}, as in Section~6 of \citet{KK}. 

Let $\boldsymbol{Y}_1,\ldots,{\boldsymbol{Y}_n}$ for $n\in\N$ be independent replicates of $\boldsymbol{Y}\in {\rm RV}^d_+(\alpha)$ with standardised margins,   and consider the angular decomposition of $\bsy$ given by
\begin{align}\label{angular.eq}
R\coloneqq\norm{\boldsymbol{Y}} , \quad  \boldsymbol{\omega}=(\omega_1,\ldots,\omega_d)=\frac{\boldsymbol{Y}}{R}. 
\end{align}
We call $R$ the radial and $\boldsymbol{\omega}$ the angular component. 
Similarly for $\bsy_\ell$ we write $R_\ell=\|\bsy_\ell\|$ and $\boldsymbol{\omega_\ell}=\bsy_\ell/R_\ell$ for  $\ell\in\{1,\ldots, n\}$.

The standardised angular measure $\tilde{H}_{\bsy}$ from~\eqref{specmass2} {provides a way to obtain a} consistent estimator from the empirical angular measure \cite[see, e.g., (9.32) of][]{ResnickHeavy}, given for known normalising functions $b_{n/k}$ by 
\begin{align}\label{eq:estang}
\tilde{H}_{\bsy, n/k} (\cdot) = \frac{\sum_{\ell=1}^{n}\mathds{1}{\{(R_\ell/b_{\frac{n}{k}},{\boldsymbol{\omega}}_\ell) \in [1,\infty] \times \cdot\}}}{\sum_{\ell=1}^{n}\mathds{1}{\{R_\ell/b_{\frac{n}{k}}\geq1\}}}\stw \tilde{H}_{\bsy} (\cdot),
\end{align}
as $n\to\infty$, $k\to\infty$, $k/n\to 0$. 

Let $R^{(n)}\leq \cdots\leq R^{(1)}$ denote the order statistics of $R_1,\ldots,R_n$.  
If we choose normalising functions $b_t$ such that 
\beam\label{eq:normfct}
t\P(\bsy/b_t\in\cdot) & \stv &\nu_{\bsy}(\cdot),\quad t\to\infty,
\eeam
then 
from \citet[p.~308]{ResnickHeavy}, we know that 
$R^{(k)}/b_{ n/k}\overset{P}{\to}1$, which suggests setting $b_{ n/k}=R^{(k)}$ in~\eqref{eq:estang} and gives the estimator 
\begin{align}
\tilde{H}_{\bsy, n/k} (\cdot) = \frac{1}{k}\sum_{\ell=1}^{n}\mathds{1}\{R_\ell \geq R^{(k)}, \boldsymbol{\omega}_\ell\in \cdot\},
\end{align}
where $k=\sum_{\ell=1}^{n}\mathds{1}\{R_\ell \geq R^{(k)}\}$.  

Our goal is to estimate extremal dependence measures and squared scalings as in Definition~\ref{scaledef}, and we define for a continuous function $f:\Theta^{d}_+\to \R_+$ the quantity
$$\E_{\tilde{{H}}_{\bsy}}[f(\boldsymbol{\omega})] :=\lim_{x\to\infty} \E[f(\boldsymbol{\omega})\mid R>x] =\int_{\Theta^d_+} f(\boldsymbol{\omega})d\tilde{H}_{\bsy}(\boldsymbol{\omega}).
$$
Thus a natural estimator for $\mathbb{E}_{\tilde{{H}}_{\bsy}}[f(\boldsymbol{\omega})]$ is  equation~(29) of \citet{KK}, given by
\begin{align}\label{initialest}
\hat{\mathbb{E}}_{\tilde{{H}}_{\bsy}}[f(\boldsymbol{\omega})]=\frac{1}{k}\sum_{\ell=1}^{n}f(\boldsymbol{\omega}_\ell)\mathds{1}\{R_\ell \geq R^{(k)}\}.
\end{align}
The function $f(\cdot)$ will depend on whether we want to estimate extremal dependence measures or squared scalings.

\subsection{Intermediate thresholding}\label{sec:thresh}

\citet{KK} use the setting from the previous section to estimate squared scalings of partial maxima of selected components of an RMLM in their Section~6. 
In the present paper we want to estimate the extremal dependence measure of the components of $\boldsymbol{\tilde T}^{ij}=(\tilde T_1^{ij},\tilde T_2^{ij})$ as defined in Section~\ref{sec:MWP}. 
Transformation of the sample variables to $\boldsymbol{\tilde T}^{ij}$ creates many small values near $\boldsymbol{0}$, corrupting the estimator~\eqref{initialest} significantly.
As a remedy, we have implemented a two-step procedure using besides $k$ as in~\eqref{initialest} an additional intermediate threshold.
 
For a given large sample $\boldsymbol{Y}_1,\ldots,\boldsymbol{Y}_n$ in ${\rm RV}^d_+(\alpha)$, with standardised margins and with angular decomposition ~\eqref{angular.eq} under the norm $\norm{\cdot}_{\alpha}$, we choose a threshold $k_1$. Consider for $\ell\in\{1,\dots,n\}$ only those observations whose radial components satisfy $R_\ell\geq b_{n/{k_1}}$ for normalising constants $b_{n/{k_1}}$ as in~\eqref{eq:estang}, and define
\beam\label{eq:Nn}
N_n=\sum_{\ell=1}^{n}\mathds{1}\{R_\ell\geq b_{n/{k_1}}\}.
\eeam
Following the d\'ecoupage de L\'evy \citep[p.~15]{ResnickHeavy},
these observations are also independent and identically distributed. 
Assume that $k_1=k_1(n)\to \infty$ and $k_1/n\to 0$ as $n\to\infty$, and choose normalising constants 
$b_{n/k_1}\sim (dn/k_1)^{1/\alpha}$. 
Here $d$ corresponds to the total mass of the angular measure $H_{\boldsymbol{Y}}$ from Definition~\ref{mrv}(ii), and including it in the normalising constant leads to the normalisation of the angular measure as shown in \eqref{eq:ncd} in Appendix~\ref{sec:3.1}.
By definition,
$\frac1{n}N_n$ is the empirical estimator of  
$\P(R\geq b_{n/k_1}) \sim d b_{n/k_1}^{-\alpha}\sim  k_1/n$ giving 
\beam\label{eq:convp}
\frac1{k_1}N_n & \stp & 1,\quad n\to\infty.
\eeam
Assume that as $n\to\infty$, $k_1=k_1(n)\to \infty$, and select a sequence {$k_2=k_2(k_1)\to\infty$} such that $k_2/k_1\to 0$.
We modify the estimator~\eqref{initialest} by first disregarding all small observations and only take the $N_n$ observations with radial component larger than $b_{n/k_1}$  into account.
For fixed $k_1$ define conditional random vectors 
\beam\label{eq:Rrandom}
\tilde{\boldsymbol{Y}}_\ell = d^{1/\alpha}\boldsymbol{Y}_\ell/b_{n/k_1}\mid R_\ell \geq b_{n/k_1},
\quad \ell\in\{1,\dots,N_n\}.
\eeam

\begin{lemma}\label{lem:Ytrans}
Let $\bsy\in{\rm RV}_+^d(\alpha)$ with angular decomposition~\eqref{angular.eq}.
Assume that as $n\to\infty$, $k_1=k_1(n)\to \infty$, and select a sequence {$k_2=k_2(k_1)\to\infty$} such that $k_2/k_1\to 0$. 
Choose the normalising constants $b_{n}/{k_i}$ such that for $i\in \{1,2\},$
\begin{align*}
\mathbb{P}(R\geq b_{{n}/{k_i}})\sim{\frac{k_i}{n}.}
\end{align*}
Consider the conditional random vectors $\tilde{\bs Y}$ as in~\eqref{eq:Rrandom}.
Then $\tilde{\bsy}\in{\rm RV}_+^d(\alpha)$ with angular decomposition $(\tilde \bsy/\tilde R, \tilde R)$ and angular measure $\tilde{H}_{\boldsymbol{Y}}$ normalised to a probability measure  on ${\Theta}^{d-1}_+$. 
Furthermore, convergence to the limiting angular measure $\tilde{H}_{\boldsymbol{Y}}$ is preserved as we condition on $b_{ k_1/k_2}$ such that $\P(\tilde R\geq b_{ k_1/k_2})\sim {k_2/k_1}$, namely,
\begin{align}
\mathbb{P}\Big(\frac{\tilde{\boldsymbol{Y}}}{\tilde R}\in \cdot \mid \tilde{R}\geq  b_{k_1/k_2} \Big)
&\overset{w}{\to} \tilde H_{\boldsymbol{Y}}(\cdot),\quad n\to\infty.
\end{align}
\end{lemma}

\begin{proof}
	We first show that $\tilde{\bsy}\in{\rm RV}_+^d(\alpha)$. From~\eqref{eq:Rrandom} we obtain  $\tilde{R}=\norm{\tilde{\boldsymbol{Y}}}$. {The following exploits Lemma~\ref{def:mrvc}.}  To prove regular variation, since $\tilde{\boldsymbol{Y}}$ is conditioned on $n$, we consider as constants $b_t$, indexed by $t$ and independent of $n$, as $t\to \infty$. Regular variation of $\tilde{R}$, for fixed $n$, follows along the same lines as the argument used for changing $n$, in~\eqref{conditionalR}:
	\begin{align} \label{conditionalRt}
	\mathbb{P}(\tilde R >  b_{t}) &= \mathbb{P}(d^{1/\alpha}R/b_{n/k_1} > b_{t} \mid R \geq b_{n/k_1})=\frac{\mathbb{P}(R > d^{-1/\alpha}b_{n/k_1}b_{t} , R \geq b_{n/k_1})}{\mathbb{P}(R \geq b_{n/k_1})} \nonumber\\
	&\sim\frac{\mathbb{P}(R > b_{nt/k_1}, R > b_{n/k_1})}{\mathbb{P}(R_\ell \geq b_{n/k_1})}=\frac{\mathbb{P}(R_\ell > b_{nt/k_1})}{\mathbb{P}(R_\ell \geq b_{n/k_1})}\sim \frac{db_{nt/k_1}^{-\alpha}}{db_{n/k_1}^{-\alpha}}\sim t^{-1} {\sim \P(R>b_{t}),\quad t\to\infty.}
	\end{align}
We next show~\eqref{eq:mrvc}:
		\begin{align}\label{refmrv}
		\mathbb{P}\Big(\frac{\tilde{\boldsymbol{Y}}}{\tilde R}\in \cdot \mid \tilde{R}\geq  b_t \Big)
		&=\mathbb{P}\Big(\frac{ \boldsymbol{Y}}{ R}\in \cdot \mid d^{1/\alpha} R/ b_{n/k_1} \geq b_t, R \geq b_{n/k_1}\Big)\nonumber\\
		&=\mathbb{P}\Big(\frac{ \boldsymbol{Y}}{ R}\in \cdot \mid {R}\geq d^{-1/\alpha}b_{n/k_1} b_{t}, R \geq b_{n/k_1} \Big) \nonumber\\
		&\sim\mathbb{P}\Big(\frac{ \boldsymbol{Y}}{ R}\in \cdot \mid {R}\geq  b_{nt/k_1}, R \geq b_{n/k_1}  \Big) \nonumber\\
		&\sim\mathbb{P}\Big(\frac{ \boldsymbol{Y}}{ R}\in \cdot \mid {R}\geq  b_{nt/k_1} \Big) \nonumber\\
		&\overset{w}{\to} \tilde H_{\boldsymbol{Y}}(\cdot),\quad t\to\infty.
		\end{align}
  This proves the first claim, that $\tilde{\bsy}\in{\rm RV}_+^d(\alpha)$ by Lemma~\ref{def:mrvc}.

	The constants $b$ appearing in the estimators constructed from the empirical angular measure, i.e.,~\eqref{eq:estang}, are naturally indexed by the sample size $n$. To this end, we show that both convergence results,~\eqref{conditionalRt} and~\eqref{refmrv}, remain valid for the conditional random vector $\tilde{\bs{Y}}$ as we let the constants $b$ change with $n$. We first consider the normalising constants. 
	Choose normalising constants $b_{k_1/k_2}\sim (dk_1/k_2)^{1/\alpha}$ and note that  
$d^{-1/\alpha}b_{n/k_1} b_{k_1/k_2}\sim (n/k_1)^{1/\alpha}(dk_1/k_2)^{1/\alpha} = (dn/k_2)^{1/\alpha}\sim b_{n/k_2}$. Note that, by~\eqref{eq:ncd} and the discussion thereafter, the choice of the normalising constants is correct for $\bsy$.
Then, for the conditional radial component we find
\begin{align} \label{conditionalR}
\mathbb{P}(\tilde R >  b_{k_1/k_2}) &= \mathbb{P}(d^{1/\alpha}R/b_{n/k_1} > b_{k_1/k_2} \mid R \geq b_{n/k_1})=\frac{\mathbb{P}(R > d^{-1/\alpha}b_{n/k_1}b_{k_1/k_2} , R \geq b_{n/k_1})}{\mathbb{P}(R \geq b_{n/k_1})} \nonumber\\
&\sim\frac{\mathbb{P}(R > b_{n/k_2}, R > b_{n/k_1})}{\mathbb{P}(R_\ell \geq b_{n/k_1})}=\frac{\mathbb{P}(R_\ell > b_{n/k_2})}{\mathbb{P}(R_\ell \geq b_{n/k_1})}\sim \frac{db_{n/k_2}^{-\alpha}}{db_{n/k_1}^{-\alpha}}\sim \frac{k_2}{k_1} {\sim \P(R>b_{k_1/k_2}).}
\end{align}
Finally, we show that the convergence to the angular measure of ${\boldsymbol{Y}}$ is preserved in the limit as we condition on $b_{k_1/k_2}$, where both $k_1,k_2$ are functions of $n$, as choice of normalising constants:
\begin{align}\label{rel3}
\mathbb{P}\Big(\frac{\tilde{\boldsymbol{Y}}}{\tilde R}\in \cdot \mid \tilde{R}\geq  b_{k_1/k_2} \Big)
&=\mathbb{P}\Big(\frac{ \boldsymbol{Y}}{ R}\in \cdot \mid d^{1/\alpha} R/ b_{n/k_1} \geq b_{k_1/k_2}, R \geq b_{n/k_1}\Big)\nonumber\\
&=\mathbb{P}\Big(\frac{ \boldsymbol{Y}}{ R}\in \cdot \mid {R}\geq d^{-1/\alpha}b_{n/k_1} b_{k_1/k_2}, R \geq b_{n/k_1} \Big) \nonumber\\
&\sim\mathbb{P}\Big(\frac{ \boldsymbol{Y}}{ R}\in \cdot \mid {R}\geq  b_{n/k_2}, R \geq b_{n/k_1}  \Big) \nonumber\\
&=\mathbb{P}\Big(\frac{ \boldsymbol{Y}}{ R}\in \cdot \mid {R}\geq  b_{n/k_2} \Big) \nonumber\\
&\overset{w}{\to} \tilde H_{\boldsymbol{Y}}(\cdot),\quad n\to\infty. 
\end{align}
Hence,  $ b_{k_1/k_2}$ is a correct normalising constant for $\tilde\bsy$, and the normalised angular measure of $\tilde{\bsy}$ converges to the same angular measure of $\tilde{H}_{\boldsymbol{Y}}$ as $\boldsymbol{Y}$.
\end{proof}

We now replace the normalising constants $b_{k_1/k_2}$ in~\eqref{conditionalR} by the order statistic $\tilde R^{(k_2)}$, the $k_2-$th largest order statistic of $\tilde R_\ell$ over th random number $N_n$ of observations. 
In Lemma~\ref{randomRb} we show that $\tilde R^{(k_2)}/b_{k_1/k_2}\overset{P}{\to}1$ for $k_1,k_2$ as above, and thus  choosing $\tilde R^{(k_2)}$ is a consistent choice.
Hence, for a continuous function $f:\Theta_+^1\to\mathbb{R}$, we consider the estimator based on the random number of observations given in~\eqref{eq:Nn}
\begin{align}\label{finalest}
\hat{\mathbb{E}}_{\tilde{{H}}_{\bsy}}[f(\boldsymbol{\omega})]=\frac{1}{k_2}\sum_{\ell=1}^{N_n}f(\boldsymbol{\omega}_\ell)\mathds{1}\{\tilde R_\ell \geq \tilde R^{(k_2)}\}.
\end{align}


The following lemma is a consequence of Theorem~4.3.2 of \citet{EKM}. In order to keep the paper self-contained, we provide a proof.

 \begin{lemma}\label{randomRb} 
	Let $\boldsymbol{X}_1,\ldots, \boldsymbol{X}_{N_n}$ be independent replicates of $\boldsymbol{X}\in{\rm RV}^{d}_+(\alpha)$. Choose $k_1=o(n)$, $k_2=o(k_1)$ and such that $n\to\infty, k_1\to\infty,k_2\to\infty$. Let $N_n\in\mathbb{N}$ be a random process such that $N_n/k_1{\scriptstyle \overset{ P}{\to}}1$. Let $(R_\ell, \boldsymbol{\omega}_\ell)$ be the angular decomposition of $\boldsymbol{X}_\ell$, and $R_{N_n}^{(k_2)}$ be the $k_2-$th largest order statistics amongst $R_\ell$, $\ell\in\{1,\ldots,N_n\}$. Let $b_{k_1/k_2}\coloneqq  (dk_1/k_2)^{1/\alpha}$. Then $R_{N_n}^{(k_2)}/b_{k_1/k_2} \overset{P}{\to}1$.
\end{lemma}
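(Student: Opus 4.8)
The plan is to reduce the random–sample–size statement to the corresponding deterministic one for intermediate order statistics, and to bridge the gap via the pathwise monotonicity of order statistics in the sample size together with a sandwiching argument.

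First I would record the deterministic version. Writing $R=\norm{\boldsymbol{X}}\in{\rm RV}_+(\alpha)$, the total spectral mass computation~\eqref{eq:ncd} gives $\mathbb{P}(R>r)\sim d\,r^{-\alpha}$, so that $b_{m/k_2}=(dm/k_2)^{1/\alpha}$ satisfies $\mathbb{P}(R>b_{m/k_2})\sim k_2/m$. Denote by $R_m^{(k_2)}$ the $k_2$-th largest of $R_1,\dots,R_m$. For an intermediate sequence $k_2=k_2(m)\to\infty$ with $k_2/m\to0$, Theorem~4.3.2 of \cite{EKM} yields
\begin{align*}
\frac{R_m^{(k_2)}}{b_{m/k_2}}\overset{P}{\to}1,\qquad m\to\infty.
\end{align*}

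Next I would control the random index $N_n$. The key observation is that, for fixed $k_2$, the map $m\mapsto R_m^{(k_2)}$ is pathwise non-decreasing, since enlarging the sample can only enlarge the set of its $k_2$ largest values. Fix $\eps\in(0,1)$ and put $m_-=\lfloor(1-\eps)k_1\rfloor$, $m_+=\lceil(1+\eps)k_1\rceil$. On the event $A_{n,\eps}=\{m_-\le N_n\le m_+\}$ monotonicity gives the deterministic sandwich
\begin{align*}
R_{m_-}^{(k_2)}\le R_{N_n}^{(k_2)}\le R_{m_+}^{(k_2)}.
\end{align*}
Because $N_n/k_1\overset{P}{\to}1$, we have $\mathbb{P}(A_{n,\eps})\to1$, so it suffices to control the two deterministic endpoints. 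This pathwise bound is the crux of the argument, and I expect it to be the main obstacle: it is what lets us dispense with any independence assumption between $N_n$ and the sample, which would otherwise be the delicate point.

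Finally I would combine the two steps. Since $m_\pm\sim k_1$ and $k_2=o(k_1)$, both $m_\pm\to\infty$ and $k_2/m_\pm\to0$, so the deterministic result applies with $m=m_\pm$, giving $R_{m_\pm}^{(k_2)}/b_{m_\pm/k_2}\overset{P}{\to}1$. Moreover
\begin{align*}
\frac{b_{m_\pm/k_2}}{b_{k_1/k_2}}=\Big(\frac{m_\pm}{k_1}\Big)^{1/\alpha}\longrightarrow(1\pm\eps)^{1/\alpha},
\end{align*}
whence $R_{m_-}^{(k_2)}/b_{k_1/k_2}\overset{P}{\to}(1-\eps)^{1/\alpha}$ and $R_{m_+}^{(k_2)}/b_{k_1/k_2}\overset{P}{\to}(1+\eps)^{1/\alpha}$. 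Given $\delta>0$, choosing $\eps$ so small that $(1-\eps)^{1/\alpha}>1-\delta$ and $(1+\eps)^{1/\alpha}<1+\delta$, the sandwich on $A_{n,\eps}$ forces $\mathbb{P}\big(|R_{N_n}^{(k_2)}/b_{k_1/k_2}-1|>\delta\big)\to0$. As $\delta>0$ was arbitrary, $R_{N_n}^{(k_2)}/b_{k_1/k_2}\overset{P}{\to}1$, which is the assertion. The remaining work beyond the monotonicity sandwich is routine regular-variation bookkeeping.
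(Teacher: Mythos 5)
Your proposal is correct and follows essentially the same route as the paper's proof: both split on the event that $N_n$ is within a factor $(1\pm\eps)$ of $k_1$, exploit the pathwise monotonicity of the order statistic $m\mapsto R_m^{(k_2)}$ to sandwich $R_{N_n}^{(k_2)}$ between deterministic-sample-size order statistics, invoke the intermediate-order-statistic convergence from Theorem~4.3.2 of \cite{EKM} together with the ratio $b_{m_\pm/k_2}/b_{k_1/k_2}\to(1\pm\eps)^{1/\alpha}$, and finish by taking $\eps$ small. The only differences are cosmetic (explicit floors/ceilings, and phrasing the final epsilon choice as $(1\pm\eps)^{1/\alpha}$ close to $1$ rather than the paper's equivalent bound $\eps_2<\min((1+\eps_1)^\alpha-1,\,1-(1-\eps_1)^\alpha)$).
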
 

 \begin{proof}
 For $\eps_1, \eps_2>0$, we consider
 	 \begin{align*}
 	\mathbb{P}\left(\Big| \frac{R_{N_n}^{(k_2)}}{b_{k_1/k_2}}-1\Big|>\eps_1\right)
  &=\mathbb{P}\left(\Big| \frac{R_{N_n}^{(k_2)}}{b_{k_1/k_2}}-1\Big|>\eps_1, \Big| \frac{N_n}{k_1}-1\Big|\geq\eps_2\right)+ \mathbb{P}\left(\Big| \frac{R_{N_n}^{(k_2)}}{b_{k_1/k_2}}-1\Big|>\eps_1, \Big| \frac{N_n}{k_1}-1\Big|<\eps_2\right)\\
  & =: I_n + II_n.
  \end{align*}
  We first estimate $I_n$ by 
  \begin{align*}
  \lim_{n\to\infty} I_n \leq\lim_{n\to\infty}\mathbb{P}\Big(\Big| \frac{N_n}{k_1}-1\Big|\geq\eps_2\Big) \to 0.
  \end{align*}
Now, we turn to $II_n$:
   \begin{align}
 	II_n &=\mathbb{P}\left( \frac{R_{N_n}^{(k_2)}}{b_{k_1/k_2}}>1+\eps_1, \Big| \frac{N_n}{k_1}-1\Big|<\eps_2\right)
  +\mathbb{P}\left( \frac{R_{N_n}^{(k_2)}}{b_{k_1/k_2}}<1-\eps_1, \Big| \frac{N_n}{k_1}-1\Big|<\eps_2\right)\nonumber\\
	&\leq \mathbb{P}\left( \frac{R_{N_n}^{(k_2)}}{b_{k_1/k_2}}>1+\eps_1, {N_n}<k_1(1+\eps_2)\right)
 +\mathbb{P}\left( \frac{R_{N_n}^{(k_2)}}{b_{k_1/k_2}}<1-\eps_1, {N_n}>k_1(1-\eps_2)\right)\nonumber\\
	&\leq \mathbb{P}\left( \frac{R_{k_1(1+\eps_2)}^{(k_2)}}{b_{k_1/k_2}}>1+\eps_1, {N_n}<k_1(1+\eps_2)\right)
 +\mathbb{P}\left( \frac{R_{k_1(1-\eps_2)}^{(k_2)}}{b_{k_1/k_2}}<1-\eps_1, {N_n}>k_1(1-\eps_2)\right)\label{eq:***}\\
	&\leq\mathbb{P}\left( \frac{R_{k_1(1+\eps_2)}^{(k_2)}}{b_{k_1/k_2}}>1+\eps_1\right)
 +\mathbb{P}\left( \frac{R_{k_1(1-\eps_2)}^{(k_2)}}{b_{k_1/k_2}}<1-\eps_1\right)\nonumber
 	\end{align}
   The inequality {\eqref{eq:***}} follows from the fact that $R_{N_n}^{(k_2)}$ is the $k_2-$th largest radial component amongst a sample of size $N_n$, and therefore it is an increasing function of $N_n$.
 	 
 	 Now, $R_{k_1(1+\eps_2)}^{(k_2)}$ is the $k_2-$th largest radial component amongst a sample of size $k_1(1+\eps_2)$, thus we have 
 	 $$\frac{R_{k_1(1+\eps_2)}^{(k_2)}}{b_{k_1/k_2}}=\frac{R_{k_1(1+\eps_2)}^{(k_2)}}{b_{k_1(1+\eps_2)/k_2}}\frac{b_{k_1(1+\eps_2)/k_2}}{b_{k_1/k_2}}\overset{ P}{\to}(1+\eps_2)^{1/\alpha},
 	 $$
 	 and similarly,
 	 $$\frac{R_{k_1(1-\eps_2)}^{(k_2)}}{b_{k_1/k_2}}=\frac{R_{k_1(1-\eps_2)}^{(k_2)}}{b_{k_1(1-\eps_2)/k_2}}\frac{b_{k_1(1-\eps_2)/k_2}}{b_{k_1/k_2}}\overset{ P}{\to}(1-\eps_2)^{1/\alpha}.
 	 $$
 	 Therefore,  for every $\eps_1>0$ we can find $0<\eps_2<\min\Big((1+\eps_1)^{\alpha}-1,1-(1-\eps_1)^{\alpha}\Big)$, such that 
 	 $$\lim_{n\to\infty}\mathbb{P}\left( \frac{R_{k_1(1+\eps_2)}^{(k_2)}}{b_{k_1/k_2}}>1+\eps_1\right) = \lim_{n\to\infty}\mathbb{P}\left( \frac{R_{k_1(1-\eps_2)}^{(k_2)}}{b_{k_1/k_2}}<1-\eps_1\right) = 0,
 	 $$
 	 implying $R_{N_n}^{(k_2)}/b_{k_1/k_2} \overset{ P}{\to}1$.
 \end{proof}

\subsection{Asymptotic normality}\label{sec:4.2}

For independent replicates $\boldsymbol{Y}_1,\ldots,{\boldsymbol{Y}_n}$ of $\boldsymbol{Y}\in {\rm RV}^d_+(\alpha)$ with standardised margins and angular decomposition $(R,\boldsymbol{\omega})$ as in~\eqref{angular.eq}, \citet{KK} investigate the asymptotic properties of the estimator $\hat{\mathbb{E}}_{{\tilde{H}}_{\bsy}}[f(\boldsymbol{\omega})]$ as in \eqref{initialest}.
The intermediate thresholding procedure of Section~\ref{sec:thresh}, however,  chooses observations with the largest radial components, and then starts from a sample of random size $N_n$ leading to the estimator \eqref{finalest}.
To this estimator, however, we cannot apply the CLT of \cite[Theorem~4]{KK}. 

Because of~\eqref{conditionalR} we rewrite~\eqref{finalest} as
\begin{align}\label{finalest1}
\hat{\mathbb{E}}_{\tilde{{H}}_{\bsy}}[f(\boldsymbol{\omega})]=\frac{1}{k_2}\sum_{\ell=1}^{N_n}f(\boldsymbol{\omega}_\ell)\mathds{1}\{R_\ell \geq R^{(k_2)}\}.
\end{align}
Now we shall use the technique of \citet{lars}, modifying {their} arguments to allow for the random sample size due to the intermediate thresholding.  

As we want to estimate the extremal dependence measure and the squared scalings given in Definition~\ref{scaledef} of the two components of a vector like $\boldsymbol{\tilde{T}}^{ij}$ from~\eqref{def:tim}, we assume that $d=2$.

To keep the section self-contained, we repeat the main theorems from Section~\ref{funcCLT}.  Recall that, under~appropriate conditions, for $\sigma^2={\rm Var}_{\tilde{{H}}_{\bsy}}(f({{\boldsymbol{\omega}}_1}))$, we first want to show that the~two-parameter~process 
\begin{align}\label{w1}
W_{k_1}(t,s)=\frac{1}{\sigma\sqrt{k_2}}\sum_{i=1}^{\lfloor k_1t\rfloor}\Big(f(\boldsymbol{\omega}_i)-\mathds{E}_{\tilde{H}_{\boldsymbol{Y}}}[f(\boldsymbol{\omega}_1)]\Big)\mathds{1}\{R_i/{b_{{k_1}/{k_2}}}\geq s^{-1/\alpha}\}
\end{align}
converges weakly as $n\to\infty$ {in $D([0,\infty)^2)$ to a Brownian sheet $W$, a Wiener process on $[0,\infty)^2$ with covariance function $(t_1\wedge t_2)(s_1\wedge s_2)$ for $(t_1,s_1), (t_2,s_2)\in\R_+^2$.}
This relies on a Donsker-type CLT. The Theorem~and its proof is motivated by Theorem~1 of \citet{lars}, however, needs to be extended from a deterministic sample size $n$ to a random sample size $N_n$.

\begin{theorem}\label{asnorma}
	Let $\{\bsy_i: i\geq 1\}$ be independent replicates of the standardised vector $\bsy\in {\rm RV}^2_+(\alpha)$ with angular decomposition $(R,\boldsymbol{\omega})$. 
	Let $R$ have have distribution function $F$ and survival function $\bar{F}=1-F$. 
	Choose $k_1(n), k_2(k_1)\to\infty$ and $k_1=o(n), k_2=o(k_1)$ as $n\to\infty$.
	Let the normalising constants $b_{k_1/k_2}$ be chosen such that $\bar F(b_{ k_1/k_2} ) \sim k_2/k_1$.
	Assume that  
	\begin{align}\label{assump}
	\lim\limits_{n\to \infty}\sqrt{k_2}\bigg(\frac{k_1}{k_2}
	\mathbb{E}[f( \boldsymbol{\omega}_1 )\mathds{1} {\{R_1\geq b_{ k_1/k_2} s^{-1/ \alpha} \}}] 
	- \mathbb{E}_{\tilde{H}_{\boldsymbol{Y}}} [f(\boldsymbol{\omega}_1)] \frac{k_1}{k_2} \bar{F}(b_{ k_1/k_2}
	s^{-1/\alpha}) \bigg)=0
	\end{align}
	holds locally uniformly for $s\in[0, \infty)$, and assume that
	$\sigma^2={\rm Var}_{\tilde{{H}}_{\bsy}}(f({{\boldsymbol{\omega}}_1}))>0$. 
	Then
	\begin{align}\label{eq:limitest}
	W_{k_1}(t,s)\overset{w}{\to}W(t,s), \hspace{5mm} n\to\infty.
	\end{align}
\end{theorem}


\begin{proof}[\textbf{Proof of Theorem~\ref{asnorma}.}]

To prove the convergence $W_{k_1}\overset{ w}{\to} W$ in $D([0,\infty)^2)\times [0,\infty)^2$ we use a classical proof technique to show finite dimensional convergence and then tightness. 
 
We start with finite dimensional convergence and define functions $h(\boldsymbol{\omega}_i):=f(\boldsymbol{\omega}_i)-\mathds{E}_{\tilde{H}_{\boldsymbol{Y}}}[f(\boldsymbol{\omega}_1)]$. For the sake of notational simplicity we use $\gamma=1/\alpha$.
	For a given interval $(t_2,t_1]\times (s_2,s_1]$ such that $t_1\geq t_2, s_1\geq s_2$ consider
	\begin{align*}
	& W_{k_1}((t_2,t_1]\times (s_2,s_1]) =W_{k_1}(t_1,s_1)+W_{k_1}(t_2,s_2)-W_{k_1}(t_1,s_2)-W_{k_1}(t_2,s_1)\\
	&=\frac{1}{\sigma\sqrt{k_2}}\sum_{i=1}^{ \lfloor k_1 t_1\rfloor }h(\boldsymbol{\omega}_i)\mathds{1}\{R_i/{b_{{{k_1}/{k_2}}}}\geq s_1^{-\gamma}\}+\frac{1}{\sigma\sqrt{k_2}}\sum_{i=1}^{ \lfloor k_1 t_2\rfloor }h(\boldsymbol{\omega}_i)\mathds{1}\{R_i/{b_{{{k_1}/{k_2}}}}\geq s_2^{-\gamma}\}\\
	&\quad -\frac{1}{\sigma\sqrt{k_2}}\sum_{i=1}^{ \lfloor k_1 t_1\rfloor }h(\boldsymbol{\omega}_i)\mathds{1}\{R_i/{b_{{{k_1}/{k_2}}}}\geq s_2^{-\gamma}\}-\frac{1}{\sigma\sqrt{k_2}}\sum_{i=1}^{ \lfloor k_1 t_2\rfloor }h(\boldsymbol{\omega}_i)\mathds{1}\{R_i/{b_{{{k_1}/{k_2}}}}\geq s_1^{-\gamma}\}\\
	&=\frac{1}{\sigma\sqrt{k_2}}\sum_{i=1}^{ \lfloor k_1 t_1\rfloor }h(\boldsymbol{\omega}_i)\mathds{1}\{R_i/{b_{{{k_1}/{k_2}}}}\in [s_1^{-\gamma},s_2^{-\gamma}) \}
	 -\frac{1}{\sigma\sqrt{k_2}}\sum_{i=1}^{ \lfloor k_1 t_2\rfloor }h(\boldsymbol{\omega}_i)\mathds{1}\{R_i/{b_{{{k_1}/{k_2}}}}\in [s_1^{-\gamma},s_2^{-\gamma}) \}\\
	&=\frac{1}{\sigma\sqrt{k_2}}\sum_{i=\lfloor k_1 t_2\rfloor+1}^{ \lfloor k_1 t_1\rfloor }h(\boldsymbol{\omega}_i)\mathds{1}\{R_i/{b_{{{k_1}/{k_2}}}}\in [s_1^{-\gamma},s_2^{-\gamma}) \}.
	\end{align*}
	Now write 
	\begin{align}\label{eq:nk2}
	N_{k_2}\coloneq N_{k_1}(t_1,t_2,s_1,s_2)=\sum_{i=\lfloor k_1 t_2\rfloor+1}^{ \lfloor k_1 t_1\rfloor} \mathds{1}\{R_i/{b_{{{k_1}/{k_2}}}}\in [s_1^{-\gamma},s_2^{-\gamma}) \}.
	\end{align}
 Furthermore, let $i(j,k_1)$ be the $j$-th index $i\in \{\lfloor k_1 t_2\rfloor+1,\ldots, \lfloor k_1 t_1\rfloor\}$ for which $R_i/{b_{{{k_1}/{k_2}}}}\in [s_1^{-\gamma},s_2^{-\gamma})$.
	{Then we rewrite $W_{k_1}$ with~\eqref{eq:nk2} as}
	\begin{align*}
	&W_{k_1}((t_2,t_1]\times (s_2,s_1]) =\frac{1}{\sigma\sqrt{k_2}} \sum_{j=1}^{N_{k_2}} \Big( f(\boldsymbol{\omega}_{i(j,k_1)})- \mathds{E}_{\tilde{H}_{\boldsymbol{Y}}}[f({\boldsymbol{\omega}_1})]\Big)\\
	&=\frac{1}{\sigma\sqrt{k_2}} \sum_{j=1}^{N_{k_2}} \Big( f(\boldsymbol{\omega}_{i(j,k_1)})- \mathds{E}_{\tilde{H}_{\boldsymbol{Y}}}[f({\boldsymbol{\omega}}_{i(j,k_1)})]\Big)
	+\frac{1}{\sigma\sqrt{k_2}} N_{k_2} \Big( \mathds{E}_{\tilde{H}_{\boldsymbol{Y}}}[f({\boldsymbol{\omega}}_{i(j,k_1)})]-\mathds{E}_{\tilde{H}_{\boldsymbol{Y}}}[f({\boldsymbol{\omega}}_1)]\Big)\\
	&=: {A_{k_1}((t_2,t_1]\times (s_2,s_1])+B_{k_1}((t_2,t_1]\times (s_2,s_1])} =: A_{k_1}+B_{k_1}.
	\end{align*}
	We first show that $B_{k_1}\overset{ P}{\to}0$ as $n\to\infty$. 
    To this end, {we set $F([a_1,a_2))):= \bar F(a_1)-\bar F(a_2)$} and note that 
	\begin{align*}
	\sigma B_{k_1} =& \frac{N_{k_2}}{k_2}\sqrt{k_2}\Big(\mathds{E}[h(\boldsymbol{\omega}_1)\mid R_1/b_{k_1/k_2}\in [s_1^{-\gamma},s_2^{-\gamma}) ]\Big)\\
	=& \frac{N_{k_2}/k_2}{{\scriptscriptstyle \frac{k_1}{k_2}} F(b_{k_1/k_2}
 [s_1^{-\gamma},s_2^{-\gamma}))}\\
  & \times \sqrt{k_2}\Big(\mathds{E}\left[f(\boldsymbol{\omega}_1)\frac{k_1}{k_2}\mathds{1}\{R_1\in {b_{{{k_1}/{k_2}}}}[s_1^{-\gamma},s_2^{-\gamma}) \}\right]-\mathds{E}_{\tilde{H}_{\boldsymbol{Y}}}[f({\boldsymbol{\omega}_1})]\frac{k_1}{k_2} F(b_{k_1/k_2}[s_1^{-\gamma},s_2^{-\gamma}))\Big)\\
	=& \frac{N_{k_2}/k_2}{{\scriptscriptstyle \frac{k_1}{k_2}}F(b_{k_1/k_2}[s_1^{-\gamma},s_2^{-\gamma}))}(C_{k_1}(s_1)-C_{k_1}(s_2)).
	\end{align*}
	As the observations are independent and identically distributed, Theorem~6.2 (9) in \citet{ResnickHeavy} implies {by standardisation and $\gamma=1/\alpha$} that 
	\begin{align*}
	\frac{N_{k_2}}{k_2} 
	&{=} (t_1-t_2)\frac{N_{k_2}}{\lfloor k_1t_1\rfloor-\lfloor k_1t_2\rfloor} \frac{\lfloor k_1t_1\rfloor-\lfloor k_1t_2\rfloor}{ k_2(t_1-t_2)}\\
	&\overset{ P}{\to} (t_1-t_2)\nu_\alpha[s_1^{-\gamma},s_2^{-\gamma})
	=(t_1-t_2)(s_1-s_2),
	\end{align*}
	where we have used {\eqref{eq:nk2} and the fact} that $\lfloor a \rfloor/a \to 1$ as $a\to\infty$.
	The regular variation of $\bar F$ likewise implies that
	\begin{align*}
	\lim_{n\to\infty} \frac{k_1}{k_2}F(b_{{k_1}/{k_2}}[s_1^{-\gamma},s_2^{-\gamma})) = \nu_\alpha[s_1^{-\gamma},s_2^{-\gamma})= s_1-s_2.
	\end{align*}
	Assumption \eqref{assump} implies that $C_{k_1}(s_1)-C_{k_1}(s_2)\to 0$ locally uniformly, so $B_{k_1}\overset{ P}{\to}0$.
	
	We now consider the process $A_{k_1}$. 
 {First note that the d\'ecoupage de L\'evy \citep[e.g., p.~212]{sres} implies that the sequence {$(\boldsymbol{\omega}_{i(j,k_1)}: j\in \{1,\ldots, N_{k_2}\})$} consists of independent and identically distributed random variables.}
 Let 
	\begin{align*}
	\sigma_{k_1}^2=\var(f(\boldsymbol{\omega}_{i(j,k_1)}))=\var(f(\boldsymbol{\omega}_1) \mid  R_1/b_{k_1/k_2}\in[s_1^{-\gamma},s_2^{-\gamma})  ),
	\end{align*}
	and consider the process 
	\begin{align*}
	Z_{k_1}(r)=\frac{1}{\sigma_{k_1}\sqrt{k_2}}\sum_{j=1}^{\lfloor k_2r\rfloor} \Big(f(\boldsymbol{\omega}_{i(j,k_1)})-\mathds{E}_{\tilde{H}_{\boldsymbol{Y}}}[f(\boldsymbol{\omega}_{i(j,k_1)})]\Big),\quad {r>0.}
	\end{align*}
 Following the proof of \citet{lars} and Theorem~3 of \citet{edms}, a functional central limit Theorem~for triangular arrays gives that $Z_{k_1}\overset{ w}{\to}Z$ in $D[0,\infty)$, where $Z$ is {Brownian motion.}
 By the joint convergence of $Z_{k_1}$ and that of $N_{k_2}/k_2\overset{ P}{\to}(t_1-t_2)(s_1-s_2)$,  we obtain by composition,
	\begin{align*}
	Z_{k_1}(N_{k_2}/k_2)=\frac{1}{\sigma_{k_1}\sqrt{k_2}}\sum_{j=1}^{N_{k_2}}\Big(f(\boldsymbol{\omega}_{i(j,k_1)})-\mathds{E}_{\tilde{H}_{\boldsymbol{Y}}}[f(\boldsymbol{\omega}_{i(j,k_1)})]\Big)\overset{ \mathcal{D}}{\to}Z\Big((t_1-t_2)(s_1-s_2)\Big).
	\end{align*}
	To obtain the limit for $A_{k_1}$, note that
 $$A_{k_1}((t_2,t_1]\times (s_2,s_1])=({\sigma_{k_1}}/{\sigma})Z(N_{k_2}/k_2){\std} Z((t_1-t_2)(s_1-s_2)),$$
where we have used that $\sigma_{k_1}\to \sigma$ owing to the regular variation and the fact that $\sigma>0$. {This implies then that
 $$W_{k_1}((t_2,t_1]\times (s_2,s_1]) {\std} N\Big(0,(t_1-t_2)(s_1-s_2)\Big),$$
i.e., the limit variable is centered normal with variance $(t_1-t_2)(s_1-s_2)$.}

	Now, we repeat the procedure for disjoint sets in $[0,\infty)^2$, say $(t_{2m},t_{1m}]\times(s_{2p}, s_{1p}]$ for $m\in\{1,\ldots,M\}$ and $p\in\{1,\ldots,P\}$. For each such combination, let 
	\begin{align*}
	N_{k_2}^{m,p}\coloneqq N_{k_1}(t_{1m},t_{2m},s_{1p},s_{2p})=\sum_{i=\lfloor k_1 t_{2m}\rfloor+1}^{ \lfloor k_1 t_{1m}\rfloor} \mathds{1}\{R_i/{b_{{{k_1}/{k_2}}}}\in [s_{1p}^{-\gamma},s_{2p}^{-\gamma}) \},
	\end{align*}
	and similarly define $i_{m,p}(j,k_1)$ as the $j$-th index {$i\in \{\lfloor k_1 t_{2m}\rfloor+1,\ldots, \lfloor k_1 t_{1m}\rfloor\}$} such that $R_i/{b_ {{{k_1}/{k_2}}}}\in [s_{1p}^{-\gamma},s_{2p}^{-\gamma})$.
	
	We again decompose $W_{k_1}((t_{2m},t_{1m}]\times(s_{2p}, s_{1p}])$ into
	\begin{align*}
	W_{k_1}((t_{2m},t_{1m}]\times(s_{2p}, s_{1p}])=A_{k_1}^{m,p}+B_{k_1}^{m,p},
	\end{align*}
	where $B_{k_1}^{m,p}\overset{ P}{\to}0$ again for all combinations $(m,p)$, using the same arguments as for $B_{k_1}$. 
 
 Similarly, for the processes $Z_{k_1}^{m,p}$, with corresponding exceedance  indices  $i_{m,p}(j,k_1)$, the d\'ecoupage de L\'evy yields that for each fixed $k_1$ the $P$ sequences $(\boldsymbol{\omega}_{i_{m,p}(j,k_1)}: j\in\{1,\ldots,N_{k_2}^{m,p}\})$ are independent for $p\in\{1,\dots,P\}$, implying that the processes $Z_{k_1}^{m,p}$ are also independent. {Independence across the $m$ index follows by the independence of the observations in time.}
 
 Hence, the convergence established previously for the single process $Z_{k_1}$, holds also jointly, giving 
	\begin{align*}
	(Z_{k_1}^{1,1},Z_{k_1}^{1,2},\ldots,Z_{k_1}^{M,P})\overset{ w}{\to} (Z_1,Z_2,\ldots,Z_{MP}),
	\end{align*}
	where the limit is an $M \times P$-dimensional Brownian motion. 
 
 Finally, composing with $N_{k_1}^{m,p}/k_2$, which converges in probability to $(t_{1m}-t_{2m})(s_{1p}-s_{2p})$, for $m\in\{1,\ldots,M\}$, $p\in\{1,\ldots,P\}$ gives
	\begin{align*}
	(A_{k_1}^{1,1},A_{k_1}^{1,2},\ldots, A_{k_1}^{M,P})\overset{ \mathcal{D}}{\to} N(0,{\rm diag}\Big((t_{11}-t_{21})(s_{11}-s_{21}),\ldots,(t_{1M}-t_{2M})(s_{1P}-s_{2P}) \Big)).
	\end{align*}
{This implies then that
 \begin{align*}
 &(W_{k_1}((t_{21},t_{11}]\times (s_{21},s_{11}]),\dots,W_{k_1}((t_{2M}, t_{1M}]\times (s_{2P},s_{1P}]) \\
 {\std} & (N_1\Big(0,(t_{11}-t_{21})(s_{11}-s_{21})\Big),\dots,N_{MP}\Big(0,(t_{1M}-t_{2M})(s_{1P}-s_{2P})\Big) ,
 \end{align*}
i.e., the limit vector has independent centered normal components}
 with variances $(t_{2m}-t_{1m})(s_{2p}-s_{1p})$ for all $m$ and $p$, which entails finite-dimensional convergence of $W_{k_1}$ to a Brownian sheet. Indeed, for $t_1\geq t_2$, $s_1\geq s_2$, 
 the only contribution to the limiting covariance between $W_{k_1}(t_1,s_1)$ and $W_{k_1}(t_2,s_2)$ 
 is due to the variance of $W_{k_1}(t_2,s_2)$, whose limit equals $t_2s_2=(t_1\wedge t_2)(s_1\wedge s_2)$, corresponding to a Brownian sheet. 

	It remains to show that the process $W_{k_1}$ is tight. Since \citet{lars} consider a one-parameter process {with continuous sample paths}, they apply the moment condition of Theorem~13.5 of \citet{billingsley}.
Such moment estimates have been extended to a multi-parameter process in $D([0,1]^q)$ for an arbitrary finite dimension $q$ and \citet[equation~(3)]{BiWi} provides a condition for tightness similar to equation~(13.14) of \citet{billingsley}.
 Theorem~3 of that paper states that if an appropriate moment condition holds, and if the process vanishes on the lower bound of the domain space, then tightness of the process follows. Clearly, in our case if any of $t$ or $s$ are set to $0$, it follows that $W_{k_1}=0$ almost surely also. Hence, we could apply this condition to obtain tightness {on compacts in $D([0,\infty)^2)$. }

 This theory has been extended to tightness on $D([0,\infty)^q)$ in \citet[Theorem~4.1]{ivanoff}. 
 Adapted to our two-parameter framework, \citet[Theorem~4.1]{ivanoff}  
 states that weak convergence on $D[0,\infty)^2\mapsto\mathbb{R}$ is equivalent to weak convergence on $D([0,b_1]\times [0,b_2])\mapsto\mathbb{R}$ for finite $b_1,b_2>0$.   
In order to prove this, we apply inequality~(3) of \citet{BiWi} for $\gamma_1=\gamma_2=2$ and $B$ and $C$ intervals in $\R_+^2$.
Hence, it remains to show that there exists a finite non-negative measure, $\mu$, that assigns zero to the zero vector in $\R^2$ such that 
	\begin{align*}
	\mathds{E}[|W_{k_1}((t_2,t_1]\times (s_2,s_1])|^2|W_{k_1}((t_3,t_2]\times (s_3,s_2])|^2]\leq \mu((t_2,t_1]\times (s_2,s_1])\mu((t_3,t_2]\times (s_3,s_2]).
	\end{align*}
 Using similar arguments as \citet{lars}, we show that  
	\begin{align}\label{biwibound}
	& \underset{n\to\infty }{\rm lim\, sup}\, \mathds{E}[|W_{k_1}((t_2,t_1]\times (s_2,s_1])|^2|W_{k_1}((t_3,t_2]\times (s_3,s_2])|^2]\nonumber\\
 & \le(t_1-t_2)(s_1-s_2)(t_2-t_3)(s_2-s_3).
	\end{align} 
We first consider disjoint intervals in $\R_+^2$.
	The independence of the observations in the subsets $(t_2,t_1]$ and $(t_3,t_2]$ implies that the expectation here factorises as 
	\begin{align}\label{disjointtimeindep}
	\mathds{E}[|W_{k_1}((t_2,t_1]\times (s_2,s_1])|^2]\mathds{E}[|W_{k_1}((t_3,t_2]\times (s_3,s_2])|^2],
	\end{align}
	and we deal separately with these two terms.  Let us write
	\begin{align*}
	W_{k_1}((t_2,t_1]\times (s_2,s_1])=\frac{1}{\sigma\sqrt{k_2}}\sum_{i=\lfloor k_1 t_2\rfloor+1}^{ \lfloor k_1 t_1\rfloor }\alpha_i,
	\quad \alpha_i=h(\boldsymbol{\omega}_i)\mathds{1}\{R_i/{b_{{{k_1}/{k_2}}}}\in [s_1^{-\gamma},s_2^{-\gamma})\},
	\end{align*}
	and write $W_{k_1}(\Big(t_3,t_2]\times (s_3,s_2\Big])$ as a similar sum of terms $\beta_j$. {As the $\alpha_i$ and the $\beta_i$ are i.i.d.,} squaring the product of the sums and taking the expectation gives
	\begin{align*}
	\frac{1}{\sigma^2k_2}\Big((\lfloor k_1t_1\rfloor-\lfloor k_1t_2\rfloor)\mathds{E}[\alpha_1^2]+ (\lfloor k_1t_1\rfloor-\lfloor k_1t_2\rfloor)(\lfloor k_1t_1\rfloor-\lfloor k_1t_2\rfloor-1)\mathds{E}[\alpha_1]\mathds{E}[\alpha_2] \Big),
	\end{align*}
	where we have used independence to factorise $\mathds{E}[\alpha_1\alpha_2]$. Now as $n\to\infty$, 
	\begin{align*}
	\frac{1}{\sigma^2k_2}(\lfloor k_1t_1\rfloor-\lfloor k_1t_2\rfloor)\mathds{E}[\alpha_1^2]&\sim \frac{k_1}{\sigma^2k_2} (t_1-t_2)\mathds{E}[\alpha_1^2]\\
	&\to \frac{1}{\sigma^2} (t_1-t_2)\mathds{E}_{\tilde{H}_{\boldsymbol{Y}}}[h({\boldsymbol{\omega}_1})^2]\nu_\alpha[s_1^{-\gamma},s_2^{-\gamma})\\
	&= {\frac{\sigma^2}{\sigma^2}}(t_1-t_2)(s_1-s_2).
	\end{align*}
	Proceeding similarly for the second summand, we find 
	\begin{align*}
	\frac{1}{\sigma^2\sqrt{k_2}}(\lfloor k_1t_1\rfloor-\lfloor k_1t_2\rfloor)\mathds{E}[\alpha_1]\sim \frac{k_1}{\sigma^2\sqrt{k_2}} (t_1-t_2)\mathds{E}[\alpha_1],
	\end{align*}
	which equals
	\begin{align*}
	(t_1-t_2)\sqrt{k_2}\Big(\mathds{E}\left[f(\boldsymbol{\omega}_1)\frac{k_1}{k_2}\mathds{1}\{R_1/{b_{{{k_1}/{k_2}}}}\in [s_1^{-\gamma},s_2^{-\gamma}) \}\right] -\mathds{E}_{\tilde{H}_{\boldsymbol{Y}}}[f({\boldsymbol{\omega}_1})]\frac{k_1}{k_2}F(b_{k_1/k_2}[s_1^{-\gamma},s_2^{-\gamma}) ) \Big). 
	\end{align*}
	This converges by assumption \eqref{assump} to 0 as $n\to\infty$, and likewise the second factor involving $\Big(\lfloor k_1t_1\rfloor-\lfloor k_1t_2\rfloor-1\Big)\mathds{E}[\alpha_2]/\sqrt{k_2}$. One may proceed with $\mathds{E}[|W_{k_1}((t_3,t_2]\times (s_3,s_2])|^2]$, to then finally obtain the bound $(t_1-t_2)(s_1-s_2)(t_2-t_3)(s_2-s_3)$, 
 which is~\eqref{biwibound}. 
 
	Following similar arguments one can obtain a bound for the neighbouring sets $(t_2,t_1]\times (s_2,s_1]$ and $(t_3,t_2]\times (s_2,s_1]$ by using \eqref{disjointtimeindep}.
 Thus, instead, we focus on the sets $(t_2,t_1]\times (s_2,s_1]$ and $(t_2,t_1]\times (s_3,s_2]$ which share one face along the first dimension, namely
	\begin{align*}
	\mathds{E}[|W_{k_1}((t_2,t_1]\times (s_2,s_1])|^2|W_{k_1}((t_2,t_1]\times (s_3,s_2])|^2],
	\end{align*}
	and, as before, write
	\begin{align*}
	W_{k_1}\Big(\big(t_2,t_1]\times (s_2,s_1\big]\Big)=\frac{1}{\sigma\sqrt{k_2}}\sum_{i=\lfloor k_1 t_2\rfloor+1}^{ \lfloor k_1 t_1\rfloor }\alpha_i,\quad \alpha_i=h(\boldsymbol{\omega}_i)\mathds{1}\{R_i/{b_{{{k_1}/{k_2}}}}\in [s_1^{-\gamma},s_2^{-\gamma})\}, 
	\end{align*}
	and with a similar expression for $W_{k_1}((t_2,t_1]\times (s_3,s_2]$ involving $\beta_j$ as summands, instead of $\alpha_i$.
	
	For notational convenience, we set $K_i\coloneqq\lfloor k_1t_i\rfloor$ for $i\in\{1,2\}$.
	Finally, using that $\alpha_i$ is independent of $\alpha_j$ and $\beta_j$ for $i\neq j$, and that $\alpha_i\beta_i=0$, we compute
	\begin{align*}
	&\frac{1}{\sigma^4k_2^2}\mathds{E}\Big[|\sum_{i=K_2+1}^{ K_1 }\alpha_i|^2|\sum_{i=K_2+1}^{ K_1 }\beta_i|^2\Big] =\frac{1}{\sigma^4k_2^2}\Big[(K_1-K_2)(K_1-K_2-1)\mathds{E}[\alpha_1^2]\mathds{E}[\beta_2^2]\\
	&\quad+(K_1-K_2)(K_1-K_2-1)(K_1-K_2-2)(\mathds{E}[\alpha_1]\mathds{E}[\alpha_2]\mathds{E}[\beta_3]+\mathds{E}[\alpha_1]\mathds{E}[\beta_2]\mathds{E}[\beta_3])\\
	&\quad+(K_1-K_2)(K_1-K_2-1)(K_1-K_2-2)(K_1-K_2-3)\mathds{E}[\alpha_1]\mathds{E}[\alpha_2]\mathds{E}[\beta_3]\mathds{E}[\beta_4]\Big]\\
	&\leq\frac{1}{\sigma^4k_2^2}\Big[(K_1-K_2)^2\mathds{E}[\alpha_1^2]\mathds{E}[\beta_2^2]+ (K_1-K_2)^3\Big((\mathds{E}[\alpha_1])^2\mathds{E}[\beta_2]+\mathds{E}[\alpha_1](\mathds{E}[\beta_2])^2\Big)\\
	&\quad+(K_1-K_2)^4(\mathds{E}[\alpha_1])^2(\mathds{E}[\beta_2])^2 \Big].
	\end{align*}
	As with the bound in equation~\eqref{biwibound} we obtain
	\begin{align*}
	\lim_{n\to\infty} \frac{(K_1-K_2)^2}{\sigma^4k_2^2}\mathds{E}[\alpha_1^2]\mathds{E}[\beta_2^2] = (t_1-t_2)^2(s_1-s_2)(s_2-s_3).
	\end{align*}
	All the remaining terms go to zero as $n\to\infty$ by arguments similar to those leading to the bound $(t_1-t_2)^2(s_1-s_2)(s_2-s_3)$, which
is~\eqref{biwibound}.  
So the proof is complete. 
\end{proof}

Next, we establish consistency and asymptotic normality of the estimator~\eqref{finalest1}. 

\begin{theorem}\label{asnorm}
Assume the setting in Theorem~\ref{asnorma}. 
For $n\in\N$ let $N_n\in\mathbb{N}$ be a random process satisfying $N_n/k_1 { \stp }1$.
Furthermore, let $\bsy_1,\dots,\bsy_{N_n}$ be a random number of independent replicates of $\bsy\in {\rm RV}^2_+(\alpha)$ with angular decomposition $(R,\boldsymbol{\omega})$. Define $\hat{\mathbb{E}}_{\tilde{{H}}_{\bsy}}[f(\boldsymbol{\omega})]$ as in~\eqref{finalest1}. Then \begin{align}\label{eq:limitestf}
	\sqrt{k_2}(\hat{\mathbb{E}}_{\tilde{H}_{\boldsymbol{Y}}}[f({\boldsymbol{\omega}_1})]-\mathds{E}_{\tilde{H}_{\boldsymbol{Y}}}[f({\boldsymbol{\omega}_1})])\overset{\mathcal{D}}{\to} \mathcal{N}(0, \sigma^2), \hspace{5mm} n\to\infty.
	\end{align}
\end{theorem}

\begin{proof}[Proof of Theorem~\ref{asnorm}]
	We use that 
	\begin{align}\label{eq:constlimits}
	R^{(k_2)}/b_{k_1/k_2}\overset{ P}{\to}1 ,\quad  N_n/k_1\overset{ P}{\to} 1,\quad n\to\infty,
	\end{align}
	where the first convergence follows from Lemma~\ref{randomRb} {and the second from~\eqref{eq:convp}.
		As the probability limits} in~\eqref{eq:constlimits} are constants, the three processes in {\eqref{w1} and~\eqref{eq:constlimits}} converge jointly as $n\to\infty$, and we apply the composition map $D([0,\infty)^2)\times [0,\infty)^2\mapsto\mathbb{R}$ given by {$(W,N,R)\mapsto W(N,R)$} to obtain  
	\begin{align}\label{compmap}
	W_{k_1}\Big(\frac{N_n}{k_1}, \Big(\frac{R^{(k_2)}}{b_{{k_1}/{k_2}}}\Big)^{-1/\gamma}\Big) & \overset{ \mathcal{D}}{\to}W(1,1),\quad n\to\infty,
	\end{align}
	where $W(1,1)$ has a standard normal distribution. ~\eqref{eq:limitestf} then follows upon noting that  $k_2\wedge N_n \sim k_2$ as $n\to\infty$, thus  $W_{k_1}(N_n/k_1, (R^{(k_2)}/b_{k_1/k_2})^{-\alpha})=\sqrt{k_2}/\sigma (\hat{\mathbb{E}}_{\tilde{{H}}_{\bsy}}[f(\boldsymbol{\omega})]-\mathbb{E}_{\tilde{H}_{\boldsymbol{Y}}} [f(\boldsymbol{\omega}_1)])$. 
\end{proof}

In practice, when $n$ is finite, we replace the threshold $R_{N_n}^{(k_2)}$ in \eqref{finalest1} by $R_{N_n}^{(k_2\wedge N_n)}$, in order to always obtain a well-defined quantity. {This does not affect the limit result of Theorem~\ref{asnorm} as $k_2\wedge N_n \sim k_2$ as $n\to\infty$.}

We can now deduce asymptotic normality when $\alpha=2$ and $\boldsymbol{Y}$ is max-linear.  
Recall that our goal is to estimate a squared scaling $\sigma_{12}^2$ as in Definition~\ref{scaledef}, which leads to setting $f(\boldsymbol{\omega})=2\omega_1\omega_2$ and the estimator 
\begin{align}\label{dmest}
\hat{\sigma}_{12}^2=\frac{1}{k_2}\sum_{\ell=1}^{N_n}2\omega_1\omega_2\mathds{1}\{R_\ell \geq R^{(k_2)}\}.
\end{align}

\begin{theorem} \label{thm_ci}
	{Let $\bsx \in {\rm RV}^2_+(2)$} be a max-linear model with coefficient matrix $A\in \mathbb{R}^{2\times D}_+$. 
	Assume that $k_1=k_1(n)\to\infty$, $k_2=k_2(n)\to\infty$ and $k_1=o(n)$, $k_2=o(k_1)$ as $n\to\infty$.
	 Suppose the setup and the assumptions for Theorem~\ref{asnorma} hold. Then 
	\begin{align*}
	\sqrt{k_2}(\hat{\sigma}_{12}^2-{\sigma}_{12}^2)\overset{\mathcal{D}}{\to} \mathcal{N}(0, \sigma^2), \hspace{5mm} n\to\infty,
	\end{align*}
	where $\sigma^2=2\sum_{k=1}^{D} {a_{1k}^2a_{2k}^2}/{\norm{a_k}^2}-\sigma_{12}^4$.
\end{theorem}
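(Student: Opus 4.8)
The plan is to recognize the claim as a specialization of the functional central limit theorem in Theorem~\ref{asnorm} to dimension $d=2$, index $\alpha=2$, and the test function $f(\boldsymbol{\omega})=2\omega_1\omega_2$. Indeed, the estimator $\hat{\sigma}_{12}^2$ in~\eqref{dmest} is exactly $\hat{\mathbb{E}}_{\tilde{H}_{\bsx}}[f(\boldsymbol{\omega})]$ of~\eqref{finalest1} for this $f$, which is smooth and bounded on the compact sphere $\Theta_+^1$. So the only substantive steps are to confirm that the $\tilde{H}_{\bsx}$-mean of $f$ equals the centering constant $\sigma_{12}^2$, and then to evaluate the limiting variance $\sigma^2=\var(f(\boldsymbol{\omega}))$ explicitly and match it to the stated formula.

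First I would pin down the spectral measure. Since $\bsx$ is max-linear with $\alpha=2$, Proposition~\ref{discspect} yields the discrete $H_{\bsx}=\sum_{k=1}^D\norm{a_k}^2\delta_{\{a_k/\norm{a_k}\}}$, whose total mass is $\sum_{k=1}^D\norm{a_k}^2=\sigma_1^2+\sigma_2^2=2$ by standardization of the two rows of $A$; hence $\tilde{H}_{\bsx}=H_{\bsx}/2$ by~\eqref{specmass2}. Evaluating the mean against this discrete measure I expect
\begin{equation*}
\mathbb{E}_{\tilde{H}_{\bsx}}[2\omega_1\omega_2]=\int_{\Theta_+^1}\omega_1\omega_2\,\mathrm{d}H_{\bsx}(\boldsymbol{\omega})=\sum_{k=1}^D a_{1k}a_{2k}=\sigma_{12}^2,
\end{equation*}
the last equality being Proposition~\ref{scalee}(b), so the centering is correct. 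The factor $2$ in $f$ is precisely what cancels the total mass $d=2$. The variance then follows from the second moment
\begin{equation*}
\mathbb{E}_{\tilde{H}_{\bsx}}[(2\omega_1\omega_2)^2]=2\sum_{k=1}^D\norm{a_k}^2\frac{a_{1k}^2a_{2k}^2}{\norm{a_k}^4}=2\sum_{k=1}^D\frac{a_{1k}^2a_{2k}^2}{\norm{a_k}^2},
\end{equation*}
giving $\sigma^2=\mathbb{E}_{\tilde{H}_{\bsx}}[(2\omega_1\omega_2)^2]-\sigma_{12}^4$, which is exactly the stated expression.

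Finally, with $f$ continuous, centering equal to $\sigma_{12}^2$, and the bias condition~\eqref{assump} together with positivity $\sigma^2>0$ both granted by the hypotheses, I would simply invoke Theorem~\ref{asnorm} with $d=2$ to obtain $\sqrt{k_2}(\hat{\sigma}_{12}^2-\sigma_{12}^2)\std\mathcal{N}(0,\sigma^2)$. I do not anticipate a genuine obstacle here: the hard analytic content---the Donsker-type limit theorem with a random number $N_n$ of threshold exceedances and the intermediate thresholding---is entirely absorbed into Theorem~\ref{asnorm}. The only point demanding care is the bookkeeping of the normalizing mass $d=2$ relating $H_{\bsx}$ and $\tilde{H}_{\bsx}$; getting this right is what makes the centering land on $\sigma_{12}^2$ rather than a constant multiple of it, and what produces the factor $2$ appearing consistently in the estimator~\eqref{dmest} and in $\sigma^2$.
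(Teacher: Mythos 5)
Your proposal is correct and takes essentially the same approach as the paper: both invoke Theorem~\ref{asnorm} with $d=2$ and $f(\boldsymbol{\omega})=2\omega_1\omega_2$, and then compute the limiting variance from the discrete spectral measure of Proposition~\ref{discspect}, with the total mass $2$ (from the standardized rows of $A$) normalizing $\tilde{H}_{\bsx}$ so that the second moment equals $2\sum_{k=1}^{D}a_{1k}^2a_{2k}^2/\norm{a_k}^2$. Your explicit check that the centering $\mathbb{E}_{\tilde{H}_{\bsx}}[2\omega_1\omega_2]=\sigma_{12}^2$ matches Proposition~\ref{scalee}(b) is a detail the paper leaves implicit, but otherwise the two arguments coincide.
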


\begin{proof}
Asymptotic normality  is a direct consequence of Theorem~\ref{asnorm}. 
For the variance, we use the exponent measure given in~\eqref{discretspecteq}, with $a_k=(a_{1k}, a_{2k})$ being the $k-$th column of $A$, and compute
$$\mathbb{E}_{\tilde{H}_{\boldsymbol{X}}}(4\,\omega_1^2\omega_2^2)=\frac{4}{2}\sum_{k=1}^D\norm{a_k}^2\frac{a_{1k}^2}{\norm{a_k}^2}\frac{a_{2k}^2}{\norm{a_k}^2}=2\sum_{k=1}^D\frac{a_{1k}^2a_{2k}^2}{\norm{a_k}^2},$$
 and find for the variance  
 \beao
 \var(2{\omega}_1 {\omega}_2) &=& \mathbb{E}_{\tilde{H}_{{\boldsymbol{X}}}}(4\,\omega_1^2\omega_2^2)-\mathbb{E}_{\tilde{H}_{{\boldsymbol{X}}}}(2\,\omega_1\omega_2)^2\\
 &=& 2\sum_{k=1}^{D}{a_{1k}^2a_{2k}^2}/{\norm{a_k}^2}-(\sum_{k=1}^{D}a_{1k}a_{2k})^2=2\sum_{k=1}^{D}{a_{1k}^2a_{2k}^2}/{\norm{a_k}^2}-\sigma_{12}^4.\hspace{2.4cm}\qedhere
 \eeao
\end{proof}

In most situations, it is the case that the regular variation index $\alpha$ is not known and different across the marginals, and also likely to be different from $\alpha=2$, which is required by Theorem~\ref{thm_ci}. To this end, the first step would be to marginally standardise the data to standard Fr\'echet or Pareto with $\alpha=2$.
The procedure we follow for the simulation study and the data application in the next section, is to transform the observations non-parametrically by employing the empirical probability integral transform. We remark, however, that applying such transformations is likely to affect the asymptotic distribution of the estimates to a certain extent, since the rank transformations induce dependence amongst the observations. Nevertheless, there has been recent work that provides finite-sample concentration bounds which quantify the deviation of the angular measure of the rank-transformed observations from the true angular measure over certain classes of sets of the positive unit simplex \citep{rankspectralconcentration}.

\section{Estimation of the scalings and choice of the input parameters for Algorithm~\ref{datdalg}}\label{scalest}

Assume $n$ i.i.d. observations of an RMLM $\bsx$ on a DAG $\D$.
For the estimation of the scalings in the matrices $\hat{C}^{(1)}$ and $\hat{\Delta}^{(1)}$, for each pair of nodes $(i,j)$ we consider the bivariate vector $\boldsymbol{X}_{ij}=(X_i,X_j)$, with standardised  Fr\'echet$(2)$ margins.
Standardization to the latter is done by using the empirical integral transform (p. 338 in \citet{beirlant}) giving for each $i\in V$,
\begin{align}\label{empstand}
X_{\ell i}\coloneqq\Big\{-\log\Big(\frac{1}{n+1}\sum_{j=1}^{n}\mathds{1}_{\{X^{*}_{ji}\leq X^{*}_{\ell i}\}}\Big)\Big\}^{-1/2},\quad  \ell\in\{1,\dots,n\},
\end{align}
where $\bs{X^{*}}$ is the vector of a simulated RMLM. 
In the data application $\bs{X^{*}}$ simply corresponds to the original data vector.

We then compute for each observation {$\bsx_{\ell}$ for} $\ell\in\{1,\dots,n\}$ the angular decomposition as in~\eqref{angular.eq} given by
\begin{align}\label{bh}
R_\ell\coloneqq\norm{\boldsymbol{X}_{\ell,ij}}_2 =  \Big(\sum_{j\in\{i,j\}} X_{\ell, j}^2\Big)^{1/2}, 
\quad {\boldsymbol{\omega}_\ell=({\omega}_{\ell, j}: j\in \{i,j\})\coloneqq\frac{\boldsymbol{X}_{\ell, ij}}{R_\ell}}, \hspace{5mm} \ell\in\{1,\ldots,n\}.
\end{align}

\subsection{Estimation of the scalings and extremal dependence measures}\label{sec:estscal}

We estimate first the squared scalings $\hat{\sigma}_{M_{ij}}^2$ in the matrix $\hat{C}^{(1)}$ for an appropriate $1\le k_1\le n$ by 
\begin{align}\label{estscal2}
\hat{\sigma}_{M_{ij}}^2 =\frac{2}{k_1}\sum_{\ell=1}^{n}\underset{r\in \{i,j\}}{\bigvee}{\omega}_{\ell r}^2\mathds{1}{\{R_\ell \geq R^{(k_1)}\}}.
\end{align}
For the univariate scalings $\hat{\sigma}_{M_{i}}^2=\hat{\sigma}_{X_{i}}^2$ we simply consider the average of ${\omega}_{i}^2$ rather than the maximum ${\vee}_{r\in \{i,j\}}{\omega}_{\ell r}^2$  in~\eqref{estscal2}.

The matrix $\hat{\Delta}^{(1)}$ needs estimates for the squared scalings ${\sigma}_{M_{ij}}^2$ and also ${\sigma}_{M_{i,aj}}^2$, which is based on $(X_i, a X_j)$ for $a>1$ with angular decomposition $(R_{a_\ell },\boldsymbol{\omega}_{a_\ell })$, say. 
We use in principle the same estimator as in~\eqref{estscal2}, but may choose a higher radial threshold, i.e., {$k_2\le k_1$.} We remark that the estimates in the matrix $\hat{C}^{(1)}$ are used only for computing the scalars $c_1,c_2$, which enter into the transformation \eqref{def:tim}, whereas the estimated scalings in $\hat{\Delta}^{(1)}$ are used to verify condition \eqref{cond1}. To this end, a higher radial threshold leads to a lower bias and better performance of the algorithm. 
Since the estimates computed via \eqref{afterscalscal} and \eqref{scalTim} are used to verify conditions of Theorem~\ref{incdagmod}, we decide to use for these estimates the same number of threshold exceedances. 
We estimate for $1\le k_2\le k_1\le n$,
\begin{align}
\hat{\sigma}_{M_{ij}}^2 &=\frac{2}{k_2}\sum_{\ell=1}^{n}\underset{r\in \{i,j\}}{\bigvee}{\omega}_{\ell r}^2\mathds{1}{\{R_\ell \geq R^{(k_2)}\}},\nonumber\\
\hat{\sigma}_{M_{i, aj}}^2 
&=\frac{a^2+1}{k_2}\sum_{\ell=1}^{n}\underset{r\in\{i,j\}}{\bigvee}{\omega}_{a_\ell r}^2\mathds{1}{\{R_{a_\ell  }\geq R_a^{(k_2)}\}},
\label{afterscalscal}
\end{align}
where we account for the new total mass of the angular measure in~\eqref{afterscalscal} by re-weighting by $a$.

For the matrices $\hat{\Delta}^{(2)}$ and $\hat{\Delta}^{(3)}$ we 
estimate the extremal dependence measure of the vector  $\boldsymbol{T}^{ij}$ as in~\eqref{def:tim}. 
To compute the latter, we use the parameter estimates of $c_1, c'_1, c_2$, which are based on threshold values $k_1\ge k_2$, respectively, with the largest radial norms as plug-in parameters. 
This yields a smaller and random number $N_n=\sum_{\ell=1}^{n}\mathds{1}\{\norm{\bs X_{\ell, ij}}_2\geq b_{n/{k_1}}\}$ of i.i.d. pseudo random variables $\bst^{ij}_\ell$ for $\ell\in\{1,\dots,N_n\}$; recall from~\eqref{eq:convp} that $N_n$ satisfies $N_n/k_1\to 1$ in probability.

To estimate the required extremal dependence measure of the components of the standardised vector, $\boldsymbol{\tilde{T}}^{ij}_\ell$ for $\ell\in \{1,\dots,N_n\}$ 
we use the empirical integral transform to standardise $\boldsymbol{T}^{ij}_\ell$ to standard Pareto(2) random variables $\boldsymbol{\tilde{T}}^{ij}_\ell$ with unit scalings.
Let further denote by
$(R_{\ell,\boldsymbol{\tilde{T}}^{ij}}, \boldsymbol{\omega}_{\ell, \boldsymbol{\tilde{T}}^{ij}})$ the angular decomposition of $\boldsymbol{\tilde{T}}^{ij}_\ell$ for $\ell \in\{1,\dots,N_n\}$.
Then, we compute the estimate $\hat{\tau}_{ij}^2$ using~\eqref{finalest1} as
\begin{align}\label{scalTim}
\hat{\tau}_{ij}^2=\frac{2}{k_2}\sum_{\ell=1}^{N_n}{\omega}_{\ell, \tilde{T}^{ij}_{1}}{\omega}_{\ell, \tilde{T}^{ij}_{2}}\mathds{1}{\{R_{\ell,\boldsymbol{\tilde{T}}^{ij}} \geq R^{(k_2\wedge N_n)}_{\boldsymbol{\tilde{T}}^{ij}}\}}.
\end{align}
Our goal is to apply Theorem~\ref{asnorm} to the estimator in \eqref{scalTim}.
By Lemma~\ref{lemmaid2} it follows that $\boldsymbol{\tilde{T}}^{ij}\in{\rm RV}^2_+(2)$ and, thus, we may apply Theorem~\ref{asnorm} to the independent replicates $\boldsymbol{\tilde{T}}^{ij}_1,\ldots,\boldsymbol{\tilde{T}}^{ij}_{N_n}$ where $N_n$ satisfies the assumptions of Theorem~\ref{asnorm}.

\subsubsection{Calibrating the input parameters}\label{sec:c}

We briefly comment on the choice of the parameters $c_1,c'_1,c_2$. Recall that if the conditions of  Lemma~\ref{lemmaid2} hold, the atoms of the angular measure of $\boldsymbol{T}^{ij}$ for the pair of nodes $(i,j)$ are obtained by normalising the non-zero vectors $\tilde{a}_k=(a_{jk} -c_1a_{ik}, a_{jk}+c_2a_{ik})$. Due to the opposite $-/+$ signs, the parameters $c_1$, $c_2$ have opposite effects on the entries of $\tilde{a}_k$. Since the latter vectors determine the angular measure of $\boldsymbol{\tilde{T}}^{ij}$, the choices of $c_1$, $c_2$ can alter the dependence structure; thus they are important for Algorithm~\ref{datdalg}.
 
From Theorem~\ref{incdagmod}{(i)}, we select those pairs $(i,j)$ for which the margins of $\boldsymbol{\tilde{T}}^{ij}$ are asymptotically fully dependent {(see Remark~\ref{rem:two_obs})}. Therefore, ideally, we would choose $c_1$ such that the asymptotic extremal dependence is reduced substantially relative to asymptotically full dependence between those transformed pairs of nodes which are not in MWP. 
Setting $c_1$ to a fixed value for all pairs $(i,j)$ might lead either to a high number of false positives or a low number of true positives for the estimated set MWP. We choose  $c_1=(\sigma_i^2+\sigma_j^2-\sigma_{M_{ij}}^2)^{1/2}$, and $c_2=1/c_1$, and we note that it satisfies the condition that $0<c_1\leq 1$.

\begin{corollary}\label{csigim}
For a pair of node variables $(X_i, X_j)$ satisfying Theorem~\ref{incdagmod} $(i)$, it holds that $\sigma_{ij}^2=\sigma_i^2+\sigma_j^2-\sigma_{M_{ij}}^2$.
\end{corollary}

\begin{proof}
By Theorem~\ref{incdagmod} (i), $(X_i, X_j)$ can be modelled as an RMLM, and using Proposition~\ref{prop.incmat}, they can be represented in terms of a $2\times2$ coefficient matrix, say $A_{ij}^{*}\in \mathbb{R}^{2\times 2}_+$, where $a_{22}^{*}=1$. Since the rows of $A$ are standardised, $\sigma_i^2=\sigma_j^2=1$, and $\sigma_{M_{ij}}^2=a^{*2}_{11}+a^{*2}_{22}$. The difference $\sigma_i^2+\sigma_j^2-\sigma_{M_{ij}}^2=2-\sigma_{M_{ij}}^2=2-(a^{*2}_{11}+a^{*2}_{22})=a^{*2}_{12}=\sigma_{ij}^2.$
\end{proof}

From Corollary~\ref{csigim}, if $(i,j)\in$MWP, then $c_1=\sigma_{ij}$, therefore, we use it as a proxy for the strength of asymptotic dependence.
 We motivate this choice of $c_1$ in the following example. 
 
\begin{example}\label{ex:c1}
Consider the DAG $\D_3$ in Figure~\ref{introex}.
Let $i=1,j=2$, so that there is a hidden confounder $X_3$ and no causal link. 
Hence $a_{12}=0$ giving the coefficient matrix $A$ and $A_{\boldsymbol{T}^{ij}}$ from Lemmas~\ref{lemmaid1} and~\ref{lemmaid2} as
$$A=\begin{bmatrix}
a_{11}&0 & a_{13}\\ 0& a_{22}&a_{23}
\end{bmatrix}, \hspace{1cm} A_{\boldsymbol{T}^{ij}}=\begin{bmatrix}
	0& a_{22}&a_{23}-c_1a_{13}\\ 0& a_{22}&a_{23}+\frac{1}{c_1}a_{13}
\end{bmatrix}.
$$
Suppose that the conditions \eqref{cond1} are satisfied, so {that by Lemma~\ref{lemmaid1} $(iii)$, $a_{23}\geq a_{13}$.}
Using Lemma~\ref{scalcoll}, we find that the chosen $c_1=(\sigma_1^2+\sigma_2^2-\sigma_{M_{12}}^2)^{1/2}=\sigma_{12}=a_{13}$.
Therefore, the matrix $A_{\boldsymbol{T}^{ij}}$ becomes
$$A_{\boldsymbol{T}^{ij}}=\begin{bmatrix}
	0& a_{22}&a_{23}-a_{13}^2\\ 0& a_{22}&a_{23}+1
\end{bmatrix}.
$$
Now note first that, since $a_{12}=0$, the second column is not affected by $c_1$. Furthermore, the matrix entry $(2, 3)$, $a_{23}$, increases by 1 regardless of the strength of asymptotic extremal dependence $a_{13}$.

{For asymptotically full dependence, the second and the third column of $A_{\boldsymbol{T}^{ij}}$  must correspond to the same atom of the angular measure; i.e., where both entries of the third column are the same. Therefore, a large difference between the two entries in the third column indicates low extremal dependence.  From $A_{\boldsymbol{T}^{ij}}$ we see that $a_{23}-a_{13}^2$ is much smaller than $a_{23}+1$, 
indicating a substantial distance of asymptotic extremal dependence away from asymptotically full dependence.}
\end{example}

{In general, the situation may not be as clear as in Example~\ref{ex:c1}, however with $c_1$ chosen above,}
 we reach a substantial reduction in the extremal dependence relative to asymptotically full dependence between the margins of $\boldsymbol{T}^{ij}$. Hence, we opt for this choice of $c_1$.
Finally, in order to avoid problems when $\sigma_{12}\in\{0,1\}$, we use the truncated version $c_1=\min\{0.1+({\sigma}_{i}^2+{\sigma}_{j}^2-{\sigma}_{M_{ij}}^2)^{1/2}, 0.8\}$ to ensure  that $c_1\in[0.1,0.8]$. 
{Since $c_1\in [0,1]$ in Theorem~\ref{incdagmod}, this choice of truncation is not too restrictive and works reasonably well in the simulation study.}

The choice of $c'_1$ concerns the difference in the extreme dependencies in Corollary~\ref{cor37}. Based on simulation experience, we see that in general a large difference between $c_1$ and $c_1'$ leads to larger values for $\tilde{\Delta}_c$ and we select $c'_1=0.1\,c_1$, which also satisfies $0<c_1, c_1'\leq 1$.

\section{Performance of the Algorithm} \label{sec:performance}

We assess the performance of Algorithm~\ref{datdalg} by simulation. As we are only interested in estimating the set MWP we use the following reduced version of Algorithm~\ref{datdalg2}, which simply outputs all pairs such that $\hat P_{ij}=1$:
 
	\begin{algorithm}[H]
		\caption{Estimation of pairs of nodes in MWP} 
		\label{datdalg}
\flushleft 
\textbf{Parameters}:\quad $a>1, \eps_1, \eps_2, \eps_3, \eps_4, \eps_5>0$  \\
\textbf{Input}:\quad ${\hat C^{(1)}}, \hat{\Delta}^{(1)}, \hat{\Delta}^{(2)}, \hat{\Delta}^{(3)}$  \\
\textbf{Output}: Matrix $\hat{P}\in \{0,1\}^{d\times d}$ indicating the pairs of nodes in MWP\\
\textbf{Procedure:} 
		\begin{algorithmic}[1]
                 \State \textbf{set} $S_1 = \{(i,j)\in V\times V:\hat{\Delta}^{(1)}_{ij}\geq-\eps_1 \quad {\rm and} \quad \hat{\Delta}^{(1)}_{ij}-\hat{\Delta}^{(1)}_{ji}\geq-\eps_2\}$ \quad [{\rm \textcolor{blue}{Conditions~\eqref{cond1}}}]
                \State \hspace{4.5mm} $S_2=\{(i,j)\in V\times V: \hat{\Delta}^{(2)}_{ij}>1-\eps_3\}$\quad  [{\rm \textcolor{blue}{Theorem~\ref{incdagmod} (i)}}]
                \State \hspace{4.5mm} $S_3=\{(i,j)\in V\times V: \hat{\Delta}^{(2)}_{ij}>\hat{\Delta}^{(2)}_{ji}+\eps_4\}$ \quad [{\rm \textcolor{blue}{Remark~\ref{remarkai}}}] 
                \State \hspace{4.5mm} $S_4=\{(i,j)\in V\times V: \hat{\Delta}^{(3)}_{ij}<\eps_5 \, {\hat C^{(1)}_{ij}}\}$ \quad [{\rm \textcolor{blue}{Corollary~\ref{cor37}}}]
			\State\textbf{for} $(i,j)\in V\times V$  
			
			\State\hspace{5mm}\textbf{if} 
			$(i,j)\in S_1\cap S_2 \cap S_3 \cap S_4$, \textbf{set} $\hat{P}_{ij}=1$
			\State\hspace{8mm}\textbf{else set} $\hat{P}_{ij}=0$
			\State \textbf{end for}
			
		\end{algorithmic}
 \textbf{Return} $\hat{P}$ 
	\end{algorithm}

\subsection{Simulation study}\label{sec:sim}

We first simulate i.i.d. random DAGs $\mathcal{D}=(V, E)$ of dimension $|V|=d$. In a second step, for each DAG, we simulate an RMLM $\bsx$ supported on it. The simulation setup is outlined in Section~\ref{simul}. 
The objective is to apply for each DAG and its RMLM our Algorithm~\ref{datdalg} to all pairs of nodes $(i,j)$ with the goal to determine whether there are max-weighted paths $q\rightsquigarrow j\rightsquigarrow i$ for all $q\in \An(i)\cap\An(j)$.
If this holds, we say that the pair $(i,j)$ satisfies the max-weighted path property, i.e., $(i,j)\in {\rm MWP}$. 
{By Theorem~\ref{incdagmod}(i), if this is the case, then  $(X_i,X_j)$ can be represented as an RMLM and the effect of {possible} confounders of the two nodes can be ignored.}  

For each pair of components $(X_i,X_j)$ of $\bsx$ assume estimates of the scalings in~\eqref{cond1} as well as of the extremal dependence measure in parts (i) and (ii) of Theorem~\ref{incdagmod}.
The theoretical quantities are given in Definition~\ref{scaledef} and Lemma~\ref{scalcoll}, 
and we use the estimates from Appendix~\ref{scalest}.

\subsection{Simulation set-up}\label{simul}

The DAG is constructed via the upper triangular edge-weight matrix $C\in\R^{d\times d}$ defined in equation~\eqref{semequat1}.
The presence of an edge is sampled from a Bernoulli distribution with success probability $p\in\{0.1, 0.2\}$, $p$ representing different sparsity levels giving a DAG.
For the edge-weight matrix $C$, the diagonal is set to one, while the squares of the other non-zero entries are randomly generated as ${\rm Unif}([0.3, 1.5])$. 
The algorithm is applied to DAGs of different dimensions $d\in\{20,30,40\}$, each with specific sparsity level $p\in\{0.1, 0.2\}$ and with different edge-weights matrices $C$.

For the simulation of a corresponding RMLM, we first compute the coefficient matrix $A$ as in equation~\eqref{Rmlmequat} corresponding to the edge weight matrix $C$, and standardise the row norms according to equation~\eqref{abar}.  
We then simulate a random vector $\bs{X^{*}}$ starting from equation~\eqref{Rmlmequat}. Since the discrete angular measure makes the max-linear model unrealistic to use in practice, we introduce a noise vector $\boldsymbol{Z}_{\bs{\eps}}$ to the model and set \begin{align}\label{simustud}
\boldsymbol{X^{*}}=A\times_{\max} \boldsymbol{Z}+\boldsymbol{Z}_{\bs{\eps}},
\end{align}
where the innovation vector $\boldsymbol{Z}\in{\rm RV}^D_+(\alpha)$ for $\alpha\in\{2,3\}$. 
Each margin of $\bsx^{*}$ is generated such that each component of $\bsz$, $Z_i\overset{d}{=} |t_\alpha|$, where $t_\alpha$ is a $t$-distributed random variable with $\alpha$ degrees of freedom, and the independent margins of the noise vector $\boldsymbol{Z}_{\bs{\eps}}$ are such that ${Z_\eps}\overset{d}{=} 0.5\cdot t_{5}$ for $\alpha=2$ and ${Z_\eps}\overset{d}{=} 0.5\cdot t_{10}$ for $\alpha=3$. 
Since $\boldsymbol{Z}$ has a heavier tail than $\boldsymbol{Z}_{\bs{\eps}}$, the angular measure of $\boldsymbol{X^{*}}$ is asymptotically equivalent to that of $A\times_{\max} \boldsymbol{Z}$. 

To appreciate the task of detecting pairs in MWP, we depict two cases from bivariate vectors $(X_1,X_2)$ simulated from~\eqref{simustud} with $\alpha=3$ and with causal dependence $X_2\to X_1$. For the second pair, not in MWP, there is a third node variable, $X_3$, which is a confounder whose effect is rendered visible by the ray in the max-linear case. The perturbated model~\eqref{simustud}, more likely to be encountered in real-life scenarios, makes it very difficult to distinguish between these cases. Such cases of non-MWP pairs contribute to the high FCCPR metric in the boxplots of Appendix~\ref{Ap:sim}.

\begin{figure}[htbp]\centering 
	\includegraphics[ height=7cm, width=7cm, clip]{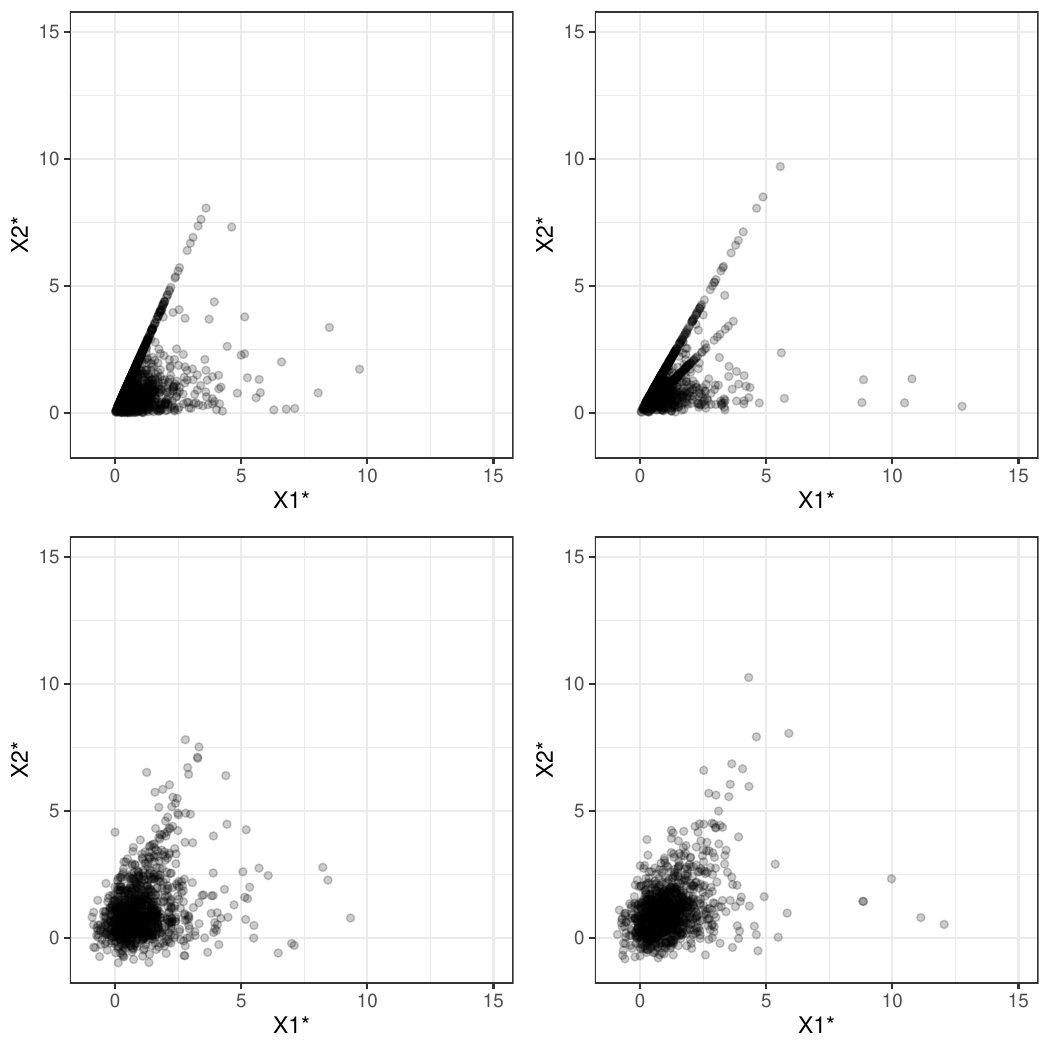}\vspace{-.2cm}
	\caption{The pair $(X_1, X_2)$ from an RMLM (top) and perturbated RMLM (bottom). The pair on the left is in MWP, and the one on the right is not. In both cases $X_2\to X_1$.} \label{noisermlm}
\end{figure}

For fixed $C,\alpha$  we simulate a sample of $n$ i.i.d. realizations as explained above. As described in Appendix~\ref{scalest}, in order to standardise the margins, we transform the data by applying the empirical integral transform \eqref{empstand} to Fr\'echet$(2)$ to $\bs{X^{*+}}$, the vector of a simulated RMLM as in equation~\eqref{simustud} and truncated to the non-negative orthant, that is,  $\bs{X^{*+}}=\max(\bs{X^{*}},\boldsymbol{0})$, with the maximum taken entrywise, and estimate scalings and extremal dependence measure from eqs. \eqref{estscal2}--\eqref{scalTim} needed as input for Algorithm~\ref{datdalg} to identify the MWP pairs. Finally, we estimate the metrics given in Section~\ref{sec:metrics}.
The box-plots in Figures~\ref{a23d20p12}--\ref{a23d40p12} are then based on 50 i.i.d. simulation runs, respectively. 

For the standardised observations, as errors for both $\alpha\in\{2,3\}$,
we fix $\eps_1=0.25, \,\eps_2=0.01,\, \eps_3=0.07,\, \eps_4=0.01,\, \eps_5=0.07$.  The selected ones are mainly due to simulation experience, taking into account both the pre-asymptotic setting and the influence of the noise. Similar to \citet{KK}, we set $a=1.0001$. The simulation studies in Section~\ref{Ap:sim2}, which show the sensitivity  with respect to the parameters $a, \eps_3, \eps_4, \eps_5$, indicate that these are appropriate choices that offer a reasonable trade-off between high true positive and low false discovery rates.

Finally, regarding the choices of the thresholds, for the sample size $n=1000$ we set $k_1=200$ and $k_2=100$, while for $n=5000$, $k_1=500$ and $k_2=200$.

\subsection{Evaluation metrics and results}\label{sec:metrics}

In this section and in Appendix~\ref{Ap:sim} we present the results of our simulation study using the metrics defined below to evaluate the predictive performance for the $50$ runs. 

Our focus is on the MWP from Definition~\ref{mwpair} for every pair of nodes in a DAG. 
For the estimator $\hat{\rm MWP}_k$ we use Theorem~\ref{incdagmod} in combination with the estimates in Section~\ref{sec:est}.
{We also involve true {causal} pairs (the set CP), true dependent pairs (the set DP), and those in ICP (the pairs in CP but in inverse index order).}

We define the following quantities for each DAG $\D_k$ with nodes $V_{\mathcal{D}_k}$ for $k\in\{1,\dots,50\}$,
\begin{itemize}
\item$\hat{\rm MWP}_k$-- the set of estimated pairs of nodes in MWP for $\mathcal{D}_k$; 
\item${\rm MWP}_{\mathcal{D}_k}$-- the set of pairs of nodes in MWP for $\mathcal{D}_k$;
{\item${\rm MWP}^c_{\mathcal{D}_k}$-- the set of pairs of nodes in MWP$^c$ for $\mathcal{D}_k$; here $(X_i,X_j)$ are not an RMLM or they could be independent, hence a degenerate RMLM;}
\item${\rm CP}_{\mathcal{D}_k}=\{(i,j)\in V_{\mathcal{D}_k}\times V_{\mathcal{D}_k}: j\in \An(i) \}$ ;
\item${\rm DP}_{\mathcal{D}_k}=\{(i,j)\in V_{\mathcal{D}_k}\times V_{\mathcal{D}_k}: \sigma_{ij}^2>0 \}$; 
\item${\rm ICP}_{\mathcal{D}_k}=\{(j,i)\in V_{\mathcal{D}_k}\times V_{\mathcal{D}_k}: (i,j)\in{\rm CP}_{\mathcal{D}_k} \}$.
\end{itemize}

The reported metrics are standard, representing True/False Positive Rates, and False Discovery Rates, see for instance \citet{fawcett} for standard formulas. However, since we measure the metrics across various categories of causal dependence, we extend them via the following formulas:
\begin{itemize}
\item {True Positive Rate:} 
$${\rm TPR}_k=\frac{\#\{\hat{{\rm MWP}}_k\cap {\rm MWP}_{\mathcal{D}_k} \}}{\#\{{\rm MWP}_{\mathcal{D}_k}\}};
$$
\item  False Causal and Confounder Positive Rate: 
$${\rm FCCPR}_k=\frac{\#\{\hat{{\rm MWP}}_k\cap {\rm MWP}^c_{\mathcal{D}_k}\cap{\rm CP}_{\mathcal{D}_k}\}}{\#\{ {\rm MWP}^c_{\mathcal{D}_k}\cap{\rm CP}_{\mathcal{D}_k}\}},
$$
which gives the proportion of estimated causal non-MWPs amongst causal non-MWPs, exemplified in $\mathcal{D}_2$ in Figure~\ref{introex};
\item  
False Dependence Causal Positive Rate: 
$${\rm FDCPR}_k=\frac{\#\{\hat{{\rm MWP}}_k\cap{\rm DP}_{\mathcal{D}_k}\cap {\rm CP}^c_{\mathcal{D}_k} \}}{\#\{{\rm DP}_{\mathcal{D}_k}\cap {\rm CP}^c_{\mathcal{D}_k}\}},
$$
which gives the proportion of estimated dependent and non-causal MWPs amongst dependent non-causal pairs, exemplified in $\mathcal{D}_3$ in Figure~\ref{introex};
\item False Discovery Rate: 
$${\rm FDR}_k=\frac{\#\{\hat{{\rm MWP}}_k\cap {\rm MWP}^c_{\mathcal{D}_k} \}}{\#\{\hat{{\rm MWP}_k}\}};
$$ 
\item False Dependence Discovery Rate:
$${\rm FDDR}_k=\frac{\#\{\hat{{\rm MWP}}_k\cap {\rm MWP}^c_{\mathcal{D}_k}\cap{\rm DP}_{\mathcal{D}_k}\}}{\#\{\hat{{\rm MWP}}_k\cap{\rm DP}_{\mathcal{D}_k}\}},
$$  
which gives the proportion of estimated dependent non-MWPs amongst estimated dependent MWPs;
\item 
False Dependence Causal Discovery Rate: 
$${\rm FDCDR}_k=\frac{\#\{\hat{{\rm MWP}}_k\cap{\rm DP}_{\mathcal{D}_k}\cap{\rm CP}^c_{\mathcal{D}_k}\}}{\#\{\hat{{\rm MWP}}_k\cap {\rm DP}_{\mathcal{D}_k}\}},
$$ 
which gives the proportion of estimated non-causal relations amongst estimated dependent MWPs;
\item False Causal Direction Discovery Rate:
$${\rm FCDDR}_k=\frac{\#\{\hat{{\rm MWP}}_k\cap{\rm ICP}_{\mathcal{D}_k}\}}{\#\{\hat{{\rm MWP}}_k\cap{\rm DP}_{\mathcal{D}_k}\}},
$$ 
which gives the proportion of inversely estimated causal relations amongst estimated dependent MWPs.
\end{itemize}

The metrics provide {knowledge} 
for each specific type of causal dependence. 
One can expect an error from a non-causal non-MWP pair to be considered as more severe relative to the case of a causal non-MWP. 
Hence it is important to distinguish between the various sources of errors.  
For instance, the difference between FDDR and FDCDR gives the contribution to the false discovery rate by the causal non-MWP pairs.
{The False Positive Rates report whether our new methodology is able to differentiate between the different} categories of causal dependence. 


Finally, we provide some comments on the simulation results as shown in Figures~\ref{a23d20p12}, ~\ref{a23d30p12} and ~\ref{a23d40p12} in Appendix~\ref{Ap:sim}. We observe a similar trend across the True Positive Rate (TPR), and the two False Positive Rates (FCCPR, FDCPR) for different levels of sparsity and regular variation index. The False Discovery Rates (FDR, FDDR, FDCDR) change in a similar fashion likewise. In general TPR lies above $80\%$ and at a similar level across the three dimensions $20,30,40$, despite a mild reduction as the latter increases. The large variation in the false positive rate, FCCPR, when $d=20$ and particularly for $p=0.1$, is due to the very small number of causal non-MWPs. The setting with $d=40$ and $p=0.1$ implies a large number of non-causal dependent pairs, leading to an increase in both FDCPR and FDCDR. Similarly, for $p=0.2$, the number of non-MWP causal pairs can be very large, in some cases more than double that of MWP, which leads to the increase in FCCPR and FDDR. Overall, however, we see that the methodology is able to distinguish between the different categories, even when a causal relation is present.

The performance of the algorithm is also affected by the regular variation index $\alpha\in\{2,3\}$, which influences the rate of convergence of the respective component maxima to their limiting Fr\'echet distributions; see for instance Prop.~2.12 in \citet{sres}. There is a slight decrease in the true positive rate from $\alpha=2$ to $\alpha=3$, and a larger decrease in the false positive rates; see for instance the metrics FCCPR and FDCPR in the box-plots of Appendix~\ref{Ap:sim}.
Amongst the false discovery rates, we notice that they become slightly lower from $\alpha=2$ to $\alpha=3$ in general. 

Concerning the level of sparsity $p$, we observe that the metric FDCPR is higher for lower values of $p$, corresponding to a higher number of non-causal pairs with confounders relative to the causal ones. The difference between FDR and FDDR indicates the presence of independent estimated non-MWPs, and we see that this increases for a lower value of $p$, as expected from a higher level of sparsity.

Finally, a larger sample size leads to an improvement in TPR, and generally to a decrease across {all} false positive rates and false discovery rates.

\section{Box-plots from the simulation study and data application}\label{Ap:sim}

{Figures \ref{a23d20p12}, \ref{a23d30p12} and \ref{a23d40p12} in this section present box-plots from the simulation study based on data with marginals transformed to Fr\'echet(2) as described after~\eqref{simustud}.}

\vspace{-.5cm}
\begin{figure}[htbp]\centering 
\includegraphics[ height=7cm, width=12cm, clip]{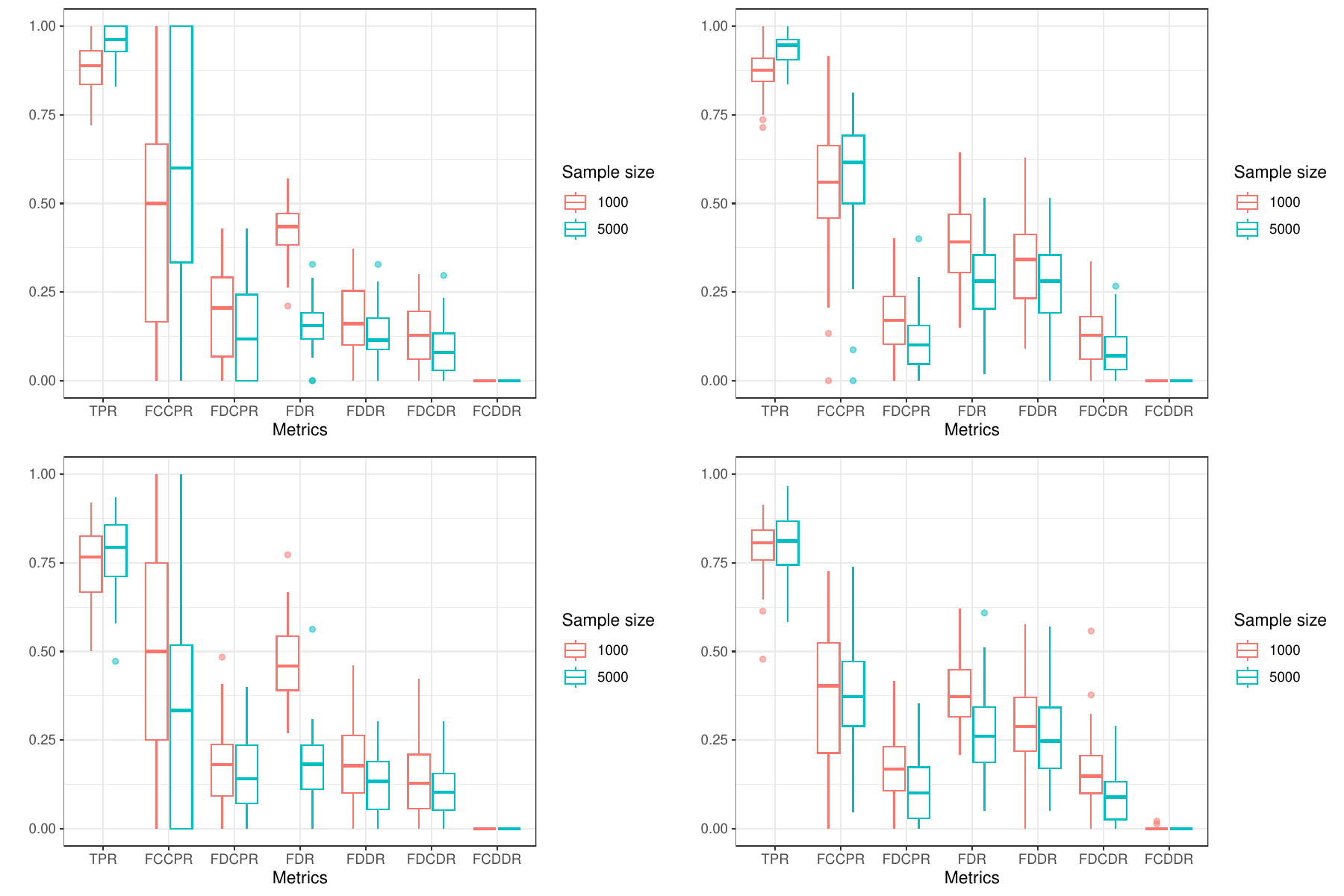}\vspace{-.2cm}
	\caption{Box-plots of the metrics over 50 {i.i.d.} DAGs with $d=20$ {nodes, sparsities} $p=0.1$ (left) and $p=0.2$ (right), {regular variation indices} $\alpha=2$ (top) and $\alpha=3$ (bottom).} \label{a23d20p12}
\end{figure}

\begin{figure}[htbp]\centering 
\includegraphics[ height=7cm, width=12cm, clip]{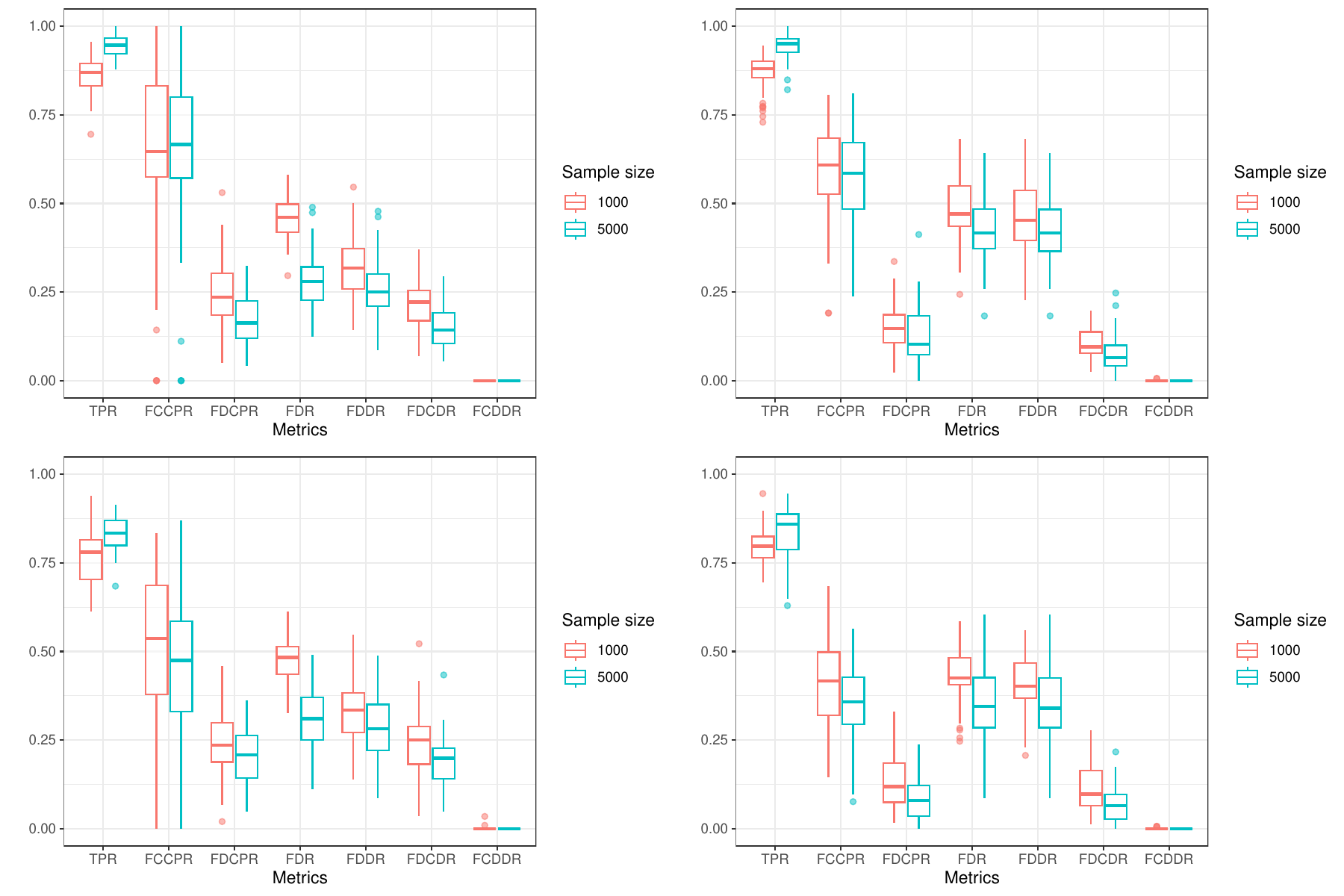}
	\caption{Box-plots of the metrics over 50 {i.i.d.} DAGs with $d=30$ {nodes, sparsities} $p=0.1$ (left) and $p=0.2$ (right), {regular variation indices} $\alpha=2$ (top) and $\alpha=3$ (bottom).} \label{a23d30p12}
\end{figure}

\begin{figure}[htbp]\centering 
\includegraphics[ height=7cm, width=12cm, clip]{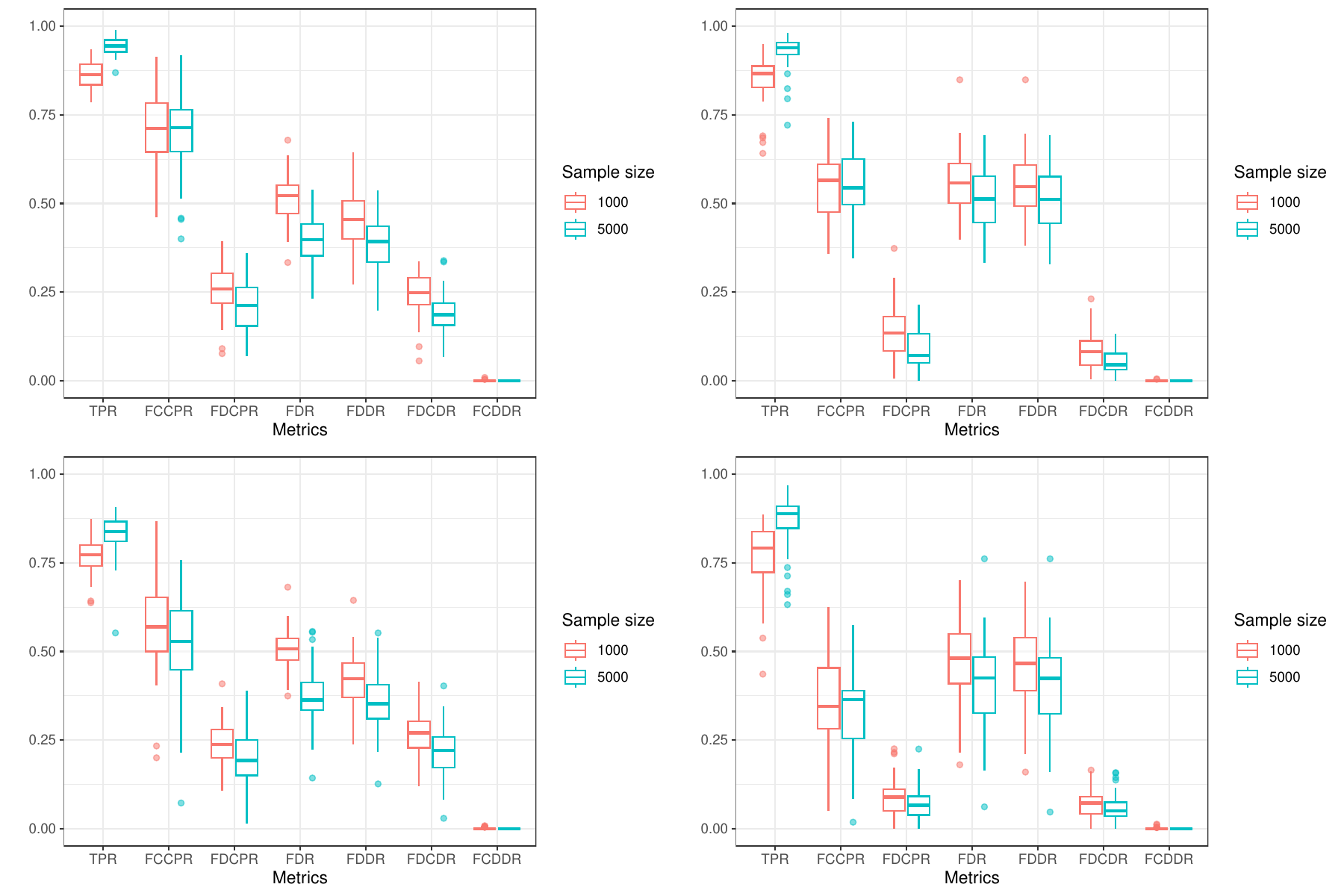}
	\caption{Box-plots of the metrics over 50 {i.i.d.} DAGs with $d=40$ {nodes, sparsities} $p=0.1$ (left) and $p=0.2$ (right), {regular variation indices} $\alpha=2$ (top) and $\alpha=3$ (bottom).} \label{a23d40p12}
\end{figure}

\subsection{Sensitivity to the $\eps$ error terms and to  $a$}\label{Ap:sim2}
In order to assess the sensitivity of Algorithm~\ref{datdalg} with respect to the error terms and the scalar $a$ we conduct a separate simulation study. We focus only on the error terms $\eps_3, \eps_4$ and $\eps_5$, whose choice is more critical . The choices $\eps_1, \eps_2$ selected in Section~\ref{simul} are rather non-restrictive, and values of $\eps_1$ in $[0.15, 0.25]$ and $\eps_2$ in $[0, 0.01]$ do not lead to significant changes in the TPR or FDR. To this end, in this simulation study we fix $\eps_1=0.25, \eps_2=0.01$.

For $\eps_3, \eps_4$ and $\eps_5$, we choose from the following sets $S_{\eps_3}=\{0.1, 0.07, 0.05\}$, $S_{\eps_4}=\{0,0. 01, .05\}$, $S_{\eps_5}=\{0.03, 0.07, 0.08\}$. Throughout the simulation we fix two of the $\eps$ terms to the values used in  Section~\ref{simul}, and let the remaining error take values in its respective set.  

For every combination of $\eps'$s, we apply Algorithm~\ref{datdalg} to 20 i.i.d. random DAGs, randomly generated with $d\in\{20, 30, 40\}$, $p\in\{0.1, 0.2\}$, $\alpha\in \{2,3\}$. Realizations are then obtained as outlined in Section~\ref{simul}. To every such DAG, we apply Algorithm~\ref{datdalg} for the combinations of $\eps'$s. The box plots of Figure~\ref{sensitivity} summarise the results for the metrics TPR, FDR, FCCPR, and FDCPR. The choice of parameters $(\eps_3, \eps_4, \eps_5)=(0.07, 0.01, 0.07)$ in the fourth bar leads to a high TPR, and to a lower FDR compared with the combinations that yield higher TPR. In general, we see that lowering $\eps_3$ leads to a significant decrease in both TPR, and FDR, whereas higher levels of $\eps_3$ lead to significant increases in both. As for $\eps_4$, which is aimed at removing independent pairs, we see that larger values decrease the TPR and also have a significant effect on the FDCPR,  which accounts for the pairs with confounders. Finally,  lowering $\eps_5$ leads to a significant decrease in TPR, whereas values slightly larger than $0.07$ only cause minor changes. The value we have selected, $(\eps_3, \eps_4, \eps_5)=(0.07, 0.01, 0.07)$, corresponding to the fourth column in each plot, offers a balanced trade-off between the TPR and FDR, and the remaining relevant metrics.

\begin{figure}[htbp]\centering 
\includegraphics[ height=10cm, width=10cm, clip]{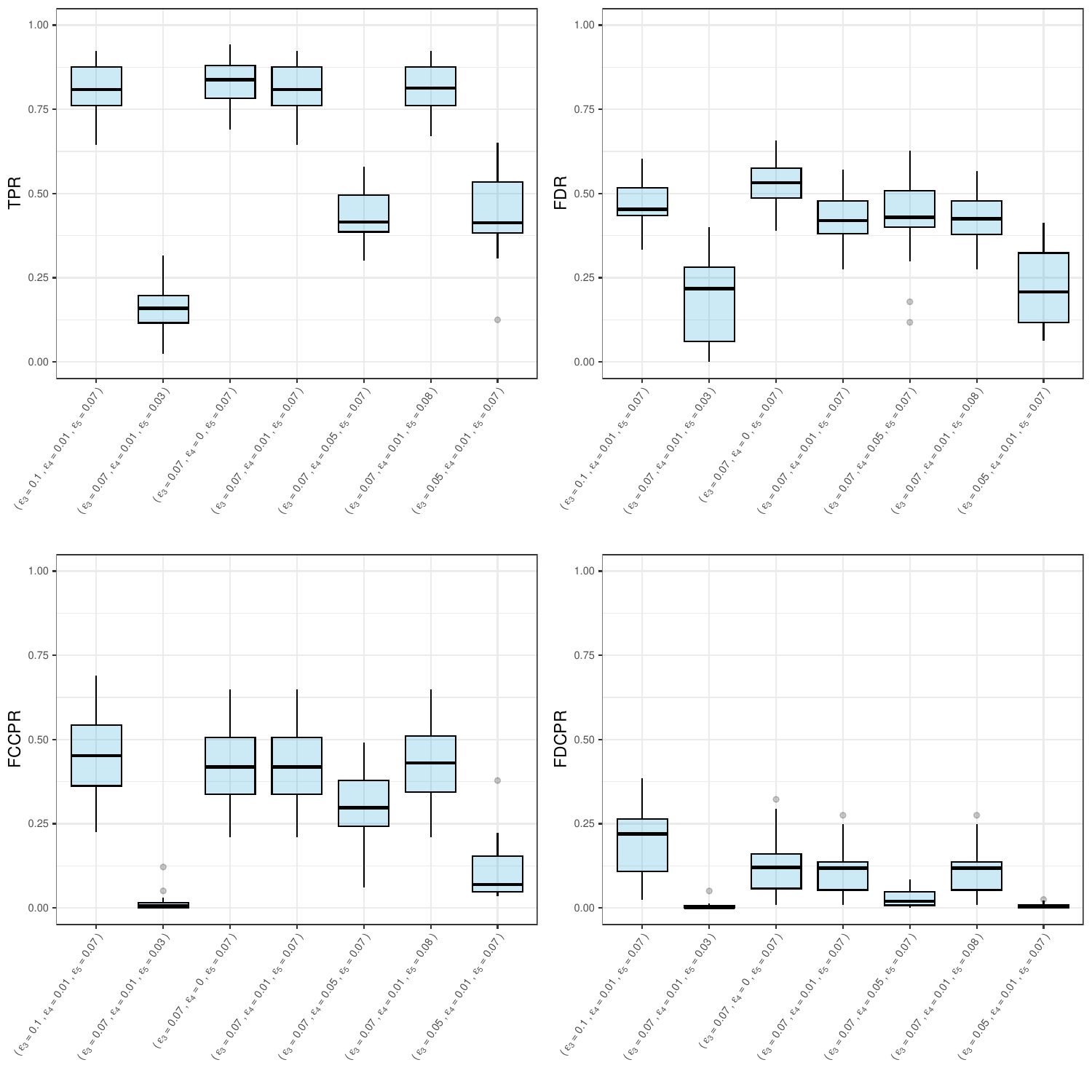}
	\caption{Box-plots of the metrics over 20 random DAGs for different choices of the $\eps'$s.} \label{sensitivity}
\end{figure}

\begin{figure}[htbp]\centering 
	\includegraphics[ height=8cm, width=8cm, clip]{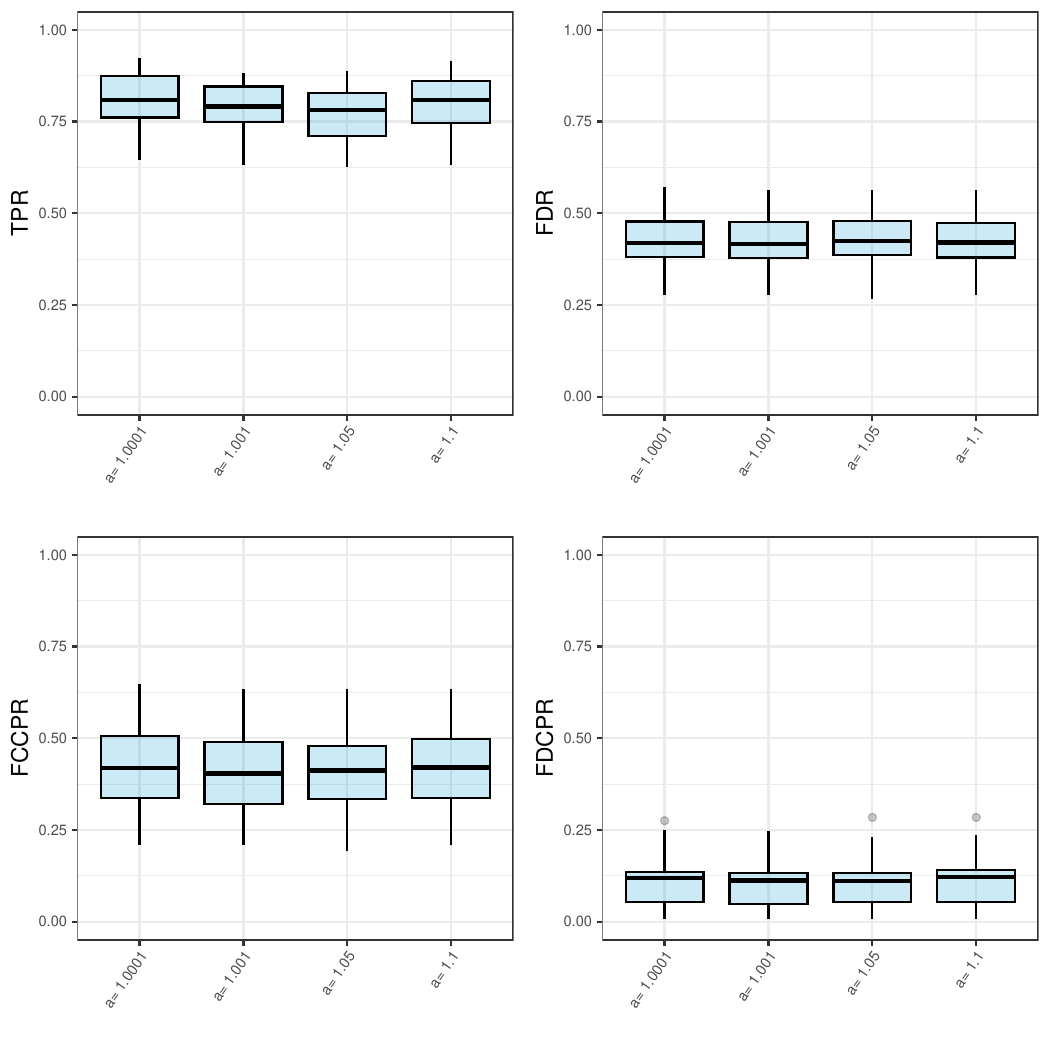}
	\caption{Box-plots of the metrics over 20 random DAGs for different choices of $a$.} \label{sensitivitya}
\end{figure}   

Finally, we investigate the performance of Algorithm~\ref{datdalg} for various choices of the scalar $a$. Here we fix the $\epsilon$'s to the same values as in Section~\ref{simul}. The random DAGs and the realizations are obtained following the procedure outlined at the beginning of this section. The resulting boxplots, depicted in Figure~\ref{sensitivitya}, show that the performance of estimating MWP is stable across the different choices of $a$.

\subsection{Food nutrient data example}\label{nutrientsensitivity}

In this section we follow the same practice as with the simulation study in Section~\ref{Ap:sim2}, and investigate how the entries of the matrix $\hat{\Delta}^{(2)}$ from the data example in Figure~\ref{fig:foodpairs}, with entries $\hat{\tau}_{ij}^2$ for all pairs $(i,j)$,  change as we let the $\eps$-terms vary. The resulting matrices are shown in Figure~\ref{sensitivity2}. On the one hand, as observed with the simulation setup in the previous section, lowering $\eps_3, \eps_5$, and increasing $\eps_4$ leads to fewer estimated pairs in MWP. In particular, the choice of $\eps_3=0.05$ yields only the pair (VA, RET) from the DAG of Figure~\ref{founddag3}, whereas the remaining pairs have a very weak level of dependence. On the other hand, higher values for $\eps_3, \eps_5$, and lower values of $\eps_4$ generally lead to many pairs estimated in MWP, including weakly dependent ones. We see that the pairs of nodes from the DAG in Figure~\ref{founddag3} are in all but two of the combinations, namely when $\eps_3=0.05$ and when $\eps_4=0.03$.

Figure~\ref{sensitivity3} provides a similar sensitivity analysis where we fix the $\epsilon$'s to the same values as in Section~\ref{simul} and let $a$ vary. 
We see here that the pairs composing the DAG in Figure~\ref{founddag3} are in all but the second matrix ($a=1.001$), in which case the nutrient pair (VA, LZ) is not in MWP. 

\begin{figure}[H]\centering 
	\includegraphics[ height=20cm, width=14cm, clip]{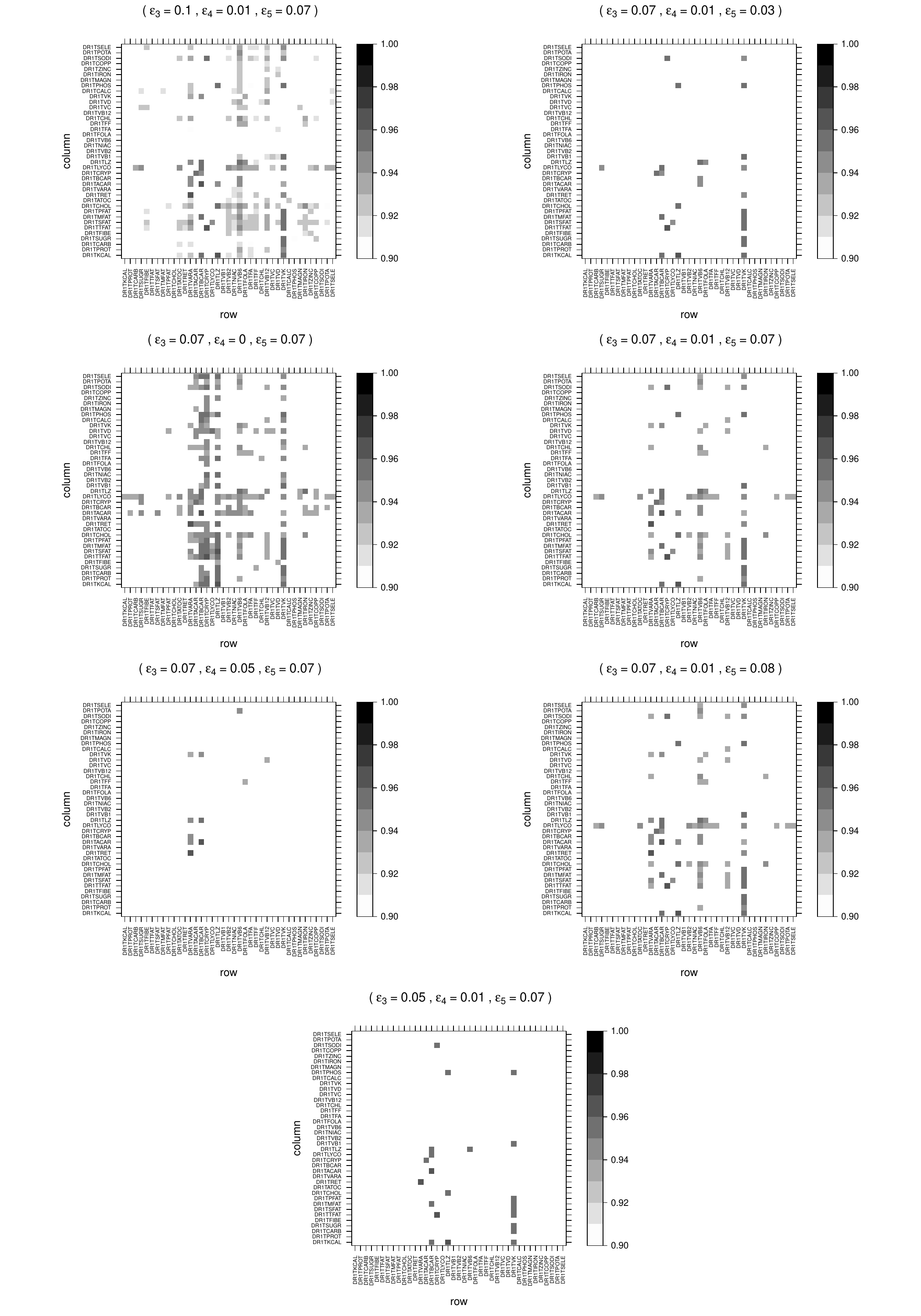}
	\hspace*{-0.5cm}\caption{Matrices $\hat{\Delta}^{(2)}$ {with entries $\hat{\tau}_{ij}^2$} for all pairs $(i,j)$, where Algorithm~\ref{datdalg2} outputs $\hat{P}_{ij}=1$ for the {chosen} $\eps'$s.  
}\label{sensitivity2}
\end{figure}

\begin{figure}[H]\centering 
	\includegraphics[ height=12cm, width=12cm, clip]{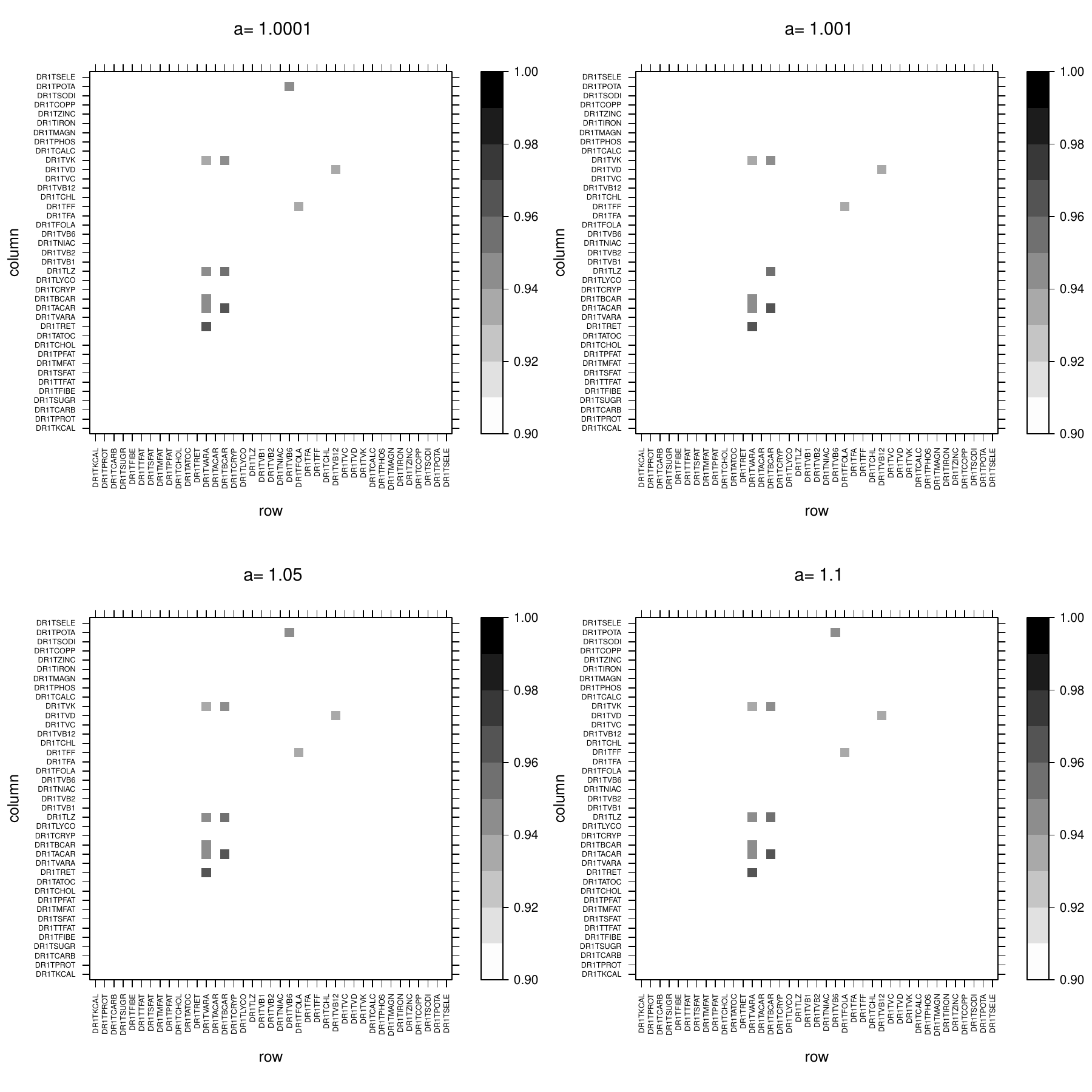}
	\caption{Matrices $\hat{\Delta}^{(2)}$ {with entries $\hat{\tau}_{ij}^2$} for all pairs $(i,j)$, where Algorithm~\ref{datdalg2} outputs $\hat{P}_{ij}=1$ for the {chosen} $a$.  
	}\label{sensitivity3}
\end{figure}

\subsection{A word of warning concerning standardisation}\label{discuss}

It is unrealistic to assume that different variables observed in real data exhibit the same tail behaviour
and we recall that 
standardisation is often used before applying statistical methods to extremes \citep[Chapter~8]{beirlant}.
As we shall see below, if variables have different regularly varying indices, max-linearity of the RMLM may no longer be suitable for capturing the causal behaviour of the data at the original scale.
Nevertheless, as we illustrate in the following example, the RMLM may still be an appropriate model for describing some extremal causality of the standardised variables. 

Let the true DAG be the one on the left of Figure~\ref{fig_app}, and suppose that we observe two structural equation models generated by different mechanisms that are supported on the left-hand DAG in Figure~\ref{fig_app}. 
Let the innovations $Z_1,Z_2\in{\rm RV}_+(\alpha_1)$ and $Z_3\in{\rm RV}_+(\alpha_3)$ with $\alpha_3>\alpha_1$ such that $Z_3$ has a lighter tail. 
Consider the two causal mechanisms in~\eqref{m1app} and~\eqref{m2app} defined recursively on the left-hand DAG of Figure~\ref{fig_app}:\\
\hspace*{-1cm}
\begin{tabularx}{\textwidth}{XX}
	{\begin{align}\label{m1app}
	X_3&=Z_3\\\nonumber
	X_2&=c_{22}Z_2\vee  c_{23}X_3\\\nonumber
	X_1&=c_{11} Z_1\vee c_{12}X_2\vee  c_{13}X_3,
	\end{align}}\hspace*{-1.5cm}
	&{\begin{align}\label{m2app}
	X_3&=Z_3\\\nonumber
	X_2&=c_{22}Z_2\vee  c_{23}X_3^{\alpha_3/\alpha_1}\\\nonumber
	X_1&=c_{11} Z_1\vee c_{12}X_2\vee  c_{13}X_3^{\alpha_3/\alpha_1}.
	\end{align}}
\end{tabularx}\\
In~\eqref{m1app}, $X_1,X_2\in {\rm RV}_+(\alpha_1)$ and $X_3\in {\rm RV}_+(\alpha_3)$. 
Note, however, that because of the heavier tail of the innovations $Z_1$ and $Z_2$, $\mathbb{P}(X_1>x)\sim \mathbb{P}(c_{11} Z_1\vee c_{12}X_2>x)$ as $x\to\infty$, and similarly $\mathbb{P}(X_2>x)\sim \mathbb{P}(c_{22} Z_2>x)$, implying that $X_1$ and $X_2$ are asymptotically independent of $X_3$. 
If we now standardise $X_3$ to 
$\tilde X_3=X_3^{\alpha_3/\alpha_1}\in {\rm RV}_+(\alpha_1)$, the tail behaviour of the variables will be equivalent to that of
\begin{align}\label{extsem1}
\tilde X_3 =\tilde Z_3=Z_3^{\alpha_3/\alpha_1},\quad\quad
\tilde X_2 =c_{22}Z_2,\quad\quad 
\tilde X_1 =c_{11} Z_1\vee c_{12}\tilde X_2,
\end{align}
which is a structural equation model supported on the middle DAG of Figure~\ref{fig_app}.
Consider now~\eqref{m2app}. Although the original variables of the system are not represented by an RMLM, the standardised variables $\tilde X_1, \tilde X_2, \tilde X_3\in{\rm RV}_+(\alpha_1)$, can be represented as
\begin{align}\label{extsem2}
\tilde X_3 =\tilde Z_3,\quad\quad
\tilde X_2 =c_{22}Z_2\vee c_{23}\tilde X_3,\quad\quad
\tilde X_1 =c_{11} Z_1\vee c_{12}\tilde X_2\vee c_{13}\tilde X_3.
\end{align}


The examples~\eqref{extsem1} and~\eqref{extsem2} above show that standardisation of the original variables to the same index of regular variation may result in extremal causal mechanisms that are different from the original ones; see the middle and left-hand DAGs in Figure~\ref{fig_app}.
We also observe, however, that causal dependencies among those node variables that share the largest risks are still preserved, providing valuable insight in risk analysis.

			\begin{figure}[H]
		\begin{center}
					\resizebox{3cm}{2cm}{\begin{tikzpicture}[
		> = stealth,
		shorten > = 1pt, 
		auto,
		node distance = 2cm, 
		semithick 
		]
		\tikzstyle{every state}=[
		draw = black,
		thick,
		fill = white,
		minimum size = 4mm,scale=1
		]
		\node[state] (3) {$3$};
		\node[state] (2) [below right=1cm and 1cm of 3] {$2$};
		\node[state] (1) [below left=1cm and 1cm of 3] {$1$};
		
		\path[->][blue] (3) edge node {} (2);
		\path[->][blue] (3) edge node {} (1);
		\path[->][blue] (2) edge node {} (1);
		\end{tikzpicture}}
            \hspace{1cm}
					\resizebox{3cm}{2cm}{\begin{tikzpicture}[
						> = stealth,
						shorten > = 1pt, 
						auto,
						node distance = 2cm, 
						semithick 
						]
						\tikzstyle{every state}=[
						draw = black,
						thick,
						fill = white,
						minimum size = 4mm,scale=1
						]
						\node[state] (3) {$3$};
						\node[state] (2) [below right=1cm and 1cm of 3] {$2$};
						\node[state] (1) [below left=1cm and 1cm of 3] {$1$};
						
						\path[->][blue] (2) edge node {} (1);;
						\end{tikzpicture}}	
					\hspace{1cm}
					\resizebox{3cm}{2cm}{\begin{tikzpicture}[
						> = stealth,
						shorten > = 1pt, 
						auto,
						node distance = 2cm, 
						semithick 
						]
						\tikzstyle{every state}=[
						draw = black,
						thick,
						fill = white,
						minimum size = 4mm,scale=1
						]
						\node[state] (3) {$3$};
						\node[state] (2) [below right=1cm and 1cm of 3] {$2$};
						\node[state] (1) [below left=1cm and 1cm of 3] {$1$};
						
						\path[->][blue] (3) edge node {} (2);
						\path[->][blue] (3) edge node {} (1);
						\path[->][blue] (2) edge node {} (1);
						\end{tikzpicture}}		
				\end{center}
				\caption{
                True DAG (left). The DAGs that support the causal mechanisms underlying the systems~\eqref{extsem1} (middle) and~\eqref{extsem2} (right).} \label{fig_app} 
			\end{figure}

\end{document}